\documentclass[12pt]{amsart}

 \usepackage{amsfonts,graphics,amsmath,amsthm,amsfonts,amscd,amssymb,amsmath,latexsym,multicol,
 mathrsfs}
\usepackage{epsfig,url}
\usepackage{flafter}
\usepackage{fancyhdr}
\usepackage{hyperref}
\hypersetup{colorlinks=true, linkcolor=black}

\addtolength{\oddsidemargin}{-0.3in}
\addtolength{\evensidemargin}{-0.3in}
\addtolength{\textwidth}{0.6in}

\addtolength{\topmargin}{-0.4in}
\addtolength{\textheight}{0.7in}


\makeatletter

\def\jobis#1{FF\fi
  \def\predicate{#1}%
  \edef\predicate{\expandafter\strip@prefix\meaning\predicate}%
  \edef\job{\jobname}%
  \ifx\job\predicate
}

\makeatother

\if\jobis{proposal}%
\else
\fi

 \usepackage[matrix, arrow]{xy}

\DeclareMathOperator{\Supp}{Supp}

\DeclareMathOperator{\codim}{codim}

\DeclareMathOperator{\Pic}{Pic}


 \numberwithin{equation}{subsection}
 \numberwithin{footnote}{subsection}

 \newtheorem{cor}[subsection]{Corollary}
 \newtheorem{lem}[subsection]{Lemma}
 \newtheorem{prop}[subsection]{Proposition}
 \newtheorem{thm}[subsection]{Theorem}

{
    \newtheoremstyle{upright}%
        {8pt plus2pt minus4pt}%
        {8pt plus2pt minus4pt}%
        {\upshape}%
        {}%
        {\bfseries\scshape}%
        {}%
        {1em}%
        {}%
\theoremstyle{upright}

 \newtheorem{defn}[subsection]{Definition}
 
 \newtheorem{exa}[subsection]{Example}

 \newtheorem{rem}[subsection]{Remark}

}

 \newcommand{\Q}{\mathbb Q}
 \newcommand{\R}{\mathbb R}
 
 \newcommand{\bir}{\dashrightarrow}


\title{Log canonical pairs with boundaries containing ample divisors}
\thanks{2010 MSC: 14E30}
\author{Zhengyu Hu}
\date{\today}
\begin{document}
\maketitle

\begin{abstract}
Let $(X,\Delta)$ be a projective log canonical pair such that $\Delta \geq A$ where $A \geq 0$ is an ample $\R$-divisor. We prove that either $(X,\Delta)$ has a good minimal model or a Mori fibre space. Moreover, if $X$ is $\Q$-factorial, then any Log Minimal Model Program on $K_X+\Delta$ with scaling terminates. As an application we prove that a log Fano type variety $X$ with $\Q$-factorial log canonical singularities is a Mori dream space. This gives an affirmative answer to a conjecture by Cascini and Gongyo. 
\end{abstract}



\section{Introduction}

We work over an algebraically closed field of characteristic zero .

\textbf{Existence of good log minimal models.}
Let $(X,\Delta)$ be a Kawamata log terminal pair such that $\Delta \geq A$ where $A \geq 0$ is an ample $\R$-divisor. C. Birkar, P. Cascini, C. D. Hacon and J. McKernan [\ref{BCHM}] proved that either $(X,\Delta)$ has a good minimal model or a Mori fibre space. This is one of the most important results from birational geometry established in recent years which immediately implies the finite generation of the canonical ring $R(X,B)$ of a Kawamata log terminal pair $(X,B)$ and the existence of Kawamata log terminal log flips. Because the existence of good minimal models is crucial to the birational classification of algebraic varieties, algebraic geometers made a great effort trying to generalise the above result for years. C. Birkar [\ref{B-lc-flips}], C. D. Hacon and C. Xu [\ref{HX2}] independently proved the existence of log canonical log flips by using a deep result on ACC for log canonical thresholds from C. D. Hacon, J. McKernan and C. Xu [\ref{HMX}]. Later C. Birkar and Z. Hu [\ref{BH-II}] showed that a log canonical pair $(X,B)$ with the good augmented base locus $\mathbf{B}_+(K_X+B)$ (more precisely the terminology "good" here means that the augmented base locus $\mathbf{B}_+(K_X+B)$ does not contain any log canonical centre of $(X,B)$ has a good minimal model. 

The main purpose of this article is to generalise C. Birkar, P. Cascini, C. D. Hacon and J. McKernan [\ref{BCHM}]'s famous theorem on the existence of good log minimal models from Kawamata log terminal (klt for short) pairs to log canonical (lc for short) pairs. To this end we consider the following more general situation but it is more flexible to arrange an inductive argument. Here is the main result of this paper.

\begin{thm}\label{t-main-1}
Let $(X,\Delta= A+B)$ be a projective log canonical pair of dimension $n$ where $A$, $B\geq 0$ are $\R$-divisors and the given morphism $f:(X,\Delta) \rightarrow Z$ is surjective. Assume further that $K_X+\Delta \sim_{\mathbb{R}} 0/Z$ and that $A \sim_\R f^{\ast} A_Z$ is the pull-back of an ample $\mathbb{R}$-divisor $A_Z$ on $Z$. Then, either $(X,\Delta)$ has a good minimal model or a Mori fibre space.
\end{thm}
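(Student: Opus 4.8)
The plan is to run an induction on dimension and reduce the problem to the two cited inputs: the BCHM theorem [\ref{BCHM}] in the klt case, and the Birkar--Hu criterion [\ref{BH-II}] that an lc pair with good augmented base locus $\mathbf{B}_+(K_X+B)$ has a good minimal model. Since $K_X+\Delta\sim_\R 0/Z$ and $A\sim_\R f^\ast A_Z$, the pair $(X,\Delta)$ has a good minimal model (resp. Mori fibre space) if and only if the same holds for a log canonical model of $f$ over $Z$; so the real content is to build a suitable model over $Z$. I would first dispose of the case $\dim Z=0$, where $K_X+\Delta\sim_\R 0$ globally, so $(X,\Delta)$ is its own good minimal model, and the case $\dim Z=\dim X$, where $f$ is birational and one can push $\Delta$ down. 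Thus assume $0<\dim Z<\dim X$.

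The key step is to replace $(X,\Delta)$ by a dlt modification (or $\Q$-factorial dlt model) $g\colon (X',\Delta')\to(X,\Delta)$, using the existence of dlt modifications for lc pairs; this preserves the hypotheses ($K_{X'}+\Delta'\sim_\R 0/Z$, and the pull-back of $A$ is still semiample and big over $Z$, so after perturbing within its $\R$-linear class we may keep $\Delta'\ge A'$ with $A'\sim_\R$ pull-back of an ample divisor). Now I would analyze the non-klt locus $\lfloor\Delta'\rfloor$. If $\lfloor\Delta'\rfloor=0$, the pair is klt and we apply [\ref{BCHM}] directly (the ampleness of the big part over $Z$ is exactly what BCHM needs). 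If $\lfloor\Delta'\rfloor\neq 0$, restrict to the dlt strata: by adjunction each component $S$ of $\lfloor\Delta'\rfloor$ carries an induced lc (indeed dlt) pair $(S,\Delta_S)$ with $K_S+\Delta_S\sim_\R 0/Z$ and $\Delta_S\ge A_S$ where $A_S$ is the pull-back of an ample divisor on the image of $S$ in $Z$; by the induction hypothesis $(S,\Delta_S)$ has a good minimal model. This is what feeds a special termination argument.

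With good models in all lower dimensions in hand, I would run a $(K_{X'}+\Delta')$-MMP over $Z$ with scaling of an ample divisor. Special termination — using the induction on the dlt strata — shows the MMP does not extract or flip inside $\lfloor\Delta'\rfloor$ infinitely often, so after finitely many steps the log canonical centres are "settled"; on the open klt part the MMP terminates by [\ref{BCHM}]-type arguments (finiteness of models with the ample summand). The outcome is a model on which $K_{X'}+\Delta'$ is nef over $Z$, i.e. a minimal model, or a Mori fibre space over $Z$; abundance over $Z$ (semiampleness of $K+\Delta$) follows because $K_X+\Delta\sim_\R 0/Z$ trivially implies it is relatively semiample once it is relatively nef. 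Tracing back through $g$ and the log canonical model over $Z$ gives the good minimal model or Mori fibre space for the original $(X,\Delta)$.

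The main obstacle I anticipate is making the termination of the MMP genuinely work in the lc (not klt) setting: BCHM termination relies on klt singularities and finiteness of minimal models, which are not available verbatim for lc pairs. The right tool is special termination combined with the inductive existence of good minimal models on the lc centres — this is precisely why the theorem is phrased with the fibration $f$ and the relative triviality $K_X+\Delta\sim_\R 0/Z$, which lets one cut down to the boundary divisors where the induction applies. A secondary technical point is ensuring that the ample summand $A$ (only an $\R$-divisor, and only $\R$-linearly the pull-back of $A_Z$) can be perturbed and propagated through dlt modifications and adjunction while keeping the hypothesis "$\Delta\ge A$, $A$ pulled back from $Z$" intact at every stage; this is routine but must be done carefully, since it is what guarantees the relevant base loci contain no lc centre and thus lets [\ref{BH-II}] apply on the pieces.
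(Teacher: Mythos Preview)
Your proposal has a fundamental confusion between relative and absolute minimal models. You propose to ``run a $(K_{X'}+\Delta')$-MMP over $Z$'' and then assert that ``abundance over $Z$ \ldots\ follows because $K_X+\Delta\sim_\R 0/Z$ trivially implies it is relatively semiample once it is relatively nef.'' But $K_X+\Delta\sim_\R 0/Z$ means there is nothing to do over $Z$: the pair is already its own good minimal model over $Z$. The theorem asks for an \emph{absolute} good minimal model (or Mori fibre space), i.e.\ over $\Spec k$, and the entire difficulty is proving absolute semiampleness of $K_X+\Delta$ when it is pseudo-effective. Your sentence about abundance is exactly where the real work hides, and your argument does not touch it.

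Once you run an MMP over a point, special termination plus BCHM on the klt locus is not enough. The paper's proof splits according to whether some lc centre is \emph{horizontal} over $Z$ (Theorem~\ref{horizontal}: then restriction plus induction, together with Nakayama's results on pull-backs of Iitaka dimensions, gives abundance) or all lc centres are \emph{vertical} (Section~\ref{vertical-case}). The vertical case is the heart of the paper and occupies most of the work: one perturbs by $-\epsilon\lfloor B\rfloor$, uses ACC for lc thresholds and for numerically trivial pairs (Theorems~\ref{ACC-1}, \ref{ACC-2}) to control the resulting klt pairs, and builds an intricate sequence of models (Lemma~\ref{vertical-lem}, Theorems~\ref{vertical-psef} and \ref{vertical-non-psef}) comparing asymptotic vanishing orders across infinitely many birational models before invoking Theorem~\ref{lc-flips-3}. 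None of this machinery appears in your outline. Also, the result [\ref{BH-II}] on good augmented base loci that you cite as a key input is not used in the proof; the relevant inputs are [\ref{BCHM}], the ACC theorems of [\ref{HMX}], and the $\R$-divisor versions of [\ref{B-lc-flips}] developed in Section~\ref{gen-trivial-pairs}.
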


As a corollary we obtain a generalisation of [\ref{BCHM}] for log canonical pairs.

\begin{cor}[= Corollary \ref{t-main-cor}]\label{t-main-cor'}
Let $(X,\Delta)$ be a projective log canonical pair such that $\Delta \geq A$ where $A \geq 0$ is an ample $\R$-divisor. Then, either $(X,\Delta)$ has a good minimal model or a Mori fibre space.  In particular, if $K_X+\Delta$ is $\Q$-Cartier, then the divisorial ring $R(X,\Delta)$ is finitely generated.
\end{cor}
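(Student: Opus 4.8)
The plan is to read the corollary off Theorem \ref{t-main-1} by choosing the fibration to be the identity. Set $B := \Delta - A$; then $B \geq 0$ and $(X,\Delta = A+B)$ is a projective log canonical pair. Apply Theorem \ref{t-main-1} with $Z := X$, $f := \mathrm{id}_{X}$ and $A_{Z} := A$: the morphism $f$ is surjective, $A = f^{\ast}A_{Z}$ is the pull-back of the ample $\R$-divisor $A_{Z}=A$ on $Z=X$, and $K_{X}+\Delta \sim_{\R} 0/Z$ holds trivially because $K_{X}+\Delta = f^{\ast}(K_{X}+\Delta)$. Theorem \ref{t-main-1} then yields at once that $(X,\Delta)$ has a good minimal model or a Mori fibre space, which is the first assertion of the corollary.

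Next I would record the finite generation statement under the hypothesis that $K_{X}+\Delta$ is $\Q$-Cartier. If $(X,\Delta)$ has a good minimal model $\phi : (X,\Delta) \bir (Y,\Delta_{Y})$, then $\phi$ is a birational contraction which is $(K_{X}+\Delta)$-non-positive, so passing to a common resolution and using the projection formula gives $R(X,\Delta) \cong R(Y,\Delta_{Y})$; since $K_{Y}+\Delta_{Y}$ is semiample, the semiample fibration it defines identifies $R(Y,\Delta_{Y})$ with the section ring of an ample divisor on the base, which is finitely generated. If instead $(X,\Delta)$ has a Mori fibre space, then the associated $(K_{X}+\Delta)$-non-positive birational contraction carries $K_{X}+\Delta$ to a divisor that is anti-ample over a base of smaller dimension, hence $K_{X}+\Delta$ is not pseudo-effective and $R(X,\Delta)$ is trivial, so again finitely generated.

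I do not expect a genuine obstacle: all the geometric content sits in Theorem \ref{t-main-1}, and what remains is the formal reduction above plus the two standard facts that (i) a good minimal model program does not change the divisorial ring and (ii) the section ring of a semiample $\Q$-Cartier divisor on a projective variety is finitely generated. The only point meriting a line of care is the Mori fibre space case of the finite generation claim, where one must note that the rational map to the fibre space being $(K_{X}+\Delta)$-non-positive forces $K_{X}+\Delta$ to fail to be pseudo-effective, so that $R(X,\Delta)$ has no sections in positive degree.
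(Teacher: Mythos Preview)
Your proposal is correct. The paper's own argument differs only in the choice of fibration: instead of taking $f=\mathrm{id}_X$ and $Z=X$, the paper first passes to a $\Q$-factorial dlt blow-up $f\colon (Y,\Delta_Y)\to X$, observes that $K_Y+\Delta_Y\sim_\R 0/X$ and $\Delta_Y\ge f^\ast A$, and then applies Theorem~\ref{t-main-1} to $(Y,\Delta_Y)\to X$; the conclusion for $(X,\Delta)$ follows since a log minimal model (resp.\ Mori fibre space) of a dlt blow-up is one of the original pair. Your identity-map reduction is more direct and avoids this extra step, while the paper's route has the mild advantage of landing immediately in the $\Q$-factorial dlt setting used throughout the proof of Theorem~\ref{t-main-1}. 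The paper does not spell out the finite generation clause; your treatment of it via the standard dichotomy (semiample on the good minimal model, or $K_X+\Delta$ not pseudo-effective in the Mori fibre space case) is the expected one.
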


\textbf{Terminations of log flips with scaling.}
[\ref{BCHM}] shows that the existence of good log minimal models (in a neighborhood of a boundary) is somehow equivalent to the terminations of log flips with scaling of some ample divisor. To investigate this a natural approach is to study the geography of log minimal models. We refer to [\ref{BCHM}], [\ref{Cascini-Lazic}], [\ref{Corti-Lazic}], [\ref{Kaw-3}], [\ref{Kaw-4}] for more details.

\begin{cor}[= Corollary \ref{cor-geography}, Geography of weak lc models]
	Let $X$ be a projective normal variety, and let $\{ \Delta_1$, $\Delta_2$, $\ldots$, $\Delta_r \}$ be a set of $\Q$-Weil divisors such that $(X,\Delta_i)$ is log canonical for each $1\le i \le r$. Let $\mathcal{P}$ be the rational polytope defined by $\{ \Delta_i \}$. Given a surjective morphism $f:X \rightarrow Z$. Assume further that $K_X+\Delta_i \sim_{\mathbb{R}} 0/Z$ for every index $i$ and that $\Delta_i \ge A$ where $A\sim_\R f^{\ast} A_Z$ is the pull-back of an ample $\mathbb{R}$-divisor $A_Z$ on $Z$. Then, the subset
	$$
	\mathcal{E}:=\{\Delta \in \mathcal{P}| \text{ $K_X+\Delta$ is pseudo-effective.}  \}
	$$
	is a rational polytope contained in $\mathcal{P}$. Moreover, $\mathcal{E}$ admits a finite rational polyhedral decomposition $\mathcal{E}= \bigcup_k \mathcal{E}_k$ satisfying: 
	
	$\bullet$ $\dim  \mathcal{E}_k =\dim  \mathcal{E}$ for every $k$;
	
	$\bullet$ $\dim  \mathcal{E}_k \bigcap \mathcal{E}_l  <\dim  \mathcal{E}$ for every $l\neq k$;
	
	$\bullet$ for any two divisors $\Delta$, $\Delta' \in \mathcal{E}$, $\Delta$ and $\Delta'$ belong to some same chamber if and only if there exists a normal variety $Y$ such that $(Y,\Delta_Y)$ and $(Y,\Delta_Y')$ are weak log canonical models of $(X,\Delta)$ and $(X,\Delta')$, where $\Delta_Y,\Delta'_{Y}$ are birational transforms of $\Delta,\Delta'$ respectively.
\end{cor}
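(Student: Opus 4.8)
The plan is to combine Theorem \ref{t-main-1} with the ``geography of log models'' machinery of Shokurov and [\ref{BCHM}], further developed in [\ref{Cascini-Lazic}] and [\ref{Corti-Lazic}]; Theorem \ref{t-main-1} supplies the only new input, the rest being polytope bookkeeping on top of it. To begin with, every $\Delta\in\mathcal P$ falls within the scope of Theorem \ref{t-main-1}: writing $\Delta=\sum_i a_i\Delta_i$ with $a_i\ge 0$ and $\sum_i a_i=1$, the pair $(X,\Delta)$ is log canonical, $\Delta=\sum_i a_i\Delta_i\ge\sum_i a_iA=A\sim_\R f^\ast A_Z$, and $K_X+\Delta=\sum_i a_i(K_X+\Delta_i)\sim_\R 0/Z$; hence $(X,\Delta)$ has a good minimal model or a Mori fibre space. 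Therefore $K_X+\Delta$ is pseudo-effective exactly when $(X,\Delta)$ has a good minimal model, so $\mathcal E$ is the set of such $\Delta$; it is convex and closed because the pseudo-effective cone is, and it is contained in the rational polytope $\mathcal P$.

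The heart of the matter is a finiteness statement: there are finitely many birational contractions $\phi_1,\dots,\phi_N\colon X\dashrightarrow Y_j$ with the $Y_j$ normal such that for every $\Delta\in\mathcal E$ some $(Y_j,(\phi_j)_\ast\Delta)$ is a weak log canonical model of $(X,\Delta)$. I would obtain this by the usual route. First reduce to a $\Q$-factorial dlt pair: fix $\Delta^\ast$ in the relative interior of $\mathcal P$ and a $\Q$-factorial dlt blow-up $g\colon X'\to X$ of $(X,\Delta^\ast)$; affine-linearity of the log discrepancy functions $a(E,X,\,\cdot\,)$ on $\mathcal P$ and log canonicity of all the $(X,\Delta_i)$ force each $g$-exceptional divisor to be an lc place of $(X,\Delta)$ for \emph{every} $\Delta\in\mathcal P$, so that $K_{X'}+\Delta'=g^\ast(K_X+\Delta)$ for $\Delta'=g_\ast^{-1}\Delta+\sum_k E_k$ and minimal models may be compared crepantly. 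Then run the log minimal model program on $K_{X'}+\Delta'$ with scaling of a fixed ample divisor: Theorem \ref{t-main-1} supplies the good minimal models needed at each stage, as in the inductive scheme of [\ref{BCHM}], and together with the termination of this program with scaling in the $\Q$-factorial setting it terminates; a compactness argument on $\mathcal P$ — the contracted extremal rays depend on the boundary only through finitely many relatively open subpolytopes of $\mathcal P$ — then produces the finite list. I expect this step to be the main obstacle, being exactly where the full force of Theorem \ref{t-main-1}, rather than the existence of a single minimal model, is used.

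Granting this finiteness, the polytope structure follows by standard arguments organised around the (unique) ample model of each $(X,\Delta)$, $\Delta\in\mathcal E$, which exists because a good minimal model $Y$ of $(X,\Delta)$ has $K_Y+\Delta_Y$ semiample. For each $j$ set $\mathcal E_j=\{\Delta\in\mathcal E\mid(Y_j,(\phi_j)_\ast\Delta)\text{ is a weak lc model of }(X,\Delta)\}$. With $p\colon W\to X$ and $q\colon W\to Y_j$ a common resolution, membership in $\mathcal E_j$ amounts to the divisorial inequality $q^\ast(K_{Y_j}+(\phi_j)_\ast\Delta)\le p^\ast(K_X+\Delta)$ on $W$, which is linear in $\Delta$, together with nefness of $K_{Y_j}+(\phi_j)_\ast\Delta$, which on the locus where $(Y_j,(\phi_j)_\ast\Delta)$ stays dlt is a rational polytope condition by the cone theorem and Shokurov's polytope lemma. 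Hence each $\mathcal E_j$ is a rational polytope, $\mathcal E=\bigcup_j\mathcal E_j$ is a finite union of rational polytopes, and being convex it is itself a rational polytope. Moreover the ample model of $(X,\Delta)$ is constant as $\Delta$ ranges over the relative interior of any cell of the common polyhedral refinement of the $\mathcal E_j$, so declaring two boundaries equivalent when their ample models agree partitions $\mathcal E$ into finitely many relatively open rational polytopes; their closures $\mathcal E_k$ cover $\mathcal E$ with $\dim\mathcal E_k=\dim\mathcal E$ and $\dim(\mathcal E_k\cap\mathcal E_l)<\dim\mathcal E$ for $k\ne l$.

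It remains to verify the model criterion. If $\Delta$ and $\Delta'$ lie in a common chamber $\mathcal E_k$ then a single $\phi_j$ is a weak lc model for all boundaries in $\mathcal E_k$, so $Y:=Y_j$ works. Conversely, suppose a normal variety $Y$ carries weak lc models $(Y,\Delta_Y)$ and $(Y,\Delta'_Y)$ of $(X,\Delta)$ and $(X,\Delta')$, and set $\Delta_{Y,t}=(1-t)\Delta_Y+t\Delta'_Y$ and $\Delta_t=(1-t)\Delta+t\Delta'$. Then $K_Y+\Delta_Y$ and $K_Y+\Delta'_Y$ are nef, both satisfy the discrepancy inequality, and $\phi\colon X\dashrightarrow Y$ is a birational contraction irrespective of the boundary; since nefness, the discrepancy inequality and log canonicity are all preserved under convex combinations of boundaries, $(Y,\Delta_{Y,t})$ is a weak lc model of $(X,\Delta_t)$ for every $t\in[0,1]$. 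In particular $K_Y+\Delta_{Y,t}$ is nef for all $t$, and since $(X,\Delta_t)$ has a good minimal model by Theorem \ref{t-main-1} it is semiample with ample model independent of $t$ on the open segment; hence the relative interior of $[\Delta,\Delta']$ lies in a single chamber and, by closedness of the chambers, so do $\Delta$ and $\Delta'$. This completes the proposed argument.
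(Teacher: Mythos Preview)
Your overall strategy matches the paper's: the corollary is obtained by observing that Theorem \ref{t-main-1} applies to every $\Delta\in\mathcal P$ (your first paragraph is correct and is essentially the paper's one-line reduction of Corollary \ref{cor-geography} to Proposition \ref{lem-geography-2}). The remainder of your argument is an attempt to prove the geography statement itself, and here your route differs from the paper's proof of Proposition \ref{lem-geography-2} in two places that leave real gaps.

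First, your finiteness step risks circularity. You invoke ``termination of this program with scaling in the $\Q$-factorial setting'', but in this paper that is Theorem \ref{t-main-2}, which is proved \emph{using} Corollary \ref{cor-geography}. The paper avoids this: for each fixed $\Delta$ it reaches a weak lc model using only [\ref{B-lc-flips}, Theorem 1.9] together with the ``decreasing coefficients'' device from Section \ref{preliminaries}; it then \emph{enlarges} the resulting chamber to top dimension by running further MMP's over the lc model (Step 2 of Proposition \ref{lem-geography-2}); finiteness is established afterwards (Step 4) by contradiction---take a limit point of interior points of infinitely many chambers, apply Lemma \ref{lem-geograpgy} (Shokurov's bound showing nearby MMP's are trivial for the limiting boundary), and induct on $\dim\mathcal E$. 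Your compactness sketch may be salvageable along BCHM lines, but as written it leans on a termination statement you do not yet have.

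Second, you organise chambers by \emph{ample} model, whereas the required characterisation is by common \emph{weak} lc model, and these do not coincide. If $Y$ is a common weak lc model of $(X,\Delta)$ and $(X,\Delta')$, the segment $K_Y+\Delta_{Y,t}$ moves inside $\mathrm{Nef}(Y)$ and may cross faces, so your assertion that the ample model is ``independent of $t$ on the open segment'' is in general false; your converse implication then collapses. The paper sidesteps this by defining chambers directly as maximal loci with a common weak lc model (Steps 1--3), and proves the converse via log discrepancies rather than ample models: if $\Delta\in\mathcal E_1$ with chamber model $Y_1$, then along the segment from $\Delta$ to $\Delta'$ both $Y$ and $Y_1$ are weak lc models, hence give identical discrepancies, forcing $\Delta'\in\mathcal E_1$ as well.
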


\begin{thm}[= Theorem \ref{t-main-2}]
Let $(X,\Delta)$ be a $\Q$-factorial projective log canonical pair such that $\Delta \geq A$ where $A \geq 0$ is an ample $\R$-divisor. Then, any LMMP on $K_X+\Delta$ with scaling terminates.
\end{thm}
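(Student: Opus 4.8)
The plan is to fix an arbitrary such program and derive a contradiction from the assumption that it does not stop. Let $C\ge 0$ be a divisor with $K_X+\Delta+C$ nef and $(X,\Delta+C)$ log canonical, and run the log minimal model program on $K_X+\Delta$ with scaling of $C$. This produces a sequence of $\Q$-factorial projective log canonical pairs $(X_i,\Delta_i)$ with $(X_1,\Delta_1)=(X,\Delta)$, where $\Delta_i,C_i$ denote birational transforms, together with the scaling numbers $\lambda_i=\inf\{t\ge 0: K_{X_i}+\Delta_i+tC_i\text{ is nef}\}$ satisfying $1\ge\lambda_1\ge\lambda_2\ge\cdots>0$, such that at the $i$-th step one contracts a $(K_{X_i}+\Delta_i)$-negative extremal ray $R_i$ with $(K_{X_i}+\Delta_i+\lambda_i C_i)\cdot R_i=0$. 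Each divisorial contraction drops the Picard number and a Mori fibre space ends the program, so if the program is infinite then all but finitely many steps are flips; discarding the initial segment I may assume every step is a flip and that there are infinitely many of them.

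Next I set up the finiteness input. Since $\Delta\ge A$ with $A$ ample and both $(X,\Delta)$ and $(X,\Delta+C)$ are log canonical, I can choose a rational polytope $\mathcal{P}$ of $\R$-boundaries on $X$ which contains the segment $\{\Delta+tC:0\le t\le 1\}$ and such that for every $\Theta\in\mathcal{P}$ the pair $(X,\Theta)$ is log canonical with $\Theta$ bounded below by a fixed ample $\R$-divisor, so that Corollary \ref{t-main-cor'} applies to $(X,\Theta)$; thus each such $(X,\Theta)$ has a good minimal model or a Mori fibre space. Feeding this into the geography-of-models machinery (as in [\ref{BCHM}], compare Corollary \ref{cor-geography}) yields finiteness of weak log canonical models over $\mathcal{P}$: there are finitely many birational contractions $\phi_1\colon X\bir Y_1,\dots,\phi_N\colon X\bir Y_N$ such that whenever $(Y,\Theta_Y)$ is a weak log canonical model of $(X,\Theta)$ for some $\Theta\in\mathcal{P}$, the induced birational map $X\bir Y$ is one of the $\phi_j$.

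Now I relate the intermediate models to this finite list. Fix $i$. For every $k<i$ we have $\lambda_i\le\lambda_k$ and $(K_{X_k}+\Delta_k+\lambda_k C_k)\cdot R_k=0$, hence $(K_{X_k}+\Delta_k+\lambda_i C_k)\cdot R_k=(\lambda_i-\lambda_k)(C_k\cdot R_k)\le 0$, where $C_k\cdot R_k>0$ because $\lambda_k>0$. Therefore the first $i$ steps form a $(K_X+\Delta+\lambda_i C)$-non-positive sequence of birational maps; together with the nefness of $K_{X_i}+\Delta_i+\lambda_i C_i$ and the negativity lemma, this shows that $(X_i,\Delta_i+\lambda_i C_i)$ is a weak log canonical model of $(X,\Delta+\lambda_i C)$ with structure map $X\bir X_i$. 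Since $\Delta+\lambda_i C\in\mathcal{P}$, this map coincides with $\phi_{j(i)}$ for some $j(i)\in\{1,\dots,N\}$.

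Finally, since the program is infinite there exist $i<i'$ with $j(i)=j(i')$, so the birational maps $X\bir X_i$ and $X\bir X_{i'}$ agree; identifying $X_i$ and $X_{i'}$ through this common map gives $\Delta_i=\Delta_{i'}$ and exhibits the composition $X_i\bir\cdots\bir X_{i'}$, a non-empty composition of $(K+\Delta)$-flips, as the identity. Choosing a divisorial valuation $E$ whose centre on $X_i$ lies in the flipping locus of the first of these flips, the negativity lemma yields $a(E;X_i,\Delta_i)<a(E;X_{i+1},\Delta_{i+1})\le\cdots\le a(E;X_{i'},\Delta_{i'})=a(E;X_i,\Delta_i)$, a contradiction; hence the program terminates. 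The real content — and the step I expect to be the main obstacle — is the finiteness statement of the second paragraph: transporting the geography/finiteness-of-models arguments from the Kawamata log terminal case to the log canonical, big-boundary case and handling the real coefficients (choosing $\mathcal{P}$, and ensuring every weak log canonical model over $\mathcal{P}$ is accounted for by one of the $\phi_j$) is where the difficulty is concentrated; granting that, the termination is a formal consequence.
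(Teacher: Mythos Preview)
Your overall architecture is the same as the paper's, and the final paragraph (pigeonhole plus discrepancy increase) is fine. The gap is in the second and third paragraphs: the finiteness statement you invoke is stronger than what Corollary \ref{cor-geography} actually delivers, and the gap is not just a klt-versus-lc issue.

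Corollary \ref{cor-geography} gives a finite decomposition $\mathcal{E}=\bigcup_k\mathcal{E}_k$ such that two boundaries lie in the same chamber iff they share \emph{some} common weak lc model $Y$. It does \emph{not} say that the set of all weak lc models of all $(X,\Theta)$ with $\Theta\in\mathcal{P}$ is finite up to isomorphism of birational maps. Your claim ``whenever $(Y,\Theta_Y)$ is a weak lc model of $(X,\Theta)$ for some $\Theta\in\mathcal P$, the induced map $X\bir Y$ is one of the $\phi_j$'' does not follow. Concretely: nothing in your setup rules out $\lambda_i=\lambda_{i+1}=\cdots=\lambda>0$ for infinitely many $i$. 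Then every $(X_i,\Delta_i+\lambda C_i)$ is a weak lc model of the \emph{same} pair $(X,\Delta+\lambda C)$; they all sit over the same point of $\mathcal P$, hence in the same chamber, yet the maps $X\bir X_i$ are pairwise distinct small maps related by $(K+\Delta+\lambda C)$-flops. The chamber decomposition cannot separate them, and your pigeonhole argument collapses. More generally, even when the $\lambda_i$ are distinct, knowing that $\Delta+\lambda_i C$ and $\Delta+\lambda_{i'} C$ share a common weak lc model $Y$ does not force $X_i\cong Y\cong X_{i'}$.

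The paper's proof of Theorem \ref{t-main-2} repairs exactly this point, and the fix is not cosmetic. One chooses the polytope $\mathcal P$ so that its image in $N^1(X)_\R$ has \emph{full dimension} (condition (ii) there), and then uses Lemma \ref{lem-polytope} to propagate this full-dimensionality to each $X_i$. Because $\Delta_i+\lambda_i C_i$ is an interior point numerically, the chamber $\mathcal{E}^{(k_i)}_i$ that contains it maps onto a full-dimensional piece of the nef cone of $X_i$, hence contains an \emph{ample} class. When $k_i=k_j$, one then has a single divisor class that is simultaneously ample on $X_i$ and on $X_j$, and the Rigidity Lemma forces $X_i\cong X_j$. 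Your one-dimensional segment $\{\Delta+tC\}$ cannot supply such an ample class, which is why the extra room in $N^1$ is essential rather than a technical convenience.
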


As immediate consequences we obtain the following corollaries.

\begin{cor}[= Corollary \ref{w-Mori}]
Let $(X,B)$ be a $\Q$-factorial projective log canonical pair such that $K_X+B$ is not pseudo-effective. Then, any LMMP on $K_X+B$ with scaling of some ample divisor terminates with a weak Mori fibre space. 
\end{cor}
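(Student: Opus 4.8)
The plan is to deduce this from Theorem \ref{t-main-2} by perturbing the boundary with a small multiple of the divisor used for scaling; the only genuine input is Theorem \ref{t-main-2}. Fix the ample $\R$-divisor $H$ for the scaling; as usual we may assume $(X,B+H)$ is log canonical and $K_X+B+H$ is nef, so that an LMMP on $K_X+B$ with scaling of $H$ is defined, with first scaling number $\lambda_0\le 1$; note $\lambda_0>0$ since $K_X+B$, being non-pseudo-effective, is not nef. First I would record that non-pseudo-effectiveness propagates along the MMP: for one step $(X_i,B_i)\bir (X_{i+1},B_{i+1})$ of a $(K_X+B)$-MMP, a common resolution $p\colon W\to X_i$, $q\colon W\to X_{i+1}$ gives $p^{\ast}(K_{X_i}+B_i)=q^{\ast}(K_{X_{i+1}}+B_{i+1})+E$ with $E\ge 0$, so pseudo-effectiveness of $K_{X_{i+1}}+B_{i+1}$ would force that of $K_{X_i}+B_i$, hence inductively of $K_X+B$. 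Therefore no LMMP on $K_X+B$ can terminate with $K+B$ nef, and it suffices to prove that it terminates at all; the output is then automatically a Mori fibre space.

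Next I would choose a rational $t$ with $0<t<1$ for which $K_X+B+tH$ is still not pseudo-effective: this is possible because the pseudo-effective threshold $\mu:=\inf\{s\ge 0:K_X+B+sH\text{ is pseudo-effective}\}$ is positive (by closedness of the pseudo-effective cone together with the hypothesis on $K_X+B$) and satisfies $\mu\le\lambda_0\le 1$, so any $t\in(0,\mu)$ works. Then $(X,B+tH)$ is log canonical, because $B+tH=(1-t)B+t(B+H)$ is a convex combination of the boundaries of the log canonical pairs $(X,B)$ and $(X,B+H)$ (discrepancies depending affinely on the boundary), and $B+tH\ge tH$ with $tH$ ample; thus $(X,B+tH)$ is a $\Q$-factorial projective log canonical pair of the kind to which Theorem \ref{t-main-2} applies.

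The main step is to recognise an arbitrary LMMP on $K_X+B$ with scaling of $H$ as an LMMP on $K_X+B+tH$ with scaling. Let its stages be $(X_i,B_i)$, its scaling numbers $\lambda_0\ge\lambda_1\ge\cdots$, and $R_i$ the extremal ray contracted or flipped at stage $i$, so $(K_{X_i}+B_i)\cdot R_i<0$, $(K_{X_i}+B_i+\lambda_iH_i)\cdot R_i=0$, hence $H_i\cdot R_i>0$. I claim $\lambda_i>t$ for all $i$: otherwise $K_{X_i}+B_i+tH_i=(K_{X_i}+B_i+\lambda_iH_i)+(t-\lambda_i)H_i$ would be nef, hence pseudo-effective, contradicting the first paragraph applied to $K_X+B+tH$. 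Consequently $(K_{X_i}+B_i+tH_i)\cdot R_i=(t-\lambda_i)H_i\cdot R_i<0$, the pair $(X_i,B_i+tH_i)$ is log canonical (again a convex combination of $(X_i,B_i)$ and $(X_i,B_i+\lambda_iH_i)$), so $R_i$ is a $(K_{X_i}+B_i+tH_i)$-negative extremal ray and the same flip or divisorial contraction of $R_i$ is a legitimate step for $K_X+B+tH$; moreover $s_i:=(\lambda_i-t)/(\lambda_0-t)\in(0,1]$ satisfies $K_{X_i}+B_i+tH_i+s_i(\lambda_0-t)H_i=K_{X_i}+B_i+\lambda_iH_i$, which is nef, so this is precisely an LMMP on $K_X+B+tH$ with scaling of the ample divisor $(\lambda_0-t)H$. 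By Theorem \ref{t-main-2} it terminates, hence so does the original LMMP, necessarily ending with a Mori fibre space.

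I do not expect a serious obstacle once Theorem \ref{t-main-2} is available; the points needing care are the inequality $\lambda_i>t$ — where the non-pseudo-effectiveness of $K_X+B$ and its propagation along the MMP are used — and the verification that all the perturbed pairs $(X_i,B_i+tH_i)$ remain log canonical, which is the convexity of discrepancies in the boundary.
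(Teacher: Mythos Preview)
Your approach is exactly the paper's: perturb the boundary by a small multiple of the scaling divisor and apply Theorem \ref{t-main-2}. The paper's own proof is a two-line sketch asserting that an LMMP on $K_X+B$ with scaling of $H$ is also an LMMP on $K_X+B+\epsilon H$ with scaling of $H$ for small $\epsilon>0$, so your write-up is considerably more detailed than what the paper provides.

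There is one technical slip. In proving $\lambda_i>t$, your decomposition
\[
K_{X_i}+B_i+tH_i=(K_{X_i}+B_i+\lambda_iH_i)+(t-\lambda_i)H_i
\]
does not establish nefness, because the strict transform $H_i$ need not be nef: each step satisfies $H\cdot R>0$, so after a flip the flipped curve has negative intersection with $H_i$. The fix is to take the \emph{first} index $i$ with $\lambda_i\le t$; for $i=0$ this contradicts $t<\mu\le\lambda_0$, while for $i\ge 1$ one has $t\in[\lambda_i,\lambda_{i-1}]$, and then $K_{X_i}+B_i+tH_i$ is a convex combination of the nef divisors $K_{X_i}+B_i+\lambda_iH_i$ and $K_{X_i}+B_i+\lambda_{i-1}H_i$ (the latter inherited from the previous step, which is $(K+B+\lambda_{i-1}H)$-trivial). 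This also repairs the slight circularity in invoking ``the first paragraph applied to $K_X+B+tH$'': with the minimal such $i$, all earlier steps have $\lambda_j>t$ and are therefore $(K_X+B+tH)$-negative, so non-pseudo-effectiveness of $K_X+B+tH$ does propagate to $X_i$.
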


\textbf{Varieties of log Fano type.}
Another consequence of Theorem \ref{t-main-1} and Corollary \ref{t-main-cor'} is the following result on varieties of log Fano type which also generalises the klt version in [\ref{BCHM}].
\begin{cor}[= Theorem \ref{coxlcfano}]
Let $X$ be a $\Q$-factorial projective variety. Assume there is a boundary $\Delta$ such that $(X,\Delta)$ is log canonical and that $-(K_X+B)$ is ample. Then a Cox ring of $X$ is finitely generated, and hence $X$ is a Mori dream space.
\end{cor}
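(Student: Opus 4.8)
The plan is to deduce this from Corollary \ref{t-main-cor'} along the route used in [\ref{BCHM}] to treat the Kawamata log terminal case, as formalised by Cascini and Gongyo; note first that the hypothesis should read ``$-(K_X+\Delta)$ is ample''. We may assume, after a small perturbation of $\Delta$ inside the affine span of its components, that $\Delta$ is a $\Q$-divisor: the conditions ``$(X,\Delta)$ is log canonical'' and ``$-(K_X+\Delta)$ is ample'' are open, so they persist for a rational boundary close to $\Delta$. First I would reduce the statement to two assertions, namely that $\operatorname{Cl}(X)$ is finitely generated and that a Cox ring $\operatorname{Cox}(X)=\bigoplus_{[D]\in\operatorname{Cl}(X)}H^{0}(X,\mathcal{O}_{X}(D))$ is a finitely generated algebra; by the criterion of Hu and Keel these two facts together imply that the $\Q$-factorial variety $X$ is a Mori dream space. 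Finite generation of $\operatorname{Cl}(X)$ is a known feature of $\Q$-factorial varieties of log Fano type (it genuinely uses $\Q$-factoriality --- the projective cone over an elliptic curve is log canonical of log Fano type but has $\operatorname{Cl}$ of infinite rank); I would cite this from the work of Cascini and Gongyo. Thus the crux is the finite generation of $\operatorname{Cox}(X)$.

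To bring Corollary \ref{t-main-cor'} to bear I would first put an ample divisor into the boundary. Since $-(K_{X}+\Delta)$ is ample, $-m(K_{X}+\Delta)$ is very ample for $m\gg 0$; let $A:=\tfrac{1}{m}H$ with $H\in\lvert -m(K_{X}+\Delta)\rvert$ general. Then $A\geq 0$ has ample class, $K_{X}+\Delta+A\sim_{\Q}0$, and by Bertini the pair $(X,\Delta_{0})$ with $\Delta_{0}:=\Delta+A\geq A$ is log canonical. Next choose prime divisors $D_{1},\dots,D_{r}$ whose classes generate $\operatorname{Cl}(X)$ (finitely many suffice, by the first paragraph). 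For $\varepsilon>0$ small enough and $t=(t_{1},\dots,t_{r})\in[0,\varepsilon]^{r}$ the pair $(X,\Delta_{t})$ with $\Delta_{t}:=\Delta_{0}+\sum_{i} t_{i}D_{i}$ is still log canonical, still satisfies $\Delta_{t}\geq A$, and has $K_{X}+\Delta_{t}\sim_{\Q}\sum_{i} t_{i}D_{i}$. By Theorem \ref{t-main-2} every LMMP on $K_{X}+\Delta_{t}$ with scaling terminates, and by Corollary \ref{t-main-cor'} each $(X,\Delta_{t})$ has a good minimal model or a Mori fibre space; a geography-of-models argument of the kind proving Corollary \ref{cor-geography} then subdivides $[0,\varepsilon]^{r}$ into finitely many rational polytopes on each of which the resulting model, together with the semiample class it carries, is essentially constant. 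From this one reads off that the multigraded divisorial ring $R(X;D_{1},\dots,D_{r})=\bigoplus_{\mathbf{m}\in\N^{r}}H^{0}(X,\sum_{i} m_{i}D_{i})$ is finitely generated and that $\overline{\mathrm{Eff}}(X)$ is a rational polyhedral cone; since the $D_{i}$ generate $\operatorname{Cl}(X)$, a formal argument (classes outside $\overline{\mathrm{Eff}}(X)$ contribute no sections) then upgrades this to the finite generation of $\operatorname{Cox}(X)$, which completes the proof.

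The main obstacle is the geography step just invoked: passing from the single-pair conclusions of Corollary \ref{t-main-cor'} and Theorem \ref{t-main-2} to the uniform statement that, as $t$ ranges over the polytope, only finitely many good minimal models appear and their semiample polarisations glue into a finitely generated multigraded ring --- in effect, the log canonical analogue of the finite generation of adjoint rings of [\ref{BCHM}]. Corollary \ref{cor-geography} provides a statement of exactly this shape, but it is phrased for families with $K_{X}+\Delta_{i}\sim_{\R}0/Z$, whereas here $K_{X}+\Delta_{t}\sim_{\Q}\sum_{i} t_{i}D_{i}$ is not relatively trivial; one must therefore either reduce to that situation or re-run the Shokurov-polytope argument directly from the termination and good-model results, and this is where essentially all of the work goes. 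Secondary points to watch are the torsion in $\operatorname{Cl}(X)$ and the exact form of the Hu--Keel criterion when $X$ is only normal, together with the choice of $m$ and of the general member $A$ ensuring that $(X,\Delta_{0})$ is log canonical.
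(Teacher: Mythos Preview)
Your approach is workable and essentially correct, but it differs genuinely from the paper's. First, the obstacle you flag is less serious than you suggest: while Corollary~\ref{cor-geography} carries the extra hypothesis $K_X+\Delta_i\sim_\R 0/Z$, the underlying Proposition~\ref{lem-geography-2} does not --- it only asks that every $(X,\Delta)$ in the polytope have a good minimal model or a Mori fibre space, and this is precisely what Corollary~\ref{t-main-cor'} supplies for your family $(X,\Delta_t)$. So the geography step is already available in the paper, and from it the finite generation of the multigraded adjoint ring follows by the standard arguments of [\ref{BCHM}] and [\ref{KKL}]. The genuine loose end in your sketch is rather the final ``formal argument'': to pass from $R(X;D_1,\dots,D_r)$ to the full Cox ring you need the nonnegative span of the $D_i$ to contain $\overline{\mathrm{Eff}}(X)$, not merely that the $D_i$ generate $\mathrm{Cl}(X)$ as a group; this requires first knowing $\overline{\mathrm{Eff}}(X)$ is rational polyhedral and then choosing the $D_i$ accordingly, which can be arranged but should be made explicit.

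The paper's proof (in the appendix, due to J.~Park) takes a shorter route that avoids the geography machinery entirely. It uses the projective-bundle trick of Cascini--Gongyo [\ref{CG}]: choose Cartier divisors $L_1,\dots,L_m$ forming a basis of $\mathrm{Pic}_\Q(X)$ whose convex hull contains $\mathrm{Eff}(X)$, set $Y=\PP\bigl(\bigoplus_i\mathcal O_X(L_i)\bigr)$ with tautological class $H$, and observe that finite generation of a Cox ring of $X$ is equivalent to finite generation of the \emph{single} section ring $R(Y,H)$. A direct computation (Lemma~\ref{bundlelc}) shows there is a boundary $\Gamma$ with $(Y,\Gamma)$ log canonical and $-(K_Y+\Gamma)$ ample, so one can choose an ample $A$ on $Y$ with $(Y,\Gamma+A)$ log canonical and $K_Y+\Gamma+A\sim_\Q\epsilon H$; Corollary~\ref{t-main-cor'} then gives finite generation of $R(Y,\epsilon H)$, hence of $R(Y,H)$, in one stroke. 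Your route stays on $X$ and exercises the full content of Section~\ref{term}; the paper's route trades the polytope analysis for a passage to higher dimension, reducing everything to a single application of the main corollary.
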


\subsection*{Ackowledgement}
The main part of this work (Section 4 and Section 5) was written during the spring of 2014 when I was a research associate at the University of Cambridge. I am very grateful to Professor Caucher Birkar for his valuable advice and support. I would like to thank Professors Yoshinori Gongyo and Jinhyung Park for pointing out that the main result of this work implies a conjecture of Cascini and Gongyo. In particular Professor Jinhyung Park generously shared his proof (Section \ref{appendix}. Appendix) with me. The rest part of this paper was mostly written when I visited Institute for Mathematical Sciences, National University of Singapore on January, 2017. I would like to thank Professor De-Qi Zhang for his hospitality. I wish to express my deep gratitude to Dr. Kenta Hashizume for valuable comments and kindly pointing out a mistake in the previous version of the proof of Theorem \ref{t-main-2}. I would also like to thank Dr. Jingjun Han for his comments. \\

\section{Preliminaries}\label{preliminaries}

Let $k$ be an algebraically closed field of characteristic zero fixed throughout the paper. A divisor means an $\R$-Cartier $\R$-Weil
divisor. A divisor $D$ over a normal variety $X$ is a divisor on a birational model of $X$. A birational map $X \dashrightarrow Y$ is a \emph{birational contraction} if its inverse map contracts no divisor. \\

\textbf{Pairs.}
A \emph{pair} $(X/Z,B)$ consists of normal quasi-projective varieties $X$, $Z$, an $\R$-divisor $B$ on $X$ with
coefficients in $[0,1]$ such that $K_X+B$ is $\mathbb{R}$-Cartier and a projective morphism $X\rightarrow Z$. If $Z$ is a point or $Z$ is unambiguous in the context, then we simply denote a pair by $(X,B)$.
For a prime divisor $D$ on some birational model of $X$ with a
nonempty centre on $X$, $a(D,X,B)$
denotes the log discrepancy. For definitions and standard results on singularities of pairs
we refer to [\ref{Kollar-Mori}].\\

\textbf{Log minimal models and Mori fibre spaces.}
A projective pair $(Y/Z,B_Y)$ is a \emph{log birational model} of a projective pair $(X/Z,B)$ if we are given a birational map
$\phi\colon X\bir Y$ and $B_Y=B^\sim+E$ where $B^\sim$ is the birational transform of $B$ and
$E$ is the reduced exceptional divisor of $\phi^{-1}$, that is, $E=\sum E_j$ where $E_j$ are the
exceptional/$X$ prime divisors on $Y$.
A log birational model $(Y/Z,B_Y)$ is a  \emph{weak log canonical (weak lc for short) model} of $(X/Z,B)$ if

$\bullet$ $K_Y+B_Y$ is nef$/Z$, and

$\bullet$ for any prime divisor $D$ on $X$ which is exceptional/$Y$, we have
$$
a(D,X,B)\le a(D,Y,B_Y)
$$

 A weak lc model $(Y/Z,B_Y)$ is a \emph{log minimal model} of $(X/Z,B)$ if

$\bullet$ $(Y/Z,B_Y)$ is $\Q$-factorial dlt,

$\bullet$ the above inequality on log discrepancies is strict.

A log minimal model $(Y/Z, B_Y)$ is \emph{good} if $K_Y + B_Y$ is semi-ample$/Z$.\\

On the other hand, a log birational model $(Y/Z,B_Y)$  is called a \emph{weak Mori fibre space} of $(X/Z,B)$ if

$\bullet$ there is a $K_Y+B_Y$-negative extremal contraction $Y\to T$
with $\dim Y>\dim T$, and

$\bullet$ for any prime divisor $D$ (on birational models of $X$) we have
$$
a(D,X,B)\le a(D,Y,B_Y)
$$
 and strict inequality holds if $D$ is
on $X$ and contracted$/Y$.

 A weak Mori fibre space $(Y/Z,B_Y)$ is a \emph{Mori fibre space} of $(X/Z,B)$ if
 
$\bullet$ $(Y/Z,B_Y)$ is $\Q$-factorial dlt,\\

Note that our definitions of log minimal models and Mori fibre spaces are slightly different
from the traditional definitions in that we allow $\phi^{-1}$ to contract certain divisors.\\

\textbf{Log smooth models.}
A pair $(Y/Z,B_Y)$ is a \emph{log smooth model} of a pair $(X/Z,B)$ if there exists a birational morphism $\pi: Y \rightarrow X$ such that 

$\bullet$ $(Y/Z,B_Y)$ is log smooth,

$\bullet$ $\pi_\ast B_Y=B$,

$\bullet$ $a(E,Y,B_Y)=0$ for every exceptional$/X$ prime divisor on $Y$.

It is obvious that if $(X,B)$ is lc, then $(Y,B_Y)$ is dlt. In this case, it is easy to calculate that $a(D,X,B)\ge a(D,Y,B_Y)$ for any prime divisor $D$ over $X$. Moreover, a log minimal model of $(Y/Z,B_Y)$ is also a log minimal model of $(X/Z,B)$.   \\

\textbf{Ample models and log canonical models.}
Let $D$ be a divisor on a normal variety $X$ over $Z$. A normal variety $T$ is the \emph{ample model}$/Z$ of $D$ if we are given a rational map $\phi: X \bir T$ such that there exists a resolution $X \xleftarrow{p} X' \xrightarrow{q} T$ with 

$\bullet$ $q$ being a contraction,

$\bullet$ $p^\ast D \sim_\R q^\ast D_T+E$ where $D_T$ is an ample$/Z$ divisor and $E\ge 0$, and

$\bullet$ for every divisor $B \in |p^\ast D/Z|_\R$, then $B \geq E$. 

Note that the ample model is unique if it exists. The existence of the ample model is equivalent to saying that the divisorial ring $R(D)$ is a finitely generated $\mathcal{O}_Z$-algebra when $D \ge 0$ is $\Q$-Cartier. \\

A normal variety $T$ is the \emph{log canonical model}$/Z$ (\emph{lc model} for short) $T$ of a pair $(X/Z,B)$ if it is the ample model$/Z$ of $K_X+B$. The existence of the lc model of a klt pair is guaranteed by [\ref{BCHM}], and the existence of the lc model of an lc pair is still an open question. It is known that proving the existence of the lc models of lc pairs $(X,B)$ with $K_X+B$ being big is equivalent to proving the abundance conjecture for klt pairs.\\

\textbf{Nakayama-Zariski decompositions.}
Nakayama [\ref{Nakayama}] defined a decomposition $D=P_\sigma(D)+N_\sigma(D)$ for any pseudo-effective
$\R$-divisor $D$ on a smooth projective variety. We refer to this as the Nakayama-Zariski decomposition.
We call $P_\sigma$ the positive part and $N_\sigma$ the negative part. We can
extend it to the singular case as follows.
Let $X$ be a normal projective variety and $D$ be a pseudo-effective $\R$-Cartier divisor on $X$. We define $P_\sigma(D)$
by taking a resolution $f\colon W\to X$ and letting $P_\sigma(D):=f_*P_\sigma(f^*D)$. A divisor $D$ is \emph{movable} if $D= P_\sigma(D)$, that is, $D \in \overline{\mathrm{Mov}(X)}$. Given a normal variety $X/Z$ and a pseudo-effective divisor $D/Z$, we define the relative Nakayama-Zariski decomposition $D=P_\sigma(D/Z)+N_\sigma(D/Z)$ in a similar way. \\

\textbf{Asymptotic vanishing orders.}
We review some basic definitions developed from [\ref{Nakayama}] and [\ref{Bou}]. Suppose that $D$ is a divisor on a normal variety $X$ with $\kappa(D) \geq 0$ and $v=\mathrm{ord}_\Gamma$ is a discrete valuation. We define the \emph{asymptotic vanishing order} of $D$ along $v$ as
$$
v(\|D\|):= \liminf\limits_{L \in |mD|} \frac{1}{m} v(L).
$$
and the \emph{asymptotic fixed part} as $\mathrm{Fix}(\|D\|):=\sum_\Gamma \mathrm{ord}_\Gamma (\|D\|) \Gamma$ where $\Gamma$ runs over all prime divisors on $X$.

Fix an ample divisor $A$, we define the \emph{numerical asymptotic vanishing order} of a pseudo-effective divisor $D'$ along $v$ as
$$
\sigma_v(\|D'\|):= \lim\limits_{\epsilon \downarrow 0} v(\|D'+\epsilon A\|)
$$
[\ref{Nakayama}] verifies that the definition above is independent of the choice of $A$. It is obvious that the asymptotic vanishing order and the numerical asymptotic vanishing order coincide when $D$ is big. By an easy calculation one deduces that the negative part of the Nakayama-Zariski decomposition of $D$ has the coefficients which are numerical asymptotic vanishing orders: $N_\sigma(D) =\sum_\Gamma \sigma_\Gamma(\|D\|) \Gamma$.  \\

\textbf{Stable base loci and restricted base loci.}
Let $D$ be a pseudo-effective divisor on a normal projective variety $X$ over $Z$. The $\R$-stable base
locus of $D$ is defined as
$$
\mathbf{B}(|D/Z|_\R):= \bigcap \{ \mathrm{Supp}D'|D'\geq 0 \mathrm{~and~}D'\sim_\R D/Z \}.
$$ 
When $D$ is not $\R$-linearly equivalent to an effective divisor, we use the convention
that $\mathbf{B}(|D/Z|_\R)= X$. 

Similarly we define the $\Q$-stable base locus as $$
\mathbf{B}(|D/Z|_\Q):=\bigcap \{ \mathrm{Supp}D'|D'\geq 0 \mathrm{~and~}D'\sim_\Q D/Z \}.
$$ 

Next we define the restricted base locus of $D$ as
$$
\mathbf{B}_-(D/Z):= \bigcup_A \mathbf{B}(|(D+A)/Z|_\R)
$$
where the union is taken over all ample divisors $A$ on $X$ over $Z$.
\\

\textbf{LMMP with scaling.}
Let $(X/Z,B+C)=(X_1/Z, B_1 + C_1)$ be an lc pair such
that $K_{X_1} + B_1 + C_1$ is nef$/Z$, $B_1 \ge 0$, and $C_1 \ge 0$ is $\R$-Cartier. Suppose
that either $K_{X_1} + B_1$ is nef$/Z$ or there is an extremal ray $R_1/Z$ such that
$(K_{X_1} + B_1) \cdot R_1 < 0$ and $(K_{X_1} + B_1 + \lambda_1 C_1) \cdot R_1 = 0$ where
$$\lambda_1 := \inf \{t \ge 0 | K_{X_1} + B_1 + t C_1 \mathrm{~is~ nef}/Z \}.$$
Now, if $K_{X_1} + B_1$ is nef$/Z$ or if $R_1$ defines a Mori fibre structure, we stop.
Otherwise assume that $R_1$ gives a divisorial contraction or a log flip $X_1 \dashrightarrow X_2$.
We can now consider $(X_2/Z, B_2 + \lambda_1 C_2)$ where $B_2 + \lambda_1 C_2$ is the birational
transform of $B_1 + 
\lambda_1 C_1$ and continue. That is, suppose that either $K_{X_2} + B_2$
is nef$/Z$ or there is an extremal ray $R_2/Z$ such that $(K_{X_2} + B_2) \cdot R_2 < 0$ and
$(K_{X_2} + B_2 + \lambda_2 C_2) \cdot R_2 = 0$ where
$$\lambda_2 := \inf \{t \ge 0 | K_{X_2} + B_2 + t C_2 \mathrm{~is~ nef}/Z \}.$$
By continuing this process, we obtain a sequence of numbers $λ\lambda_i$ and a special
kind of LMMP$/Z$ which is called the LMMP$/Z$ on $K_{X_1} + B_1$ with scaling of $C_1$.
Note that by definition $\lambda_i \ge \lambda_{i+1}$ for every $i$, and we usually put $\lambda = \lim\limits_{i \rightarrow \infty} \lambda_i$.\\

\textbf{Decreasing the coefficients appeared in LMMP with scaling.}
Let $(X/Z,B+C)$ be as above. Assume further that $(X/Z,B)$ is $\Q$-factorial dlt. We suppose that for each $0< t \le 1$, $(X/Z,B+tC)$ has a good minimal model. Then, we can run an LMMP$/Z$ on $K_X+B$ with scaling of $C$ such that $\lambda = \lim\limits_{i \rightarrow \infty} \lambda_i=0$. To see this, we assume that $\lambda>0$. By [\ref{B-lc-flips}, Theorem 1.9], after finitely many steps we reach a model $(X_i/Z,B_i+\lambda C_i)$ on which $K_{X_i}+B_i +\lambda C_i$ is semi-ample$/Z$. Let $(Y/Z,B_Y)$ be the lc model of $(X_i/Z,B_i+\lambda C_i)$, and denote by $g: X/Z \rightarrow Y$. Now we instead run an LMMP$/Y$ on $K_{X_i}+B_i$ with scaling of $\lambda C_i$ which ends with a good minimal model $(X_j/Y,B_j)$ on which $K_{X_j}+B_j$ is semi-ample$/Y$ by Theorem \ref{lc-flips-1}. We therefore obtain that $\lambda_j< \lambda$. By continuing this process, we obtain the desired LMMP$/Z$.  \\

\textbf{Lifting a sequence of log flips with scaling.}
Given an LMMP with scaling which consists of a sequence of log flips $X_j \dashrightarrow X_{j+1}/Z_j$. Let $(X_1'/Z_1, B_1')$ be a $\Q$-factorial dlt
blow-up of $(X_1/Z, B_1)$ and $C_1'$ the pullback of $C_1$. Since $K_{X_1} + B_1 + \lambda_1 C_1 \equiv 0/Z_1$, we get $K_{X_1′}+ B_1' + \lambda_1 C_1' \equiv 0/Z_1$.

Run an LMMP$/Z_1$ on $K_{X_1'} + B_1'$ with scaling of an ample$/Z_1$ divisor
which is automatically also an LMMP$/Z_1$ on $K_{X_1'}+ B_1'$ with scaling of $\lambda_1 C_1'$.
Assume that this LMMP terminates with a log minimal model $(X_2'/Z_1, B_2')$. By construction, $(X_2/Z_1,B_2)$ is the lc model of $(X_1'/Z_1, B_1')$ and hence $(X_2'/Z_1, B_2')$ is a $\Q$-factorial dlt blow-up of $(X_2/Z_1, B_2)$. We can continue this process on $(X_2'/Z_2, B_2')$ and lift the original sequence to an LMMP on $K_{X_1'}+ B_1'$ with scaling of $\lambda_1 C_1'$. \\

\textbf{ACC for log canonical thresholds.}
Theorems on ACC proved in [\ref{HMX}] are crucial to the proof of our main result. We present them here for convenience. Let us recall the log canonical threshold of an effective divisor $M$ with respect to a log pair $(X,\Delta)$, denoted by $\mathrm{lct}(X,\Delta;M)$, is defined as 
$$
\mathrm{lct}(X,\Delta;M):=\sup \{t\in \R | (X,\Delta+t M) \mathrm{~is~log~canonical}\}.
$$
\begin{thm}[ACC for log canonical thresholds, see $\mathrm{[\ref{HMX}, Theorem ~1.1]}$]\label{ACC-1}
Fix a positive integer $n$, a set $I \subset [0, 1]$ and a set $J \subset \R_{>0}$, where
$I$ and $J$ satisfy the DCC. Let $\mathfrak{T}_n(I)$ be the set of log canonical pairs
$(X, \Delta)$, where $X$ is a variety of dimension $n$ and the coefficients of $\Delta$
belong to $I$. Then the set
$$
\{\mathrm{lct}(X, \Delta; M) | (X, \Delta) \in \mathfrak{T}_n(I), \mathrm{~the ~coefficients ~of~} M \mathrm{~ belong~ to~} J\}
$$
satisfies the ACC.
\end{thm}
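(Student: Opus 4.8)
The plan is to argue by contradiction, following the inductive scheme of [\ref{HMX}]. Suppose the theorem fails for some $n$, $I$, $J$: then there are log canonical pairs $(X_j,\Delta_j)\in\mathfrak{T}_n(I)$ and effective divisors $M_j$ with coefficients in $J$ such that $t_j:=\mathrm{lct}(X_j,\Delta_j;M_j)$ forms a strictly increasing sequence. Using that $I$ and $J$ satisfy the DCC one first checks that the $t_j$ are bounded above, so $t:=\lim_j t_j<\infty$, and we may assume $t_j<t$ for all $j$. The case $n=1$ is elementary: on a smooth curve $\mathrm{lct}(X,\Delta;M)=\min_{p\in\mathrm{Supp}\,M}(1-\mathrm{coeff}_p\Delta)/\mathrm{mult}_pM$, and since $\{1-a:a\in I\}$ satisfies the ACC while the semigroup generated by $J$ satisfies the DCC, the set of all such values satisfies the ACC. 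So assume $n\ge 2$ and that the theorem --- together with its companions, the Global ACC for log canonical pairs with numerically trivial log canonical divisor, and the DCC for volumes --- holds in all dimensions $<n$; all three are carried along in a single induction on dimension with base case $n=1$.

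Next come the standard reductions. Replacing $X_j$ by a projective compactification and then by a $\mathbb{Q}$-factorial dlt blow-up of $(X_j,\Delta_j)$, we may assume each $X_j$ is projective and $\mathbb{Q}$-factorial and $(X_j,\Delta_j)$ is dlt, the coefficients of $\Delta_j$ still lying in a DCC set. For each $j$, by the very definition of the log canonical threshold, $(X_j,\Delta_j+t_jM_j)$ is log canonical whereas $(X_j,\Delta_j+tM_j)$ is not, so there is a divisorial valuation $E_j$ over $X_j$ with
$$
a(E_j,X_j,\Delta_j+t_jM_j)=0,\qquad a(E_j,X_j,\Delta_j+tM_j)<0.
$$
Using a further dlt blow-up I would extract $E_j$ to a prime divisor $S_j$ on a $\mathbb{Q}$-factorial dlt pair $(Y_j,\Gamma_j)$ crepant to $(X_j,\Delta_j+t_jM_j)$ with $S_j\le\lfloor\Gamma_j\rfloor$, and invoke adjunction: $(K_{Y_j}+\Gamma_j)|_{S_j}=K_{S_j}+\Gamma_{S_j}$, where the coefficients of the different $\Gamma_{S_j}$ again lie in a DCC set --- one depending only on $n$, $I$, $J$ and the (a priori uncontrolled) set $\{t_j\}$ --- the point being that the ``hyperstandard'' coefficient sets produced by adjunction inherit the DCC. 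Thus $(S_j,\Gamma_{S_j})$ is an $(n-1)$-dimensional log canonical pair with DCC boundary coefficients, among which the parameters $t_j$ themselves appear, through the components of the birational transform of $t_jM_j$ meeting $S_j$.

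The crux --- and the step I expect to absorb almost all the work --- is to feed $(S_j,\Gamma_{S_j})$ into the lower-dimensional induction, which requires controlling not merely the coefficients but the global positivity of $K_{S_j}+\Gamma_{S_j}$. Following [\ref{HMX}] I would run a $(K_{Y_j}+\Gamma_j)$-MMP and pass to the ample model $g_j\colon Y_j\to V_j$ of $K_{Y_j}+\Gamma_j$; by suitable general choices and using the strict inequality $a(E_j,X_j,\Delta_j+tM_j)<0$, one arranges that $S_j$ is contracted by $g_j$, so that over a general point of $g_j(S_j)$ the divisor $K_{S_j}+\Gamma_{S_j}$ is $\mathbb{R}$-trivial; the Global ACC in dimension $n-1$ then forces the coefficients of $\Gamma_{S_j}$, hence the numbers $t_j$, into a finite set, contradicting strict monotonicity. (Alternatively, comparing on $S_j$ the boundary coming from $t_j$ with the one coming from $t$ --- the latter acquiring a coefficient $>1$, reflecting the displayed strict inequality --- and invoking ACC for log canonical thresholds in dimension $n-1$ bounds $t-t_j$ away from $0$.) The genuinely substantial points are: establishing that the MMP above exists and terminates in the log canonical real-coefficient category (here one may appeal to the existence of dlt blow-ups and of minimal models over a base recalled in the preliminaries); checking that every coefficient set arising along the way --- differents, restrictions, and their limits --- satisfies the DCC uniformly in $j$; and, when $g_j$ cannot be arranged to contract $S_j$ outright, iterating the construction until one lands in a dimension where the induction applies. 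Once a contradiction has been reached in all cases, the ACC for $\{t_j\}$ follows, completing the inductive step; in dimension two this argument recovers Alexeev's theorem.
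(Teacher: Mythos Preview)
The paper does not prove this statement at all: Theorem~\ref{ACC-1} is quoted from [\ref{HMX}, Theorem~1.1] as a black box in the preliminaries, with no argument supplied. So there is no ``paper's own proof'' to compare against; your sketch is an outline of the Hacon--McKernan--Xu argument itself, which is well beyond what this paper attempts. If your goal was to supply what the paper supplies, the correct response is simply a citation.

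That said, a brief comment on your outline as a sketch of [\ref{HMX}]: the overall architecture you describe --- contradiction, extraction of a log canonical place, adjunction to a divisor $S_j$, running an MMP to force $K_{S_j}+\Gamma_{S_j}\equiv 0$ on a general fibre, and then invoking Global ACC in dimension $n-1$ --- is the right shape. The main soft spot is the sentence ``one arranges that $S_j$ is contracted by $g_j$'': in [\ref{HMX}] this is not arranged directly from the strict inequality $a(E_j,X_j,\Delta_j+tM_j)<0$ but requires a substantial detour through the DCC for volumes and boundedness of log general type pairs in lower dimension, which is precisely why all three statements (ACC for lct, Global ACC, DCC for volumes) must be carried together in the induction. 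Your parenthetical ``alternatively'' route via lct in dimension $n-1$ does not by itself close the argument either, because the coefficient set on $S_j$ depends on the unbounded family $\{t_j\}$ and one must first show this dependence is harmless. So while the skeleton is correct, the step you flag as ``the crux'' is indeed where essentially all of [\ref{HMX}] lives, and your sketch understates the work needed there.
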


\begin{thm}[ACC for numerically trivial pairs, see $\mathrm{[\ref{HMX}, Theorem  ~D]}$]\label{ACC-2}
Fix a positive integer $n$ and a set $I \subset [0, 1]$, which satisfies the
DCC.
Then there is a finite set $I_0 \subset I$ with the following property: \\
If $(X, \Delta)$ is a log canonical pair such that \\
(i) $X$ is projective of dimension $n$, \\
(ii) the coefficients of $\Delta$ belong to $I$, and \\
(iii) $K_X + \Delta$ is numerically trivial, \\
then the coefficients of $\Delta$ belong to $I_0$.
\end{thm}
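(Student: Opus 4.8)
The plan is to prove the statement by induction on the dimension $n$, using the ACC for log canonical thresholds (Theorem \ref{ACC-1}) as the principal engine. The case $n=1$ is immediate: on a normal projective curve $X$, numerical triviality of $K_X+\sum_j a_jP_j$ forces $(2g-2)+\sum_j a_j=0$, which, together with the fact that a DCC subset of $[0,1]$ has its positive part bounded away from $0$, leaves only finitely many possibilities for the $a_j$. For $n>1$ I would argue by contradiction: if no finite $I_0$ works there is a sequence $(X_i,\Delta_i)$ of log canonical pairs with $\dim X_i=n$, coefficients in $I$, and $K_{X_i}+\Delta_i\equiv 0$, whose coefficients together lie in no finite subset of $I$. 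Since the union of all coefficients of the $\Delta_i$ is then an infinite subset of the DCC set $I$, it contains a strictly increasing sequence; after passing to a subsequence and relabelling one may assume $\Delta_i$ has a component $D_i$ with coefficient $a_i$, where $(a_i)$ is strictly increasing to a point $a\in\overline{I}$.

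The heart of the matter is the klt case. Fixing small $\epsilon_i>0$ and running a $\bigl(K_{X_i}+(1-\epsilon_i)\Delta_i\bigr)$-MMP --- which terminates with a Mori fibre space $g_i\colon X_i'\to T_i$, because $K_{X_i}+(1-\epsilon_i)\Delta_i$ is not pseudo-effective while its negative $\epsilon_i\Delta_i$ is effective, and which preserves $K+\Delta\equiv 0$ at every step --- one splits into two cases. If $\dim T_i=0$, then $X_i'$ is of Fano type, hence lies in a bounded family by the boundedness results underlying [\ref{HMX}, Theorem A]; Theorem \ref{ACC-1} applied over that bounded family then forces the coefficients of $\Delta_i'$, and hence of $\Delta_i$, into a finite set, contradicting the choice of $a_i$. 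If $\dim T_i>0$, restriction to a general fibre $F_i$ gives a klt pair with $K_{F_i}+\Delta_i'|_{F_i}\equiv 0$ and $\dim F_i<n$, so induction controls the horizontal coefficients; the vertical ones are controlled by the canonical bundle formula $K_{X_i'}+\Delta_i'\sim_\R g_i^*(K_{T_i}+B_{T_i}+M_{T_i})$, whose discriminant $B_{T_i}$ has coefficients computed from log canonical thresholds and so lying in a DCC set, after which induction applied to the lower-dimensional numerically trivial generalised pair $(T_i,B_{T_i}+M_{T_i})$ closes the loop.

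The general log canonical case reduces to the klt one: pass to a $\Q$-factorial dlt modification $(Y_i,\Gamma_i)$, which keeps $K_{Y_i}+\Gamma_i\equiv 0$ with coefficients in the DCC set $I\cup\{1\}$, slightly decrease the coefficient-one part, and run an MMP as above; either one reaches a klt situation, or one extracts a non-klt centre $S_i$ and applies adjunction $(K_{Y_i}+\Gamma_i)|_{S_i}=K_{S_i}+\Theta_i\equiv 0$ with $\dim S_i=n-1$, the different $\Theta_i$ having coefficients in a DCC set depending only on $I$, again contradicting the inductive hypothesis in dimension $n-1$. I expect the main obstacle to be precisely this interplay of the boundary with the fibration and adjunction structures --- making the canonical bundle formula quantitative enough to keep the discriminant a DCC set and to run the induction on the base as a \emph{generalised} polarised pair carrying the nef part $M_{T_i}$, and ensuring that the pathological behaviour of the coefficients is not lost under restriction to a general fibre or to a non-klt centre --- together with the boundedness of Fano type varieties; the ACC for log canonical thresholds is the one ingredient that can safely be used as a black box.
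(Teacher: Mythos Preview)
The paper does not prove this statement at all: Theorem~\ref{ACC-2} is quoted verbatim from [\ref{HMX}, Theorem~D] in the Preliminaries section as an external input, and is used later (in the ``special LMMP'' construction and in the proofs of Theorems~\ref{lc-flips-1} and~\ref{vertical-non-psef}) as a black box. There is therefore no proof in the paper to compare your proposal against.

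For what it is worth, your sketch follows the broad shape of the argument in [\ref{HMX}], but you should be aware that the points you flag as ``main obstacles'' are exactly where the real work lies and are not minor technicalities. In particular, the induction on the base of the Mori fibre space genuinely requires the theory of generalised pairs (the moduli part $M_{T_i}$ is only nef, not effective), and the ``boundedness results underlying [\ref{HMX}, Theorem~A]'' that you invoke in the $\dim T_i=0$ case are not established independently of Theorem~D in that paper---the ACC for log canonical thresholds, the global ACC, and the boundedness statements are proved together in an intertwined induction, so treating boundedness as a freely available input while proving Theorem~D risks circularity. If you intend to supply a self-contained proof rather than a citation, you would need to set up that joint induction carefully; for the purposes of the present paper, simply citing [\ref{HMX}] is both correct and sufficient.
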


\textbf{A special LMMP.}
Given a $\Q$-factorial dlt pair $(X,B)$ and a divisor $P$ with its support $\mathrm{Supp}P= \lfloor B\rfloor$, Birkar [\ref{B-lc-flips}] defined a sequence of birational maps as follows. Here we put everything into the situation of $\R$-divisors for our use later. Pick a decreasing sequence of sufficiently small positive real numbers $\epsilon_1 >\epsilon_2 > \cdots >\epsilon_j > \cdots$ such that $\lim_{j \rightarrow \infty} \epsilon_j =0$. Obviously, the pair $(X,B- \epsilon_j P)$ is klt provided that $\epsilon_1$ is sufficiently small. For each $j$, if we can run an LMMP on $K_X+B -\epsilon_j P$ which terminates with a good minimal model $(X_j, B_j-\epsilon_j P_j)$ where $B_j$ and $P_j$ are birational transforms of $B$ and $P$ respectively, then $(X_j,B_j)$ is $\Q$-factorial lc by Theorem \ref{ACC-1}. Since $\epsilon_1$ can be chosen arbitrary small, we can assume that all $X_j$'s are isomorphic in codimension one.

Since $K_{X_1}+B_1- \epsilon_1 P_1$ is semi-ample, 
$$
K_{X_1}+B_1- \epsilon_1 P_1 \sim_\R C_1
$$
for some $C_1$ so that $(X_1,B_1+C_1)$ is lc. If $t_j=\frac{\epsilon_j}{\epsilon_1-\epsilon_j}$ and $j \neq 1$, then 
$$
K_{X_1}+B_1+t_j C_1 \sim_\R (1+t_j)(K_{X_j}+ B_1 -\epsilon_j P_1).
$$
Next we can run an LMMP on $K_{X_2}+B_2$ with scaling of $t_2 C_2$. Because $(X_j,B_j+t_jC_j)$ is a good minimal model of $(X_2,B_2+t_jB_2)$, we can assume that $(X_j,B_j)$ appears in the process of this LMMP for every $j$ by [\ref{B-lc-flips}, Theorem 1.9]. In other words, the sequence of birational maps 
$$
(X_2,B_2) \rightarrow \cdots \rightarrow (X_j,B_j) \rightarrow \cdots
$$ 
can be embedded into a sequence of log flips. In particular, if the LMMP above does not intersect with $\lfloor B_j \rfloor$ for $j \gg 0$, then the program terminates.    \\

\textbf{On special termination.}
Assume that we are given an LMMP
with scaling which consists of only a sequence $X_i \dashrightarrow
X_{i+1}/Z_i$ of log flips, and that $(X_1/Z, B_1)$ is $\Q$-factorial dlt. Assume $\lfloor B_1 \rfloor \neq  0$
and pick a component $S_1$ of $\lfloor B_1 \rfloor$. Let $S_i \subset X_i$ be the birational transform of
$S_1$ and $T_i$ the normalisation of the image of $S_i$
in $Z_i$. Using standard special
termination arguments, we will see that termination of the LMMP near $S_1$ is
reduced to termination in lower dimensions. It is well-known that the induced
map $S_i \dashrightarrow S_{i+1}/T_i$
is an isomorphism in codimension one if $i \gg 0$ (cf. [\ref{Fujino-2}]).
So, we could assume that these maps are all isomorphisms in codimension one.
Put $K_{S_i}+B_{S_i}
:= (K_{X_i}+B_i)|_{S_i}$. In general, $S_i \dashrightarrow S_{i+1}/T_i$
is not a $(K_{S_i}+B_{S_i})$
-flip.
To apply induction, we note that $S_i \dashrightarrow S_{i+1}/T_i$ can be connected by a sequence of $(K_{S_i}+B_{S_i})$-flips (see [\ref{B-lc-flips}, Remark 2.10]).\\

\textbf{Very exceptional divisors.}
Let $f : X \rightarrow Y$ be a
contraction of normal varieties, $D$ a divisor on $X$, and $V \subset X$ a closed
subset. We say that $V$ is vertical over $Y$ or $f$-vertical if $f(V)$ is a proper subset of $Y$. We
say that $D$ is \emph{very exceptional}$/Y$ if $D$ is vertical$/Y$ and for any prime divisor
$P$ on Y there is a prime divisor $Q$ on $X$ which is not a component of $D$ but
$f(Q) = P$, i.e. over the generic point of $P$ we have $\mathrm{Supp} f^\ast P \nsubseteq \mathrm{Supp} D$. In some literatures such $D$ is also said to be \emph{$f$-degenerate} (cf. [\ref{Gongyo-Lehmann}]). Here we cite some lemmas for later use.

\begin{lem}[$\mathrm{[}$\ref{B-lc-flips}, Lemma 3.2$\mathrm{]}$, cf. Shokurov $\mathrm{[}$\ref{Shokurov}, Lemma 3.19$\mathrm{]}$]\label{lem-exc-1}
Let $f : X \rightarrow Y$ be a contraction
of normal varieties, projective over a normal affine variety $Z$. Let $A$ be an
ample$/Z$ divisor on $Y$ and $F = f^\ast A$. If $E \ge 0$ is a divisor on $X$ which is
vertical$/Y$ and such that $mE = \mathrm{Fix}(mF + mE)$ for every integer $m \gg 0$, then
$E$ is very exceptional$/Y$.
\end{lem}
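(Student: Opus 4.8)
The plan is to argue by contradiction and to reduce the statement to the single assertion that $f_{*}\mathcal{O}_X(mE)=\mathcal{O}_Y$ for all $m\geq 0$, which is then easy to contradict. So suppose $E$ is \emph{not} very exceptional$/Y$. Since $E$ is vertical$/Y$ by hypothesis, it is the second clause of the definition that fails, so there is a prime divisor $P$ on $Y$ such that every prime divisor $Q$ on $X$ dominating $P$ is a component of $E$; for such $Q$ put $a_Q=\mathrm{mult}_Q f^{*}P>0$ and $e_Q=\mathrm{mult}_Q E>0$.

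First I would reinterpret the numerical hypothesis sheaf-theoretically. Since $A$ is ample$/Z$ and $Z$ is affine, $mA$ is base point free for $m\gg 0$, hence so is $mF=f^{*}(mA)$, so $\mathrm{Fix}(mF)=0$ and therefore $\mathrm{Fix}(mF+mE)\leq mE$ automatically; the hypothesis $\mathrm{Fix}(mF+mE)=mE$ thus says that every effective divisor $\R$-linearly equivalent to $mF+mE$ over $Z$ is $\geq mE$, equivalently that multiplication by the tautological section of $\mathcal{O}_X(mE)$ gives an isomorphism $H^{0}(X,mF)\xrightarrow{\ \sim\ }H^{0}(X,mF+mE)$. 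Writing $\mathcal{F}_m:=f_{*}\mathcal{O}_X(mE)$ — a coherent $\mathcal{O}_Y$-module containing $\mathcal{O}_Y$ (because $mE\geq 0$) and nondecreasing in $m$ — the projection formula turns this into: the inclusion $H^{0}(Y,\mathcal{O}_Y(mA))\hookrightarrow H^{0}(Y,\mathcal{O}_Y(mA)\otimes\mathcal{F}_m)$ is an equality for every $m\gg 0$.

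Next comes a short cohomology computation. From $0\to\mathcal{O}_Y\to\mathcal{F}_m\to\mathcal{G}_m\to 0$, tensoring with $\mathcal{O}_Y(mA)$ and using Serre vanishing $H^{1}(Y,\mathcal{O}_Y(mA))=0$ for $m\gg 0$, the equality just obtained forces $H^{0}(Y,\mathcal{G}_m\otimes\mathcal{O}_Y(mA))=0$ for all $m\gg 0$. Since $\mathcal{G}_m$ is nondecreasing in $m$, if $\mathcal{G}_{m_0}\neq 0$ for some $m_0$ then $\mathcal{G}_m\supseteq\mathcal{G}_{m_0}$ for $m\geq m_0$, whence $H^{0}(Y,\mathcal{G}_m\otimes\mathcal{O}_Y(mA))\supseteq H^{0}(Y,\mathcal{G}_{m_0}\otimes\mathcal{O}_Y(mA))\neq 0$ for $m\gg 0$ ($\mathcal{G}_{m_0}$ a fixed nonzero coherent sheaf, $A$ ample) — a contradiction. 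This monotonicity is precisely what lets the same integer $m$ play the roles of both the twist and the coefficient of $mE$. We conclude $f_{*}\mathcal{O}_X(mE)=\mathcal{O}_Y$ for every $m\geq 0$.

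Finally I would contradict this last equality using $P$. Localise at the generic point $\eta$ of $P$: let $R=\mathcal{O}_{Y,\eta}$, a DVR with uniformiser $t$, and $X_R=X\times_Y\Spec R$; flat base change gives $f_{R,*}\mathcal{O}_{X_R}=R$ since $f_{*}\mathcal{O}_X=\mathcal{O}_Y$. The prime divisors of $X_R$ lying over the closed point are exactly the prime divisors $Q$ of $X$ dominating $P$, all components of $E$ by the choice of $P$, and (as $E$ is vertical$/Y$) $E|_{X_R}=\sum_Q e_Q Q$; moreover $\divi_{X_R}(t\circ f)=\sum_Q a_Q Q$. Hence for $g:=1/t\in\mathrm{Frac}(R)\subseteq k(X)$ one has $\divi_{X_R}(g)=-\sum_Q a_Q Q$, so for any integer $m\geq\max_Q(a_Q/e_Q)$ we get $\divi_{X_R}(g)+mE\geq 0$ on $X_R$, i.e. $g\in H^{0}(X_R,\mathcal{O}_{X_R}(mE))$; but $g\notin R=H^{0}(X_R,\mathcal{O}_{X_R})$, so $f_{*}\mathcal{O}_X(mE)\neq\mathcal{O}_Y$ at $\eta$, contradicting the previous paragraph. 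The step I expect to need the most care is this last one — correctly identifying the prime divisors of $X_R$ over $\eta$ with the prime divisors of $X$ dominating $P$ (so that $\divi_{X_R}(g)$ is supported exactly on components of $E$) — while the cohomological reduction in the earlier paragraphs is essentially formal.
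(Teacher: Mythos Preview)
The paper does not give its own proof of this lemma; it merely quotes the statement from Birkar [\ref{B-lc-flips}, Lemma 3.2] (cf.\ Shokurov [\ref{Shokurov}, Lemma 3.19]) and uses it as a black box. So there is nothing in the paper to compare against.

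Your argument is correct and is essentially the standard one. The two reductions you make are exactly right: first, the hypothesis $mE=\mathrm{Fix}(mF+mE)$ together with the projection formula and Serre vanishing for the ample $A$ forces $f_*\mathcal{O}_X(mE)=\mathcal{O}_Y$ for all $m\ge 0$; second, failure of very exceptionality produces, after localising at the generic point of the bad prime divisor $P$, the rational function $1/t$ as a global section of $\mathcal{O}_{X_R}(mE)$ for $m\gg 0$ that does not lie in $R$. Your worry about identifying the prime divisors on $X_R$ over the closed point with the prime divisors of $X$ dominating $P$ is well placed but harmless: since $E$ is vertical$/Y$ and $Y$ is irreducible, any component of $E$ whose generic point lies in $X_R$ must have image exactly $\overline{\{\eta\}}=P$, so $E|_{X_R}$ is supported precisely on the $Q$'s dominating $P$, and $\mathrm{div}_{X_R}(1/t)$ is supported there as well. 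One small point worth stating explicitly is that you may replace $A$ by a positive multiple so that it is Cartier (the hypothesis only concerns $m\gg 0$), which is needed for the projection formula and for Serre vanishing to apply cleanly.
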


\begin{lem}[$\mathrm{[}$\ref{B-lc-flips}, Theorem 3.4$\mathrm{]}$]\label{lem-exc-2}
Let $(X/Z, B)$ be a $\Q$-factorial dlt pair such
that $K_X + B \sim_\R M/Z$ with $M \ge 0$ very exceptional$/Z$. Then, any LMMP$/Z$
on $K_X + B$ with scaling of an ample$/Z$ divisor terminates with a model $Y$ on
which $K_Y + B_Y \sim_\R M_Y = 0/Z$.
\end{lem}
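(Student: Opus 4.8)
The plan is to run the indicated LMMP explicitly and read off from the very exceptional hypothesis both its termination and the vanishing $M_Y=0$, the latter being the easier half.

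\emph{Set-up and first reductions.} Since the conclusion is local over $Z$, I would assume $Z$ affine. Run an LMMP$/Z$ on $K_X+B$ with scaling of an ample$/Z$ divisor $C$, producing pairs $(X_i/Z,B_i)$, and write $M_i$ for the birational transform of $M$. As $K_X+B\sim_\R M\ge 0/Z$ is pseudo-effective$/Z$, no step is a Mori fibre contraction; each step is a divisorial contraction or a log flip, and only finitely many are divisorial (each lowers the Picard number). A routine check shows each $M_i$ stays effective and very exceptional$/Z$: effectivity survives birational contractions, and any prime divisor contracted by a step $X_i\bir X_{i+1}$ must lie in $\Supp M_i$, since the contracted ray $R$ has $(K_{X_i}+B_i)\cdot R<0$ and hence $M_i\cdot R<0$; thus the non-components of $M_i$ over generic points of prime divisors of $Z$, which witness very exceptionality, are never affected.

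\emph{Why a terminal model has $M_Y=0$.} If the LMMP terminates it ends with a $\Q$-factorial dlt log minimal model $(Y/Z,B_Y)$ on which $K_Y+B_Y$ is nef$/Z$ and $K_Y+B_Y\sim_\R M_Y\ge 0/Z$ with $M_Y$ very exceptional$/Z$. Here I would invoke the negativity principle behind Lemma \ref{lem-exc-1} and its sources: a nonzero effective very exceptional$/Z$ divisor is never nef$/Z$. A direct proof cuts $Z$ by general very ample hypersurfaces down to a curve $Z''$ meeting $f(\Supp M_Y)$ and restricts $f$ to the induced fibration $S\to Z''$ of a normal surface; there $M_Y|_S$ is a nonzero effective divisor supported on fibres but containing no whole fibre, so Zariski's lemma gives $(M_Y|_S)^2<0$, yielding a curve in a fibre of $f$ which meets $M_Y$ negatively — impossible if $M_Y$ is nef$/Z$. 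Hence $M_Y=0$, and this applies to the output of \emph{any} terminating run.

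\emph{Termination: the main obstacle.} This is the heart of the matter, since termination of LMMP with scaling for dlt pairs is open in general, so very exceptionality must be used decisively. After discarding the finitely many divisorial steps I may assume the LMMP is an infinite sequence of flips, all $X_i$ isomorphic in codimension one and all $M_i$ effective very exceptional$/Z$. I would first exploit that for every $0<t\le 1$ the class $K_X+B+tC\sim_\R M+tC$ is big$/Z$ (effective plus ample$/Z$), so $(X,B+tC)$ has a good minimal model (a $\Q$-factorial dlt pair with big log canonical class has one, cf. [\ref{BCHM}]); the ``decreasing the coefficients in LMMP with scaling'' device recalled in Section \ref{preliminaries} then furnishes one run with $\lambda:=\lim\lambda_i=0$. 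For that run, given $\epsilon>0$ the divisor $M_i+\epsilon C_i=K_{X_i}+B_i+\epsilon C_i$ is nef$/Z$ for all $i\gg 0$; feeding this into the negativity principle in quantitative form — bounding $M_i\cdot\ell$ for suitable low-degree curves $\ell$ in fibres of $f$, with the coefficients of $B_i$ controlled uniformly by the ACC theorems \ref{ACC-1} and \ref{ACC-2} — I would force $\Supp M_i$ to strictly decrease along the flips, which can happen only finitely often; so that run terminates at a good minimal model with $M_Y=0$. Finally, $(X/Z,B)$ now having a good minimal model, [\ref{B-lc-flips}, Theorem 1.9] upgrades the conclusion to \emph{every} LMMP$/Z$ on $K_X+B$ with scaling of an ample$/Z$ divisor, necessarily terminating at a model with $M_Y=0$. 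The step I expect to cost the most is precisely the decrease of $\Supp M_i$: the curves witnessing non-nefness of $M_i$ move with $i$, so converting ``$M_i+\epsilon C_i$ eventually nef$/Z$ for every $\epsilon$'' into an honest drop of the support needs the uniform ACC bounds together with the geometry of very exceptional divisors to run. (A back-up via special termination — restricting to components of $\lfloor B\rfloor$, connecting the induced maps by flips, and inducting on dimension — is available in principle, but adjunction need not preserve the very-exceptional shape of the log canonical class downstairs, so that induction does not obviously close without extra work.)
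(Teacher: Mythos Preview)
The paper does not prove this lemma; it is quoted from [\ref{B-lc-flips}, Theorem 3.4] and used as a black box, so the relevant comparison is with Birkar's original argument there.

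Your set-up and the argument that any terminal output satisfies $M_Y=0$ are correct and essentially reproduce the negativity principle for very exceptional divisors. The termination argument, however, has genuine gaps.

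First, the appeal to [\ref{BCHM}] for good minimal models of $(X/Z,B+tC)$ is not valid as stated: [\ref{BCHM}] treats klt pairs, whereas $(X,B)$ is only $\Q$-factorial dlt, and adding a general ample $tC$ does not remove the lc centres of $(X,B)$. Second, invoking the ``decreasing the coefficients'' device from Section~\ref{preliminaries} and then [\ref{B-lc-flips}, Theorem 1.9] is circular: in Birkar's paper Theorem 1.9 is proved downstream of the Section 3 results you are trying to establish, and the device in this paper explicitly rests on Theorem 1.9. Third --- and this is decisive --- the step ``ACC $+$ negativity forces $\Supp M_i$ to strictly decrease along the flips'' is empty. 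Once the finitely many divisorial contractions are exhausted the $X_i$ are isomorphic in codimension one, so $\Supp M_i$ is literally constant; there is nothing to decrease. Moreover the coefficients of $B_i$ are fixed along the LMMP (they are birational transforms of $B$), so Theorems \ref{ACC-1} and \ref{ACC-2} have no purchase. You flag this yourself as the costly step, and indeed no argument along those lines exists.

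Birkar's actual proof uses none of [\ref{BCHM}], ACC, or his Theorem 1.9; it is a short direct argument resting on his Lemma 3.3 (the negativity lemma for very exceptional divisors). After passing to the tail of flips with all $X_i$ isomorphic in codimension one, nefness of $K_{X_i}+B_i+\lambda_{i-1}C_i$ on each $X_i$ shows that $M+\lambda C$ lies in the closed movable cone over $Z$ on a fixed model, where $\lambda=\lim\lambda_i$; feeding this into Lemma 3.3 forces $M=0$ on that model, contradicting non-termination. The idea you are missing is precisely this passage through movability across codimension-one-isomorphic models combined with Lemma 3.3; no support-dropping and no ACC are involved.
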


\begin{lem}[$\mathrm{[}$\ref{B-lc-flips}, Theorem 3.5$\mathrm{]}$]\label{lem-exc-3}
Let $(X/Z, B)$ be a $\Q$-factorial dlt pair such that $X \rightarrow Z$ is
birational, and $K_X + B \sim_\R M = M_+ - M_-/Z$ where $M_+$, $M_- \ge 0$ have
no common components and $M_+$ is exceptional$/Z$. Then, any LMMP$/Z$ on
$K_X + B$ with scaling of an ample$/Z$ divisor contracts $M_+$ after finitely many
steps.
\end{lem}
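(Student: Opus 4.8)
The plan is to reduce the statement to Lemma \ref{lem-exc-2}, which gives the assertion when $M_-=0$. As usual I would first reduce to the case that $Z$ is affine. Each divisorial contraction strictly lowers the Picard number, so the given LMMP has only finitely many divisorial contractions; if $M_+$ is not already contracted among these, then from some point on the LMMP consists only of flips, so the models $X_i$ are all isomorphic in codimension one, the birational transforms $M_{+,i}$, $M_{-,i}$ have constant support, and they still satisfy $K_{X_i}+B_i \sim_\R M_{+,i}-M_{-,i}/Z$ with $M_{+,i}$ exceptional$/Z$ and with no common components. It therefore suffices to rule out the possibility that $M_{+,i}\neq 0$ for every $i$ along such an infinite sequence of flips.

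The key point is that $M_{+,i}$ is forced into the negative part of $K_{X_i}+B_i$ over $Z$. Since $Z$ is affine, $X_i\to Z$ is birational and $M_{+,i}$ is exceptional$/Z$, any effective $\R$-divisor $G$ with $G\sim_\R M_{+,i}/Z$ satisfies $G\ge M_{+,i}$: indeed $G-M_{+,i}$ is $\R$-linearly trivial$/Z$, and pushing forward to the affine $Z$ — where $M_{+,i}$ disappears — shows it is the pull-back of an effective divisor. Applying this to $G=D+M_{-,i}$ for an arbitrary effective $D\sim_\R(K_{X_i}+B_i)/Z$ and using that $M_{+,i}$ and $M_{-,i}$ have no common components, we get $D\ge M_{+,i}$; hence $\Supp M_{+,i}\subseteq\mathbf B(|K_{X_i}+B_i/Z|_\R)$, and a small ample perturbation upgrades this to $N_\sigma((K_{X_i}+B_i)/Z)\ge M_{+,i}$ (in particular $K_{X_i}+B_i$ is pseudo-effective$/Z$, which also follows from $X_i\to Z$ being birational).

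To conclude I would pass to a pair whose log canonical divisor is $\R$-linearly equivalent over $Z$ to an \emph{effective} very exceptional divisor and then invoke Lemma \ref{lem-exc-2}. Namely, take a $\Q$-factorial dlt modification $\pi\colon(X',\Delta')\to X$ of $(X,B+M_-)$ — after the preliminary reduction needed to make $(X,B+M_-)$ log canonical, treating separately the components of $M_-$ where this fails. Then $K_{X'}+\Delta'\sim_\R\pi^*M_+/Z$ with $\pi^*M_+\ge 0$ exceptional, hence very exceptional (all maps being birational), over $Z$. Lift the given LMMP on $K_X+B$ to an LMMP on $K_{X'}+\Delta'$ as in the paragraph ``Lifting a sequence of log flips with scaling''. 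By Lemma \ref{lem-exc-2} this lifted LMMP terminates on a model where the transform of $\pi^*M_+$, hence of $M_+$, is zero; since by the previous paragraph $M_+$ is precisely the fixed/negative part, the $(K_{X'}+\Delta')$-negative steps match the $(K_X+B)$-negative ones, so the original LMMP contracts $M_+$ after finitely many steps, contradicting $M_{+,i}\neq 0$ for all $i$.

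The main obstacle is exactly this last reduction. Because only $M_+$, not $M_-$, is assumed exceptional, one must first arrange that $(X,B+M_-)$ — or a suitable variant of it — is log canonical, and, more seriously, one must match the steps of the lifted LMMP on $K_{X'}+\Delta'$ with those of the original LMMP on $K_X+B$ so that termination of the former (supplied by Lemma \ref{lem-exc-2}) really does force $M_+$ to be contracted in the latter. This is where the identification of $\Supp M_+$ with the stable/restricted base locus of $K_X+B$ over $Z$ from the second step, together with Lemmas \ref{lem-exc-1}--\ref{lem-exc-2} and the lifting machinery recalled in the preliminaries, is essential.
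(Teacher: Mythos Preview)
The paper does not actually prove this lemma: it is stated as a citation of [\ref{B-lc-flips}, Theorem~3.5] and used as a black box, so there is no argument in the paper to compare against. That said, your proposal has a genuine gap that you yourself flag but do not close.

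The problematic step is the passage to a dlt modification of $(X,B+M_-)$ and the subsequent ``matching'' of LMMPs. First, $(X,B+M_-)$ need not be log canonical (the coefficients of $M_-$ are completely uncontrolled), and your parenthetical ``treating separately the components of $M_-$ where this fails'' is not an argument. Second, and more seriously, even if you produce some $(X',\Delta')$ with $K_{X'}+\Delta'\sim_\R \pi^*M_+/Z$, the LMMP on $K_{X'}+\Delta'$ is an LMMP on $\pi^*(K_X+B+M_-)$, not on $\pi^*(K_X+B)$. The lifting procedure in the preliminaries lifts an LMMP on $(X,B)$ to a dlt blow-up \emph{of $(X,B)$}; it does not let you transport a $(K_X+B)$-LMMP to a $(K_{X'}+\Delta')$-LMMP when $\Delta'$ differs from the crepant pull-back of $B$ by the non-exceptional divisor $M_-$. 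Your assertion that ``the $(K_{X'}+\Delta')$-negative steps match the $(K_X+B)$-negative ones'' because $M_+$ sits in the negative part does not follow: the extremal rays for the two divisors can be entirely different since $M_-$ is horizontal data, not exceptional.

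Birkar's actual proof in [\ref{B-lc-flips}] avoids this entirely. It works directly with the given LMMP and applies the general negativity lemma (his Lemma~3.3) to $M_{+,i}-M_{-,i}+\lambda_i C_i\equiv K_{X_i}+B_i+\lambda_i C_i/Z$, which is nef$/Z$ at each step; since $M_{+,i}$ is (very) exceptional$/Z$ and $M_{-,i}+\lambda_i C_i\ge 0$ shares no component with $M_{+,i}$, negativity forces the LMMP to meet $\Supp M_{+,i}$ and eventually contract it. No auxiliary pair or lifting is needed. If you want to repair your approach, you would need to replace the dlt-modification step by this direct negativity argument, at which point the reduction to Lemma~\ref{lem-exc-2} becomes unnecessary.
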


If $\codim f(D) \ge 2$, then $D$ is very exceptional. In this case we say $D$ is \emph{$f$-exceptional}. 
\\

\textbf{Toroidal reductions and equidimensional morphisms.}
We will need a few results of toroidal reductions in Section \ref{gen-trivial-pairs}.

\begin{thm}[$\mathrm{[}$\ref{AK}, Theorem 2.1$\mathrm{]}$]\label{thm-toroidal}
	Let $f: X\to Z$ be a projective surjective morphism with geometrically integral generic fibre, and assume $Z$ integral. Let $B \subset X$ be a proper closed subscheme. There exists a diagram as follows :
$$\begin{array}{lclcl} 
U_X & \subset & X' &\stackrel{m_X}{\to} &X \\
\downarrow & & \downarrow f' & & \downarrow f\\
 U_Z & \subset & Z'
& \stackrel{m_Z}{\to} &Z \end{array}
$$
such that $m_X$ and $m_Z$ are birational projective morphisms, $X'$ and $Z'$ are nonsingular, the inclusions on the left are toroidal embeddings, and such that:

1. $f'$ is toroidal.

2. Let $B'=m_X^{-1} B$. Then $B'$ is a simple normal crossings divisor, and $B' \subset X'\backslash U_{X'}$.

\end{thm}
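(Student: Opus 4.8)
The plan is to reduce to the toroidalization machinery for dominant morphisms in characteristic zero, by first cleaning up the base and then performing a simultaneous log resolution of $f$ and of $B$. Since $m_Z$ is only required to be birational, I would first replace $Z$ by a resolution of singularities $\widetilde Z\to Z$ (Hironaka) and replace $X$ by the component of $X\times_Z\widetilde Z$ dominating $\widetilde Z$; because the function fields of $Z$ and $\widetilde Z$ agree this does not change the generic fibre of $f$, so from now on $Z$ is nonsingular. Generic smoothness in characteristic zero then gives an SNC divisor $D_Z\subset Z$ such that $f$ is smooth over $U_Z:=Z\setminus D_Z$ and such that $\Supp f^{*}D_Z$ together with $B$ is where all of the bad behaviour is concentrated.

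\textbf{Log resolution of $(f,B)$.} Applying Hironaka again to $X$, I would produce a projective birational $m_X\colon X'\to X$ with $X'$ nonsingular such that $B':=m_X^{-1}B$ is SNC and such that, after possibly enlarging $D_Z$, the divisor $\mathrm{Exc}(m_X)\cup \Supp(f\circ m_X)^{*}D_Z\cup B'$ is contained in an SNC divisor $D_X$ on $X'$ with $(f\circ m_X)^{-1}(D_Z)\subseteq D_X$. Setting $U_X:=X'\setminus D_X$, the pairs $U_X\subset X'$ and $U_Z\subset Z$ become toroidal embeddings without self-intersection, $(f\circ m_X)(U_X)\subseteq U_Z$, and $B'$ lies in the boundary $X'\setminus U_X$; the hypothesis that the generic fibre is geometrically integral guarantees that $U_X\to U_Z$ is still a fibration with irreducible generic fibre, which is exactly what is needed to speak of a \emph{dominant} toroidal structure. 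What remains is to upgrade $f\circ m_X$ to an honestly toroidal morphism.

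\textbf{Toroidalization (the main point).} The morphism constructed so far need not be toroidal: toroidality requires that étale-locally near every point of $X'$, in monomial coordinates $x_i$ adapted to $D_X$ and $y_j$ adapted to $D_Z$, each $f^{*}y_j$ is a monomial in the $x_i$ — equivalently, that the induced map of rational polyhedral cone complexes $\Sigma_{X'}\to\Sigma_{Z'}$ is a compatible morphism of complexes. To force this I would invoke toroidalization of dominant morphisms of characteristic-zero varieties: after a further, carefully synchronized sequence of blow-ups — of $Z'$ along smooth centres that are intersections of components of $D_Z$, and of $X'$ along smooth centres compatible with $D_X$ — all of which preserve the toroidal structures and keep $B'$ SNC and contained in the boundary, the morphism becomes toroidal. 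The argument runs by induction on the relative dimension $\dim X-\dim Z$: the relative-dimension-one case is essentially semistable reduction for families of curves in the style of Kempf–Knudsen–Mumford–Saint-Donat, and the inductive step uses Hironaka-type combinatorial principalization (torification) to kill the non-monomial part of $f^{*}D_Z$ before restricting to a generic hyperplane section.

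\textbf{Conclusion and main obstacle.} Renaming the composite modifications as $m_X\colon X'\to X$ and $m_Z\colon Z'\to Z$ (compositions of projective birational maps, hence again projective birational), with $f'\colon X'\to Z'$ the resulting toroidal morphism and $U_X\subset X'$, $U_Z\subset Z'$ the toroidal embeddings, we get nonsingular $X',Z'$, an SNC divisor $B'=m_X^{-1}B$, and $B'\subset X'\setminus U_X$ by construction, which is the assertion. The hard step is the toroidalization: it is a theorem, not a formality, because one cannot resolve $X'$ and $Z'$ independently — every blow-up on $Z'$ pulls back to $X'$ and reintroduces non-monomial behaviour that must be destroyed again, so the real difficulty is to arrange the blow-ups on source and target so that this loop terminates. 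This is precisely the content of [\ref{AK}], and obtaining it in full generality required substantial further work.
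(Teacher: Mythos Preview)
The paper does not give a proof of this statement at all: it is quoted as Theorem~2.1 of Abramovich--Karu [\ref{AK}] and used as a black box in Section~\ref{gen-trivial-pairs}. So there is no ``paper's own proof'' to compare your attempt against.

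As for the content of your sketch: the preparatory part (pass to a resolution of $Z$, then take a log resolution of $X$ making $B'$ together with the preimage of a discriminant divisor an SNC boundary, so that $U_X\subset X'$ and $U_Z\subset Z'$ are toroidal embeddings with $f'(U_X)\subset U_Z$) is correct and is indeed the easy half of the Abramovich--Karu argument. But your ``main point'' paragraph is where the actual theorem lives, and there your proposal is not a proof but a restatement of the goal: you say one performs ``synchronized'' blow-ups on source and target until the map becomes toroidal, and then concede in the last paragraph that making this terminate ``is precisely the content of [\ref{AK}]''. That is circular --- the statement you are asked to prove \emph{is} [\ref{AK}, Theorem~2.1].

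Moreover, the mechanism you outline for the hard step is not what Abramovich and Karu actually do. Their proof does not proceed by induction on the relative dimension with the curve-fibration case as base; rather, after the log-resolution step they reduce directly (via a semistable-reduction/alteration argument in the spirit of de~Jong and KKMS combined with combinatorial subdivision of the associated polyhedral complexes) to a situation where the map of cone complexes is already a map of fans. The ``blow up both sides in a loop and hope it terminates'' picture you describe is closer in spirit to Cutkosky's later toroidalization programme, which is a different (and in general much harder) problem than the weak toroidal reduction of [\ref{AK}]. So if you want to turn this into an honest proof sketch, you should replace the inductive paragraph by the actual AK mechanism: alter the base, pull back, and subdivide the cone complex so that the induced map becomes toroidal.
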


\begin{prop}[$\mathrm{[}$\ref{AK}, Proposition 4.4$\mathrm{]}$]\label{prop-toroidal}
	Let $U_X\subset X$ and $U_B \subset B$ be toroidal embeddings with
	polyhedral complexes $\Delta_X$ and $\Delta_B$ respectively, and
	assume that $B$ is nonsingular. Let $ f: X\to B$ be a surjective
	toroidal morphism. Then there exist projective subdivisions
	$\Delta_X'$ of $\Delta_X$ and $\Delta_B'$ of $\Delta_B$ with
	$\Delta_B'$ nonsingular, such that the induced map $f':X'\to B'$ is an
	equidimensional toroidal morphism.
\end{prop}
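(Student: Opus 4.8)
The plan is to reduce the statement to the combinatorics of conical polyhedral complexes with integral structure. Recall from the theory of toroidal embeddings (see [\ref{AK}]) that subdivisions of $\Delta_X$ (resp.\ $\Delta_B$) correspond to toroidal modifications $X'\to X$ (resp.\ $B'\to B$) which are isomorphisms over $U_X$ (resp.\ $U_B$), that projective subdivisions correspond to projective modifications, that the toroidal morphism $f$ induces a morphism of complexes $f_\Delta\colon\Delta_X\to\Delta_B$ whose restriction to each cone $\sigma\in\Delta_X$ is an integral linear map onto a subcone of some cone $\tau_\sigma\in\Delta_B$, and that if $\Delta_X'\to\Delta_X$ and $\Delta_B'\to\Delta_B$ are subdivisions for which $f_\Delta$ remains a morphism of complexes $\Delta_X'\to\Delta_B'$, then the induced $f'\colon X'\to B'$ is again toroidal. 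The key input I would use is the combinatorial criterion for equidimensionality: such an $f'$ is equidimensional if and only if $f_\Delta$ maps every cone of $\Delta_X'$ \emph{onto} a cone of $\Delta_B'$. (The blow-up of a point fails this, while the morphism $(x,y)\mapsto xy$ satisfies it.) Thus the whole statement becomes: produce compatible projective subdivisions, with $\Delta_B'$ nonsingular, realizing this ``cone onto cone'' property.

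First I would fix the target. Let $\Delta_B^\flat$ be the common refinement of $\Delta_B$ with the finite collection of image cones $\{f_\Delta(\sigma):\sigma\in\Delta_X\}$ together with all of their faces; each $f_\Delta(\sigma)$ is strongly convex, being contained in the strongly convex cone $\tau_\sigma$, so this common refinement is again a complex, and by construction each $f_\Delta(\sigma)$ is a union of cones of $\Delta_B^\flat$. Then, by the combinatorial resolution of singularities for polyhedral complexes (equivalently, toric resolution of singularities), choose a nonsingular projective subdivision $\Delta_B'\to\Delta_B^\flat$; passing to a refinement preserves the property of being a union of cones, so each $f_\Delta(\sigma)$ is still a union of cones of $\Delta_B'$. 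This $\Delta_B'$ will be the required subdivision of $\Delta_B$.

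Next I would set $\Delta_X'$ to be the common refinement of $\Delta_X$ with the pull-back complex $f_\Delta^{-1}(\Delta_B')$, so that the cones of $\Delta_X'$ are the sets $\sigma\cap f_\Delta^{-1}(\rho)=\{x\in\sigma:f_\Delta(x)\in\rho\}$ for $\sigma\in\Delta_X$ and $\rho\in\Delta_B'$. Each of these is a rational strongly convex cone (a subcone of $\sigma$), the collection is closed under taking faces and the pieces glue along faces because $f_\Delta$ is a morphism of complexes, so $\Delta_X'$ is a genuine subdivision of $\Delta_X$; moreover $X'$ is (locally, hence globally) the normalization of the base change $X\times_B B'$, so $X'\to X$ is projective and $\Delta_X'$ is a projective subdivision. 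By construction $f_\Delta$ is still a morphism of complexes $\Delta_X'\to\Delta_B'$, so $f'\colon X'\to B'$ is toroidal with $B'$ nonsingular. Finally, to check equidimensionality I would apply the criterion: for a cone $\sigma\cap f_\Delta^{-1}(\rho)$ of $\Delta_X'$ one computes $f_\Delta\bigl(\sigma\cap f_\Delta^{-1}(\rho)\bigr)=f_\Delta(\sigma)\cap\rho$, and since $f_\Delta(\sigma)$ is a union of cones of $\Delta_B'$ this intersection is a union of faces of $\rho$, hence — being convex — a single face of $\rho$, in particular a cone of $\Delta_B'$. This is exactly the ``cone onto cone'' property, so $f'$ is equidimensional.

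The step I expect to be the main obstacle is the equidimensionality criterion itself: making precise, in the general (possibly non-strict, certainly non-toric) toroidal setting, the dictionary between the dimensions of the fibres of $f'$ and the combinatorics of $f_\Delta$ — in particular keeping track of the integral structures, since the natural lattice of an image cone $f_\Delta(\sigma)$ is the saturation of the image lattice inside that of $\tau_\sigma$, and this is what ``nonsingular'' and ``subdivision'' must refer to. Once that dictionary is available, the remaining ingredients — common refinements, toric resolution of singularities, and the identification of $X'$ with a normalized fibre product to obtain projectivity for free — are standard, and the only genuinely new observation is that a convex union of faces of a cone is a single face.
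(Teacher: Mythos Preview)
The paper does not prove this proposition at all: it is quoted verbatim from Abramovich--Karu [\ref{AK}, Proposition 4.4] and used as a black box in the proof of Lemma \ref{klt-m-g-model}, so there is no ``paper's own proof'' to compare your proposal against. Your sketch is in fact a faithful outline of the argument in [\ref{AK}] itself --- refine the base by the images $f_\Delta(\sigma)$, desingularise, then pull back and verify the cone-onto-cone criterion --- and the point you flag as the main obstacle (the equidimensionality criterion in the general toroidal setting, with the correct integral structures on image cones) is exactly what [\ref{AK}, Lemma 4.1] establishes before proving Proposition 4.4.
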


\begin{lem}\label{lem-toroidal}
	Let $\pi: X' \to X$ be a birational projective morphism to a smooth variety. Then, there exists a small $\Q$-factorialisation $\tau : \overline{X'} \to X'$. 
\end{lem}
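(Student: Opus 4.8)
The plan is to realise $\overline{X'}$ as the output of a relative log minimal model program over $X'$, run from a common resolution; smoothness of $X$ is what will force the correct divisors to be contracted, even though $X'$ itself need not be $\Q$-Gorenstein. After replacing $X$ by a smooth projective compactification and $X'$ by the normalised closure of the graph of $\pi$ (and restricting the final construction back over the original $X'$), I may assume $X$ and $X'$ projective. First I would choose a smooth projective $W$ with birational projective morphisms $h\colon W\to X'$ and $g:=\pi\circ h\colon W\to X$ such that the union of all $g$-exceptional prime divisors has simple normal crossing support; write $F_1,\dots,F_s$ for the $h$-exceptional ones among these and $E_1,\dots,E_r$ for the remaining $g$-exceptional primes, so that the $D_i:=h(E_i)$ are exactly the $\pi$-exceptional prime divisors on $X'$. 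Since $X$ is smooth, $(X,0)$ is terminal, so $K_W=g^{\ast}K_X+\sum_i a_iE_i+\sum_j b_jF_j$ with all $a_i,b_j\ge1$; in particular, setting $F:=\sum_j F_j$, the pair $(W,F)$ is log smooth, hence $\Q$-factorial dlt.

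Next I would run the LMMP over $X'$. Fixing a general ample $\Q$-divisor $H$ on $W$ and a small $\epsilon>0$, the pair $(W,F+\epsilon H)$ is $\Q$-factorial dlt with $F+\epsilon H$ big$/X'$, and because $g^{\ast}K_X=h^{\ast}(\pi^{\ast}K_X)$ is $\R$-linearly trivial$/X'$ we have $K_W+F+\epsilon H\sim_{\R}\sum_i a_iE_i+\sum_j(b_j+1)F_j+\epsilon H$ over $X'$, which is effective; hence $K_W+F+\epsilon H$ is pseudo-effective$/X'$. By [\ref{BCHM}] (together with the standard reduction of dlt, big-boundary pairs to the klt case) an LMMP$/X'$ on $K_W+F+\epsilon H$ with scaling of an ample divisor exists and terminates with a log minimal model. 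Shrinking $\epsilon$ and using finiteness of log minimal models along the segment from $F$ to $F+H$, I may take this LMMP to be independent of $\epsilon$ for $0<\epsilon\ll1$; let $\tau\colon\overline{X'}\to X'$ be its output. Then $\overline{X'}$ is $\Q$-factorial, being produced from the $\Q$-factorial $W$ by an LMMP, and letting $\epsilon\downarrow0$ we get that $K_{\overline{X'}}+F_{\overline{X'}}$ is nef$/X'$, where $F_{\overline{X'}}$ denotes the strict transform of $F$.

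It then remains to check that $\tau$ is small. An LMMP$/X'$ contracts only prime divisors exceptional$/X'$, and none of the $E_i$ (nor any component of $H$) is such: each dominates a prime divisor on $X'$, and every variety projective and birational over $X'$ carries a prime divisor over it. So the only divisors that can be contracted are the $F_j$, and I claim all of them are. Indeed, over $X'$ one has $K_{\overline{X'}}+F_{\overline{X'}}\sim_{\R}M:=\sum_i a_i\bar E_i+\sum_j(b_j+1)\bar F_j\ge0$ $/X'$ (bars denote strict transforms on $\overline{X'}$, with $\bar F_j=0$ precisely when $F_j$ has been contracted); since $K_{\overline{X'}}+F_{\overline{X'}}$ is nef$/X'$ so is the effective divisor $M$, and an effective divisor nef over $X'$ cannot contain a prime divisor exceptional over $X'$ in its support (negativity lemma, or the relative Nakayama--Zariski decomposition: the asymptotic vanishing order of $M/X'$ along such a $\bar F_j$ would be its coefficient $b_j+1>0$, yet $N_\sigma(M/X')=0$). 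Hence $\bar F_j=0$ for all $j$, so $\tau$ contracts no divisor; being also $\Q$-factorial, $\overline{X'}$ is the required small $\Q$-factorialisation.

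The hard part will be the second paragraph. Because $K_{X'}$ need not be $\R$-Cartier, one cannot run an LMMP on $K_{X'}$ at all, so the entire program has to be set up upstairs on $W$ relative to $X'$; ensuring that the perturbed relative LMMP exists and terminates, and that the limit $\epsilon\to0$ behaves well, is where the real content lies. The feature that makes all of this go through is precisely the positivity $a_i\ge1$ coming from $X$ being smooth: it is what makes $K_W+F$, relative to $X'$, $\R$-linearly equivalent to an effective divisor whose support exhausts the $h$-exceptional locus, so that the relative LMMP is genuinely one on an effective class and the divisors $F_j$ are exactly the ones it can and must remove.
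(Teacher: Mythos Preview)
Your argument breaks in the final paragraph. The assertion ``an effective divisor nef over $X'$ cannot contain a prime divisor exceptional over $X'$ in its support'' is false, and the parenthetical justification via $N_\sigma$ does not hold: the relative asymptotic vanishing order $\sigma_{\bar F_j}(M/X')$ of a nef$/X'$ divisor is $0$, not the coefficient of $\bar F_j$ in $M$. A concrete counterexample: let $\tau\colon \overline{X'}\to X'$ be the blow-up of a smooth surface at a point, with exceptional curve $E$, and let $\bar L,\bar L'$ be the strict transforms of two smooth curves through that point. Then $M=\bar L+\bar L'+E$ is effective, contains $E$, and satisfies $M\cdot E=1>0$, hence is nef$/X'$; indeed $M\sim_\R \bar L'$ over $X'$, so $\sigma_E(M/X')=0$. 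In your situation the non-exceptional part $\sum_i a_i\bar E_i$ plays exactly the role of $\bar L+\bar L'$: it can absorb negativity against $\bar F_j$ and there is no mechanism forcing $\bar F_j$ to be contracted. Neither form of the negativity lemma applies, and Lemmas~\ref{lem-exc-2} and~\ref{lem-exc-3} do not either, since your $M$ is not very exceptional$/X'$ and is not of the form $M_+-M_-$ with $M_+$ exceptional and $M_-\ge 0$.

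The paper's proof sidesteps this entirely by exploiting smoothness of $X$ in a different way: rather than working on a resolution over $X'$, it produces a klt boundary directly on $X'$. One picks an ample $\Q$-divisor $A$ on $X$ with $(X,A)$ klt and with $a(E_i,X,A)\le 1$ for every $\pi$-exceptional prime $E_i$ (possible since $X$ is smooth, so one can make $A$ pass with high multiplicity through the centres of the $E_i$ while keeping $(X,A)$ klt). Writing $K_{X'}+A'=\pi^*(K_X+A)$, the condition on log discrepancies forces $A'\ge 0$, so $(X',A')$ is an honest klt pair, and then the small $\Q$-factorialisation is the standard one from [\ref{BCHM}]. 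The point you are missing is that one needs the $\pi$-exceptional divisors to carry boundary coefficients as well, not just the $h$-exceptional ones; this is what makes the relevant relative log canonical class purely exceptional over $X'$ and hence genuinely contractible.
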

\begin{proof}
	We can assume that $X,X'$ are projective. Let $\{ E_i \}$ be the set of exceptional$/X$ divisors on $X'$. Pick an ample $\Q$-divisor $A$ such that $(X,A)$ is klt and $a(E_i,X,A) \le 1$ for each $i$. We write $K_{X'} +A' =\pi^*(K_X+A)$. Since $(X',A')$ is klt, by a standard argument we obtain a small $\Q$-factorialisation.
\end{proof}

\textbf{A lemma on weak lc models.}
We prove the following easy lemma for the use of an argument in Section \ref{vertical-case}.

\begin{lem}\label{weak-lc-model-lem}
Let $(X,\Delta)$, $(W,\Delta_W)$ be projective $\Q$-factorial dlt pairs and $g:W \dashrightarrow X$ be a birational map. Assume that 

$\bullet$ $K_{W}+\Delta_{W}$ is semi-ample;

$\bullet$ $a(\Gamma,X,\Delta) \le a(\Gamma,W,\Delta_W)$ for any prime divisor $\Gamma$ over $X$;

$\bullet$ if $\Gamma$ is a prime divisor on $W$, then $a(\Gamma,X,\Delta) + \sigma_\Gamma(K_X+\Delta) \ge a(\Gamma,W,\Delta_W)$.

Then, $(X,\Delta)$ has a good minimal model.
\end{lem}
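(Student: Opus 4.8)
The plan is to compare $K_X+\Delta$ with $K_W+\Delta_W$ on a common resolution, use the three hypotheses to identify the Nakayama positive part of $K_X+\Delta$ with the pullback of the semi-ample divisor $K_W+\Delta_W$, and then contract the negative part by a log minimal model program over the ample model of $K_W+\Delta_W$.

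\textbf{Step 1 (comparison on a common resolution).} After replacing $(X,\Delta)$ by a $\Q$-factorial dlt log smooth model (a good minimal model of which is one of $(X,\Delta)$; one checks the three hypotheses are inherited, since passing to a log smooth model changes $a(\Gamma,X,\Delta)$ and $\sigma_\Gamma(K_X+\Delta)$ by opposite amounts on the extracted divisors) and resolving $g$, I may assume there is a common $\Q$-factorial dlt model $V$ with birational morphisms $p\colon V\to W$ and $q\colon V\to X$. Writing $q^\ast(K_X+\Delta)=K_V+B^X_V$ and $p^\ast(K_W+\Delta_W)=K_V+B^W_V$, the discrepancy hypothesis $a(\Gamma,X,\Delta)\le a(\Gamma,W,\Delta_W)$ gives $B^X_V\ge B^W_V$, hence
\[
q^\ast(K_X+\Delta)=p^\ast(K_W+\Delta_W)+F,\qquad F:=B^X_V-B^W_V\ge 0 .
\]
In particular $K_X+\Delta$ is pseudo-effective, because $p^\ast(K_W+\Delta_W)$ is semi-ample.

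\textbf{Step 2 (identifying $P_\sigma$).} Since $p^\ast(K_W+\Delta_W)$ is semi-ample, subadditivity of $N_\sigma$ gives $N_\sigma(q^\ast(K_X+\Delta))\le N_\sigma(F)\le F$ coefficientwise. Conversely, for a prime divisor $\Gamma$ lying on $W$ one has $\mult_\Gamma F=a(\Gamma,W,\Delta_W)-a(\Gamma,X,\Delta)$ and $\mult_\Gamma N_\sigma(q^\ast(K_X+\Delta))=\sigma_\Gamma(K_X+\Delta)$, so the hypothesis $a(\Gamma,X,\Delta)+\sigma_\Gamma(K_X+\Delta)\ge a(\Gamma,W,\Delta_W)$ is exactly $\mult_\Gamma N_\sigma(q^\ast(K_X+\Delta))\ge \mult_\Gamma F$. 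Therefore $N_\sigma(q^\ast(K_X+\Delta))$ agrees with $F$ along every prime divisor that is not $p$-exceptional, whence
\[
P_\sigma(q^\ast(K_X+\Delta))=p^\ast(K_W+\Delta_W)+G,\qquad G:=F-N_\sigma(q^\ast(K_X+\Delta))\ge 0,
\]
with $G$ supported on $p$-exceptional divisors. But $P_\sigma(q^\ast(K_X+\Delta))$ is movable, while for any pseudo-effective $L$ on $W$ and any effective $p$-exceptional divisor $G$ one has $N_\sigma(p^\ast L+G)\ge G$ (a standard property of $N_\sigma$ under birational pullback, [\ref{Nakayama}]); applied with $L=K_W+\Delta_W$ this forces $G=0$. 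Hence $P_\sigma(q^\ast(K_X+\Delta))=p^\ast(K_W+\Delta_W)$ is semi-ample. Let $\psi\colon W\to T$ be the ample model of $K_W+\Delta_W$, $h:=\psi\circ p\colon V\to T$, and write $p^\ast(K_W+\Delta_W)\sim_\R h^\ast H$ with $H$ ample on $T$, so that $q^\ast(K_X+\Delta)=h^\ast H+N$ with $N:=N_\sigma(q^\ast(K_X+\Delta))\ge 0$.

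\textbf{Step 3 (producing the good minimal model).} Pass to a $\Q$-factorial dlt blow-up $(U,\Delta_U)$ of $(X,\Delta)$ which still dominates $W$, so that $K_U+\Delta_U=\pi^\ast(K_X+\Delta)$ and there are morphisms $\rho\colon U\to W$ and $h_U:=\psi\circ\rho\colon U\to T$; the computation of Step 2 (applied on $U$ in place of $V$) gives $K_U+\Delta_U=h_U^\ast H+M$ with $M:=N_\sigma(K_U+\Delta_U)\ge 0$, hence $K_U+\Delta_U\sim_\R M/T$. The key claim is that $M$ is very exceptional$/T$: it is vertical$/T$ because, $K_W+\Delta_W$ being $\R$-linearly trivial on a general fibre of $\psi$, the restriction of $K_U+\Delta_U$ to a general fibre of $h_U$ is $\R$-linearly trivial modulo divisors exceptional over the corresponding fibre of $\psi$, which (together with the equalities of Step 2) rules out horizontal components of $M$; and for each prime divisor $P$ on $T$ there is a component of $\psi^{-1}(P)$ outside $\Supp M$, since $\Supp M$ meets $W$ only inside $\Supp N_\sigma(K_X+\Delta)$ while the reduced preimage of $P$ cannot be entirely contained in the negative part of the semi-ample divisor $K_W+\Delta_W$. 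Granting this, Lemma \ref{lem-exc-2} over $T$ shows that an LMMP$/T$ on $K_U+\Delta_U$ with scaling of an ample$/T$ divisor terminates with a model $(Y,\Delta_Y)$ on which $K_Y+\Delta_Y\sim_\R M_Y=0/T$; since $K_Y+\Delta_Y$ is then $\R$-linearly equivalent to the pullback of an ample divisor on $T$, it is semi-ample, so $(Y,\Delta_Y)$ is a good minimal model of $(U,\Delta_U)$ and hence of $(X,\Delta)$.

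The main obstacle is the very-exceptionality claim in Step 3: Steps 1--2 are formal, resting only on subadditivity and the birational-pullback negativity of $N_\sigma$ and on unwinding the three hypotheses, but to feed the negative part into Lemma \ref{lem-exc-2} one must genuinely rule out horizontal components of $M$ over the base $T$ of the semi-ample positive part and verify the covering condition in the definition of ``very exceptional''. Both ought to follow from the fact that $K_W+\Delta_W$ is $\R$-trivial on the general fibre of $\psi$, but translating this into the required statement about $N_\sigma(K_U+\Delta_U)$ is where the work lies. If that cannot be done directly, the fallback is to first run a preliminary LMMP contracting the components of $N_\sigma(K_X+\Delta)$ that survive on $W$, reducing to a model below $W$ on which the negative part is honestly $\rho$-exceptional and Lemma \ref{lem-exc-3} applies.
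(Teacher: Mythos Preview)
Your Steps 1--2 are essentially correct and actually prove something slightly stronger than what the paper establishes: you show that $P_\sigma(q^\ast(K_X+\Delta))=p^\ast(K_W+\Delta_W)$ is semi-ample. The key claim $G=0$ follows from [\ref{Gongyo-Lehmann}, Lemma 2.16] (cited in the paper), which gives $N_\sigma(p^\ast L+G)\ge G$ for $G\ge 0$ $p$-exceptional and $L$ pseudo-effective. Once $P_\sigma$ is semi-ample on a resolution, you are done: by [\ref{BH-I}] the pair $(X,\Delta)$ has a log minimal model $(Y,\Delta_Y)$, and since the pullback of $K_Y+\Delta_Y$ to a common resolution equals $P_\sigma$, it is semi-ample. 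This is exactly how the paper finishes (via [\ref{BH-I}] and [\ref{B-lc-flips}, Remark 2.7]), though the paper reaches that point differently: rather than proving $N_\sigma=F$ in full, it only shows the non-$p$-exceptional part $\widetilde{G}$ of $F$ satisfies $\widetilde{G}\le N_\sigma$, subtracts $\widetilde{G}$ from the boundary to obtain an auxiliary pair $(\widetilde{W},\Delta'_{\widetilde{W}})$ for which $(W,\Delta_W)$ is already a weak lc model, and then transfers back using [\ref{B-lc-flips}, Corollary 3.7].

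Your Step 3, however, has a genuine gap that you correctly flagged but underestimated: the divisor $M=N_\sigma(K_U+\Delta_U)$ need not be very exceptional, or even vertical, over $T$. Take for instance $W=X$, $g=\mathrm{id}$, with $K_X+\Delta_W\equiv 0$ (so $T$ is a point) and $\Delta=\Delta_W+\epsilon E$ for a $(-1)$-curve $E$ on a surface $X$; all three hypotheses hold, $M=\epsilon E\neq 0$, but no nonzero divisor is vertical over a point, so Lemma \ref{lem-exc-2} does not apply. More generally, whenever $\dim T<\dim W$ a component of $F$ lying on $W$ can dominate $T$. The fallback you sketch does not repair this, since the problematic components are precisely those that are \emph{not} $\rho$-exceptional. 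The correct move is simply to delete Step 3 and invoke [\ref{BH-I}] at the end of Step 2.
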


\begin{proof}
Let $W \xleftarrow{p} \widetilde{W} \xrightarrow{q} X$ be a common log resolution. If we write $K_{\widetilde{W}}+\Delta_{\widetilde{W}}= q^\ast (K_X+\Delta)) +E$ where $\Delta_{\widetilde{W}}$, $E \ge0$ have no common component and $E$ is exceptional$/X$, then a log minimal model of $(\widetilde{W},\Delta_{\widetilde{W}})$ is also a log minimal model of $(X,\Delta)$. Moreover,  if $\Gamma$ is a prime divisor on $W$, then $\sigma_\Gamma(K_X+\Delta)=\sigma_\Gamma(K_{\widetilde{W}}+\Delta_{\widetilde{W}})$ as $a(\Gamma,X,\Delta)=a(\Gamma,\widetilde{W},\Delta_{\widetilde{W}}) \le a(\Gamma,W,\Delta_W) \le 1$. Since $a(\Gamma,X,\Delta) \le a(\Gamma,W,\Delta_W)$, we have $p_\ast \Delta_{\widetilde{W}} \ge \Delta_W$. Put $G=p_\ast \Delta_{\widetilde{W}} -\Delta_W$ and $\widetilde{G}= p_\ast^{-1} G$. If we write $\Delta_{\widetilde{W}}'=\Delta_{\widetilde{W}} -\widetilde{G}$, then we have $a(\Gamma,W,\Delta_W)= a(\Gamma,\widetilde{W},\Delta_{\widetilde{W}}')$ for any prime divisor $\Gamma$ on $W$. Because $K_W+\Delta_W$ is nef, by Negativity Lemma $a(\Gamma,W,\Delta_W)\ge a(\Gamma,\widetilde{W},\Delta_{\widetilde{W}}')$ and hence $(W,\Delta_W)$ is a weak lc model of $(\widetilde{W},\Delta_{\widetilde{W}}')$. 

Because $a(\Gamma,X,\Delta) + \sigma_\Gamma(K_X+\Delta) \ge a(\Gamma,W,\Delta_W)$, for every prime divisor on $W$, we deduce $\widetilde{G} \le N_\sigma(K_{\widetilde{W}}+\Delta_{\widetilde{W}})$. Therefore $(\widetilde{W},\Delta_{\widetilde{W}}')$ has a log minimal model which in turn implies that $(\widetilde{W},\Delta_{\widetilde{W}})$ has a log minimal model by [\ref{B-lc-flips}, Corollary 3.7] and [\ref{BH-I}] since $\widetilde{G} \le N_\sigma(K_{\widetilde{W}}+\Delta_{\widetilde{W}})$. Hence $(X,\Delta)$ has a log minimal model $(Y,B_Y)$. Because $K_{W}+\Delta_{W}$ is semi-ample, by [\ref{B-lc-flips}, Remark 2.7], $K_Y+\Delta_Y$ is semi-ample. 
\end{proof}

\vspace{0.3cm}
\section{Minimal models for generically trivial pairs}\label{gen-trivial-pairs}

We need some results from C. Birkar [\ref{B-lc-flips}, Theorem 1.1, Theorem 1.4 and Theorem 1.7], C.-D. Hacon and C. Xu [\ref{HX}, Corollary 1.5], [\ref{HX2}, Theorem 1.6] (cf. Fujino and Gongyo [\ref{FG2}]) with a generalisation for $\R$-divisors. K. Hashizume [\ref{Hashizume}] informed me that he also obtain the same result with a different approach.

\begin{thm}\label{lc-flips-1}
	Let $(X/Z, B +A)$ be an lc pair where $A \geq 0$
	is $\R$-Cartier, and the given morphism $f : X \rightarrow Z$ is surjective. Assume further
	that $K_X + B + A \sim_\R 0/Z$. Then, 
	
	(1) $(X/Z, B)$ has a Mori fibre space or a log minimal model $(Y/Z, B_Y)$,
	
	(2) if $K_Y + B_Y$ is nef$/Z$, then it is semi-ample$/Z$.
\end{thm}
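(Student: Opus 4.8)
The plan is to deduce the theorem from its rational-coefficient version, which is precisely the content of the cited results of Birkar and of Hacon--Xu, and to obtain (2) from an elementary verticality observation together with abundance for generically trivial pairs. First I would carry out the usual reduction to the $\Q$-factorial dlt case: take a $\Q$-factorial dlt blow-up of $(X,B+A)$, keeping $A$ as the birational transform of $A$ and absorbing the exceptional lc places into $B$, and then apply Lemma \ref{lem-exc-3} to contract, over $X$, the part of the total transform of $A$ that is exceptional$/X$. This yields a new pair, still denoted $(X,B)$, which is $\Q$-factorial dlt, together with an effective $A$ such that $(X,B+A)$ is lc and $K_X+B+A\sim_\R 0/Z$, and with the property that a good log minimal model, resp.\ a Mori fibre space, of the new data induces one of the original $(X/Z,B)$ --- here one uses that in the definitions adopted in this paper the discrepancy inequalities for log minimal models are imposed only for divisors on $X$, and that the intervening MMP only raises discrepancies over the relevant crepant model.

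Next I would reduce to rational coefficients. Let $V$ be the finite-dimensional $\R$-vector space spanned by the components of $B+A$; inside $V$ the set
\[
\mathcal C=\{\Theta\in V:\Theta\ge 0,\ (X,\Theta)\text{ is lc},\ K_X+\Theta\sim_\R 0/Z\}
\]
is a rational polytope --- using that $\{D\in V:D\sim_\R 0/Z\}$ is a rational subspace of $V$ and that log canonicity with prescribed coefficients is a rational polyhedral condition --- and it contains $B+A$. Write $B+A=\sum_j t_j\Theta_j$ as a convex combination, with $t_j>0$, of rational points $\Theta_j\in\mathcal C$ lying very close to $B+A$, split compatibly as $\Theta_j=B_j+A_j$ with $A_j\ge 0$ rational and $B=\sum_j t_jB_j$, $A=\sum_j t_jA_j$. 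Applying the rational-coefficient form of the theorem to each $(X/Z,B_j+A_j)$ shows that every $(X/Z,B_j)$ has a good log minimal model or a Mori fibre space, so any LMMP$/Z$ on $K_X+B_j$ with scaling terminates. Now run an LMMP$/Z$ on $K_X+B$ with scaling of $A$: this is legitimate since $K_X+B+A\sim_\R 0/Z$ is nef$/Z$, and since $A\sim_\R -(K_X+B)/Z$ we have $K_{X_i}+B_i+A_i\sim_\R 0/Z$ on every model, so this LMMP is simultaneously an LMMP with scaling for each of the finitely many pairs $(X_i,B_{j,i}+A_{j,i})$ --- the point being that an extremal ray negative for $K_{X_i}+B_i$ is still negative for each $K_{X_i}+B_{j,i}$ because $\Theta_j$ is close to $B+A$ inside $\mathcal C$. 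Termination for the rational pairs then forces termination here, ending with a Mori fibre space --- which gives (1) --- or with a log minimal model $(Y/Z,B_Y)$, $K_Y+B_Y$ nef$/Z$.

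It remains to prove (2). From $K_Y+B_Y+A_Y\sim_\R 0/Z$ we get that $-A_Y\sim_\R K_Y+B_Y$ is nef$/Z$; as $A_Y\ge 0$ and a nonzero effective divisor has positive degree against an ample complete-intersection curve contained in a fibre, $A_Y$ can have no component dominating $Z$, i.e.\ $A_Y$ is vertical$/Z$. Hence $(K_Y+B_Y)|_{F}\sim_\R 0$ on the generic fibre $F$, so $K_Y+B_Y$ is nef$/Z$ and $\R$-linearly trivial on the generic fibre, and the cited abundance results for such pairs (applied, if necessary, after the rational decomposition above) give that $K_Y+B_Y$ is semi-ample$/Z$.

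The step I expect to be the genuine obstacle is the rational reduction: arranging a single decomposition that respects the coefficients of $B$, the coefficients of $A$, and log canonicity simultaneously, and then verifying carefully that the LMMP on $K_X+B$ with scaling of $A$ matches, step by step, the finitely many rational LMMP's, so that their termination really transfers. This is where the material on LMMP with scaling recalled in Section \ref{preliminaries} and the usual intersection-theoretic estimates must be used with care; the remaining ingredients --- the dlt reduction, the verticality argument, and the appeals to the rational-coefficient theorems and to abundance --- are comparatively routine.
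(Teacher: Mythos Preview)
Your plan is a global reduction to the rational-coefficient theorems of Birkar and Hacon--Xu via a convex decomposition in a rational polytope; the paper takes a rather different route. It re-runs Birkar's inductive scheme from [\ref{B-lc-flips}] directly for $\R$-divisors, arguing by induction on $\dim X$ and splitting into the klt case (handled by Lemma~\ref{klt-m-g-model}, which uses toroidal reduction rather than rational approximation), the case where every lc centre is vertical$/Z$ (handled by the ``special LMMP'' on $K_X+B-\epsilon_jP$ and special termination, feeding into Theorem~\ref{lc-flips-3}), and the case of a horizontal lc centre (handled via ACC for numerically trivial pairs and adjunction to a component of $\lfloor B\rfloor$). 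The only place where a rational-polytope/Diophantine argument appears is Theorem~\ref{lc-flips-3}, and there it is applied on a fixed model with $K_X+B$ already nef.

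There is a genuine gap at precisely the step you flag. You claim that a $(K_{X_i}+B_i)$-negative extremal ray $R$ on a model $X_i$ is also $(K_{X_i}+B_{j,i})$-negative for every $j$, ``because $\Theta_j$ is close to $B+A$''. But closeness of $B_j$ to $B$ in the divisor polytope on $X$ only bounds the \emph{coefficients} of $B_{j,i}-B_i$; it gives no control on $(B_{j,i}-B_i)\cdot R$, since the components of $B_i$ can have arbitrarily large intersection with extremal rays on models $X_i$ arising deep in the MMP. Equivalently, using $K_{X_i}+\Theta_{j,i}\equiv 0/Z$ one needs $A_{j,i}\cdot R>0$ whenever $A_i\cdot R>0$, and nothing rules out $A_i\cdot R$ being a small positive number arising from near-cancellation of large contributions from several components, which a coefficient perturbation destroys. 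Lemma~\ref{lem-geograpgy}, which is the relevant length estimate in the paper, requires one of the log divisors to be \emph{nef} --- exactly what is not available here. A second issue compounds this: even if each step were $(K_{X_i}+B_{j,i})$-negative, the resulting sequence is not visibly an LMMP \emph{with scaling} for $(X,B_j)$, and the rational-coefficient input gives only existence of good models (or termination with scaling of an ample divisor), not termination of an arbitrary sequence of flips with scaling of the non-ample $A_j$. Your verticality argument for (2) is correct, but the appeal to ``cited abundance results'' at the end is again the $\R$-divisor statement you are trying to prove; one would still need to decompose on the minimal model $Y$ inside the rational polytope $\mathcal N_Y\cap\{\,\Theta:\Theta\ge B_Y',\ K_Y+\Theta\sim_\R 0/Z\,\}$ and check that this intersection is nonempty and rational, which is doable but not the one-line reduction you indicate.
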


\begin{thm}\label{lc-flips-2}
	Let $(X/Z, B)$ be an lc
	pair and the given morphism $f : X \rightarrow Z$
	is surjective. Assume further that $K_X + B \sim_\R 0$ over some non-empty open
	subset $U \subseteq Z$, and if $\eta$ is the generic point of an lc centre of $(X/Z, B)$, then
	$f(\eta)  \in U$. Then,
	
	(1) $(X/Z, B)$ has a log minimal model $(Y/Z, B_Y )$,
	
	(2) $K_Y + B_Y$ is semi-ample$/Z$.
\end{thm}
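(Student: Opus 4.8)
The plan is to reduce to the case where $B$ is a $\Q$-divisor --- for which (1) and (2) are essentially the cited results of Hacon--Xu, Birkar and Fujino--Gongyo --- and then to treat the general $\R$-divisor case by perturbing $B$ inside a suitable rational polytope of boundaries. First, replacing $(X/Z,B)$ by a $\Q$-factorial dlt blow-up $\pi\colon (X',B')\to (X,B)$, I may assume that $(X/Z,B)$ is $\Q$-factorial dlt. Indeed $\pi$ is crepant, so $K_{X'}+B'\sim_\R 0/U$ still holds; the lc centres of $(X',B')$ are the strata of $\lfloor B'\rfloor$ and each maps onto an lc centre of $(X,B)$, so their generic points map into $U$; and a good log minimal model of $(X'/Z,B')$ is a good log minimal model of $(X/Z,B)$, exactly as in the discussion of log smooth models (every divisor contracted by $\pi$, or by the birational map from $X'$ to the model, has log discrepancy $0$).

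\textbf{The $\Q$-divisor case.} Assume now that $B$ is a $\Q$-divisor, so that $K_X+B$ is $\Q$-Cartier. Then the existence of a log minimal model $(Y/Z,B_Y)$ is [\ref{HX2}, Theorem 1.6] (compare [\ref{B-lc-flips}, Theorems 1.4 and 1.7] and [\ref{FG2}]): the hypotheses that $K_X+B\sim_\Q 0/U$ and that the generic point of every lc centre maps into $U$ are exactly what these results require. Once $K_Y+B_Y$ is nef$/Z$, the relative abundance theorem [\ref{HX}, Corollary 1.5] (see also [\ref{B-lc-flips}, Theorem 1.1]) gives that $K_Y+B_Y$ is semi-ample$/Z$, which is (2).

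\textbf{The $\R$-divisor case.} Let $\mathcal S$ be the set of prime divisors in $\Supp\{B\}$, and let $N$ be the set of $\R$-divisors $\theta$ supported on $\mathcal S$ with $K_X+B+\theta\sim_\R 0/U$; since $K_X+B\sim_\R 0/U$ this is the same as $\theta\sim_\R 0/U$. As the solution set of a system of $\Q$-linear equations among the classes of the finitely many divisors in $\mathcal S$ on $X_U:=f^{-1}(U)$, the set $N$ is a $\Q$-rational linear subspace. Hence, shrinking if necessary,
$$
\mathcal C:=\{\,\Delta=B+\theta\ :\ \theta\in N,\ \Supp\Delta\subseteq\Supp B,\ \lfloor\Delta\rfloor=\lfloor B\rfloor\,\}
$$
is a rational polytope containing $B$, and for every $\Delta\in\mathcal C$ the pair $(X,\Delta)$ is $\Q$-factorial dlt, with $K_X+\Delta\sim_\R 0/U$ and with lc centres --- still the strata of $\lfloor B\rfloor$ --- whose generic points map into $U$. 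For a rational vertex $\Delta_i$ of $\mathcal C$, the divisor $K_X+\Delta_i$ is $\Q$-Cartier and $\R$-linearly, hence $\Q$-linearly, trivial over $U$; so by the previous step $(X,\Delta_i)$ has a good log minimal model over $Z$. Writing $B$ as a convex combination of the $\Delta_i$, one propagates this to $B$ by the usual analysis of the Nakayama--Zariski decompositions $N_\sigma(K_X+\Delta/Z)$ as $\Delta$ varies over $\mathcal C$, together with [\ref{B-lc-flips}, Corollary 3.7], [\ref{BH-I}] and [\ref{B-lc-flips}, Theorem 1.9] --- this is the same mechanism used to prove Lemma \ref{weak-lc-model-lem}. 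One concludes that $(X,B)$ has a good log minimal model, which gives (1) and (2).

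\textbf{The main obstacle.} The real content is this last step: passing from good log minimal models at the rational boundaries $\Delta_i$ to one at the irrational boundary $B$, with no bigness hypothesis on $K_X+B$ or on $B$ available. This forces one to keep precise track of how $N_\sigma(K_X+\Delta/Z)$, and the movable part $P_\sigma(K_X+\Delta/Z)$, behave along $\mathcal C$, and then to check that semi-ampleness survives in the limit, which is ensured by [\ref{B-lc-flips}, Remark 2.7] as in Lemma \ref{weak-lc-model-lem}. A secondary, routine point is to verify that the perturbed pairs really do retain both $K_X+\Delta\sim_\R 0/U$ and the lc-centre condition; this is precisely why $\mathcal C$ is cut out inside the rational slice $B+N$ and with the integral part $\lfloor B\rfloor$ held fixed.
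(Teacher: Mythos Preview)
Your approach is genuinely different from the paper's, and the key step has a real gap.

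\textbf{What the paper does.} The paper does not reduce the $\R$-divisor case to the $\Q$-divisor case by polytope approximation. Instead, the one-line proof ``The argument is similar as above'' refers to the proof of Theorem \ref{lc-flips-1}: after passing to a $\Q$-factorial dlt model, one runs the full inductive machinery directly for $\R$-divisors --- the klt case via Lemma \ref{klt-m-g-model}, and otherwise a case split on whether $\lfloor B\rfloor$ is vertical or horizontal over $Z$, using the special LMMP on $K_X+B-\epsilon_j\lfloor B\rfloor$, ACC (Theorems \ref{ACC-1} and \ref{ACC-2}), special termination, and induction on dimension. The passage from $\Q$- to $\R$-divisors is absorbed into Theorem \ref{lc-flips-3}, whose proof is where Diophantine approximation actually appears, and only for the semi-ampleness statement on a model where nefness is already known. (The paper notes that Hashizume [\ref{Hashizume}] obtains the same result by a different approach, so your strategy is not unreasonable; it is just not the paper's.)

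\textbf{The gap in your argument.} Your propagation step --- ``Writing $B$ as a convex combination of the $\Delta_i$, one propagates this to $B$ by the usual analysis of the Nakayama--Zariski decompositions\dots'' --- is not a proof, and the references you cite do not supply one. Concretely: the good minimal models $Y_i$ of the rational vertices $(X,\Delta_i)$ are a priori unrelated birational models, and $N_\sigma(K_X+\Delta/Z)$ is only convex (not linear) in $\Delta$, so averaging the Zariski decompositions at the $\Delta_i$ does not produce one for $B$. Lemma \ref{weak-lc-model-lem} requires a \emph{single} candidate model $(W,\Delta_W)$ with $K_W+\Delta_W$ semi-ample and compatible log discrepancies; you have not exhibited such a $W$ for the boundary $B$. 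Likewise [\ref{B-lc-flips}, Theorem 1.9] lets you terminate an LMMP with scaling \emph{once you already know} good minimal models exist for $(X,B+tC)$ along the path, and [\ref{B-lc-flips}, Remark 2.7] compares two weak lc models of the \emph{same} pair --- neither addresses the passage from the $\Delta_i$ to $B$. You yourself flag this as ``the main obstacle'' and then assert it is ``ensured'' without further argument; that is precisely the missing content. To make this route work you would need, for instance, to show that a single LMMP run from $(X,\Delta_1)$ to $Y_1$ is $(K_X+B)$-nonpositive and that $K_{Y_1}+B_{Y_1}$ is nef$/Z$ (which fails in general), or to carry out a genuine finiteness-of-models argument on the polytope $\mathcal C$ --- which is substantially more work than what you have written.
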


\begin{thm}\label{lc-flips-3}
	Let $(X/Z, B)$ be a $\Q$-factorial dlt pair and $G \subseteq \lfloor B \rfloor$ be a reduced divisor. Suppose that
	
	$\bullet$ $K_X + B$ is nef$/Z$,
	
	$\bullet$ $(K_X + B)|_S$ is semi-ample$/Z$ for each component $S$ of $G$,
	
	$\bullet$ $K_X + B - \epsilon P$ is semi-ample$/Z$ for some divisor $P \geq 0$ with $\mathrm{Supp} P = G$
	and for any sufficiently small real number $\epsilon > 0$.
	
	Then, $K_X + B$ is semi-ample$/Z$.	
\end{thm}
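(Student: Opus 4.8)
The plan is to follow the argument of (the $\Q$-divisor analogue of) [\ref{B-lc-flips}, Theorem 1.7] (cf. also [\ref{HX2}]), arranged so that it goes through with $\R$-coefficients. One begins with the standard reductions: shrinking $Z$ we may assume it affine, and replacing $(X,B)$ by a log smooth model we may assume it is $\Q$-factorial dlt with $X$ projective, the three hypotheses being inherited. The first observation is that the stable base locus of $K_X+B$ is already confined to $G$: writing $K_X+B=(K_X+B-\epsilon P)+\epsilon P$ with $K_X+B-\epsilon P$ semi-ample$/Z$ and $\Supp(\epsilon P)=G$, for any point $x\notin G$ there is an effective $\R$-divisor $\R$-linearly equivalent to $K_X+B-\epsilon P$ over $Z$ and missing $x$, and adding $\epsilon P$ gives an effective divisor $\sim_{\R}K_X+B/Z$ still missing $x$. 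Hence $\mathbf{B}(|K_X+B/Z|_{\R})\subseteq G$, so that $K_X+B$ is semi-ample$/Z$ away from $G$ and the whole problem reduces to proving semi-ampleness in a neighbourhood of $G$.

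The heart of the matter is to lift semi-ampleness from $G$. By dlt adjunction $(K_X+B)|_G=K_G+\mathrm{Diff}$ on the reduced divisor $G$, and each $(K_X+B)|_S$ is semi-ample$/Z$ by the second hypothesis; arguing by induction on dimension --- applying the present theorem, together with Theorems \ref{lc-flips-1} and \ref{lc-flips-2}, to the components $S$ of $G$ and to their mutual intersections, which are lc centres of strictly smaller dimension --- and then gluing the resulting semi-ample structures over these lc strata, one obtains that $(K_X+B)|_G$ is itself semi-ample$/Z$. Now fix $m\gg 0$, divisible enough that $m(K_X+B)|_G$ is base-point-free$/Z$ and $\Bs\,|m(K_X+B)/Z|\subseteq G$, and put $\epsilon_m=c/m$ with $c>0$ chosen so small that $(m-1)\epsilon_m P$ has all coefficients $<1$. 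Then
$$
m(K_X+B)-G\ \sim_{\R}\ K_X+\Theta_m+(m-1)(K_X+B-\epsilon_m P),
$$
where $\Theta_m:=(\lfloor B\rfloor-G)+\{B\}+(m-1)\epsilon_m P$ satisfies $0\le\Theta_m\le B$ (so $(X,\Theta_m)$ is dlt) and $(m-1)(K_X+B-\epsilon_m P)$ is semi-ample$/Z$. The relative injectivity and vanishing theorems for dlt pairs twisted by a semi-ample divisor --- which, unlike Kawamata--Viehweg, require no bigness (Ambro--Fujino; as used in [\ref{B-lc-flips}], [\ref{HX2}]) --- then show that $f_{\ast}\mathcal{O}_X(m(K_X+B))\to f_{\ast}\mathcal{O}_G(m(K_X+B)|_G)$ is surjective, so every section over $G$ extends to $X$. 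Combined with $\Bs\,|m(K_X+B)/Z|\subseteq G$ this makes $m(K_X+B)$ base-point-free$/Z$, i.e. $K_X+B$ is semi-ample$/Z$. Passing from $\Q$- to $\R$-coefficients throughout is handled by approximating the semi-ample $\R$-divisors that occur above by $\Q$-divisors inside the relevant rational cones.

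I expect the main obstacle to be precisely this lifting step: arranging the vanishing that lets one extend sections from $G$ while $K_X+B$ is only nef$/Z$ and not big$/Z$. This is exactly where the third hypothesis is indispensable, through the rescaling $\epsilon_m=c/m$: it keeps $(m-1)\epsilon_m P$ a sub-boundary on $G$ while presenting $m(K_X+B)-G$ as $K_X+(\text{dlt boundary})+(\text{semi-ample}/Z)$, the form the vanishing theorems accept. A secondary difficulty is gluing the per-component semi-ample structures on the components of $G$ into a single semi-ample structure on the possibly non-normal divisor $G$, together with the bookkeeping required to keep the induction on dimension self-consistent.
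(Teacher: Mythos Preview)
Your overall strategy---confine the stable base locus to $G$, decompose $m(K_X+B)-G$ as $K_X+(\text{dlt})+(\text{semi-ample})$, apply an injectivity theorem to lift sections from $G$---is the same as the paper's, and your identification of the third hypothesis as the device that makes the vanishing go through without bigness is exactly right.

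The gap is in your final sentence. The passage from $\Q$- to $\R$-coefficients is not an afterthought here; it is the main technical content of the proof, and ``approximating inside the relevant rational cones'' does not suffice. The problem is that for $B$ with an irrational coefficient there is no $m$ making $m(K_X+B)$ Cartier, so the sheaf $\mathcal{O}_X(m(K_X+B))$ and the map $f_*\mathcal{O}_X(m(K_X+B))\to f_*\mathcal{O}_G(\cdots)$ do not exist as written. One must instead perturb $\{B\}$ to nearby $\Q$-divisors $\Delta_k$, and the difficulty is that a generic rational perturbation destroys the second hypothesis: there is no reason $(K_X+S+\Delta_k)|_{S_i}$ should remain semi-ample$/Z$. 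The paper handles this by building, for each component $S_i$ of $G$, a rational polytope $\mathcal{P}_i$ of boundaries on $S_i$ whose associated log canonical divisors are pullbacks from the ample model $T_i$, and then intersecting the preimages $\mathcal{L}_i$ back in the space of boundaries on $X$; this guarantees that any $\Q$-point $\Delta_k\in\mathcal{L}'=\bigcap_i\mathcal{L}_i$ still restricts to a semi-ample divisor on every $S_i$. One then needs the Diophantine approximation Lemma~\ref{diophantine} to choose the $\Delta_k$ with $\|\Delta_k-\Delta\|\le \alpha/m_k$ and $m_k\Delta_k$ integral: the first condition is what keeps $m_k(\Delta_k-\Delta)$ small enough to be absorbed into a dlt boundary in the decomposition
\[
m_k(K_X+S+\Delta_k)-G=K_X+B+m_k(\Delta_k-\Delta)-G+(m_k-1)\epsilon P+(m_k-1)(K_X+B-\epsilon P),
\]
and the second is what makes the left side Cartier so the injectivity theorem applies. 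Running the injectivity argument for each $k$ shows $\mathbf{B}(|K_X+B/Z|_\R)\cap G=\emptyset$; combined with your first paragraph this gives $\mathbf{B}(|K_X+B/Z|_\R)=\emptyset$, and a separate lemma (Lemma~\ref{semi-ample}) is then needed to pass from empty $\R$-stable base locus to genuine semi-ampleness of an $\R$-divisor.

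A secondary remark: your proposed induction to glue the per-component semi-ample structures into semi-ampleness of $(K_X+B)|_G$ is not the route the paper takes and would require slc-type gluing results you have not invoked. The paper sidesteps this by working from the start with the $\Q$-perturbations $\Delta_k$, for which the restriction to $G$ is automatically semi-ample by the polytope construction.
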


\begin{rem}
We only need the case when $G = \lfloor B \rfloor$ in this article. However, we prove Theorem \ref{lc-flips-3} in full generality since the same argument works.	
\end{rem}

An important ingredient of the proof of Theorem \ref{lc-flips-3} is the following Diophantine approximation.

\begin{lem}[see $\mathrm{[}$\ref{BCHM}, Lemma 3.7.7$\mathrm{]}$]\label{diophantine}
Let $\mathcal{C}$ be a rational polytope contained in a real vector
space $V$ of dimension $n$, which is defined over the rationals. Fix a
positive integer $k$ and a positive real number $\alpha$.
If $v \in \mathcal{C}$, then we may find vectors $v_1$, $v_2$, $\dots$, $\in \mathcal{C}$ and positive
integers $m_1$, $m_2$, $\dots$, $m_p$, which are divisible by $k$, such that $v$ is a convex
linear combination of the vectors $v_1$, $v_2$, $\dots$, $v_p$ and
\begin{align*}
\|v_i-v\|\leq \frac{\alpha}{m_i} \mathrm{~where~} \frac{m_iv_i}{k} \mathrm{~is~integral}
\end{align*}
where $\|v_i-v\|$ denotes the norm of $v_i-v$ in $\R^n$.
\end{lem}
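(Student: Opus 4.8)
The plan is to prove the lemma by induction on $n=\dim V$. After a routine reduction (translating $\mathcal{C}$ by one of its rational vertices and enlarging $k$ by a bounded integer factor, to absorb the denominator of that vertex) we may assume that $\mathcal{C}$ spans $V$, so $\mathcal{C}$ has nonempty interior. If $v$ is a rational point we take $p=1$, $v_1=v$ and $m_1=kq$ where $q$ is the least positive integer with $qv$ integral; then $m_1v_1/k=qv$ is integral and $\|v_1-v\|=0$. If $v$ lies in the (relative) boundary of $\mathcal{C}$, then $v$ lies in a proper face $\mathcal{C}'$ of $\mathcal{C}$, which is again a rational polytope of smaller dimension, and any expression of $v$ as a convex combination of points of $\mathcal{C}'$ is also one of points of $\mathcal{C}$; so we conclude by induction.

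So assume $v$ is irrational and lies in the interior of $\mathcal{C}$. Let $L$ be the smallest rational affine subspace containing $v$. If $\dim L<n$, then $\mathcal{C}\cap L$ is a rational polytope containing $v$ of dimension $<n$, whose integral points form the lattice $\Z^n\cap L$ inside $L$; since integrality in $\Z^n\cap L$ implies integrality in $\Z^n$, we conclude by induction. Hence we may assume $L=V$; equivalently, $1$ together with the coordinates of $v$ are linearly independent over $\Q$, so by Weyl's criterion the orbit $\{Nv\bmod\Z^n : N\in\Z_{>0}\}$ is equidistributed in the torus $\R^n/\Z^n$.

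In this remaining case I would fix the directions $D=\{\pm e_1,\dots,\pm e_n\}$, for which $0$ lies in the interior of $\operatorname{conv}(D)$ with inradius $\gamma=1/\sqrt{n}$, together with sufficiently small positive constants $c,\delta$ with $c+\delta\le\alpha/k$ and $\delta<c\gamma/2$. Averaging the equidistribution statement over a window $[M,2M]$, for $M\gg 0$ one finds, for every $\omega\in D$, an integer $N_\omega\in[M,2M]$ and an integral vector $p_\omega$ with $\|N_\omega v-p_\omega+c\omega\|<\delta$. Put $v_\omega:=p_\omega/N_\omega$, so $v_\omega-v=(c\omega-e_\omega)/N_\omega$ with $\|e_\omega\|<\delta$; then $\|v_\omega-v\|<(c+\delta)/N_\omega\to 0$, hence $v_\omega\in\mathcal{C}$ once $M$ is large. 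Writing $q_\omega\le N_\omega$ for the denominator of $v_\omega$ and setting $m_\omega:=kq_\omega$, the number $m_\omega v_\omega/k=q_\omega v_\omega$ is integral, $m_\omega$ is divisible by $k$, and $\|v_\omega-v\|<(c+\delta)/N_\omega\le \alpha/(kN_\omega)\le\alpha/(kq_\omega)=\alpha/m_\omega$. Finally, the vectors $(c/N_\omega)\omega$ put $0$ in the interior of their convex hull (a generalized cross-polytope) with inradius at least $\gamma c/\max_\omega N_\omega\ge\gamma c/(2M)$, while each $v_\omega-v$ differs from $(c/N_\omega)\omega$ by at most $\delta/N_\omega\le\delta/M<\gamma c/(2M)$; hence $0$ still lies in $\operatorname{conv}\{v_\omega-v:\omega\in D\}$, i.e.\ $v$ is a convex combination of the $v_\omega$. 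These are the desired points (with $p=2n$, or $p=n+1$ after Carath\'eodory's theorem).

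The main obstacle is exactly the balance in the last paragraph: requiring $\|v_i-v\|\le\alpha/m_i$ with $m_i=kq_i$ forces each $v_i$ to be an unusually good rational approximation of $v$, yet to place $v$ in the convex hull one must surround it by such points in all directions, and good approximations are a priori guaranteed only along directions in which the orbit $\{Nv\}$ recurs. Total irrationality of $v$ — reached after the reductions of the first two paragraphs — forces that orbit to be equidistributed, so suitable approximations exist in every direction; the window argument then keeps all the denominators $N_\omega$ comparable, which is what prevents the perturbations $e_\omega/N_\omega$ from destroying the containment $0\in\operatorname{conv}\{v_\omega-v\}$.
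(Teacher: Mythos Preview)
The paper does not give its own proof of this lemma; it simply cites \textnormal{[BCHM, Lemma 3.7.7]}. Your argument is correct and is essentially the standard proof found there: reduce by induction on $\dim V$ (passing to faces, and to the smallest rational affine subspace containing $v$) to the case where $1,v^{(1)},\dots,v^{(n)}$ are $\Q$-linearly independent, then use Weyl equidistribution of $\{Nv\bmod\Z^n\}$ to find, in a window $[M,2M]$, rational approximations $v_\omega=p_\omega/N_\omega$ lying slightly in each of the $2n$ coordinate directions from $v$, with the window ensuring the perturbations are uniformly small relative to the inradius of the cross-polytope so that $v$ remains in the convex hull. The only places that deserve a word of extra care are the bookkeeping in the inductive reductions---when passing to a rational affine subspace $L$ you must translate by a rational point of $L$ (enlarging $k$) and then identify the lattice $\Z^n\cap L_0$ with $\Z^m$ via an integral basis, adjusting $\alpha$ by the resulting norm-equivalence constant---but these are routine and your sketch acknowledges them.
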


The following lemma may be known to experts. We write a detailed proof for the reader's convenience.

\begin{lem}\label{semi-ample}
Let $f:X \rightarrow Z$ be a projective morphism of normal varieties and $D$ be a divisor on $X$. Assume that $X$ is $\Q$-factorial and $\mathbf{B}(|D/Z|_\R)= \emptyset$. Then, $D$ is semi-ample$/Z$. 
\end{lem}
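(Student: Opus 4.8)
The plan is to show that the hypothesis $\mathbf{B}(|D/Z|_\R)=\emptyset$ allows us to write $D$ as a limit of effective divisors avoiding any prescribed finite set of curves, and then to invoke a base-point-free-type argument. First I would reduce to the case where $Z$ is affine, so that ``semi-ample$/Z$'' means $D$ is $\R$-linearly equivalent$/Z$ to a semi-ample divisor; by Nakayama-type results it suffices to produce, for the $\R$-linear system $|D/Z|_\R$, enough effective members so that their common support is empty, which is exactly what $\mathbf{B}(|D/Z|_\R)=\emptyset$ gives. The key point is that emptiness of the $\R$-stable base locus is a strong condition: writing $D\sim_\R \sum a_i D_i/Z$ with $D_i\ge 0$, one may, using a Diophantine approximation argument as in Lemma \ref{diophantine}, perturb the coefficients so that $D$ becomes (a positive combination of) $\Q$-divisors each of which is itself $\Q$-linearly equivalent$/Z$ to an effective divisor, and then argue that the $\Q$-stable base locus of each piece is also empty.

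The main steps, in order, are: (i) pass to the affine base $Z=\Spec R$ and to a $\Q$-factorial setting, which is given; (ii) choose finitely many effective divisors $D_1,\dots,D_k$ with $D\sim_\R D_j/Z$ whose supports have empty intersection — these exist by definition of $\mathbf{B}(|D/Z|_\R)$, possibly after noting that the intersection of all supports is already achieved by finitely many of them by Noetherianity; (iii) observe that the convex hull of the classes $[D_j]$ in $N^1(X/Z)_\R$ (or rather in the appropriate space of $\R$-divisors modulo $\R$-linear equivalence$/Z$) is a rational polytope containing $[D]$, and apply Lemma \ref{diophantine} to express $D$ as a convex combination $\sum \mu_j D_j'$ with each $D_j'$ a $\Q$-divisor that is $\Q$-linearly equivalent$/Z$ to an effective $\Q$-divisor supported on $\bigcup_j \Supp D_j$; (iv) deduce that $\mathbf{B}(|D_j'/Z|_\Q)\subseteq \bigcap_j \Supp D_j=\emptyset$, so each $D_j'$ is a $\Q$-Cartier divisor with empty $\Q$-stable base locus; (v) invoke the fact that a $\Q$-Cartier divisor on a normal variety, projective over an affine base, with empty $\Q$-stable base locus is semi-ample$/Z$ (this is the classical statement, following from finite generation of the section ring $\bigoplus_m f_*\mathcal{O}_X(mD_j')$ once it has no base points, or directly from the fact that $|mD_j'/Z|$ is base-point-free for some $m$); (vi) conclude that $D$, being a non-negative $\R$-combination of the semi-ample$/Z$ divisors $D_j'$, is semi-ample$/Z$.

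The main obstacle is step (iii)–(iv): making precise the passage from an $\R$-divisor with empty $\R$-stable base locus to finitely many $\Q$-divisors each with empty $\Q$-stable base locus, since $\R$-linear equivalence is more flexible than $\Q$-linear equivalence and one must be careful that the Diophantine perturbation does not enlarge the base locus. The resolution is to work inside the finite-dimensional space spanned by $D$ and the differences $D_j - D_\ell$ (which are $\R$-linearly equivalent to $0/Z$, hence can be written using a finite set of rational functions and divisors), so that the relevant polytope is genuinely rational and Lemma \ref{diophantine} applies; one then checks that each approximating $\Q$-divisor is supported on the fixed finite union $\bigcup_j\Supp D_j$, whose total intersection is empty. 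Once this bookkeeping is done, step (v) is standard and step (vi) is immediate, since a finite sum of semi-ample$/Z$ $\R$-Cartier divisors is semi-ample$/Z$.
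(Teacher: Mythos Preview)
Your overall strategy --- build a rational polytope of effective divisors containing $D$, show every $\Q$-point in it is semi-ample, and write $D$ as a convex combination of such $\Q$-points --- is the same as the paper's. But there is a genuine gap in your steps (iii)--(iv) as written. In (iii) you form the ``convex hull of the classes $[D_j]$'' modulo $\R$-linear equivalence$/Z$; since all the $D_j$ are $\R$-linearly equivalent to $D$, this hull degenerates to a point and carries no information. More seriously, even after fixing the ambient space, your conclusion in (iii) that each approximant $D'$ is $\Q$-linearly equivalent to an effective divisor supported on the \emph{union} $\bigcup_j\Supp D_j$ is far too weak to give (iv): you need, for each fixed $\ell$, that $D'\sim_\Q E_\ell$ with $E_\ell\ge 0$ and $\Supp E_\ell\subset\Supp D_\ell$, so that $\mathbf B(|D'|_\Q)\subset\bigcap_\ell\Supp E_\ell\subset\bigcap_\ell\Supp D_\ell=\emptyset$. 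The correct polytope is therefore $\bigcap_\ell(\mathcal C_\ell+\mathcal R_\ell)$ inside the finite-dimensional space of $\R$-divisors spanned by the components of $D$, the $D_\ell$, and the principal divisors witnessing $D\sim_\R D_\ell$, where $\mathcal C_\ell$ is the rational cone of effective divisors supported on $\Supp D_\ell$ and $\mathcal R_\ell$ is the rational subspace spanned by those principal divisors. This set is rational polyhedral, contains $D$, and any $\Q$-point in it has the property you need for (iv); the Diophantine lemma then finishes as you indicate.

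Once repaired, your argument differs from the paper's in one respect worth noting. The paper does not choose members $D_1,\dots,D_k$ with $\bigcap\Supp D_j=\emptyset$; instead it fixes an effective representative $D$, chooses for each irreducible component $D_i$ of $\Supp D$ a single $M_i\sim_\R D$ missing $D_i$, and shows that for nearby $\Q$-points $D'$ one has $\mathbf B(|D'|_\Q)\subset\bigcup_i(D_i\cap\Supp M_i)$, a set of strictly smaller dimension. It then \emph{iterates}, shrinking the polytope at each stage, until the potential base locus is forced to be empty. Your one-shot version (after the fix above) is more direct and avoids the induction on dimension; the paper's iterative version has the mild advantage that at each stage one only needs to produce a member avoiding a given irreducible component rather than finitely many members with globally empty intersection, but on a quasi-compact scheme this is no real saving. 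Both routes rely on exactly the same rationality-of-polytopes mechanism.
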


\begin{proof}
By replacing $D$ with a member of $|D/Z|_\R$, we can assume that $D$ is effective. For simplicity we assume that $Z$ is a point. Let $\Sigma_i D_i$ be the irreducible
decompositions of $\mathrm{Supp} D $. For every $i$ there exists a divisor $M_i \sim_\R D$ such that $M_i$ does not contain $D_i$ in its support. More precisely, we write 
$$
D+ \sum_j a_{ij}(g_{ij})  =M_i
$$
where $a_{ij}$ are real coefficients and $g_{ij}$ are rational functions on $X$. Let $\Sigma_k M_{ik}$ be the irreducible
decompositions of $\mathrm{Supp} M_i $, $\mathcal{V}:= \oplus_i \R_{\geq 0} B_i$ and $\mathcal{W}_i= \oplus_k \R_{\geq 0} M_{ik}$. Moreover, let $\mathcal{R}_i =\oplus_j \R (g_{ij})$. It is easy to check that there is  a rational polytope 
\begin{align*}
\mathcal{L}_i \subset \{D' \in \mathcal{V}| D'+\mathcal{R}_i \mathrm{~intersects~}  \mathcal{W}_i \}
\end{align*}
containing $D$, and for every $\Q$-divisor $D' \in \mathcal{L} =\bigcap_i \mathcal{L}_i$, we have  $$\mathbf{B}(|D'|_\Q) \subset \bigcup_i (D_i \bigcap \mathrm{Supp}(M_i)) $$ by [\ref{Nakayama}, Chapter II, 2.8 Lemma]. Note that $\dim D_i \bigcap \mathrm{Supp}(M_i) < \dim B_i$. Next we pick a divisor $M_i'$ for each $i$ such that $M_i'$ does not contain $B_i \bigcap \mathrm{Supp}(M_i)$ in its support. In a similar way we shrink $\mathcal{L}$ to a smaller polytope such that for every $\Q$-divisor $D' \in \mathcal{L} $, $\mathbf{B}(|D'|_\Q) \subset \bigcup_i (B_i \bigcap \mathrm{Supp}(M_i) \bigcap  \mathrm{Supp}(M_i')) $. By repeating this process we obtain a rational polytope $\mathcal{L}$ containing $D$ such that every $\Q$-divisor $D' \in \mathcal{L}$ is semi-ample. Hence there exists a finite number of semi-ample $\Q$-divisors $D'_l$ such that $D$ is a convex linear combination of $D'_l$.  Finally, by a standard argument we deduce that $D$ is semi-ample.
\end{proof}

\begin{proof}[Proof of Theorem \ref{lc-flips-3}]
If $B$ is a $\Q$-divisor, then the conclusion follows from a similar proof of [\ref{HX}, Corollary 1.5] with an aid of an injectivity theorem (cf. [\ref{Fujino}, Theorem 6.1], [\ref{Ambro-2}, Theorem 2.3], [\ref{Fujino-3}, Proposition 5.1.1], [\ref{Kollar}], etc.). We therefore assume that $B$ is an $\R$-divisor with some coefficient irrational. Let $S=\lfloor B\rfloor$ (by abuse of notation) and $\Sigma_i B_i$ be the irreducible
decompositions of $\mathrm{Supp} \Delta $ where $\Delta=\{B\}$ is the fractional part of $B$. We put $\mathcal{V}:=\oplus_i \R B_i$. It is easy to check that
$$
\mathcal{L} =\{\Delta' \in \mathcal{V} |(X, S+\Delta')\mathrm{~is~log ~canonical ~and~} K_X+S+\Delta' \mathrm{~is~nef}/Z  \}
$$
is a rational polytope containing $\Delta$.

Let $S_i$ be a component of $G$. Since $K_{S_i}+B_{S_i}= (K_X+B)|_{S_i}$ is semi-ample$/Z$, there is a morphism $g_i:S_i \rightarrow T_i$ where $T_i$ is the lc model of $(S_i/Z,B_{S_i})$. So, there is an ample$/Z$ divisor $A_i$ on $T_i$ and a finite number of rational functions $h_{ij}$ on $S_i$ such that
 $$
K_{S_i}+B_{S_i}+\Sigma_j a_{ij}(h_{ij})= g_i^\ast A_i \in  g_i^\ast \mathrm{Div}_\R(T_i/Z)
$$ where $a_{ij}$ are real coefficients. 
Let $N_i=\lfloor B_{S_i} \rfloor$ and $\Sigma_k B_{S_i,k}$ be the irreducible
decompositions of $\mathrm{Supp}\{ B_{S_i}\}$. We define $\mathcal{W}_i:=\oplus_k \R B_{S_i,k}$, $\mathcal{R}_i:=\oplus_j \R(h_{ij})$ and $\mathcal{A}_i \subset \mathrm{Div}_\R(T_i/Z)$ as a sufficiently small rational polytope containing $A_i$. It is easy to check that 
\begin{align*}
\mathcal{P}_i := \{ &\Delta_{S_i}' \in \mathcal{W}_i| (S_i, N_i +\Delta_{S_i}')\mathrm{~is~log ~canonical ~and~} \\
& K_{S_i}+N_i +\Delta_{S_i}'+ \mathcal{R}_i\mathrm{~intersects~}g_i^\ast \mathcal{A}_i \}
\end{align*}
is a rational polytope in by [\ref{B-II}, Proposition 3.2]. It follows that
\begin{align*}
\mathcal{L}_i := \{& \Delta' \in \mathcal{L}| \Delta_{S_i}' \in \mathcal{P}_i  \mathrm{~where~} \Delta_{S_i}' \mathrm{~ is~defined~as~} \\
& K_{S_i}+N_i+\Delta_{S_i}' =(K_X+S+\Delta')|_{S_i} \}
\end{align*}
is also a rational polytope containing $\Delta=\{B\}$. Let $\mathcal{L}'= \cap_i \mathcal{L}_i$ where $i$ runs over all components $S_i$ of $G$. By assumptions we deduce that $\mathcal{L}'$ is a rational polytope containing $\Delta$. 

Let $\Delta_k $'s be a finite set of $\Q$-divisors in $\mathcal{L}'$ such that $\Delta$ is a convex linear combination of $\Delta_k$'s, that is, $\Delta=  \Sigma \lambda_k \Delta_k$ and $\Sigma \lambda_k=1$. Since such $\Delta_k$'s can be chosen arbitrarily close to $\Delta$,  $K_{S_i}+N_i+\Delta_{k,S_i}$ is semi-ample$/Z$ for every $i$.

Let $\beta=\|\Delta \|$, let $d$ be an integer such that $dK_X$ and $d B_i$ are Cartier for all $i$, and pick $\alpha \ll 1- \beta$. By Lemma \ref{diophantine} we can further assume that 
\begin{align*}
\|\Delta_k-\Delta \|\leq \frac{\alpha}{m_k} \mathrm{~where~} \frac{m_k}{d}\Delta_k \mathrm{~is~integral}.
\end{align*}
Then, we use a similar argument from [\ref{HX}, proof of 1.5] (see also [\ref{KMM}, 7.4]). We write
\begin{align*}
m_k(K_X+S+\Delta_k)-G =& K_X+B +m_k(\Delta_k-\Delta) -G +(m_k-1)\epsilon P\\
 &+(m_k-1)(K_X+B-\epsilon P). 
\end{align*}
Since $(X,B +m_k(\Delta_k-\Delta) -G +(m_k-1)\epsilon P)$ is dlt for $0<\epsilon \ll 1$ and $K_X+B-\epsilon P$ is semi-ample$/Z$, by an injectivity theorem (cf. [\ref{Fujino}, Theorem 6.1], [\ref{Ambro-2}, Theorem 2.3], [\ref{Fujino-3}, Proposition 2.23], [\ref{Kollar}], etc.) we have that
$$
R^1f_\ast \mathcal{O}_X(m_k(K_X+S+\Delta_k)-G) \rightarrow R^1f_\ast \mathcal{O}_X(m_k(K_X+S+\Delta_k))
$$
is an injection where $f: X \rightarrow Z$ and hence 
$$f_\ast \mathcal{O}_X(m_k(K_X+S+\Delta_k)) \rightarrow f_\ast \mathcal{O}_G(m_k(K_G+(S-G)|_G+\Delta_{k,G}))
$$ 
is surjective where $\Delta_{k,G}$ is defined as $K_G+(S-G)|_G+\Delta_{k,G} =(K_X+S+\Delta_k)|_G$. We have the commutative diagram as follows.
$$
\xymatrix{
	f^\ast f_\ast \mathcal{O}_X(m_k(K_X+S+\Delta_k)) \ar[d]_{} \ar[r]_{}&  f^\ast f_\ast \mathcal{O}_G(m_k(K_G+(S-G)|_G+\Delta_{k,G})) \ar[d]^{}  &\\
	\mathcal{O}_X(m_k(K_X+S+\Delta_k))  \ar[r]_{}  & \mathcal{O}_G(m_k(K_G+(S-G)|_G+\Delta_{k,G}))    &
	} 
$$
Since the upper arrow and the right arrow are surjective, $m_k(K_X + S+\Delta_k)$ is relatively
globally generated along $G$ over $Z$ which in turn implies that the $\R$-stable base locus $\mathbf{B}(|K_X+B/Z|_\R)$ does not intersect with $G$. Since $K_X+B-\epsilon P$ is semi-ample$/Z$ for $0 <\epsilon \ll 1$, the $\R$-stable base locus $\mathbf{B}(|K_X+B/Z|_\R)$ must be contained in the support of $G$. Hence we obtain that $\mathbf{B}(|K_X+B/Z|_\R) =\emptyset$. We apply Lemma \ref{semi-ample} to conclude that $K_X+B$ is semi-ample$/Z$.
\end{proof}

The following lemma is very useful.

\begin{lem}\label{lc-m-g-model}
	Let $(X/Z,B)$ be an lc pair. Assume that $(X/Z,B)$ has the lc model and that $K_X+B$ is abundant$/Z$. Then, $(X/Z,B)$ has a good minimal model.
\end{lem}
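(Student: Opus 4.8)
The idea is to pass to the lc model $T$, reduce to the case where $K_X+B$ is trivial over a nonempty open subset of $T$, and then contract the remaining fixed part as a very exceptional divisor over $T$.

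\textbf{Step 1 (reduction).} First I would replace $(X/Z,B)$ by a log smooth model which also resolves the rational map to the lc model $T$; then $(X/Z,B)$ is $\Q$-factorial dlt, the induced map $q\colon X\to T$ is a morphism, and a good minimal model of the new pair is one of the original. By definition of the ample model, $q$ is a contraction and
$$
K_X+B\sim_\R q^{\ast}A_T+E/Z ,
$$
where $A_T$ is ample$/Z$ on $T$ and $E\ge 0$ is the fixed part, so that $mE=\Fix(\|m(K_X+B)/Z\|)$ for $m\gg 0$. Since $T$ is the ample model, $q$ is the Iitaka fibration of $K_X+B$ over $Z$, so $\kappa((K_X+B)|_F)=0$ for a general fibre $F$; because $K_X+B$ is abundant$/Z$, the restriction $(K_X+B)|_F$ is then abundant of numerical dimension $0$ (cf. [\ref{Nakayama}]). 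By adjunction $(K_X+B)|_F=K_F+B_F$ with $(F,B_F)$ lc.

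\textbf{Step 2 (trivialising over $T$).} By the abundance theorem for lc pairs of numerical dimension zero, $(F,B_F)$ has a good minimal model on which $K_F+B_F\sim_\R 0$. Now I would run a suitable LMMP$/T$ on $K_X+B$ with scaling of an ample$/T$ divisor; over the generic point $\eta$ of $T$ this induces an LMMP with scaling on $(X_\eta,B_\eta)$, which --- since $(X_\eta,B_\eta)$ has a good minimal model with $K+B\sim_\R 0$ --- can be arranged to terminate after finitely many steps (using [\ref{B-lc-flips}, Theorem 1.9] and the remarks on decreasing the scaling coefficients above). After these finitely many steps we obtain a $\Q$-factorial dlt model, again denoted $(X/T,B)$, on which $K_X+B\sim_\R 0$ over a nonempty open $U\subseteq T$; here $T$ is still the lc model, so $K_X+B\sim_\R q^{\ast}A_T+E/Z$ with $E$ the fixed part, but now $E$ is vertical$/T$ because $E|_F=0$ for general $F$.

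\textbf{Step 3 (contracting the fixed part).} Since $E\ge 0$ is vertical$/T$ and is the fixed part of $K_X+B/Z$, Lemma \ref{lem-exc-1} (applied to $q$ and the divisor $A_T$, after shrinking $Z$ to be affine) shows that $E$ is very exceptional$/T$. As $q^{\ast}A_T\sim_\R 0/T$, we then have $K_X+B\sim_\R E/T$ with $E$ very exceptional$/T$, so by Lemma \ref{lem-exc-2} the LMMP$/T$ on $K_X+B$ with scaling of an ample$/T$ divisor terminates with a model $(Y/T,B_Y)$ on which the birational transform of $E$ is $0$, i.e. $K_Y+B_Y=g^{\ast}A_T$ for the induced morphism $g\colon Y\to T$. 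Since $A_T$ is ample$/Z$, $K_Y+B_Y$ is semi-ample$/Z$, in particular nef$/Z$; and $(Y,B_Y)$ is reached from our log smooth model of $(X,B)$ by a sequence of $(K_X+B)$-negative steps, so $(Y/Z,B_Y)$ is a log minimal model of $(X/Z,B)$, hence a good one.

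\textbf{Main obstacle.} The hard part will be Step 2: producing a model on which $K+B$ is trivial over an open subset of $T$. This rests on the abundance theorem for lc pairs of numerical dimension zero (applied to the general fibre of $q$) and, more delicately, on the fact that a relative LMMP with scaling over $T$ can be arranged to terminate over the generic point of $T$ once the generic fibre admits a good minimal model --- that is, that the principle ``good minimal model $\Rightarrow$ termination of LMMP with scaling'' propagates to a finite initial segment of the relative LMMP. Granting this, Step 3 is a routine application of Lemmas \ref{lem-exc-1} and \ref{lem-exc-2}.
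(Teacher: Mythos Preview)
Your overall strategy --- trivialise $K_X+B$ on the generic fibre of the map to the lc model $T$, then contract the remaining fixed part as a very exceptional divisor over $T$ via Lemmas \ref{lem-exc-1} and \ref{lem-exc-2} --- is exactly the paper's strategy, and your Step 3 matches the paper's endgame essentially verbatim.

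The difference is in Step 2. The paper does not go through abundance for lc pairs of numerical dimension zero on the general fibre. Instead it first treats the absolute case ($Z$ a point) directly: on a log resolution one writes $f^{\ast}(K_X+B)=M+F$ with $M$ semi-ample (pulled back from the lc model) and $F$ the asymptotic fixed part; abundance of $K_X+B$ then forces $F=N_\sigma(f^{\ast}(K_X+B))$ by [\ref{Lehmann}, Proposition 6.4], so $P_\sigma=M$ is semi-ample, and [\ref{BH-I}] immediately produces a good minimal model. The general case is then reduced to this by running an LMMP$/Z$ with scaling and applying the absolute case to the generic fibre of $X\to Z$. This is cleaner for two reasons: it stays entirely within the Nakayama--Zariski/asymptotic-invariant machinery already set up in the preliminaries, and it avoids invoking Gongyo's abundance for $\nu=0$ lc pairs as an external input.

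Your route is viable, but the step you flag as the ``main obstacle'' really does need more care. First, the claim that $(K_X+B)|_F$ has numerical dimension $0$ does not follow from $\kappa((K_X+B)|_F)=0$ alone; you need to explain how abundance of $K_X+B/Z$ forces $\kappa_\sigma$ to vanish on the general fibre (the reference ``cf.\ [\ref{Nakayama}]'' is too vague --- the relevant subadditivity statement for $\kappa_\sigma$ over fibrations should be made explicit). Second, the passage from ``good minimal model exists on the general fibre'' to ``a relative LMMP$/T$ with scaling terminates over the generic point after finitely many steps'' is correct but deserves a sentence: [\ref{B-lc-flips}, Theorem 1.9] is stated in the absolute projective setting, so one should argue via a very general closed fibre (where the theorem applies) and observe that each step of the relative LMMP restricts to a step on that fibre. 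With those two points filled in, your argument goes through; but the paper's use of [\ref{Lehmann}] plus [\ref{BH-I}] sidesteps both issues.
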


\begin{proof}
	We first treat the case when $Z$ is a point. Since $(X,B)$ has the lc model, there is a log resolution $f:X' \longrightarrow X$ such that $f^\ast (K_X+B)=M+F$ where $M$ is semi-ample and $F$ is the asymptotic fixed part. Note that the abundance of $K_X+B$ implies that $F= N_\sigma (f^\ast(K_X+B))$ by [\ref{Lehmann}, Proposition 6.4]. Therefore, $K_X+B$ birationally has a Nakayama-Zariski decomposition with nef positive part. We immediately obtain that $(X,B)$ has a log minimal model $(Y,B_Y)$ according to [\ref{BH-I}]. We assume that the birational map $g:X' \dashrightarrow Y$ is a morphism, and it is obvious that $g^\ast (K_Y+B_Y)= P_\sigma (f^\ast(K_X+B))=M$ is semi-ample. It follows that the log minimal model $(Y,B_Y)$ is good.
	
	Next we prove the general case. Replacing $(X/Z,B)$ we can assume it is $\Q$-factorial dlt. Run an LMMP$/Z$ on $K_X+B$ with scaling of an ample$/Z$ divisor. From the argument above we reach a model $g:(Y,B_Y) \dashrightarrow T$ after finitely many steps where $T$ is the lc model of $(X/Z,B)$ and $(K_Y+B_Y)|_{Y_\eta} \sim_\R 0$ on the generic fibre $Y_\eta$ of $g$. Replacing $(Y/Z,B_Y)$ with some birational model, we can assume that $g$ is a morphism and write $K_Y+B_Y\sim_\R g^\ast A_T +F_Y$ where $A_T$ is an ample$/Z$ divisor and $F_Y \ge 0$ is vertical$/T$. By Lemma \ref{lem-exc-1}, $F_Y$ is very exceptional$/T$, and hence  by Lemma \ref{lem-exc-2} any LMMP$/T'$ on $K_{Y'}+B_{Y'}$ with scaling of an ample$/Z$ divisor contracts $F_{Y'}$ and terminates with a minimal model $g'': (Y'',B_{Y''})\rightarrow T$ on which $K_{Y''}+B_{Y''} \sim_\R g''^\ast A_T$. 
\end{proof}

The following lemma is [\ref{B-lc-flips}, Theorem 1.5] with a generalisation for $\R$-divisors.

\begin{lem}\label{lc-m-g-model-2}
Let $(X/Z, B)$ be a $\Q$-factorial dlt pair where  $f : X \rightarrow Z$ is projective. Assume further that $(X/Z,B)$ has the lc model$/Z$, and that $(K_X + B)|_{X_\eta} \sim_\R 0$ where $X_\eta$
is the generic fibre of $f$. Then, any LMMP$/Z$ on $K_X + B$ with scaling of an
ample$/Z$ divisor terminates with a good minimal model.
\end{lem}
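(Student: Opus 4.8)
The plan is to deduce the statement from Lemma \ref{lem-exc-2} applied over $Z$, by exhibiting $K_X+B$ as $\R$-linearly equivalent over $Z$ to an effective divisor which becomes very exceptional$/Z$ on a suitable model.

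Let $T/Z$ be the lc model of $(X/Z,B)$, with defining relation $p^\ast(K_X+B)\sim_\R q^\ast A_T+E$ on a common resolution $X\xleftarrow{p}X'\xrightarrow{q}T$, $A_T$ ample$/Z$, $E\ge 0$. Restricting this to the generic fibre of $X\to Z$ forces the generic fibre of $T\to Z$ to be a point, i.e. $T\to Z$ is birational, since the left-hand side becomes $\sim_\R 0$ while $q^\ast A_T$ is a pull-back of an ample divisor and $E\ge 0$. Consequently $\kappa(K_X+B/Z)=\kappa_\sigma(K_X+B/Z)=0$, so $K_X+B$ is abundant$/Z$ and by Lemma \ref{lc-m-g-model} the pair $(X/Z,B)$ has a good minimal model; it remains to prove that every LMMP$/Z$ on $K_X+B$ with scaling of an ample$/Z$ divisor terminates, and its output will again be good by Lemma \ref{lc-m-g-model}. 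Since $\kappa(K_X+B/Z)=0$ we may write $K_X+B\sim_\R E_0/Z$ with $E_0=\mathrm{Fix}(\|K_X+B/Z\|)\ge 0$, which by restriction to the generic fibre is vertical$/Z$.

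The crucial point is that $E_0$ meets the fibre of $X\to Z$ over every prime divisor $P\subset Z$ in a proper closed subset. Indeed, $Z$ is normal and $T\to Z$ is birational, so the image of the exceptional locus of $T\to Z$ has codimension $\ge 2$; hence over a neighbourhood of the generic point of $P$ the map $T\to Z$ is an isomorphism, so there $K_X+B$ is $\R$-linearly equivalent to the pull-back of a general ample divisor on $Z$, which may be chosen to avoid $P$ — forcing $E_0$ not to contain the whole fibre over $P$. Now I would replace $(X,B)$ by a model that is equidimensional over $Z$: using toroidal reduction (Theorem \ref{thm-toroidal}, Proposition \ref{prop-toroidal}, and Lemma \ref{lem-toroidal} for $\Q$-factoriality), choose a $\Q$-factorial dlt model $\pi\colon(W,B_W)\to(X,B)$ with $W\to Z$ equidimensional, arranged so that $K_W+B_W\sim_\R M/Z$ with $M\ge 0$ vertical$/Z$ still meeting the fibre over every prime divisor of $Z$ in a proper subset (one only adds to the strict transform of $E_0$ divisors exceptional$/X$ and vertical$/Z$, produced by the equidimensionalisation). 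By equidimensionality every component of the fibre over a prime divisor $P$ dominates $P$, and $M$ omits one of them; hence $M$ is very exceptional$/Z$.

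By Lemma \ref{lem-exc-2}, any LMMP$/Z$ on $K_W+B_W$ with scaling of an ample$/Z$ divisor terminates (with $K+B\sim_\R 0/Z$ on the output). Given an arbitrary LMMP$/Z$ on $K_X+B$ with scaling of an ample divisor, I would first reduce to the case of a sequence of flips and then lift it, as in the paragraph on lifting sequences of log flips with scaling, to an LMMP$/Z$ on $K_W+B_W$ with scaling; termination of the lift forces termination of the original. The main obstacle is the construction of $W$: one must simultaneously keep $(W,B_W)$ $\Q$-factorial dlt, make $W\to Z$ equidimensional, and preserve verticality of $M$ together with the property that $M$ meets every fibre over a prime divisor properly — the toroidal machinery is exactly what lets the equidimensionalisation be carried out by blowing up only vertical$/Z$ centres, so that these features survive. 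Checking that finiteness of the lifted LMMP yields finiteness of the given one (which involves termination of the auxiliary LMMPs over the bases of the flips and divisorial contractions) is the remaining, routine, ingredient.
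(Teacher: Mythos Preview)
Your route is genuinely different from the paper's. The paper's argument is two lines: since $(K_X+B)|_{X_\eta}\sim_\R 0$, the divisor $K_X+B$ is abundant$/Z$, and one then appeals to Lemma~\ref{lc-m-g-model} (whose relative-case proof already contains the geometric input). The point is that one works over the lc model $T$, not over $Z$: because $T\to Z$ is birational, on a model admitting a \emph{morphism} to $T$ one has $K+B\sim_\R g^\ast A_T+F$ with $F$ very exceptional$/T$ by Lemma~\ref{lem-exc-1}, and Lemma~\ref{lem-exc-2} applied over $T$ gives termination. You instead try to manufacture a very exceptional divisor directly over $Z$, via equidimensionalisation.

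There is a genuine gap in your lifting step. The model $W$ you construct by toroidal reduction is a \emph{log smooth model} of $(X,B)$, so $K_W+B_W=\pi^\ast(K_X+B)+F$ with $F\ge 0$ effective and exceptional$/X$; it is not a crepant dlt blow-up. The lifting procedure in Section~\ref{preliminaries} (``Lifting a sequence of log flips with scaling'') starts from a $\Q$-factorial dlt blow-up, i.e.\ a model with $K_W+B_W=\pi^\ast(K_X+B)$, because each auxiliary LMMP$/Z_i$ must land on a dlt blow-up of the next flip target. With your non-crepant $W$ this mechanism does not apply, and termination of an LMMP on $(W,B_W)$ does not force termination of the given LMMP on $(X,B)$.

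A second, related problem is your assertion that the toroidal construction ``only blows up vertical$/Z$ centres''. This is not what Theorems~\ref{thm-toroidal} and Proposition~\ref{prop-toroidal} give: before one can equidimensionalise, one must first make $f$ toroidal, and the birational map $X'\to X$ in Theorem~\ref{thm-toroidal} is an arbitrary projective modification which will in general introduce horizontal exceptional divisors. Those horizontal exceptional divisors appear in $F$, hence in $M$, so $M$ need not be vertical$/Z$ and Lemma~\ref{lem-exc-2} over $Z$ does not apply. (Compare the proof of Lemma~\ref{klt-m-g-model}, where exactly this horizontal contribution is handled separately via $N_\sigma$ and \cite{BH-I}, not via Lemma~\ref{lem-exc-2}.) Working over $T$ rather than $Z$ sidesteps both issues: no equidimensionalisation is needed, and Lemma~\ref{lem-exc-1} produces the very exceptional divisor directly.
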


\begin{proof}
We assume that $f$ is a contraction. Since $(K_X + B)|_{X_\eta} \sim_\R 0$ where $X_\eta$
is the generic fibre of $f$, $K_X+B$ is abundant$/Z$. We derive the conclusion from Lemma \ref{lc-m-g-model} .
\end{proof}

The argument for the following lemma is similar to [\ref{Hashizume}, Lemma 3.2].

\begin{lem}\label{klt-m-g-model}
	Let $(X/Z, B)$ be a $\Q$-factorial klt pair where  $f : X \rightarrow Z$ is projective. Assume further that $(K_X + B)|_{X_\eta} \sim_\R 0$ where $X_\eta$
	is the generic fibre of $f$. Then, any LMMP$/Z$ on $K_X + B$ with scaling of an
	ample$/Z$ divisor terminates with a good minimal model.
\end{lem}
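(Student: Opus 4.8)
The plan is to reduce this to Lemma~\ref{lc-m-g-model-2}: it is enough to show that $(X/Z,B)$ has the lc model$/Z$, for then that lemma applies verbatim — its remaining hypothesis, $(K_X+B)|_{X_\eta}\sim_\R 0$, is exactly what we are given — and it yields the desired termination with a good minimal model.

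I would begin with the standard harmless reductions. Replacing $Z$ by the Stein factorisation of $f$ I may assume $f$ is a contraction; this affects neither the hypothesis on the generic fibre nor any LMMP$/Z$. When $K_X+B$ fails to be pseudo-effective$/Z$ a routine argument bounding the length of the extremal rays (so that eventually the LMMP can only produce vertical contractions before it must stop with a fibre-type contraction) shows the LMMP$/Z$ with scaling terminates with a weak Mori fibre space; so the point is the pseudo-effective case. There $K_X+B$ is abundant$/Z$, since on the generic fibre both its numerical and its Iitaka dimension are zero, the class being $\R$-trivial there.

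The heart of the matter is to upgrade the triviality of $(K_X+B)|_{X_\eta}$ to the existence of the lc model$/Z$, i.e. to finite generation of $R(K_X+B/Z)$ over $\mathcal O_Z$. Passing to a log smooth model I would invoke the canonical bundle formula for the lc-trivial fibration $X\to Z$: after a suitable birational base change $Z''\to Z$ with $Z''$ smooth, the induced contraction $g\colon X''\to Z''$ satisfies $K_{X''}+B_{X''}\sim_\R g^\ast(K_{Z''}+B_{Z''}+M_{Z''})$ with $(Z'',B_{Z''})$ sub-klt and the moduli part $M_{Z''}$ nef on a high enough model. The decisive input from the klt hypothesis is that the generic fibre $(X_\eta,B_\eta)$ is a klt pair with $K_{X_\eta}+B_\eta\sim_\R 0$, hence is its own good minimal model; combining this with the relative minimal model machinery for pairs fibred over a base — so that the good minimal model on the generic fibre propagates to a relative one, using [\ref{HX}], [\ref{HX2}] together with [\ref{BH-I}] and the relative Nakayama--Zariski decomposition of $K_X+B$ over $Z$ — one produces an ample model$/Z$ of $K_{Z''}+B_{Z''}+M_{Z''}$, whose pullback under $g$ is the lc model$/Z$ of $(X/Z,B)$. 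Then Lemma~\ref{lc-m-g-model-2} completes the proof, the argument running parallel to [\ref{Hashizume}, Lemma~3.2].

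I expect the middle step to be the main obstacle: one must control the positivity of the moduli divisor $M_{Z''}$ and the singularities of the base pair $(Z'',B_{Z''})$ tightly enough to carry the base MMP through to its ample model. This is precisely where being klt rather than merely lc is essential — in the lc case one cannot avoid assuming the existence of the lc model outright, as is done in Lemma~\ref{lc-m-g-model-2}.
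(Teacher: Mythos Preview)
Your reduction to Lemma~\ref{lc-m-g-model-2} is logically sound, but the route you propose to the lc model has a real gap. The canonical bundle formula applies to an \emph{lc-trivial fibration}, i.e.\ one with $K_{X''}+B_{X''}\sim_\R 0/Z''$; the hypothesis here gives only $(K_X+B)|_{X_\eta}\sim_\R 0$, which is triviality over the \emph{generic point} of $Z$, not over $Z$ or any birational model of it. No amount of birational base change alone bridges that gap---after base change you still have $K_{X''}+B_{X''}\sim_\R (\text{something vertical})/Z''$, and that vertical part is precisely what obstructs the formula. Your sentence ``the good minimal model on the generic fibre propagates to a relative one'' is thus the entire content of the lemma, and none of [\ref{HX}], [\ref{HX2}], [\ref{BH-I}] supplies it as a black box.

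The paper's proof confronts this head-on with a different mechanism: toroidal reduction (Theorem~\ref{thm-toroidal}) followed by equidimensionalisation (Proposition~\ref{prop-toroidal}) and a small $\Q$-factorialisation (Lemma~\ref{lem-toroidal}). The point of equidimensionality is that every vertical prime divisor on the resulting $\overline{X''}$ now dominates a prime divisor on $Z''$, so the vertical part of the pullback of $K_X+B$ decomposes as $(f''\circ\tau)^\ast G+\overline{F}$ with $\overline{F}$ very exceptional$/Z''$. Together with the observation that the horizontal part lies in $N_\sigma$ over $Z$ (coming from $(K_X+B)|_{X_\eta}\sim_\R N_\sigma((K_X+B)|_{X_\eta})$ after the resolution), an LMMP$/Z''$ with scaling contracts $\overline{E}+\overline{F}+\overline{D}_h$ via [\ref{BH-I}] and terminates with a model on which $K_Y+B_Y\sim_\R 0/Z''$. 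Only \emph{then} is the canonical bundle formula invoked, and since $(Y,B_Y)$ is klt and $Z''\to Z$ is birational, a standard argument (as in Proposition~\ref{klt-g-model}) finishes. So the canonical bundle formula does appear, but at the end rather than the beginning; the toroidal/equidimensional step is what your outline is missing.
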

\begin{proof}
	By replacing $(X,B)$ and $Z$ we assume $(X,B)$ is log smooth and $Z$ is smooth. Note that of course we lose the condition $(K_X + B)|_{X_\eta} \sim_\R 0$. However, we still have $(K_X + B)|_{X_\eta} \sim_\R N_\sigma((K_X + B)|_{X_\eta})$. Now by replacing $X,Z$ with suitable compactifications we can assume $X,Z$ are projective. By assumption we let $K_X+B \sim_\R D= D_h+D_v$ where $D_v \ge 0$ is vertical$/Z$ and each component of $D_h \ge 0$ is horizontal$/Z$. In particular, we have $D_h|_{X_\eta} = N_\sigma(D_h|_{X_\eta})$. By Theorem \ref{thm-toroidal} there exists a diagram as follows: 
	$$\begin{array}{lclcl} 
	U_X & \subset & X' &\stackrel{\pi_X}{\to} &X \\
	\downarrow & & \downarrow f' & & \downarrow f\\
	U_Z & \subset & Z'
	& \stackrel{\pi_Z}{\to} &Z \end{array}
	$$
	such that $\pi_X$ and $\pi_Z$ are birational projective morphisms, $X'$ and $Z'$ are smooth, the inclusions on the left are toroidal embeddings, and such that:
	
	1. $f'$ is toroidal.
	
	
	2. $\Supp \pi_X^*(B +D) \subset X'\backslash U_{X'}$ is a simple normal crossings divisor.
	
	By Proposition \ref{prop-toroidal} we have toroidal birational morphisms $\pi_X':X'' \to X'$, $\pi_Z':Z'' \to Z'$ such that the induced morphism $f'': X'' \to Z''$ is equidimensional toroidal. Moreover, by Lemma \ref{lem-toroidal}, $X''$ admits a small $\Q$-factorialisation $\tau: \overline{X''} \to X''$.
	
	If we write $K_{\overline{X''}}+ \overline{B''}=(\pi_X \circ \pi_X' \circ \tau)^* (K_X+B) +\overline{E}$ where $\overline{B''},\overline{E}$ have no common components, then it suffices to prove that $(\overline{X''}/Z,\overline{B''})$ has a good minimal model. Let 
	$$\overline{D}=(\pi_X \circ \pi_X' \circ \tau)^* D =(\pi_X \circ \pi_X' \circ \tau)^* D_h+(\pi_X \circ \pi_X' \circ \tau)^* D_v=\overline{D}_h+\overline{D}_v.$$
	 We have $K_{\overline{X''}}+ \overline{B''} \sim_\R \overline{D}_h+\overline{D}_v + \overline{E}$. Since $f''$ is equidimensional and $\tau$ is small, every component of $\overline{D}_v$ is mapped onto a prime divisor on $Z''$. It follows that we can write $\overline{D}_v=(f''\circ \tau)^*G + \overline{F}$ where $G \ge 0$ and $\overline{F}$ is very exceptional$/Z''$.
	
	Now we run an LMMP$/Z''$ on $K_{\overline{X''}}+ \overline{B''}$ with scaling of an ample divisor. Because $\overline{D}_h|_{X_\eta} =N_\sigma(\overline{D}_h|_{X_\eta})$, we deduce $ \overline{E}+\overline{F} +\overline{D}_h \le N_\sigma (K_{\overline{X''}}+ \overline{B''}/Z)$. Hence the LMMP terminates with a minimal model $(Y,B_Y)$ on which $K_Y+B_Y \sim_\R 0/Z''$ thanks to [\ref{BH-I}]. Since $(Y,B_Y)$ is klt and $Z'' \to Z$ is birational, by a canonical bundle formula and a standard argument (see proof of Proposition \ref{klt-g-model}) we deduce that $(Y/Z,B_Y)$ has a good minimal model which in turn implies that $(X/Z, B)$ has a good minimal model.
\end{proof}

\begin{proof}[Proof of Theorem \ref{lc-flips-1}]
The argument is similar to [\ref{B-lc-flips}]. By replacing $(X,B)$ with a dlt blow-up, we can assume that $(X,B)$ is $\Q$-factorial dlt. If $(X,B)$ is klt, then $(K_X+B)|_{X_\eta}\sim_\R -A|_{X_\eta} $ is either non-pseudo-effective or $A|_{X_\eta}=0$ where $X_\eta$ is the generic fibre of $f$. Therefore either $(X,B)$ has a Mori fibre space or a good minimal model thanks to Lemma \ref{klt-m-g-model}. From now on we assume that $\lfloor B \rfloor \neq 0$.  

\textbf{The vertical case.} 
We first prove the case when every lc centre of $(X/Z,B)$ is vertical$/Z$. Assume Theorem \ref{lc-flips-1} holds for $\dim \leq n-1$. If $K_X+B$ is not pesudo-effective$/Z$, then $(X/Z,B)$ has a Mori fibre space by [\ref{BCHM}]. If $K_X+B$ is pseudo-effective$/Z$, then $(K_X+B)|_{X_\eta}\sim_\R 0 $. Let $P:= \lfloor B\rfloor$ and pick a decreasing sequence of sufficiently small positive real numbers $\epsilon_1 >\epsilon_2 > \cdots >\epsilon_j > \cdots$ such that $\lim_{j \rightarrow \infty} \epsilon_j =0$. We can run an LMMP$/Z$ on $K_X+B-\epsilon_j P$ which terminates with a good minimal model since $P$ is vertical$/Z$ and $(X/Z,B-\epsilon_j P)$ is klt. As we discussed in Section \ref{preliminaries}: a special LMMP, we obtained a sequence of birational maps
$$
(X_2/Z,B_2) \rightarrow \cdots \rightarrow (X_j/Z,B_j) \rightarrow \cdots
$$ 
which can be embedded into a sequence of LMMP$/Z$ on $K_{X_2}+B_2$ with scaling of $t_2C_2$. It suffices to prove that the LMMP above terminates near $\lfloor B_j \rfloor$ for $j \gg 0$. Let $A_j$ be the birational transform of $A$ on $X_j$. Then, $K_{X_j}+B_j+ A_j \sim_\R 0/Z$ still holds for every $j$. As we discussed in Section \ref{preliminaries}: lifting a sequence of log flips with scaling and by Lemma \ref{lc-m-g-model-2}, we can lift the sequence of birational maps so that after replacing we can assume $(X_j/Z,B_j)$ is dlt. After reindexing we assume the LMMP$/Z$ on $K_{X_2}+B_2$ with scaling which contains the sequence does not contract any lc centre. Let $S_2$ be a component of $\lfloor B_2 \rfloor$, $S_j$ be the birational transform of $S_2$ on $X_j$ and let $T$ be the normalisation of the image
of $S_2$ in $Z$. As we discussed in Section \ref{preliminaries}: on special termination, the sequence 
$$
(S_2/T,B_{S_2}) \rightarrow \cdots \rightarrow (S_j/T,B_{S_j}) \rightarrow \cdots
$$
can be embedded into a sequence of LMMP$/T$ on $K_{S_2}+B_{S_2}$ with scaling of $t_2 C_{S_2}$ where $K_{S_2}+B_{S_2}=(K_{X_2}+B_2)|_{S_2}$ and $C_{S_2}= C_2|_{S_2}$. By an inductive assumption, this LMMP$/T$ terminates since $K_{S_2}+B_{S_2}+A_{S_2} \sim_\R 0/T$, which in turn implies that the LMMP$/Z$ above terminates with a good minimal model$/Z$ thanks to Theorem \ref{lc-flips-3}. Hence $(X/Z,B)$ has a good minimal model by [\ref{B-lc-flips}, Corollary 3.7 and Remark 2.7].

\textbf{The horizontal case.} Now we treat the case when there is an lc centre $S \subset \lfloor B \rfloor$ horizontal$/Z$. For the same reasoning we assume that $(K_X+B)|_{X_\eta}\sim_\R 0 $. Run an LMMP$/Z$ on $K_X+B -\epsilon \lfloor B \rfloor$ for some $\epsilon \ll 1$ which terminates with a Mori fibre space $g:(Y/Z,B_Y) \rightarrow T$. By Theorem \ref{ACC-2} $(Y,B_Y)$ is lc and $K_Y+B_Y \sim_\R 0/T$ provided that $\epsilon$ is sufficiently small. Now let $W$ be a common log resolution of $(X,B)$ and $(Y,B_Y)$, and let $(W,B_W)$ be a log smooth model of $(X,B)$.
$$
\xymatrix{
	& W \ar[dl]_{p} \ar[dr]^{q} \ar@{-->}[r]_{} & W'\ar[d]^{q'} \ar@{-->}[r]_{} & W'' \ar[dd]_{}\\
	X \ar[dd]_{f}\ar@{-->}[rr]_{} &&  Y\ar[d]^{g} &\\
	&& T \ar[dll]_{}   & T'  \ar[l]_{}\\
	Z &} 
$$
Run an LMMP$/Y$ on $K_W+B_W$. Since $q$ is birational, by Lemma \ref{lem-exc-3}, we reach a model $(W',B_{W'})$ after finitely many steps such that $K_{W'}+B_{W'}+G = q'^\ast(K_Y+B_Y)$ for some exceptional$/Y$ divisor $G \ge 0$. Moreover, by Theorem \ref{lc-flips-1} in the vertical case we assume that $K_{W'}+B_{W'}$ is semi-ample$/Y$. 

We use the same argument from the proof of [\ref{B-lc-flips}, Lemma 6.7] to prove that $(W'/T,B_{W'})$ has a good minimal model $W'' \rightarrow T'$. By replacing $(W'/Y,B_{W'})$ with its lc model, we can assume that $G$ contains all exceptional$/Y$ divisors. By adding a small multiple of $G$ to $B_{W'}$ we assume that $\mathrm{Supp} G \subseteq \mathrm{Supp} B_{W'}$ and hence
$$
\mathrm{Supp} q'^\ast \lfloor B_Y\rfloor \subseteq (\mathrm{Supp} G \bigcup \mathrm{Supp} \lfloor B_{W'} \rfloor ) \subseteq \mathrm{Supp}B_{W'}
$$
By replacing $(W'/T, B_{W'})$ with a dlt blow-up, we can again assume that
$(W'/T, B_{W'})$ is $\Q$-factorial dlt. Since $\lfloor B_Y \rfloor$ is ample$/T$, for a sufficiently small number $\tau$, we write
$$
K_{W'}+B_{W'} =K_{W'}+B_{W'} -\tau q'^\ast \lfloor B_Y\rfloor +\tau q'^\ast \lfloor B_Y\rfloor \sim_\R K_{W'}+\Delta_{W'} /T
$$
such that $(W'/T,\Delta_{W'})$ is klt. By Lemma \ref{lc-m-g-model-2} $(W'/T,\Delta_{W'})$ has a good minimal model which in turn implies that $(W'/T,B_{W'})$ has a good minimal model $(W''/T,B_{W''}) \to T'$.

Let $A_{W'}= G+q'^\ast A_Y$. We have $K_{W'}+B_{W'}+A_{W'} \sim_\R 0/Z$. Let $A_{W''}$ be the birational transforms of $A_{W'}$ on $W''$. Then, $K_{W''}+B_{W''}+A_{W''} \sim_\R 0/Z$ and hence $K_{S_{W''}}+B_{S_{W''}}+A_{S_{W''}} :=(K_{W''}+B_{W''}+A_{W''})|_{S_{W''}} \sim_\R 0/Z$. By adjunction formula $(S_{W''},B_{S_{W''}}+A_{S_{W''}})$ is lc. By an inductive assumption, $(S_{W''}/Z,B_{S_{W''}})$ has a good minimal model which in turn implies that $(W''/Z,B_{W''})$ has a good minimal model by Lemma \ref{lc-m-g-model}. It follows that $(X/Z,B)$ has a good minimal model by the construction of $(W''/Z,B_{W''})$.
\end{proof}

\begin{proof}[Proof of Theorem \ref{lc-flips-2}]
The argument is similar as above.	
\end{proof}

\vspace{0.3cm}
\section{Klt pairs and the horizontal case}\label{klt-pairs}

\begin{prop}\label{klt-g-model}
Let $(X,\Delta=A+B)$, $f$ and $Z$ be as in Theorem~\ref{t-main-1}. Assume that $(X,\Delta)$ is klt. Then, either $(X,\Delta)$ has a good minimal model or a Mori fibre space.
\end{prop}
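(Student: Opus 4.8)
The plan is to push the problem down to the base $Z$ via the canonical bundle formula, where the boundary will contain an ample divisor, to apply [\ref{BCHM}] there, and then to lift the resulting good minimal model (or Mori fibre space) back up to $X$. I would begin with the usual reductions: replacing $(X,\Delta)$ by a $\Q$-factorialisation and $Z$ by the Stein factorisation of $f$ (under which the pull-back of $A_Z$ stays ample), I may assume that $X$ is $\Q$-factorial and that $f$ is a contraction, since neither change affects the hypotheses nor the existence of a good minimal model or of a Mori fibre space. If $K_X+\Delta$ is not pseudo-effective, then $(X,\Delta)$ has a Mori fibre space by [\ref{BCHM}], so from now on I assume $K_X+\Delta$ is pseudo-effective. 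Since $K_X+\Delta\sim_\R 0/Z$, restriction to the generic fibre $X_\eta$ of $f$ gives $(K_X+\Delta)|_{X_\eta}\sim_\R 0$, so $f$ is an \emph{lc-trivial fibration} and the canonical bundle formula applies to it.

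Next I would run the canonical bundle formula for $f$. After replacing $Z$ by a sufficiently high birational model on which the moduli part becomes nef, and $(X,\Delta)$ by a compatible $\Q$-factorial birational model dominating both $X$ and the new base (this is harmless by the discussion of log smooth models in Section~\ref{preliminaries}), I may write $K_X+\Delta\sim_\R f^*(K_Z+\Theta_Z)$, where $(Z,\Theta_Z)$ is a klt pair and $\Theta_Z=B_Z+M_Z+A_Z$ with $B_Z\ge 0$ the discriminant part, $M_Z$ the nef moduli part, and $A_Z$ the ample divisor pulled back from the original base. The key gain is that $M_Z+A_Z$ is now nef and big, so after a small perturbation of $\Theta_Z$ within its $\R$-linear equivalence class that shrinks the effective error and keeps the pair klt, I may assume $\Theta_Z\ge A_Z'$ for some ample $\R$-divisor $A_Z'$. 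Thus $(Z,\Theta_Z)$ is a klt pair whose boundary contains an ample divisor, and $K_Z+\Theta_Z$ is pseudo-effective because its pull-back $K_X+\Delta$ is; by [\ref{BCHM}] it therefore has a good minimal model, obtained by a $(K_Z+\Theta_Z)$-MMP with scaling of an ample divisor which terminates at a model $Z''$ on which $K_{Z''}+\Theta_{Z''}$ is semi-ample.

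It then remains to lift this program to $(X,\Delta)$, which I would do one step at a time. Given a step $Z_i\dashrightarrow Z_{i+1}$ of the base MMP with common contraction $Z_i\to W_i$, the divisor $K_X+\Delta$ is, under the identification above, the pull-back of the $(Z_i\to W_i)$-negative divisor $K_{Z_i}+\Theta_{Z_i}$; so I run a $(K_X+\Delta)$-MMP over $W_i$, which terminates with a good minimal model over $W_i$ by Lemma~\ref{klt-m-g-model}, the restriction of $K_X+\Delta$ to the generic fibre of $X\to W_i$ being still $\R$-trivial because $Z_i\to W_i$ is birational. The ample model over $W_i$ of the output is $Z_{i+1}$, and, since the discriminant and moduli b-divisors are birational invariants of the fibration, the new total space again has its log canonical divisor $\R$-linearly equivalent to the pull-back of $K_{Z_{i+1}}+\Theta_{Z_{i+1}}$. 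After finitely many steps I reach a model $(Y,\Delta_Y)$ with $K_Y+\Delta_Y\sim_\R f_Y^*(K_{Z''}+\Theta_{Z''})$, which is semi-ample; as $(Y,\Delta_Y)$ is obtained from $(X,\Delta)$ by a sequence of $(K_X+\Delta)$-flips and divisorial contractions (after the harmless birational replacements made at the start), it is a good minimal model of $(X,\Delta)$.

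The step I expect to be the main obstacle is the bookkeeping around the canonical bundle formula and its interaction with the minimal model program: choosing the birational model of $Z$ so that the moduli part is nef while keeping $(Z,\Theta_Z)$ genuinely klt with an ample summand in its boundary, and checking that the relation $K_X+\Delta\sim_\R f^*(K_Z+\Theta_Z)$, in the correct crepant sense, is preserved under each step of the base MMP and under the intermediate relative MMPs used to lift it. Once this framework is set up, the descent and ascent of good minimal models along lc-trivial fibrations is standard (cf. [\ref{FG2}], [\ref{HX}], [\ref{B-lc-flips}]).
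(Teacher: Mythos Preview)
Your approach is essentially correct, and it shares the same opening moves as the paper: reduce to $\Q$-factorial with $f$ a contraction, dispose of the non-pseudo-effective case via [\ref{BCHM}], and descend to $Z$ using the canonical bundle formula. The divergence is in how the good minimal model on the base is lifted back to $X$.

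The paper avoids your step-by-step lifting entirely. It applies the canonical bundle formula of [\ref{FG3}] directly to the klt pair $(X,B)$, obtaining $(Z,B_Z)$ klt with $K_X+B\sim_\R f^*(K_Z+B_Z)$ on the \emph{original} $Z$; since $A\sim_\R f^*A_Z$ with $A_Z$ genuinely ample, one gets $K_X+\Delta\sim_\R f^*(K_Z+\Delta_Z)$ where $\Delta_Z=A_Z+B_Z$ already contains an ample divisor, with no need to pass to a higher model of $Z$ or to separate moduli and discriminant parts. After taking a good minimal model $(Z',\Delta_{Z'})$ of $(Z,\Delta_Z)$ by [\ref{BCHM}], the paper pulls the Nakayama--Zariski decomposition of $K_Z+\Delta_Z$ back through a common resolution: the pull-back of the negative part is very exceptional over $Z'$ (hence lies in $N_\sigma$), and the pull-back of the positive part is semi-ample, so $P_\sigma(p^*(K_X+\Delta))$ is semi-ample and [\ref{BH-I}] gives the good minimal model in one stroke.

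Your route --- lifting each step $Z_i\dashrightarrow Z_{i+1}/W_i$ of the base MMP by running a relative MMP over $W_i$ via Lemma~\ref{klt-m-g-model}, and tracking the canonical bundle formula through each step --- also works, but it carries substantially more bookkeeping: you must verify at each stage that the new total space maps to $Z_{i+1}$, that the crepant relation $K_{Y_i}+\Delta_{Y_i}\sim_\R f_i^*(K_{Z_{i+1}}+\Theta_{Z_{i+1}})$ is restored, and that the discriminant and moduli b-divisors are compatible with these modifications. The paper's Nakayama--Zariski argument sidesteps all of this and is both shorter and more robust; it is also the template reused later in the paper (e.g.\ in Theorem~\ref{horizontal}).
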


\begin{proof}
Replacing $(X,\Delta)$ with a dlt blow-up and $f$ with its Stein factorization, we can assume that $f$ is a contraction and $(X,\Delta)$ is $\Q$-factorial dlt. If $K_X+\Delta$ is not pseudo-effective, then it has a Mori fibre space thanks to [\ref{BCHM}]. Hence we assume that $K_X+\Delta$ is pseudo-effective. Applying a canonical bundle formula [\ref{FG3}, Theorem 3.1] there exists a boundary $B_Z$ such that $(Z,B_Z)$ is klt and $K_X+B \sim_\R f^\ast (K_Z+B_Z)$. By [\ref{BCHM}], $(Z,\Delta_Z=A_Z+B_Z)$ has a good minimal model $g:(Z',\Delta_{Z'} )\rightarrow T$. \\

We show that $K_X+\Delta$ birationally has a Nakayama-Zariski decomposition with semi-ample positive part which in turn implies the existence of good log minimal model of $(X,\Delta)$ by [\ref{BH-I}]. To this end, let $Z \xleftarrow{p_Z} \widetilde{Z} \xrightarrow{q_Z} Z'$ be a common resolution. In addition, we let $p:\widetilde{X} \longrightarrow X$ be a resolution such that the rational map $\widetilde{f}: \widetilde{X} \dashrightarrow \widetilde{Z}$ is a morphism. Now we have the commutative diagram as follows.
$$
\xymatrix{
	X \ar[d]_{f} &  \widetilde{X}\ar[d]^{\widetilde{f}}\ar[l]_{p} &\\
	Z  &  \widetilde{Z}\ar[l]_{p_Z}\ar[r]^{q_Z} & Z' \ar[d]^{g}     \\
	& & T } 
$$
 Consider the Nakayama-Zariski decomposition $p_Z^\ast (K_Z+\Delta_Z) = P+ N$ and put $G=p^\ast(K_X+B)- \widetilde{f}^\ast (P+N) \sim_\R 0$. It follows that $\widetilde{f}^\ast N \leq N_\sigma (p^\ast(K_X+\Delta))$ because $N$ is exceptional$/Z'$ and hence $\widetilde{f}^\ast N$ is very exceptional$/Z'$ by [\ref{Gongyo-Lehmann}, Lemma 2.16]. On the other hand, $\widetilde{f}^\ast P +G \leq P_\sigma (p^\ast(K_X+\Delta))$ because $P$ is semi-ample which forces that $ P_\sigma (p^\ast(K_X+\Delta))=\widetilde{f}^\ast P $ is semi-ample.
\end{proof}

\begin{thm}\label{horizontal}
Let $(X,\Delta=A+B)$, $f$ and $Z$ be as in Theorem~\ref{t-main-1}. Assume that Theorem~\ref{t-main-1} holds in dimension $n-1$ and that there exists an lc centre of $(X,B)$ which is horizontal$/Z$. Then either $(X,\Delta)$ has a good minimal model or a Mori fibre space.
\end{thm}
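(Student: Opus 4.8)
The plan is to follow the strategy of the horizontal case in the proof of Theorem \ref{lc-flips-1}, which is considerably streamlined here by the extra hypothesis $A\sim_\R f^\ast A_Z$: it forces $K_X+\Delta$ to be the pull-back of a divisor from $Z$, so the whole problem can be read off, via the inductive hypothesis, from a horizontal lc centre — whose dimension is less than $n$. After a dlt blow-up of $(X,B)$ (arranged to extract a horizontal$/Z$ lc place of $(X,B)$, which exists since $(X,B)$ has a horizontal lc centre) and a Stein factorisation, I would assume $(X,B)$ is $\Q$-factorial dlt, $f$ is a contraction, and $\lfloor B\rfloor$ has a component $S$ horizontal$/Z$; here $(X,B)$ is lc because $0\le B\le\Delta$ and $(X,\Delta)$ is lc. Since $A\sim_\R f^\ast A_Z$ is $\sim_\R 0/Z$, we get $K_X+B\sim_\R(K_X+\Delta)-A\sim_\R 0/Z$, hence $K_X+\Delta\sim_\R f^\ast D_Z$ for some $\R$-Cartier divisor $D_Z$ on $Z$; and as the fibres of $f$ are positive-dimensional, $\dim Z\le n-1$. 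If $K_X+\Delta$, equivalently $D_Z$, is not pseudo-effective, then $(X,\Delta)$ has a Mori fibre space by [\ref{BCHM}], [\ref{B-lc-flips}] and Theorem \ref{ACC-2} (run an LMMP on a klt perturbation $K_X+\Delta-\epsilon\lfloor\Delta\rfloor$, which is not pseudo-effective since $K_X+B\sim_\R 0/Z$ and $\lfloor B\rfloor$ is not vertical$/Z$). So from now on assume $K_X+\Delta$ is pseudo-effective.

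The decisive step is to restrict everything to $S$. By adjunction $K_S+\Delta_S:=(K_X+\Delta)|_S$ makes $(S,\Delta_S=B_S+A_S)$ an lc pair, with $A_S=A|_S\sim_\R(f|_S)^\ast A_Z$ the pull-back of the ample divisor $A_Z$, with $K_S+\Delta_S\sim_\R(f|_S)^\ast D_Z\sim_\R 0/Z$, and with $f|_S\colon S\to Z$ surjective and $\dim S\le n-1$. So I would apply the inductive hypothesis — Theorem \ref{t-main-1} in dimension $n-1$ — to $(S,\Delta_S)$: it has a good minimal model or a Mori fibre space, and since $K_S+\Delta_S$ is pseudo-effective (because $D_Z$ is and $f|_S$ is surjective) it must have a good minimal model.

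It then remains to push this back up to $X$. Having a good minimal model, $K_S+\Delta_S$ is abundant, so $D_Z$ is abundant (abundance is preserved under pull-back by the surjective morphism $f|_S$), so $K_X+\Delta\sim_\R f^\ast D_Z$ is abundant; likewise the section ring of $K_S+\Delta_S$ is finitely generated, so — passing through the Stein factorisation of $f|_S$ and the Artin--Tate lemma — so is $R(D_Z)=R(K_X+\Delta)$, i.e. $(X,\Delta)$ has an lc model. Lemma \ref{lc-m-g-model} then produces a good minimal model of $(X,\Delta)$, which finishes the proof.

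The substantial inputs are imported — Theorem \ref{t-main-1} in dimension $n-1$ on $S$, and Lemma \ref{lc-m-g-model} on $X$ — so the one place where I expect to have to be careful is this transfer step: checking that a good minimal model of $(S,\Delta_S)$ really does force both the abundance of $K_X+\Delta$ and the existence of the lc model of $(X,\Delta)$. Everything hinges on the two facts that $K_X+\Delta$ is pulled back from $Z$ and that $S$ dominates $Z$, so that restriction to $S$ loses no information about $D_Z$; by comparison the reductions of the first paragraph (in particular the existence of a horizontal component of $\lfloor B\rfloor$) and the non-pseudo-effective case are routine.
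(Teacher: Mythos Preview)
Your approach is essentially the same as the paper's: reduce to $S$, apply induction, then transfer abundance and the existence of the lc model back to $K_X+\Delta$ via the fact that $K_X+\Delta\sim_\R f^\ast D_Z$, and finish with Lemma~\ref{lc-m-g-model}. The one place that needs tightening is precisely the transfer step you flag.

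Your proposed mechanism for the descent --- finite generation of $R(D_Z)$ via the Artin--Tate lemma --- is problematic because $D_Z$ is an $\R$-divisor, so $R(D_Z)$ is not defined. What you need is that $D_Z$ has an ample model, equivalently that on some resolution $\overline{Z}\to Z$ the positive part $P_\sigma(D_{\overline{Z}})$ is semi-ample. The paper handles this with Nakayama's results: from the good minimal model of $(S,\Delta_S)$ one gets that $D_{Z'}$ (pulled back along the Stein factorisation $S\to Z'\to Z$) is abundant and has semi-ample positive part on a resolution $\overline{Z}'$; abundance of $D_Z$ then follows from [\ref{Nakayama}, II~3.11 and V~2.7(4)], and the descent of semi-ampleness of the positive part from $\overline{Z}'$ to some $\overline{Z}$ over $Z$ uses [\ref{Nakayama}, III~5.18~Corollary], which is exactly designed for generically finite pull-backs. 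Once you invoke these in place of Artin--Tate, your argument and the paper's coincide. (For $\Q$-divisors your Artin--Tate sketch can indeed be made to work, but it does not extend to $\R$-coefficients without passing through these Nakayama-type statements anyway.)

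Two minor remarks: the reason $\dim Z\le n-1$ is that the horizontal $S$ dominates $Z$, not that fibres are positive-dimensional a priori; and in the non-pseudo-effective case the paper simply cites [\ref{BCHM}] rather than your ACC perturbation, but either route is routine.
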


\begin{proof}
Replacing $(X,\Delta)$ with a dlt blow-up and $f$ with its Stein factorization, we can assume that $f$ is a contraction and $(X,\Delta)$ is $\Q$-factorial dlt. If $K_X+\Delta$ is not pseudo-effective, then it has a Mori fibre space thanks to [\ref{BCHM}]. Hence we assume that $K_X+\Delta$ is pseudo-effective. By assumption there is an irreducible component $S \subseteq \lfloor B \rfloor$ which is horizontal$/Z$. Let $S \rightarrow Z' \rightarrow Z$ be the Stein factorization, $D_Z$ be a divisor defined by $f^\ast D_Z = K_X +B$, and let $D_Z'$, $A_Z'$ be the pull-backs of $D_Z$, $A_Z$ on $Z'$ respectively. By the inductive assumption and Proposition \ref{klt-g-model}, $(S,\Delta_S)$ has a good minimal model where $K_S+\Delta_S=(K_X+\Delta)|_S$ which in turn implies that $D_Z+A_Z$ is abundant thanks to [\ref{Nakayama}, Chapter II Lemma 3.11 and Chapter V 2.7 Proposition (4)]. In particular, $(S,\Delta_S)$ has the lc model which is the ample model of $D_Z'+A_Z'$, and $K_X+\Delta$ is abundant since $D_Z+A_Z$ is abundant.

Now we prove that $D_Z+A_Z$ has the ample model which is also the lc model of $(X,\Delta)$. To this end, we show that $D_Z+A_Z$ birationally has a Nakayama-Zariski decomposition with semi-ample positive part. Let $\overline{Z}'$ be a resolution of $Z'$ such that $P_\sigma(A_{\overline{Z}'}+D_{\overline{Z}'})$ is semi-ample where $A_{\overline{Z}'}$ and $D_{\overline{Z}'}$ are the pull-backs of $A_{Z'}$ and $D_{Z'}$ on $\overline{Z}'$ respectively. By [\ref{Nakayama}, Chapter III 5.18 Corollary] there is a birational morphism $\overline{Z} \rightarrow Z$ such that $P_\sigma(A_{\overline{Z}}+D_{\overline{Z}})$ is semi-ample where $A_{\overline{Z}}$ and $D_{\overline{Z}}$ are the pull-backs of $A_Z$ and $D_Z$ respectively. Let $T''$ be the ample model of $P_\sigma(A_{\overline{Z}}+D_{\overline{Z}})$. By a similar argument in the proof of Proposition~\ref{klt-g-model}, we deduce that $T''$ is the lc model of $(X,\Delta)$, which in turn implies the conclusion.
\end{proof}

The theorems above has settled the klt case and the horizontal case. As we are about to deal with the vertical case, we prove a slightly generalised result of Proposition \ref{klt-g-model} which admits a more flexible assumption on the relative triviality.

\begin{cor}\label{klt-lc-m}
Let $(X,\Delta=A+B)$ be a projective lc pair of dimension $n$ where $A$, $B \ge 0$, $f:(X,\Delta) \rightarrow Z$ be a surjective morphism to a normal variety such that $K_X+B \sim_{\mathbb{R}} 0/U$ for some open subset $U \subseteq Z$, and let $A\sim_\R f^{\ast} A_Z$ be the pull-back of an ample divisor $A_Z$. Assume that $U$ contains the image of the generic points of all lc centres of $(X,B)$. If either $(X,B)$ is klt or Theorem~\ref{t-main-1} holds in dimension $n$, then either $(X,A+B)$ has a good minimal model or a Mori fibre space.
\end{cor}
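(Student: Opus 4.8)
The plan is to push the problem down to $Z$ by a canonical bundle formula and reduce it to a lower‑dimensional instance of the results already established. First I would replace $(X,\Delta)$ by a $\Q$‑factorial dlt blow‑up and $f$ by its Stein factorisation, so that $(X,\Delta)$ is $\Q$‑factorial dlt and $f$ is a contraction; since the blow‑up is crepant over $(X,B)$ as well, the lc centres of $(X,B)$ still map into $U$, and as $U$ is a non‑empty open subset the generic point $\eta$ of $Z$ lies in $U$, so $(K_X+B)|_{X_\eta}\sim_\R 0$. Then, applying the canonical bundle formula [\ref{FG3}, Theorem 3.1] to $f:(X,B)\to Z$, I obtain a boundary $B_Z$ with $(Z,B_Z)$ log canonical --- Kawamata log terminal when $(X,B)$ is klt --- and $K_X+B\sim_\R f^{\ast}(K_Z+B_Z)$. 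Setting $\Delta_Z:=A_Z+B_Z$ gives $K_X+\Delta\sim_\R f^{\ast}(K_Z+\Delta_Z)$, where $(Z,\Delta_Z)$ is a projective log canonical pair of dimension $\le n$ with $\Delta_Z\ge A_Z$ an ample $\R$‑divisor.

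Next I would apply the known results on the base. If $(X,B)$ is klt then $(Z,\Delta_Z)$ is klt with ample boundary part, so [\ref{BCHM}] gives that $(Z,\Delta_Z)$ has a good minimal model or a Mori fibre space; otherwise, the hypothesis that Theorem \ref{t-main-1} holds in dimension $n$ (and, inductively, in lower dimensions) yields Corollary \ref{t-main-cor'} in dimension $\le n$, which applied to $(Z,\Delta_Z)$ gives the same dichotomy. Since $f$ is surjective, $K_X+\Delta$ is pseudo‑effective if and only if $K_Z+\Delta_Z$ is. In the pseudo‑effective case I would lift a good minimal model $(Z',\Delta_{Z'})\to T$ of $(Z,\Delta_Z)$ back to $X$ by the Nakayama--Zariski argument of Proposition \ref{klt-g-model}: on a common resolution $Z\xleftarrow{p_Z}\widetilde Z\xrightarrow{q_Z}Z'$ and a resolution $p:\widetilde X\to X$ with $\widetilde f:\widetilde X\to\widetilde Z$ a morphism, write $p_Z^{\ast}(K_Z+\Delta_Z)=P+N$ for the Nakayama--Zariski decomposition; goodness of the minimal model makes $P=P_\sigma$ semi‑ample and $N$ exceptional$/Z'$, so $p^{\ast}(K_X+\Delta)=\widetilde f^{\ast}(P+N)$ with $\widetilde f^{\ast}N$ very exceptional (by [\ref{Gongyo-Lehmann}, Lemma 2.16]) and $\widetilde f^{\ast}P$ semi‑ample; comparing positive and negative parts forces $P_\sigma(p^{\ast}(K_X+\Delta))=\widetilde f^{\ast}P$, and then $(X,\Delta)$ has a good minimal model by [\ref{BH-I}]. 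In the non‑pseudo‑effective case, I would pull the Mori fibre space on $Z$ back along $f$ and contract further over its base on $K+\Delta$, starting from a $\Q$‑factorial dlt model dominating both $X$ and the Mori fibre space of $Z$, to reach a weak Mori fibre space of $(X,\Delta)$; this last reduction again appeals to [\ref{BCHM}] in the klt case and to the validity of the main results in dimension $\le n$ in general.

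The hard part will be the canonical bundle formula step. As usually stated it requires $K_X+B$ to be trivial over all of $Z$, whereas I only have triviality on the generic fibre; I would handle this by first replacing $(X/Z,B)$ with a relative good minimal model, which exists and has semi‑ample$/Z$ log canonical divisor by Theorem \ref{lc-flips-2}, so that $K+B$ becomes $\R$‑linearly trivial over the relative ample model $V$ of $(X/Z,B)$, and then descend to $V$. This reduction has a cost: $V\to Z$ is birational (the generic fibre is trivial), so the pulled‑back boundary part becomes only nef and big rather than ample, and I would re‑ampleize it by Kodaira's lemma before invoking [\ref{BCHM}] or Corollary \ref{t-main-cor'}. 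This manoeuvre is routine when $(X,B)$ is klt, but when $(X,B)$ has log canonical centres one must arrange that the small effective divisor split off by Kodaira's lemma avoids those centres so that the perturbed pair stays log canonical; this, together with keeping track of the moduli part produced by the canonical bundle formula, is where the bookkeeping is most delicate, and is the principal obstacle I would expect.
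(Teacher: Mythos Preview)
Your overall plan --- run an LMMP$/Z$ on $K_X+B$ via Theorem \ref{lc-flips-2} to reach a model $X'\to T$ with $T\to Z$ birational and $K_{X'}+B'\sim_\R 0/T$, then reduce to the situation of the main theorem over the new base $T$ --- is exactly the skeleton of the paper's proof. Where you diverge is in how you carry out the reduction over $T$: you descend to $T$ by a canonical bundle formula and then try to massage the pulled-back $h^*A_Z$ back into a genuinely ample divisor via Kodaira's lemma, before lifting a good minimal model of $(T,B_T+h^*A_Z)$ back up by the Nakayama--Zariski machinery.

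The paper avoids this descent entirely. After reaching $g:(X',B')\to T$, it simply rewrites
\[
K_{X'}+A'+B'\;\sim_\R\;(1-\epsilon)\Bigl(K_{X'}+B'+\tfrac{1}{1-\epsilon}\bigl(A'+\epsilon(K_{X'}+B')\bigr)\Bigr),
\]
and observes that $A'+\epsilon(K_{X'}+B')$ is $\R$-linearly the pull-back of an \emph{ample} divisor on $T$ (because $h^*A_Z$ is nef and big and $K_{X'}+B'$ is the pull-back of a divisor which is ample$/Z$). Thus $(X',B'+\text{pull-back of ample on }T)$ satisfies the hypotheses of Proposition \ref{klt-g-model} (in the klt case) or of Theorem \ref{t-main-1} itself (in the lc case), applied with base $T$ instead of $Z$. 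No canonical bundle formula, no Kodaira lemma, no worry about whether the effective correction term meets lc centres.

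This is not merely cosmetic. Your anticipated ``hard part'' --- handling the moduli part of the canonical bundle formula and arranging the Kodaira correction to avoid lc centres --- is a genuine obstacle in the lc case: the b-semiampleness of the moduli part for lc-trivial fibrations is open, so absorbing it into a boundary to obtain an honest lc pair $(V,B_V)$ with $K_{X'}+B'\sim_\R g^*(K_V+B_V)$ is not justified as stated. The paper's algebraic manipulation sidesteps this completely by never leaving $X'$. In the klt case your route would go through (and is essentially what happens inside Proposition \ref{klt-g-model} anyway), but in the lc case the paper's shortcut is what makes the argument close.
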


\begin{proof}
Replacing $(X,\Delta)$ with a dlt blow-up and $f$ with its Stein factorization, we can assume that $f$ is a contraction and $(X,\Delta)$ is $\Q$-factorial dlt. If $K_X+\Delta$ is not pseudo-effective, then it has a Mori fibre space thanks to [\ref{BCHM}]. Hence we assume that $K_X+\Delta$ is pseudo-effective. Replacing $A_Z$ with a general member of $|A_Z|_\R$, we can assume that $\lfloor A+B \rfloor=\lfloor B \rfloor$. Run an LMMP$/Z$ on $K_X+B$ which terminates with a good minimal model $g:(X'/Z,B') \rightarrow T$ thanks to Theorem \ref{lc-flips-2}. Since $K_X+B\sim_\R 0/U$, $\dim T=\dim Z$ and $h:T \rightarrow Z$ is a birational morphism.

It is enough to prove that $(X',A'+B')$ has a good minimal model where $A'$ is the pull-back of $A_Z$. To this end, observe that the birational map $X \dashrightarrow X'$ does not intersect with $f^{-1}U$ and so, the birational morphism induces an isomorphism on the inverse image of $U$.
$$
\xymatrix{
	X \ar[dd]_{f}\ar@{-->}[r]_{} &  X'\ar[d]^{g} &\\
     & T \ar[dl]^{h}   &  \\
	Z &} 
$$
Moreover, if we pick a sufficiently small number $\epsilon>0$, then
\begin{align*}
K_{X'}+A'+B' &\sim_\R (1-\epsilon)(K_{X'}+B)+(A'+\epsilon(K_{X'}+B')) \\
 &\sim_\R (1-\epsilon)(K_{X'}+B+\frac{1}{1-\epsilon}(A'+\epsilon(K_{X'}+B')))
\end{align*}
where $\frac{1}{1-\epsilon}(A'+\epsilon(K_{X'}+B'))$ is $\R$-linear to the pull-back of some ample divisor on $T$. By the assumption that either $(X,B)$ is klt or Theorem~\ref{t-main-1} holds for dimension $n$, the conclusion follows from Lemma \ref{klt-g-model}.
\end{proof}

\vspace{0.3cm}
\section{The vertical case}\label{vertical-case}

Setup $(\ast)$: Replacing $(X,\Delta=A+B)$ with a dlt blow-up and $f$ with its Stein factorization, we can assume that $f$ is a contraction and $(X,\Delta)$ is $\Q$-factorial dlt. If $K_X+\Delta$ is not pseudo-effective, then it has a Mori fibre space thanks to [\ref{BCHM}]. Hence we assume that $K_X+\Delta$ is pseudo-effective. Replacing $A_Z$ with a general member of $|A_Z|_\R$, we can assume that $\lfloor A+B \rfloor=\lfloor B \rfloor$. Suppose that $\lfloor B\rfloor$ is vertical$/Z$ and is denoted by $P$.

\begin{defn}
Let $X$ be a normal projective variety and $D$ be a pseudo-effective divisor on $X$. We define the non-nef locus of $D$ to be the set
$$
\mathrm{NNF}(D):= \{c_X(v)| \sigma_v(\|D\|)>0 \}
$$
where $v$ runs over all discrete valuations of $X$, $\sigma_v$ denotes the numerical asymptotic vanishing order and $c_X$ denotes the centre of the valuation.
\end{defn}

\begin{lem}\label{NNF}
Let $f:Y\longrightarrow X$ be a surjective morphism of normal projective varieties with connected fibres, and $D$ be a pseudo-effective divisor on $X$. We have the following equation on non-nef loci
$$
\mathrm{NNF}(f^\ast D) = f^{-1} \mathrm{NNF}(D).
$$
\end{lem}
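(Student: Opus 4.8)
The plan is to pass to divisorial valuations on birational models and then establish the two inclusions. Since $N_\sigma(E)=\sum_\Gamma\sigma_\Gamma(\|E\|)\,\Gamma$ and $\sigma_v$ is computed on any model, one has $\mathrm{NNF}(E)=\bigcup_\mu\mu\bigl(\Supp N_\sigma(\mu^{\ast}E)\bigr)$, the union over projective birational morphisms $\mu\colon X'\to X$ with $X'$ smooth (and likewise for $Y$). Fixing such a $\mu$, take a common resolution $Y'$ with morphisms $\nu\colon Y'\to Y$, $g\colon Y'\to X'$ and $f\nu=\mu g$; then $\nu^{\ast}f^{\ast}D=g^{\ast}\mu^{\ast}D$, and since the generic fibre of $g$ coincides with that of $f$ (hence is connected) and $X'$ is normal, $g$ is again a contraction, $g_{\ast}\mathcal O_{Y'}=\mathcal O_{X'}$. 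Set $M=\mu^{\ast}D$. The lemma reduces to: (i) any prime divisor $\Gamma$ on $Y'$ with $\sigma_\Gamma(\|g^{\ast}M\|)>0$ satisfies $g(\Gamma)\subseteq\mathrm{NNF}(M)$; and (ii) for any prime divisor $\Gamma_U$ on $X'$ with $\sigma_{\Gamma_U}(\|M\|)>0$ and any component $\Gamma$ of $g^{\ast}\Gamma_U$, $\sigma_\Gamma(\|g^{\ast}M\|)>0$. Indeed, (i) gives $\mathrm{NNF}(f^{\ast}D)\subseteq f^{-1}\mathrm{NNF}(D)$ after applying $\mu,\nu$; and for the reverse inclusion one lifts: given $y$ with $f(y)\in\mathrm{NNF}(D)$, pick $\mu$, $\Gamma_U$ with $\sigma_{\Gamma_U}(\|\mu^{\ast}D\|)>0$ and $f(y)\in\mu(\Gamma_U)$, choose $Y'$ dominating $Y\times_X X'$ so that $\nu^{-1}(y)$ meets $g^{-1}(\Gamma_U)$, and a component $\Gamma$ of $g^{\ast}\Gamma_U$ through a point of $\nu^{-1}(y)$; then (ii) places $\Gamma$ in $\Supp N_\sigma(\nu^{\ast}f^{\ast}D)$, so $y\in\nu(\Gamma)\subseteq\mathrm{NNF}(f^{\ast}D)$.

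To prove (i), let $w=\mathrm{ord}_\Gamma|_{\C(X')}$ and $Z=c_{X'}(w)=g(\Gamma)$, and suppose $Z\not\subseteq\mathrm{NNF}(M)$, i.e.\ $\sigma_w(\|M\|)=0$. Fix an ample $A_{X'}$; then $\mathrm{ord}_w(\|M+\epsilon A_{X'}\|)=0$ for every $\epsilon>0$, so by standard properties of base loci of big divisors there is $E\ge0$, $E\sim_\R M+\epsilon A_{X'}$, with $Z\not\subseteq\Supp E$. Pulling back, $g^{\ast}E\ge0$, $g^{\ast}E\sim_\R g^{\ast}M+\epsilon g^{\ast}A_{X'}$, and $\Gamma\not\subseteq\Supp g^{\ast}E$ because the generic point of $\Gamma$ maps to that of $Z$, which avoids $\Supp E$; hence $\mathrm{ord}_\Gamma(\|g^{\ast}M+\epsilon g^{\ast}A_{X'}\|)=0$. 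For any $\epsilon'>0$, choosing $\epsilon$ small enough that $\epsilon'A_{Y'}-\epsilon g^{\ast}A_{X'}$ is ample (for a fixed ample $A_{Y'}$ on $Y'$) and using subadditivity of $\mathrm{ord}_\Gamma(\|\cdot\|)$ together with $\mathrm{ord}_\Gamma(\|\mathrm{ample}\|)=0$ gives $\mathrm{ord}_\Gamma(\|g^{\ast}M+\epsilon'A_{Y'}\|)=0$; letting $\epsilon'\downarrow0$ forces $\sigma_\Gamma(\|g^{\ast}M\|)=0$, contradicting the hypothesis. (In particular $\Gamma$ cannot be horizontal over $X'$, since then $Z=X'$ and $\sigma_w(\|M\|)=0$ automatically.)

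Fact (ii) is the crux, and I expect the main work to lie here. Since $g_{\ast}\mathcal O_{Y'}=\mathcal O_{X'}$, every section of a line bundle pulled back from $X'$ is itself pulled back, so $|g^{\ast}L|=g^{\ast}|L|$ for Cartier $L$ on $X'$; therefore $\mathrm{ord}_\Gamma(\|g^{\ast}L\|)=e_\Gamma\cdot\mathrm{ord}_{\Gamma_U}(\|L\|)$, where $e_\Gamma\ge1$ is the multiplicity of $\Gamma$ in $g^{\ast}\Gamma_U$, and applying this to rational approximations of $M+\epsilon A_{X'}$ shows the \emph{linear} asymptotic order of $g^{\ast}(M+\epsilon A_{X'})$ along $\Gamma$ is positive for small $\epsilon$. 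The obstacle is that $g^{\ast}(M+\epsilon A_{X'})$ need not be big when $g$ has positive-dimensional fibres, so this does not at once yield positivity of the \emph{numerical} order $\sigma_\Gamma(\|g^{\ast}M\|)$; bridging this is the technical heart of the lemma. I would do so either by invoking Nakayama's analysis of the behaviour of the $\sigma$-decomposition under surjective morphisms ([\ref{Nakayama}, Chapter III, \S5], cf.\ [\ref{Nakayama}, Chapter V, \S2]), or by induction on $\dim Y'-\dim X'$: cutting $Y'$ with a general very ample divisor $S$ replaces $g$ by a surjective $g|_S\colon S\to X'$ of one lower relative dimension with $(g^{\ast}M)|_S=(g|_S)^{\ast}M$, and restricted base loci restrict compatibly to general members of a very ample system, the base case being the birational invariance of $\sigma_v$. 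Either way one obtains (ii), and together with (i) and the first paragraph this proves the lemma.
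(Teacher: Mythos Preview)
Your proposal is essentially correct, but the paper's own proof is a single sentence: ``This follows directly from [\ref{Nakayama}, Chapter III, 5.15 Lemma].'' What you have written is a reconstruction of (part of) that very result of Nakayama. You correctly isolate the nontrivial content in your fact (ii)---positivity of the \emph{numerical} order $\sigma_\Gamma(\|g^{\ast}M\|)$ from positivity of $\sigma_{\Gamma_U}(\|M\|)$ when $g$ has positive-dimensional fibres---and then you propose to resolve it by invoking Nakayama, Chapter III, \S5. That is precisely the reference the paper cites, so at the decisive step your argument collapses to the paper's one-line proof.

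Two further comments. First, in your proof of (i) the sentence ``there is $E\ge0$, $E\sim_\R M+\epsilon A_{X'}$, with $Z\not\subseteq\Supp E$'' is not quite justified: $Z=c_{X'}(w)$ need not be a divisor, and $\mathrm{ord}_w(\|M+\epsilon A_{X'}\|)=0$ only gives effective $E$ with $w(E)$ arbitrarily small, not necessarily zero. The conclusion $\mathrm{ord}_\Gamma(\|g^{\ast}M+\epsilon g^{\ast}A_{X'}\|)=0$ still follows, since $\mathrm{ord}_\Gamma(g^{\ast}E)=w(E)$, but you should phrase it via a sequence rather than a single $E$. Second, your alternative route for (ii) via induction on the relative dimension by cutting with a general very ample $S\subset Y'$ is only sketched; to make it work you would have to control how $\sigma_\Gamma$ behaves under restriction to $S$ (e.g.\ via the compatibility of $\mathbf{B}_-$ with general hyperplane sections) and handle the Stein factorisation of $g|_S$. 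This is doable but is again essentially the content of the Nakayama lemma you are trying to avoid.
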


\begin{proof}
This follows directly from [\ref{Nakayama}, Chapter III, 5.15 Lemma].
\end{proof}

\begin{rem}
It is already known that the non-nef locus coincides with the restricted base locus on a smooth variety (or with klt type singularities). Readers can consult [\ref{C-B}] for details. An analogue in positive characteristic is proved in [\ref{Mustata}]. However, it remains an open question in the general case.
\end{rem}

\begin{lem}\label{vertical-lem'}
	Let $(X,\Delta=A+B)$, $f$ and $P$ be as in Setup $(\ast)$. Assume that $K_X+\Delta$ is pseudo-effective. Let $H$ be an ample divisor on $X$, $\delta>0$ be a sufficiently small number and let $(W,\Delta_W+\delta H_W)$ be a good minimal model of $(X,\Delta+\delta H)$. Assume further that Theorem~\ref{t-main-1} holds for dimension $\leq n-1$. Then, $K_W+\Delta_W$ is semi-ample on every lc centre, that is, $(K_W+\Delta_W)|_S$ is semi-ample for every lc centre $S$ of $(W,\Delta_W)$.
\end{lem}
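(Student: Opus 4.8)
The plan is to establish semi-ampleness of $(K_W+\Delta_W)|_S$ one log canonical centre $S$ at a time, by restricting via adjunction and feeding the restriction to Theorem~\ref{t-main-1} in dimension $\le n-1$. Recall that the birational map $\phi\colon X\dashrightarrow W$ attached to the minimal model creates no new lc centres, and, for $\delta$ small, $\lfloor\Delta_W+\delta H_W\rfloor=\lfloor\Delta_W\rfloor$; hence every lc centre $S$ of $(W,\Delta_W)$ is the birational transform of an lc centre $S_X$ of $(X,\Delta)$. Since $P=\lfloor B\rfloor$ is vertical$/Z$, so is $S_X$, and we may take the Stein factorization $S_X\to Z_S\to Z$ of $S_X\to f(S_X)$, where $Z_S$ is normal and $\dim S_X\le n-1$.

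First I would carry out the adjunction on $X$. By dlt adjunction $(S_X,\Theta_{S_X})$ is dlt, where $K_{S_X}+\Theta_{S_X}:=(K_X+\Delta)|_{S_X}$; since $A$ is general in the sense of Setup~$(\ast)$, we may write $\Theta_{S_X}=A_{S_X}+B_{S_X}$ with $A_{S_X}=A|_{S_X}\ge 0$, $B_{S_X}\ge 0$, and $A_{S_X}\sim_\R f_{S}^{\ast}A_{Z_S}$ the pull-back of an ample divisor $A_{Z_S}$ on $Z_S$ (the restriction of the ample $A_Z$ to $f(S_X)$, pulled back to $Z_S$). Moreover $K_{S_X}+\Theta_{S_X}\sim_\R(K_X+\Delta)|_{S_X}\sim_\R 0/Z$, and as $S_X\to Z$ factors through $Z_S$ this yields $K_{S_X}+\Theta_{S_X}\sim_\R 0/Z_S$. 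Thus $(S_X,\Theta_{S_X})\to Z_S$ is precisely in the situation of Theorem~\ref{t-main-1} in dimension $\le n-1$, so $(S_X,\Theta_{S_X})$ has either a good minimal model or a Mori fibre space.

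Next I would prove that $K_W+\Delta_W$ is nef. For $\delta$ sufficiently small, finiteness of minimal models for the family $(X,\Delta+tH)$ with $t$ ranging over a small interval $(0,\delta_0]$ (equivalently, the geography of weak lc models) lets us choose the good minimal model $W$ and its boundary $\Delta_W$ independent of $t$, so that $K_W+\Delta_W+tH_W$ is semi-ample for every $t\in(0,\delta_0]$; letting $t\downarrow 0$ shows $K_W+\Delta_W$ is nef. Consequently $(K_W+\Delta_W)|_S=K_S+\Theta_S$ is nef, hence pseudo-effective, where $(S,\Theta_S)$ is the dlt pair obtained by adjunction on $W$. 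Since $\phi$ induces a birational contraction $S_X\dashrightarrow S$ (special termination, as recalled in Section~\ref{preliminaries}), standard arguments show $(S,\Theta_S)$ shares its minimal model theory with $(S_X,\Theta_{S_X})$; in particular $K_{S_X}+\Theta_{S_X}$ is pseudo-effective, so the Mori fibre space alternative is excluded and $(S_X,\Theta_{S_X})$, hence $(S,\Theta_S)$ (cf. [\ref{B-lc-flips}, Corollary 3.7] and [\ref{BH-I}]), has a good minimal model. Finally, since $K_S+\Theta_S$ is already nef and $(S,\Theta_S)$ is dlt, the Negativity Lemma forces any good minimal model of $(S,\Theta_S)$ to be crepant to it, whence $(K_W+\Delta_W)|_S=K_S+\Theta_S$ is semi-ample (cf. [\ref{B-lc-flips}, Remark 2.7]).

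The step I expect to be the main obstacle is the proof that $K_W+\Delta_W$ is nef: this asks one to make the minimal model $W$ uniform in the auxiliary parameter $\delta$, which relies on finiteness of minimal models (or the geography of weak lc models) for pairs whose boundary contains an ample part, and it requires a careful transfer of pseudo-effectivity and of minimal models along the restriction $S_X\dashrightarrow S$ of the minimal model program, which is only a birational contraction and not an isomorphism in codimension one in general. A secondary technical point is making the adjunction precise: one must check that the ``ample piece'' $A_{S_X}$ genuinely is the pull-back of an ample divisor on the base $Z_S$ and that the relative triviality descends to $Z_S$ rather than merely holding over $Z$, which is exactly why we pass to the Stein factorization and normalization of the image of $S_X$.
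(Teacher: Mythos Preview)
Your approach has a genuine circularity. The key step---showing that $K_W+\Delta_W$ is nef by invoking finiteness of minimal models (the geography of weak lc models) for the family $\{(X,\Delta+tH)\}_{t\in(0,\delta_0]}$---uses a result that is only available for lc pairs \emph{after} Theorem~\ref{t-main-1} has been established in dimension $n$; in the paper this is precisely Corollary~\ref{cor-geography}, which is deduced from Theorem~\ref{t-main-1}. Since Lemma~\ref{vertical-lem'} is an ingredient in the proof of Theorem~\ref{vertical-psef}, which in turn is part of the inductive proof of Theorem~\ref{t-main-1} in dimension $n$, you cannot appeal to it here. The klt finiteness from [\ref{BCHM}] does not apply because $(X,\Delta+tH)$ has $\lfloor B\rfloor\neq 0$ and is only dlt.

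The paper sidesteps this entirely: it never claims $K_W+\Delta_W$ is nef. Instead it (i) uses the verticality of $P$ and Lemma~\ref{NNF} to show that $\mathrm{NNF}(K_X+\Delta+\delta H)$ is vertical$/Z$, producing an open set $V\subset Z$ over which $X\dashrightarrow W$ is an isomorphism and which contains the images of the generic points of all lc centres of $(W,\Delta_W)$; (ii) runs a further LMMP on $K_W+\Delta_W$ with scaling of $\delta H_W$; and (iii) proves termination near $\lfloor\Delta_{W_k}\rfloor$ by special termination, applying Corollary~\ref{klt-lc-m} (not Theorem~\ref{t-main-1} directly) to a log smooth model $(\widetilde{S_W},\Delta_{\widetilde{S_W}})$ of the restricted pair. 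The point of (i) is exactly to verify the hypothesis $K_{\widetilde{S_W}}+\Delta_{\widetilde{S_W}}\sim_\R 0/V_T$ of Corollary~\ref{klt-lc-m}, together with $\Delta_{\widetilde{S_W}}\ge\epsilon\, p^*A_S$. The conclusion is then obtained by replacing $\delta$ with a smaller number, so that the $W$ in the statement is one of the $W_k$. Your secondary worry---that $S_X\dashrightarrow S$ is only a birational contraction and the transfer of minimal models along it needs care---is real, and this is another place where the open-set argument in (i) does the work that your ``standard arguments'' leave implicit.
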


\begin{proof}
By replacing $H$ with a large multiple, we can assume that $K_X+\Delta+H$ is ample. Again by replacing $H$ with a general member of $|H|_\R$, we can assume that $(X,\Delta+H)$ is dlt. Now run an LMMP on $K_X+\Delta +\delta H$ with scaling of $H$ which terminates with a good minimal model by [\ref{BCHM}]. We assume this LMMP ends with $(W,\Delta_W+\delta H_W)$. Because $\mathrm{NNF}(K_X+\Delta+ \delta H) \subseteq \mathrm{NNF}(K_X+\Delta)$ is a closed subset, $f (\mathrm{NNF}(K_X+\Delta+ \delta H)) \subseteq f (\mathrm{NNF}(K_X+\Delta))$. By Lemma~\ref{NNF}, $f (\mathrm{NNF}(K_X+\Delta+ \delta H))$ is a proper closed subset. If $\delta$ is sufficiently small, then $\mathrm{NNF}(K_X+\Delta+ \delta H)$ contains every lc centre which is contained in $\mathrm{NNF}(K_X+\Delta)$. If we choose an open subset $V \subseteq Z\setminus f (\mathrm{NNF}(K_X+\Delta+ \delta H))$ of $Z$ and denote its inverse image on $X$ by $U$, then the birational map $X \dashrightarrow W$ is an isomorphism on $U$. We assume that $U_W$ contains the generic points of all lc centres of $(W,\Delta_W)$ where $U_W$ is the open subset of $W$ which is isomorphic to $U$.
	
If $K_W+\Delta_W$ is not nef on some lc centre, then we again run an LMMP on $K_W+\Delta_W$ with scaling of $\delta H_W$ where $H_W$ is the birational transform of $H$. 
$$
(W,\Delta_W)=(W_1,\Delta_{W_1})\dashrightarrow \dots \dashrightarrow (W_k,\Delta_{W_k})\dashrightarrow(W_{k+1},\Delta_{W_{k+1}}) \dashrightarrow \dots 
$$
We claim that the LMMP above terminates near $\lfloor \Delta_{W_k} \rfloor$ for $k \gg 0$. Moreover, $K_{W_k}+\Delta_{W_k}$ is semi-ample on every lc centre. We therefore conclude the lemma by replacing $\delta$ with some smaller positive number.
	
To this end, let $S_W$ be an lc centre of $(W,\Delta_W)$, $S$ be the corresponding lc centre of $(X,\Delta)$ and $\widetilde{S_{W}}$ be a common log resolution of $(S,\Delta_S)$ and $(S_W,\Delta_{S_W})$. We pick a divisor $\Delta_{\widetilde{S_{W}}}$ so that $(\widetilde{S_{W}}, \Delta_{\widetilde{S_{W}}})$ is a log smooth model of $(S_W,\Delta_{S_W})$. Replacing $W$ with $W_k$ for $k \gg 0$ and by induction we can assume that the original LMMP induces an LMMP on $K_{S_W}+\Delta_{S_W}$ with scaling of $\delta H_{S_W}$ where $H_{S_W}= H_W|_{S_W}$. Consider the following commutative diagram
	$$
	\xymatrix{
		& \widetilde{S_{W}} \ar[dl]_{p} \ar[dr]^{q}  &\\
		S  \ar[d]_{f}\ar@{-->}[rr]_{} & & S_W \\
		T } 
	$$
	where $T$ is defined by the Stein factorization $S \rightarrow T\rightarrow Z$.
	
	We denote the restriction of $U$ to $S$ by $U_S$, the restriction of $U_W$ to $S_W$ by $U_{S_W}$ and the restriction of $V$ to $T$ by $V_T$. Then, $U_S$ is isomorphic to $U_{S_W}$, and hence we assume that $U_{\widetilde{S_{W}}}$ is also isomorphic to $U_S$ where $U_{\widetilde{S_{W}}}$ is the inverse image of $U_S$. It follows that $K_{\widetilde{S_{W}}} +\Delta_{\widetilde{S_{W}}} \sim_\R 0/V_T$. By the construction of $W$ we have $\pi^\ast A \leq \pi_W^\ast A_W $ for any common resolution $X \xleftarrow{\pi} \widetilde{W} \xrightarrow{\pi_W} W$ where $A_W$ is the birational transform of $A$. We therefore deduce that $p^\ast A_S \leq q^\ast A_{S_W}$ where $A_S=A|_S$ and $A_{S_W}=A_W|_{S_W}$. It is easy to check that $\Delta_{\widetilde{S_{W}}} \geq \epsilon q^\ast A_{S_W}$ for some sufficiently small number $\epsilon >0$ and hence $\Delta_{\widetilde{S_{W}}} \geq \epsilon p^\ast A_S$. Then, $(\widetilde{S_{W}},\Delta_{\widetilde{S_{W}}})$ has a good minimal model thanks to Corollary \ref{klt-lc-m} and hence $(S_W,\Delta_{S_W})$ has a good minimal which in turn implies that the induced LMMP on $K_{S_W}+\Delta_{S_W}$ terminates and $K_{S_{W_k}}+\Delta_{S_{W_k}}$ is semi-ample for $k \gg 0$.
\end{proof}

\subsection{The pseudo-effective case}
We treat the case when $K_X+\Delta -\epsilon P$ is pseudo-effective for every sufficiently small number $\epsilon>0$.

\begin{lem}\label{vertical-lem}
Let $(X,\Delta=A+B)$, $f$ and $P$ be as in Setup $(\ast)$. Assume that $K_X+\Delta -\epsilon P$ is pseudo-effective for all sufficiently small numbers $\epsilon>0$. Then, $(X,\Delta)$ and $P$ induces a sequence of log birational models $(X_j,\Delta_j)$ of $(X,\Delta)$ together with a sequence of birational maps  
 $$
 (X_2,\Delta_2) \dashrightarrow (X_3,\Delta_3) \dashrightarrow\cdots \dashrightarrow (X_j,\Delta_j) \dashrightarrow \cdots
 $$ 
for $j \ge 2$ such that 
 
$\bullet$ the sequence can be embedded into a sequence of LMMP on $K_{X_2}+B_2$ with scaling of some divisor $t_2 C_2$, and $\lim\limits_{j\rightarrow \infty} t_j=0$ where $t_j$'s are the coefficients appeared in the LMMP with scaling, where $C_2 \sim_\R K_{X_2}+\Delta_2 -\epsilon P_2$ for some small number $\epsilon>0$, a divisor $P_2 \geq 0$ and $\mathrm{Supp} P_2 = \lfloor \Delta_2 \rfloor$;

$\bullet$ the LMMP above consists of only log flips which does not contract any lc centre;
 
$\bullet$  for every $j$, $(X_j,\Delta_j=A_j+B_j)$ is $\Q$-factorial dlt where $A_j \geq 0$, $\lfloor A_j+B_j\rfloor =\lfloor B_j \rfloor$ and for any common resolution $X \xleftarrow{\pi}\overline{X_j} \xrightarrow{\rho} X_j$, we have $\pi^\ast A \leq \rho^\ast A_j$;

$\bullet$ for every $j$, there exist a nonempty open subset $V_j \subset Z$, $U_j'= f^{-1}V_j \subset X$ and an open subset $U_j \subset X_j$ such that, there is a common resolution $U_j' \xleftarrow{p_j} \overline{U_j} \xrightarrow{q_j} U_j$ of $U_j$ and $U_j'$ with $p_j$, $q_j$ surjective, and
$$
p_j^\ast ((K_X+\Delta)|_{U_j'})=q_j^\ast((K_{X_j}+\Delta_j)|_{U_j});
$$ 

$\bullet$ there exists a divisor $C \geq 0$ on $X$ such that for every $j \ge 2$ and any prime divisor $\Gamma$ over $X$, we have the equation
 \begin{align}\label{eq1}
 a(\Gamma,X,\Delta+t_jC) +\sigma_\Gamma(K_X+\Delta +t_j C) =a(\Gamma,X_j,\Delta_j+t_j C_j).
 \end{align}
From Equation (\ref{eq1}), one deduces that if $\Gamma$ is a prime divisor over $X$ with $a(\Gamma,X_j,\Delta_j)=0$, then $a(\Gamma,X,\Delta)=0$ and $\sigma_\Gamma(K_X+\Delta)=0$;

$\bullet$ there is a $\Q$-factorial dlt blow-up $h:(X',\Delta') \to (X,\Delta)$ such that the birational contraction $X' \dashrightarrow X_j$ is an isomorphism at the generic point of every lc centre of $(X_j,\Delta_j)$. 

In particular, if the LMMP above terminates, then it ends with a weak lc model $(Y,\Delta_Y)$ of $(X,\Delta)$.
\end{lem}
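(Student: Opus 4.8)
The plan is to carry out the ``special LMMP'' construction of Birkar recalled in Section~\ref{preliminaries} in the present setting, the new input being that the needed good minimal models are now supplied by Corollary~\ref{klt-lc-m} rather than assumed. Fix a strictly decreasing sequence $\epsilon_1>\epsilon_2>\cdots$ of sufficiently small positive reals with $\epsilon_j\to0$. Since $\mathrm{Supp}\,P=\lfloor\Delta\rfloor$, the pair $(X,\Delta-\epsilon_jP)=(X,(B-\epsilon_jP)+A)$ is klt and its boundary part $B-\epsilon_jP$ is klt as well; because $K_X+\Delta\sim_\R 0/Z$ and $A\sim_\R f^{\ast}A_Z$ is a pull-back, $K_X+(B-\epsilon_jP)\sim_\R 0/U$ over the nonempty open set $U:=Z\setminus f(P)$ ($P$ being vertical$/Z$), and the lc-centre hypothesis of Corollary~\ref{klt-lc-m} is vacuous as the pair is klt. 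As $K_X+\Delta-\epsilon_jP$ is pseudo-effective by assumption, Corollary~\ref{klt-lc-m} provides a good minimal model $(X_j,\Delta_j-\epsilon_jP_j)$ of $(X,\Delta-\epsilon_jP)$, where $\Delta_j,P_j$ denote birational transforms; standard termination arguments (cf. [\ref{B-lc-flips}]) then let us take $X\dashrightarrow X_j$ to be a finite sequence of $(K_X+\Delta-\epsilon_jP)$-flips and divisorial contractions.

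Next I would extract the structural data exactly as in Birkar's construction. By the ACC theorems (Theorems~\ref{ACC-1} and~\ref{ACC-2}), taking $\epsilon_1\ll1$ forces $(X_j,\Delta_j)$ to be $\Q$-factorial lc and all the $X_j$ to be isomorphic in codimension one; lifting the construction to $\Q$-factorial dlt blow-ups as in Section~\ref{preliminaries} we may assume each $(X_j,\Delta_j=A_j+B_j)$ is $\Q$-factorial dlt with $\lfloor A_j+B_j\rfloor=\lfloor B_j\rfloor$. Since the semi-ample summand $A\sim_\R f^{\ast}A_Z$ is numerically trivial on every flipping or divisorial ray of the LMMP producing $X_j$, it is carried along without being contracted, and the Negativity Lemma gives $\pi^{\ast}A\le\rho^{\ast}A_j$ for any common resolution $X\xleftarrow{\pi}\overline{X_j}\xrightarrow{\rho}X_j$. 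Put $C_1\sim_\R K_{X_1}+\Delta_1-\epsilon_1P_1\ge0$, which is semi-ample and makes $(X_1,\Delta_1+C_1)$ lc, let $C_j$ be its birational transform, and set $t_j:=\epsilon_j/(\epsilon_1-\epsilon_j)$, so $t_j\downarrow0$ and $K_{X_1}+\Delta_1+t_jC_1\sim_\R(1+t_j)(K_{X_1}+\Delta_1-\epsilon_jP_1)$. Hence $(X_j,\Delta_j+t_jC_j)$, up to scaling, is a good minimal model of $(X_2,\Delta_2+t_jC_2)$, and the interpolation argument of [\ref{B-lc-flips}, Theorem~1.9] together with the constructions recalled in Section~\ref{preliminaries} organize the $(X_j,\Delta_j)$ into a single LMMP on $K_{X_2}+B_2$ with scaling of $t_2C_2$ whose scaling coefficients are the $t_j$ and tend to $0$; discarding finitely many initial steps makes this LMMP consist of flips only and contract no lc centre (there being finitely many lc centres, each contracted at most finitely often).

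It remains to verify the last three bullets and the final assertion. For the open-set compatibility, choose $V_j\subseteq Z$ in the complement of the closed set $f(\mathrm{NNF}(K_X+\Delta+t_jC))$, which is proper by Lemma~\ref{NNF}, let $U_j'=f^{-1}V_j$ and $U_j\subseteq X_j$ its isomorphic image; over $U_j'$ the birational map $X\dashrightarrow X_j$ is an isomorphism and crepant for $K_X+\Delta$, so a common resolution yields the required identity. The crepancy equation~\eqref{eq1} holds because $X\dashrightarrow X_j$ is $(K_X+\Delta+t_jC)$-non-positive and $(X_j,\Delta_j+t_jC_j)$ is a log minimal model of $(X,\Delta+t_jC)$, with $C$ the birational transform of $C_1$ on $X$: the standard minimal-model bookkeeping (via the Nakayama--Zariski decomposition) gives $a(\Gamma,X,\Delta+t_jC)+\sigma_\Gamma(K_X+\Delta+t_jC)=a(\Gamma,X_j,\Delta_j+t_jC_j)$ for every $\Gamma$, and letting $t_j\downarrow0$ and using $\sigma_\Gamma\ge0$ yields the stated consequence. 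For the dlt blow-up, take $h:(X',\Delta')\to(X,\Delta)$ a $\Q$-factorial dlt blow-up extracting all lc places of $(X,\Delta)$; by~\eqref{eq1} every lc place of $(X_j,\Delta_j)$ is an lc place of $(X,\Delta)$ and hence appears on $X'$, so, as $X'\dashrightarrow X_j$ is a birational contraction between dlt pairs isomorphic in codimension one, it is an isomorphism at the generic point of every lc centre of $(X_j,\Delta_j)$. Finally, if the LMMP above terminates it ends on a model $(Y,\Delta_Y=A_Y+B_Y)$ with $K_Y+B_Y$ nef, hence $K_Y+\Delta_Y=K_Y+B_Y+A_Y$ nef; reading~\eqref{eq1} at $t=0$ gives $a(\Gamma,X,\Delta)+\sigma_\Gamma(K_X+\Delta)=a(\Gamma,Y,\Delta_Y)$, so $a(\Gamma,X,\Delta)\le a(\Gamma,Y,\Delta_Y)$ and $(Y,\Delta_Y)$ is a weak lc model of $(X,\Delta)$.

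The step I expect to be the main obstacle is the middle one: assembling a single LMMP on $K_{X_2}+B_2$ with scaling that simultaneously visits every $(X_j,\Delta_j)$ as $\epsilon_j\downarrow0$ (this needs the [\ref{B-lc-flips}, Theorem~1.9]-type interpolation and uniqueness of good minimal models), consists only of flips contracting no lc centre, and has vanishing limiting scaling --- and, running in parallel, keeping exact track of the Nakayama--Zariski data so that~\eqref{eq1} holds on the nose, since that identity is what drives both the dlt-blow-up statement and the weak-lc-model conclusion. A secondary point requiring care is controlling the inert semi-ample summand $A$ throughout: that it is never contracted, that $\pi^{\ast}A\le\rho^{\ast}A_j$, and that the auxiliary open sets can be taken to be $f$-vertical complements.
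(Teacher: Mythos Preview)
Your overall strategy matches the paper's: run Birkar's special LMMP using Corollary~\ref{klt-lc-m} to supply good minimal models of $(X,\Delta-\epsilon_jP)$, then organise the $X_j$ into a single LMMP with scaling and lift to dlt models. However, two of the bullets are not established by what you wrote.

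\emph{Verticality of the non-nef locus.} You claim $f(\mathrm{NNF}(K_X+\Delta+t_jC))$ is proper ``by Lemma~\ref{NNF}'', but that lemma only applies to pullbacks $f^*D$. Here $K_X+\Delta+t_jC\sim_\R(1+t_j)(K_X+\Delta-\epsilon_jP)$, and while $K_X+\Delta\sim_\R 0/Z$ is a pullback, $P=\lfloor B\rfloor$ is merely vertical, not a pullback, so the lemma does not apply. The paper instead runs an LMMP$/Z$ on $K_X+\Delta-\epsilon_jP$: since the pair is klt and trivial on the generic fibre, Lemma~\ref{klt-m-g-model} gives a good minimal model $(X''/Z,\Delta''-\epsilon_jP'')\to Z'$ with $Z'\to Z$ birational, from which one reads off that $\mathrm{NNF}(K_X+\Delta-\epsilon_jP)$ is vertical and that $X\dashrightarrow X_j$ is an isomorphism over a nonempty open $V\subset Z$. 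This step is genuinely missing from your argument.

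\emph{Construction of the dlt blow-up $X'$.} A $\Q$-factorial dlt blow-up extracts only finitely many divisors, so ``extracting all lc places of $(X,\Delta)$'' is not a valid construction; moreover your conclusion that $X'\dashrightarrow X_j$ is simultaneously a birational contraction and an isomorphism in codimension one is self-contradictory and in any case unjustified. The paper's construction is specific: take a common log resolution $\overline{X}$ of $(X,\Delta)$ and $(X_2,\Delta_2)$, form log smooth models $(\overline{X},\overline{\Delta})$ and $(\overline{X},\overline{\Omega})$, take $(X_2',\Delta_2')$ a log minimal model of $(\overline{X}/X_2,\overline{\Omega})$ and $(X',\Delta')$ a log minimal model of $(\overline{X}/X,\overline{\Delta})$. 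Then $\overline{X}\dashrightarrow X_2'$ is an MMP, hence an isomorphism at generic points of lc centres of $(X_2',\Delta_2')$, and one combines this with Equation~\eqref{eq1} to transfer the statement to $X'$.

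A smaller point: your reason for $\pi^*A\le\rho^*A_j$, namely that $A$ is numerically trivial on every extremal ray of the LMMP producing $X_j$, is false in general since that LMMP is global, not over $Z$. The paper does not spell this out either, but you should not claim a justification that does not hold.
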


\begin{proof}
Pick a sequence of sufficiently small positive numbers $\epsilon_1>\epsilon_2> \ldots >\epsilon_j> \ldots$ such that $\lim_{j \rightarrow \infty} \epsilon_j =0$, and for each $j$ we run an LMMP on $K_X+\Delta -\epsilon_j P$ which terminates with a good minimal model $(X_j,\Delta_j-\epsilon_j P_j=A_j+B_j -\epsilon_j P_j)$ by Corollary~\ref{klt-lc-m}. We denote by $\phi_j: X \dashrightarrow X_j$ the birational maps. As we pointed out in Section \ref{preliminaries}: a special LMMP, we assume that all $X_j$'s are isomorphic in codimension one and all $(X_j,\Delta_j)$ are $\Q$-factorial lc by ACC for log canonical thresholds Theorem \ref{ACC-1}. Let $C_1 \sim_\mathbb{R} K_{X_1}+\Delta_1 -\epsilon_1 P_1$. Then, we can run an LMMP on $K_{X_2}+\Delta_2$ with scaling of $t_2 C_2$ where $t_2=\frac{\epsilon_2}{\epsilon_1 -\epsilon_2}$ such that for each $j$ the pair $(X_j,\Delta_j)$ appears in this process. 
$$
(X_2,\Delta_2) \dashrightarrow (X_3,\Delta_3) \dashrightarrow \cdots \dashrightarrow (X_j,\Delta_j) \dashrightarrow \cdots
$$ 
Note that $K_{X_j}+\Delta_j+t_j C_j$ is semi-ample where $t_j=\frac{\epsilon_j}{\epsilon_1 -\epsilon_j}$. Since $t_j$ approaches zero as we run this program, we assume further the LMMP above does not contract any lc centres of $(X_j,\Delta_j)$ for every $j$ after a possible reindexing.

For every $j$, we prove that there exists a nonempty open subset $V_j \subset Z$ such that there is an open subset $U_j \subset X_j$ containing every lc centre of $(X_j,\Delta_j)$, for any common resolution $U_j' \xleftarrow{p_j} \overline{U_j} \xrightarrow{q_j} U_j$ of $U_j$ and $U_j'=f^{-1}V_j$, and we have
$$
p_j^\ast ((K_X+\Delta)|_{U_j'})=q_j^\ast((K_{X_j}+\Delta_j)|_{U_j});
$$ 
To this end, we claim that the closed subset $\mathrm{NNF}(K_X+\Delta -\epsilon_j P)$ is vertical$/Z$, which implies that the birational map $X \dashrightarrow X_j$ induces an isomorphism on the inverse image of a nonempty open subset $V$ of $Z$ as long as $V$ is outside $f(\mathrm{NNF}(K_X+\Delta -\epsilon_j P))$. In fact, if we run an LMMP$/Z$ on $K_X+\Delta -\epsilon_j P$, then it terminates with a good minimal model $(X''/Z,\Delta''-\epsilon_j P'') \rightarrow Z'$. We get a commutative diagram
$$
\xymatrix{
	X  \ar[d]_{f}\ar@{-->}[rr]_{} & & X''  \ar[d]_{f'}\\
	Z && Z' \ar[ll]_{} } 
$$
where the bottom arrow is birational and $K_{X''}+\Delta'' -\epsilon_j P''\sim_\R 0/Z'$. It turns out that $\mathrm{NNF}(K_X+\Delta -\epsilon_j P)$ is vertical$/Z$.

 As we want to lift the LMMP constructed above to dlt pairs (see Section \ref{preliminaries}: lifting a sequence of log flips with scaling), we study the lc centres of $(X_j,\Delta_j)$ for $j\geq 2$. Let $S_j$ be an lc centre of $(X_j,\Delta_j)$ and the prime divisor $\Gamma$ be a corresponding divisor over $X_j$ with centre at $S_j$. We have that $a(\Gamma,X_k,\Delta_k)=0$ for all $k$ since the LMMP does not contain any lc centre as we assumed before. Let $0 \leq C\sim_\mathbb{R} K_X+\Delta-\epsilon_1 P$ such that $\phi_{1,\ast}C=C_1$. For any prime divisor $\Gamma$, by comparing log discrepancies we have the following equation
 \begin{align*}
 a(\Gamma,X,\Delta+t_jC) +\sigma_\Gamma(K_X+\Delta +t_j C) =a(\Gamma,X_j,\Delta_j+t_j C_j).
 \end{align*}
 The equation forces $a(\Gamma,X,\Delta)=0$ and $\sigma_\Gamma (K_X+\Delta)=0$ since $t_j$ can be chosen arbitrary small. Note that $(X,\Delta+t_jC)$ is not necessarily log canonical but Equation (\ref{eq1}) always holds since $X_j$ is $\Q$-factorial. Therefore the centre of $\Gamma$ on $X$ is an lc centre $S$ with $S \nsubseteq \mathrm{NNF}(K_X+\Delta)$ as we assumed that $(X,\Delta)$ is $\Q$-factorial dlt. Now we lift the LMMP to a sequence of birational maps
  $$
 (X_2',\Delta_2') \dashrightarrow (X_3',\Delta_3') \dashrightarrow \cdots \dashrightarrow  (X'_j,\Delta'_j) \dashrightarrow
 $$
 which can be embedded into a partial LMMP as we discussed in Section \ref{preliminaries}. Because the original LMMP does not contract any lc centre, all $X_j'$ are isomorphic in codimension one. Let $P'_j$ and $C'_j$ be the pull-backs of $P_j$ and $C_j$ respectively. Moreover, let $A_j'$ be the birational transform $\widetilde{A_j}$, and $B_j'$ be $\widetilde{B_j}+E$ where $\widetilde{B_j}$ is the birational transform and $E$ is the reduced exceptional$/X_j$ divisor. Note that $(X'_j,\Delta'_j -\epsilon_j P'_j)$ is not necessarily a log minimal model of $(X,\Delta -\epsilon_j P)$. However, Equation (\ref{eq1}) still holds for $(X,\Delta)$ and $(X_j',\Delta_j')$. It is not hard to verify that the sequence of birational maps previously constructed satisfies all desired properties. Also note that $C_k'$ and $P_k'$ are birational tranforms of $C_j'$ and $P_j'$ respectively for arbitrary $j, k \geq 2$ because the LMMP constructed above does not contain any lc centre.
 
 Finally we show the existence of such dlt blow-up $(X',\Delta')$. Let $X \xleftarrow{p} \overline{X} \xrightarrow{q} X_2$ be a common log resolution, and let $\overline{\Delta}$, $\overline{\Omega}$ be boundaries such that $(\overline{X},\overline{\Delta})$ and $(\overline{X},\overline{\Omega})$ are log smooth models of $(X,\Delta)$ and $(X_2,\Delta_2)$ respectively. We can assume that $(X_2',\Delta_2')$ is a log minimal model of $(\overline{X}/X_2,\overline{\Omega})$. Since $\overline{X} \dashrightarrow X_2'$ is a sequence of steps of $K_{\overline{X}}+\overline{\Omega}$-MMP, it is an isomorphism at the generic point of every lc centre of $(X_2',\Delta_2')$. Let $(X',\Delta')$ be a log minimal model of $(\overline{X}/X,\overline{\Delta})$. We deduce the required property from Equation \ref{eq1}.
 
 By replacing $(X_j,\Delta_j)$ with $(X_j',\Delta_j')$, we conclude the lemma. By comparing log discrepancies, if the LMMP terminates, then it ends with a weak lc model $(Y,\Delta_Y)$ of $(X,\Delta)$. 
\end{proof}

\begin{thm}\label{vertical-psef}
	Let $(X,\Delta=A+B)$, $f$ and $P$ be as in Setup $(\ast)$. Assume that Theorem~\ref{t-main-1} holds for dimension $\leq n-1$ and that $K_X+\Delta -\epsilon P$ is pseudo-effective for any sufficiently small number $\epsilon>0$. Then, $(X,\Delta)$ has a good minimal model.
\end{thm}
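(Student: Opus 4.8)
The plan is to take the flip sequence produced by Lemma~\ref{vertical-lem}, show it terminates, and identify the resulting model as a good minimal model by means of Theorem~\ref{lc-flips-3}. If $P=\lfloor B\rfloor=0$ the pair $(X,\Delta)$ is klt and we are done by Proposition~\ref{klt-g-model}, so assume $\lfloor\Delta\rfloor\neq 0$. Since $K_X+\Delta-\epsilon P$ is pseudo-effective for all small $\epsilon>0$, Lemma~\ref{vertical-lem} provides $\Q$-factorial dlt pairs $(X_j,\Delta_j=A_j+B_j)$ $(j\ge 2)$ and birational maps
$$
(X_2,\Delta_2)\dashrightarrow (X_3,\Delta_3)\dashrightarrow\cdots
$$
embedded into an LMMP on $K_{X_2}+\Delta_2$ with scaling of $t_2C_2$, consisting of log flips that contract no lc centre, with $t_j\to 0$, and such that each $(X_j,\Delta_j-\epsilon_jP_j)$ is a good minimal model of $(X,\Delta-\epsilon_jP)$ for a small $\epsilon_j>0$; in particular $K_{X_j}+\Delta_j-\epsilon_jP_j$ is semi-ample. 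By Lemma~\ref{vertical-lem}, if this sequence terminates it ends with a weak lc model $(Y,\Delta_Y)$ of $(X,\Delta)$, which is then $\Q$-factorial dlt with $K_Y+\Delta_Y$ nef; so it remains to prove termination and that $K_Y+\Delta_Y$ is semi-ample.

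For termination I would use special termination. Let $S$ be a component of $\lfloor\Delta_2\rfloor$. As $P$, and hence $S$, is vertical over $Z$ we have $\dim S\le n-1$, and with $S\to T\to Z$ the Stein factorisation, adjunction gives $(K_{X_j}+\Delta_j)|_{S_j}=K_{S_j}+\Delta_{S_j}$ where $K_S+\Delta_S\sim_\R 0/T$ is pseudo-effective and $A|_S\sim_\R$ the pull-back of an ample divisor on $T$. Hence $(S,\Delta_S)$ satisfies the hypotheses of Theorem~\ref{t-main-1} in dimension $\le n-1$ (one may alternatively invoke the canonical bundle formula and [\ref{BCHM}] on $T$ together with Lemma~\ref{lc-m-g-model}), so $(S,\Delta_S)$ has a good minimal model; the semi-ampleness of $K_W+\Delta_W$ on lc centres in Lemma~\ref{vertical-lem'} is the analogous input at the level of $(X,\Delta)$. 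By the standard special termination argument the induced maps $S_j\dashrightarrow S_{j+1}$ form a sequence of $(K_{S_j}+\Delta_{S_j})$-flips carrying the inherited scaling, and such an LMMP with scaling terminates since $(S,\Delta_S)$ has a good minimal model ([\ref{B-lc-flips}]); so the original sequence terminates near $\lfloor\Delta_j\rfloor$ for $j\gg 0$. For such $j$ a further flip $X_j\dashrightarrow X_{j+1}$, if it existed, would have flipping locus disjoint from $\mathrm{Supp}\,P_j=\lfloor\Delta_j\rfloor$ (the sequence contracts no lc centre), so its contracted ray $R$ would satisfy $P_j\cdot R=0$ and thus $(K_{X_j}+\Delta_j-\epsilon_jP_j)\cdot R=(K_{X_j}+\Delta_j)\cdot R<0$, contradicting the semi-ampleness of $K_{X_j}+\Delta_j-\epsilon_jP_j$. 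Therefore the sequence terminates with $(Y,\Delta_Y)=(X_N,\Delta_N)$, and $X_k=X_N$ for all $k\ge N$.

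It remains to check the hypotheses of Theorem~\ref{lc-flips-3} for $(Y,\Delta_Y)$ with $G=\lfloor\Delta_Y\rfloor$. First, $K_Y+\Delta_Y$ is nef. Next, for each component $S_Y$ of $G$, the pair $(S_Y,\Delta_{S_Y})$ is a minimal model of $(S,\Delta_S)$ reached by $(K_S+\Delta_S)$-flips, hence good since $(S,\Delta_S)$ has a good minimal model, so $(K_Y+\Delta_Y)|_{S_Y}=K_{S_Y}+\Delta_{S_Y}$ is semi-ample. Finally $K_Y+\Delta_Y-\epsilon_NP_Y$ is semi-ample because $(X_N,\Delta_N-\epsilon_NP_N)$ is a good minimal model of $(X,\Delta-\epsilon_NP)$, and since $X_k=X_N$ for all $k\ge N$ the same holds for every $\epsilon_k$, hence for all sufficiently small $\epsilon>0$ by the finiteness of the rational polyhedral chamber decomposition of the weak lc models (cf.\ Corollary~\ref{cor-geography}). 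Theorem~\ref{lc-flips-3} then yields that $K_Y+\Delta_Y$ is semi-ample, whence $(X,\Delta)$ has a good minimal model (e.g.\ by Lemma~\ref{weak-lc-model-lem}).

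I expect the delicate point to be the termination of the induced MMP on the vertical stratum $S$: the inductive hypothesis only furnishes a good minimal model for $(S,\Delta_S)$, and one must promote this to termination of the specific flip sequence on $S$. This is legitimate precisely because that sequence is an LMMP with scaling (so that the termination-from-existence results of [\ref{B-lc-flips}], or a nested special-termination recursion on $\dim S$, apply), not an arbitrary sequence of flips. The remaining points—pseudo-effectivity of $K_S+\Delta_S$, the persistence of semi-ampleness of $K_Y+\Delta_Y-\epsilon P_Y$ over an interval of $\epsilon$ rather than only for the discrete sequence $\epsilon_j$, and the identification of $(S_Y,\Delta_{S_Y})$ as a minimal model of $(S,\Delta_S)$—are routine consequences of the properties recorded in Lemma~\ref{vertical-lem} and the geography of weak lc models.
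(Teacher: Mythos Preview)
Your argument has a genuine gap in the special termination step. You assert that $(S,\Delta_S)$ satisfies the hypotheses of Theorem~\ref{t-main-1} in dimension $\le n-1$, where $S$ is a component of $\lfloor\Delta_2\rfloor$, by invoking a Stein factorisation $S\to T\to Z$. But $S$ lives on $X_2$, and the birational map $X\dashrightarrow X_2$ has destroyed the morphism to $Z$: there is no map $X_2\to Z$, hence no map $S\to T$, and no relation $K_S+\Delta_S\sim_\R 0/T$ or $A_{S}\sim_\R$ pullback-of-ample to appeal to. The correspondence supplied by Lemma~\ref{vertical-lem} between lc centres of $(X_j,\Delta_j)$ and those of the dlt blow-up $(X',\Delta')$ is only an isomorphism at the \emph{generic point}; it does not give you a morphism from $S_2$ to anything over which the inductive hypothesis applies. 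So you cannot conclude that $(S_2,\Delta_{S_2})$ has a good minimal model this way, and without that the special-termination recursion does not start.

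This is exactly the obstacle the paper's six-step proof is designed to overcome. The paper does not show the sequence of Lemma~\ref{vertical-lem} terminates by special termination directly. Instead it builds auxiliary models $W_j\to X_j$ (Step~1), runs a second LMMP on $K_{W_j}+\Delta_{W_j}$ with scaling of $D_j$ (Step~3), and brings in the model $W$ of Lemma~\ref{vertical-lem'}, whose construction uses a perturbation by an ample $H$ together with Corollary~\ref{klt-lc-m} (relative triviality over an \emph{open subset} of $Z$, which survives passage to the birational models). Semi-ampleness of $(K_W+\Delta_W)|_{S_W}$ is then transported to the strata $S_{W_k'}$ and finally to $S_{X_l}$ via the log-discrepancy comparisons of Steps~4--6 and Lemma~\ref{weak-lc-model-lem}. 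Only after this indirect route does one obtain that $K_{X_l}+\Delta_l$ is semi-ample on every lc centre and invoke Theorem~\ref{lc-flips-3}. Your one-line reference to Lemma~\ref{vertical-lem'} as ``analogous input'' is not enough: that lemma concerns a different model $W$, and the whole difficulty is in carrying its conclusion over to the $X_j$.

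A smaller point: your final appeal to Corollary~\ref{cor-geography} to pass from semi-ampleness of $K_Y+\Delta_Y-\epsilon_k P_Y$ at the discrete $\epsilon_k$ to all small $\epsilon$ is circular, since that corollary is deduced from Theorem~\ref{t-main-1} in dimension $n$. One can likely repair this using finiteness of models for the klt pairs $(Y,\Delta_Y-\epsilon P_Y)$ via \cite{BCHM}, but it needs to be said.
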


\begin{proof}
\emph{Step 1}. 
We construct a sequence of birational maps
$$
(X_2,\Delta_2) \dashrightarrow (X_3,\Delta_3) \dashrightarrow \cdots \dashrightarrow (X_j,\Delta_j) \dashrightarrow \cdots
$$ 
which satisfies the conditions listed in Lemma \ref{vertical-lem}. Replacing $(X,\Delta)$ with a suitable dlt blow-up we can assume the birational contraction $X \dashrightarrow X_j$ is an isomorphism at the generic point of every lc centre of $(X_j,\Delta_j)$. Pick some $j \geq 2$. Now we take a common log resolution $\overline{W_j}$ of $(X,\Delta)$ and $(X_j,\Delta_j)$, and write $K_{\overline{W_j}}+\Delta_{\overline{W_j}} = p_j^\ast (K_X+\Delta)+E_{\overline{W_j}} $ where $p_j:(\overline{W_j},\Delta_{\overline{W_j}}) \rightarrow X$ is a log smooth model of $(X,\Delta)$. It is easy to see that if $\Gamma$ is an irreducible component of $E_{\overline{W_j}}$, then $a(\Gamma,X_j,\Delta_j)>0$. In fact, if $a(\Gamma,X_j,\Delta_j)=0$, then $a(\Gamma,X,\Delta)=0$ and $\sigma_\Gamma(K_X+\Delta)=0$ by Lemma \ref{vertical-lem}. Next we run an LMMP$/X_j$ on $K_{\overline{W_j}}+\Delta_{\overline{W_j}}$ which terminates with a good minimal model $(W_j/X_j,\Delta_{W_j})$ since $X_j$ is $\Q$-factorial thanks to Lemma \ref{lem-exc-3} and Theorem \ref{lc-flips-1}.
 $$
 \xymatrix{
 	& (\overline{W_j}, \Delta_{\overline{W_j}}) \ar[dd]_{\overline{g_j}}\ar[dl]_{p_j}\ar@{-->}[dr]^{q_j} &  \\
 	(X,\Delta)\ar@{-->}[dr]_{\phi_j} & & (W_j,\Delta_{W_j}) \ar[dl]^{g_j}    \\
 	& (X_j,\Delta_j) & } 
 $$
  We write $K_{W_j}+\Delta_{W_j} +G_j =g_j^\ast(K_{X_j}+\Delta_j)$ where $g_j:W_j \rightarrow X_j$ is a morphism, $G_j \geq 0 $ is exceptional$/X_j$. We note that $E_{\overline{W_j}}$ is exceptional$/W_j$. Also note that $G_j$ contains no lc centre of $(W_j,\Delta_{W_j}+G_j)$ in its support. Otherwise there was a prime divisor $\Gamma$ over $X_j$ with $a(\Gamma,X_j,\Delta_j)=0$ and $a(\Gamma,W_j,\Delta_{W_j})>0$. By Lemma \ref{vertical-lem} we have $a(\Gamma,X,\Delta)=0$ and $\sigma_\Gamma(K_X+\Delta)=0$. Therefore we have $a(\Gamma,\overline{W_j},\Delta_{\overline{W_j}})=0$ and $\sigma_\Gamma(K_{\overline{W_j}}+\Delta_{\overline{W_j}})=0$, and it follows that $a(\Gamma,W_j,\Delta_{W_j})=0$ which is a contradiction. Note that by suitably choosing $\overline{W_j}$ we can assume the birational map $X \dashrightarrow W_j$ is a birational contraction. Because the set of exceptional$/X_j$ prime divisors on $X$ is finite. If we suitably choose $j$, then we can assume that the reduced divisor on $X$ which is contracted by $X \dashrightarrow W_j$ coincides with that contracted by $X \dashrightarrow W_k$ for infinitely many $k$. 

\emph{Step 2}.
 Fix some index $j\geq 2$. We now prove that $K_{W_j}+\Delta_{W_j}$ is movable, that is, $N_\sigma(K_{W_j}+\Delta_{W_j})=0$. As we assumed in Step 1, for infinitely many $k$, the birational map $W_j \dashrightarrow W_k$ is an isomorphism in codimension one. For such $k$, if $\Gamma$ is an exceptional$/X_j$ divisor on $W_j$, then from the equation 
 $$
 \mathrm{ord}_\Gamma G_k=a(\Gamma,W_k,\Delta_{W_k}) -a(\Gamma,X_k,\Delta_k) = a(\Gamma,X,\Delta) -a(\Gamma,X_k,\Delta_k)
 $$ and Equation (\ref{eq1}) we have
 \begin{align}\label{eq2}
 \mathrm{ord}_\Gamma G_k + t_k \mathrm{ord}_\Gamma C_k = t_k \mathrm{ord}_\Gamma C - \sigma_\Gamma (K_X+\Delta+t_k C).
 \end{align}
 This equation forces $\mathrm{ord}_\Gamma G_k + t_k \mathrm{ord}_\Gamma C_k$ converges to zero as $k$ tends to the infinity. We note that $N_\sigma (K_{W_j}+\Delta_{W_j} +G_{k,W_j}+t_k (g_k^\ast C_k)_{W_j})=0$ where $G_{k,W_j}$ and $(g_k^\ast C_k)_{W_j}$ are the birational transforms of $G_k$ and $g_k^\ast C_k$ on $W_j$, which in turn implies that $N_\sigma(K_{W_j}+\Delta_{W_j})=0$. 
Therefore the birational contraction $X \dashrightarrow W_j$ contracts precisely $N_\sigma ({K_X+\Delta})$.

\emph{Step 3}.
 Now let $D_j= G_j+ t_j g_j^\ast C_j$. We claim that one can run an LMMP on $K_{W_j}+\Delta_{W_j}$ with scaling of $D_j$ such that $\lambda= \lim\limits_{k \rightarrow \infty} \lambda_k=0$ where $\lambda_k$ are the coefficients appeared in the process of the LMMP. To this end, as we discussed in Section \ref{preliminaries}: decreasing the coefficients appeared in LMMP with scaling, it is enough to prove that $(W_j,\Delta_{W_j} +\lambda D_j)$ has a good minimal model for each $0< \lambda \leq 1$ thanks to [\ref{B-lc-flips}, Theorem 1.9]. We write
\begin{align*}
D_j&= G_j+ t_j g_j^\ast C_j \\
   &\sim_\mathbb{R} G_j + t_j (K_{W_j}+\Delta_{W_j} +G_j- \epsilon g_j^\ast P_j)
\end{align*}
For each $\lambda \leq 1$ we have
\begin{align*}
 & ~  K_{W_j} + \Delta_{W_j} +\lambda D_j  \\
 & \sim_\R K_{W_j}+\Delta_{W_j} +(\lambda-\delta) (G_j +t_j g_j^\ast C_j)+\delta_j G_j +\delta_j t_j ( K_{W_j}+\Delta_{W_j}+G_j -\epsilon g_j^\ast P_j) \\
 &\sim_\mathbb{R} (1+\delta  t_j)(K_{W_j}+\Delta_{W_j} +\frac{\lambda+ \delta t_j}{1+\delta  t_j} G_j + \frac{(\lambda-\delta)t_j}{1+\delta t_j} g_j^\ast C_j - \frac{\delta t_j \epsilon}{1+\delta t_j} g_j^\ast P_j)
\end{align*}
where $\delta \ll 1$. Note that $g_j^\ast P_j$ contains all lc centres of $(W_j,\Delta_{W_j})$ because $P_j$ contains all lc centres of $(X_j,\Delta_j)$ and $G_j$ does not contain any lc centre. Now temporarily replacing $(W_j,\Delta_{W_j})$ with the lc model$/X_j$ and later recovering it, we can assume that $G_j$ contains all components of the reduced exceptional$/X_j$ divisor. If we write 
$$
\Delta_{W_j}^\lambda =\Delta_{W_j} +\frac{\lambda+ \delta t_j}{1+\delta  t_j} G_j + \frac{(\lambda-\delta)t_j}{1+\delta t_j} g_j^\ast C_j - \frac{\delta t_j \epsilon}{1+\delta t_j} g_j^\ast P_j,
$$
then $(W_j,\Delta_{W_j}^\lambda)$ is klt since $\Delta_{W_j}^\lambda \geq 0$, provided that $\delta \ll 1$. So it is enough to prove that $(W_j,\Delta_{W_j}^\lambda)$ has a good minimal model for each $\lambda>0$. To this end, replacing $\overline{W}_j$ we assume $q_j$ is a morphism and we write $K_{\overline{W}_j}+\Delta_{\overline{W}_j}^\lambda=q_j^*(K_{W_j} +\Delta_{W_j}^\lambda) +E^\lambda$ where $\Delta_{W_j}^\lambda,E^\lambda \ge 0$ has no common components. It is obvious that that the log smooth pair $(\overline{W}_j,\Delta_{\overline{W}_j}^\lambda)$ is klt. 
 $$
 \xymatrix{
 	& (\overline{W_j},\Delta_{\overline{W_j}}^\lambda) \ar[dl]_{p_j} \ar[dr]^{q_j}\ar[dd]^{\overline{g_j}}  &\\
 	(X,\Delta)\ar@{-->}[dr]_{\phi_j}\ar[dd]^{f} & & (W_j,\Delta_{W_j}^\lambda) \ar[dl]^{g_j} \\
    & (X_j,\Delta_j) & \\
    Z }
 $$
By Lemma \ref{vertical-lem}, there exist a nonempty open subset $V \subset Z$ and an open subset $U_j \subset X_j$ such that, for any common resolution $U \xleftarrow{p_j'} \overline{U_j} \xrightarrow{q_j'} U_j$ of $U_j$ and $U=f^{-1}V$ with $p_j'$, $q_j'$ surjective, we have
$$
p_j'^\ast ((K_X+\Delta)|_U)=q_j'^\ast((K_{X_j}+\Delta_j)|_{U_j}).
$$
By the construction of $W_j$, possibly after shrinking $V$ we have
\begin{align*}
p_j^\ast ((K_X+\Delta)|_U)+E'_{\overline{W_j}}|_{\overline{U_j}} &=q_j^\ast((K_{W_j}+\Delta_{W_j})|_{g_j^{-1}U_j}) \\
&=(g_j\circ q_j)^\ast ((K_{X_j}+\Delta_j)|_{U_j})- q_j^\ast G_j|_{\overline{U_j}}
\end{align*}
where $0\leq E'_{\overline{W_j}}\leq E_{\overline{W_j}}$ and $\overline{U_j}=p_j^{-1}U$. Hence $E'_{\overline{W_j}}|_{\overline{U_j}}=G_j|_{\overline{U_j}}=0$ which in turn implies that $g_j$ is an isomorphism on the inverse image of $U_j$ and 
$$
p_j^\ast ((K_X+\Delta)|_U)=q_j^\ast((K_{W_j}+\Delta_{W_j})|_{g_j^{-1}U_j});
$$
Since $P$ is vertical$/Z$, one deduces that $(g_j \circ q_j)^\ast P_j|_{(\overline{W_j})_\eta} =0$ on the generic fibre $(\overline{W_j})_\eta$ of $f \circ p_j$, and hence $(g_j \circ q_j)^\ast C_j|_{(\overline{W_j})_\eta} \sim_\R 0$ which in turn implies that 
$$(K_{\overline{W}_j}+ \Delta_{\overline{W_j}}^\lambda)|_{(\overline{W_j})_\eta} \sim_\mathbb{R} E^\lambda|_{(\overline{W_j})_\eta} = N_\sigma((K_{\overline{W}_j}+ \Delta_{\overline{W_j}}^\lambda)|_{(\overline{W_j})_\eta}).
$$ 

Since $\Delta_{W_j} +G_j \geq \iota g_j^\ast A_j $ for some sufficiently small number $\iota >0$, by the construction of $\Delta_{W_j}^\lambda$ and adding a small multiple of exceptional$/W_j$ divisors we have $\Delta_{\overline{W_j}}^\lambda \geq \iota' (g_j\circ q_j)^\ast A_j \geq \iota' p_j^\ast A$ for some sufficiently small number $\iota' \ll \iota $. Then, the klt pair $(\overline{W}_j,\Delta_{\overline{W}_j}^\lambda)$ has a good minimal model by [\ref{BH-I}] and Corollary \ref{klt-lc-m} which in turn implies that $(W_j,\Delta_{W_j}^\lambda)$ has a good minimal model. This completes the argument for our claim.

\emph{Step 4}.
Let $H$ be an ample divisor on $X$ and pick a sufficiently small number $\delta_2>0$. Let $(W,\Delta_W +\delta_2 H_W)$ be a good minimal model of $(X,\Delta+ \delta_2 H)$. By Lemma \ref{vertical-lem'}, $K_W+\Delta_W$ is semi-ample on every lc centre. Because $K_{W_j}+\Delta_{W_j}$ is movable, $W$ is isomorphic to $W_j$ in codimension one. We show that the LMMP on $K_{W_j}+\Delta_{W_j}$ with scaling of $D_j$ terminates near $\lfloor\Delta_{W_k'} \rfloor$ after finitely many steps. 
$$
(W_j,\Delta_{W_j})=(W_j',\Delta_{W_j'}) \dashrightarrow \cdots \dashrightarrow (W_k',\Delta_{W_k'}) \dashrightarrow \cdots
$$
To this end, consider an lc centre $S_{W_j}$ of $(W_j,\Delta_{W_j})$. We write $D=D_j$ and $D_{W_k'}, D_W$ as its birational transforms on $W_k',W$. Because $(W_k',\Delta_{W_k'}+\lambda_k D_{W_k'})$ is a good minimal model of $(W,\Delta_W+\lambda_k D_{W})$, and $(W,\Delta_W+\delta_2 H_W)$ is a good minimal model of $(W_k',\Delta_{W_k'}+\delta_2 H_{W_k'})$, for any common resolution $W_k' \xleftarrow{\psi_k} \widetilde{W_k} \xrightarrow{\phi_k} W$, we have the following inequalities 
$$
\psi_k^\ast (K_{W_k'}+\Delta_{W_k'} +\delta_2 H_{W_k}')  \geq \phi_k^\ast (K_{W} +\Delta_{W} +\delta_2 H_{W}) 
$$ 
and 
$$
\psi_k^\ast (K_{W_k'}+\Delta_{W_k'}+ \lambda_k D_{W_k'})  \leq  \phi_k^\ast (K_W+\Delta_W +\lambda_k D_{W}).
$$ 
As we discussed in Section \ref{preliminaries}: on special termination and by induction, the original LMMP induces an LMMP on $K_{S_{W_k'}}+\Delta_{S_{W_k'}}$ with scaling for $k \gg 0$, where $S_{W_k'}$ is the birational transform of $S_{W_j}$. Since the birational map $W \dashrightarrow W_k'$ is an isomorphism at the generic point of every lc centre, by restriction we obtain that
$$
a(\Gamma,S_{W_k'},\Delta_{S_{W_k'}}+\delta_2 H_{S_{W_k'}}) \leq a(\Gamma,S_W,\Delta_{S_W}+\delta_2 H_{S_W})
$$
and
$$
a(\Gamma,S_{W_k'},\Delta_{S_{W_k'}}+\lambda_k D_{S_{W_k'}}) \geq a(\Gamma,S_W,\Delta_{S_W}+\lambda_k D_{S_W})
$$
for every prime divisor $\Gamma$ over $S_W$. Because $\delta_2$ can be chosen arbitrary small, we have
$$
a(\Gamma,S_{W_k'},\Delta_{S_{W_k'}}) \leq a(\Gamma,S_W,\Delta_{S_W}).
$$ 
It follows that 
$$
0 \leq a(\Gamma,S_W,\Delta_{S_W})- a(\Gamma,S_{W_k'},\Delta_{S_{W_k'}}) \leq \lambda_k (\mathrm{ord}_\Gamma D_{S_W}-\mathrm{ord}_\Gamma D_{S_{W_k'}}). 
$$
As we pointed out earlier, the sequence of birational maps
$$
 \cdots \dashrightarrow (S_{W_k'},\Delta_{S_{W_k'}}) \dashrightarrow (S_{W_{k+1}'},\Delta_{S_{W_{k+1}'}}) \dashrightarrow \cdots
$$
can be embedded into sequence of log flips. If $\Gamma$ is a prime divisor on $S_{W_k'}$, then $a(\Gamma,S_{W_k'},\Delta_{S_{W_k'}})=a(\Gamma,S_{W_l'},\Delta_{S_{W_l'}})$ and $\mathrm{ord}_\Gamma D_{S_{W_k'}}=\mathrm{ord}_\Gamma D_{S_{W_l'}}$ for $l>k$ which in turn implies that $a(\Gamma,S_W,\Delta_{S_W})= a(\Gamma,S_{W_k'},\Delta_{S_{W_k'}})$. If $\Gamma$ is a prime divisor on $S_{W}$, then $\lambda_k (\mathrm{ord}_\Gamma D_{S_W}-\mathrm{ord}_\Gamma D_{S_{W_k'}})$ converges to zero as $k$ tends to the infinity which gives $a(\Gamma,S_W,\Delta_{S_W})- a(\Gamma,S_{W_k'},\Delta_{S_{W_k'}}) \leq \sigma_\Gamma (K_{S_{W_k'}}+\Delta_{S_{W_k'}})$. Therefore by Lemma \ref{weak-lc-model-lem} we deduce that $(S_{W_k'},\Delta_{S_{W_k'}})$ has a good minimal model which in turn implies that the LMMP on $K_{W_j}+\Delta_{W_j}$ with scaling of $D_j$ terminates near $S_{W_k'}$. Moreover, since $K_{S_W}+\Delta_{S_W}$ is semi-ample, $K_{S_{W_k'}}+\Delta_{S_{W_k'}}$ is also semi-ample for $k \gg 0$. We continue this argument by induction on dimension and conclude that the LMMP on $K_{W_j}+\Delta_{W_j}$ with scaling of $D_j$ terminates near $\lfloor \Delta_{W_k'} \rfloor$, and that $K_{W_k'}+\Delta_{W_k'}$ is semi-ample on every lc centre for $k \gg 0$.

\emph{Step 5}.
Let $l,k \gg 0$. If $\widetilde{W_{kl}}$ is a common resolution of $W_k'$ and $W_l$,
$$
\xymatrix{
	& \widetilde{W_{kl}} \ar[dl]_{\rho_k'} \ar[dr]^{\rho_l}  &\\
	W_k'\ar@{-->}[rr]_{}\ar@{-->}[drr]_{g_{kl}'} & & W_l \ar[d]^{g_l} \\
	&& X_l} 
$$
then by Negativity Lemma
$$
\rho_k'^{\ast}(K_{W_k'}+\Delta_{W_k'}+ \lambda_k D_{W_k'}) \leq \rho_l^\ast(K_{W_l}+\Delta_{W_l}+ \lambda_k D_{W_l})
$$ 
where $D_{W_l}$ is the birational transform of $D_{W_k'}$ (by abuse of notations) which is also the birational transform of $D_j$. On the other hand, by Negativity Lemma
$$
\rho_k'^{\ast}((K_{W_k'}+\Delta_{W_k'}+  D_{l,W_k'}) \geq \rho_l^\ast(K_{W_l}+\Delta_{W_l}+ D_l)
$$
where $D_{l,W_k'}$ is the birational transform of $D_l$. Let $S_l$ be an lc centre of $(X_l,\Delta_l)$, and let $S_{W_k'},S_{W_l}$ be corresponding lc centres of $(W_k',\Delta_{W_k'}), (W_l,\Delta_{W_l})$ so that the commutative diagram of birational maps above
induces a commutative diagram of birational maps.
$$
\xymatrix{
    S_{W_k'}\ar@{-->}[rr]_{}\ar@{-->}[drr]_{g_{kl}'} & & S_{W_l} \ar[d]^{g_l} \\
	&& S_l} 
$$
By restriction we obtain the following inequalities
\begin{align}\label{ineq-1}
a(\Gamma, S_{W_k'}, \Delta_{S_{W_k'}}+ \lambda_k D_{S_{W_k'}} ) \geq a(\Gamma, S_{W_l}, \Delta_{S_{W_l}}+  \lambda_k D_{S_{W_l}} )
\end{align}
where $D_{S_{W_l}} = D_{W_l}|_{S_{W_l}}$ and 
\begin{align}\label{ineq-2}
a(\Gamma, S_{W_k'}, \Delta_{S_{W_k'}}+ D_{l,S_{W_k'}} ) \leq a(\Gamma, S_{W_l},\Delta_{S_{W_l}}+ D_{l,S_{W_l}} )
\end{align}
where $D_{l,S_{W_k'}}=D_{l,W_k'}|_{S_{W_k'}}$ for any prime divisor $\Gamma$ over $S$. From Inequality (\ref{ineq-1}) we deduce that
$$
\lambda_k (\mathrm{ord}_\Gamma D_{S_{W_k'}} -\mathrm{ord}_\Gamma  D_{S_{W_l}}) \leq a(\Gamma, S_{W_k'}, \Delta_{S_{W_k'}}) -a(\Gamma, S_{W_l},\Delta_{S_{W_l}}).
$$
Because the LMMP on $K_{W_j}+\Delta_{W_j}$ terminates near $\lfloor \Delta_{W_k'} \rfloor$ (by Step 4), we have $\mathrm{ord}_\Gamma D_{S_{W_k'}}=\mathrm{ord}_\Gamma D_{S_{W_m'}}$ and $a(\Gamma, S_{W_k'}, \Delta_{S_{W_k'}})=a(\Gamma, S_{W_m'}, \Delta_{S_{W_m'}})$ for every integer $m>k$ which in turn implies that 
\begin{align}\label{ineq-3}
a(\Gamma, S_{W_k'}, \Delta_{S_{W_k'}}) \geq a(\Gamma, S_{W_l},\Delta_{S_{W_l}})
\end{align}
for every prime divisor $\Gamma$ over $S$. On the other hand, from Inequality (\ref{ineq-2}) we deduce that 
$$
0 \le a(\Gamma, S_{W_k'}, \Delta_{S_{W_k'}}) -a(\Gamma, S_{W_l},\Delta_{S_{W_l}})
\leq \mathrm{ord}_\Gamma D_{l,S_{W_k'}} - \mathrm{ord}_\Gamma D_{l,S_{W_l}}.
$$
If $\Gamma$ is a prime divisor on $S_{W_k'}$, then by Equation (\ref{eq2}) the coefficients of $D_{l,W_k'}$ converge to zero as $l$ tends to infinity which implies that $\mathrm{ord}_\Gamma D_{l,S_{W_k'}}$ converges to zero as $l$ tends to infinity. In particular, $\mathrm{ord}_\Gamma D_{l,S_{W_l}}$ converges to zero as $l$ tends to infinity.

\emph{Step 6}.
We show that $K_{X_l}+\Delta_l$ is semi-ample on every lc centre of $(X_l,\Delta_l)$ for $l \gg 0$ which completes the whole proof. To this end, let $S_l$ be an lc centre of $(X_l,\Delta_l)$, and consider the following commutative diagram.
$$
\xymatrix{
	S_{W_k'}\ar@{-->}[r]_{}\ar@{-->}[dr]_{g_{kl}'}  & S_{W_l} \ar[d]^{g_l}\ar@{-->}[r]_{} & S_{W_{l+1}} \ar[d]^{g_{l+1}}\ar@{-->}[r]_{} & \cdots \\
	& S_{l} \ar@{-->}[r]_{} & S_{l+1} \ar@{-->}[r]_{} & \cdots}
$$
As we pointed out in the previous step, if $\Gamma$ is a prime divisor on $S_{W_k'}$, then 
\begin{align}\label{ineq-4}
a(\Gamma, S_{W_k'}, \Delta_{S_{W_k'}}) &\le a(\Gamma, S_{W_l},\Delta_{S_{W_l}})+\mathrm{ord}_\Gamma D_{l,S_{W_k'}} - \mathrm{ord}_\Gamma D_{l,S_{W_l}} \\
&= a(\Gamma,S_l,\Delta_{S_l}) + \mathrm{ord}_\Gamma D_{l,S_{W_k'}} - t_l \mathrm{ord}_\Gamma C_{l,S_{W_l}}\notag
\end{align}
where $C_{l,S_{W_l}}=(g_l^\ast C_l)|_{S_{W_l}}= g_l^\ast (C_l|_{S_l})$. Since $\mathrm{ord}_\Gamma D_{l,S_{W_k'}} - t_l \mathrm{ord}_\Gamma C_{l,S_{W_l}}$ converges to zero as $l$ tends to infinity, we have that $a(\Gamma, S_{W_k'}, \Delta_{S_{W_k'}}) - a(\Gamma,S_l,\Delta_{S_l}) \leq \sigma_\Gamma (K_{S_l}+\Delta_{S_l})$. Hence by combining Inequalities (\ref{ineq-3}), (\ref{ineq-4}) and Lemma \ref{weak-lc-model-lem}, $(S_l,\Delta_{S_l})$ has a good minimal model which in turn implies that the LMMP on $K_{X_j}+\Delta_j$ terminates near $S_l$ and $K_{S_l}+\Delta_{S_l}$ is semi-ample for $l \gg 0$. We continue this argument by induction on dimension and conclude that the LMMP on $K_{X_j}+\Delta_j$ terminates near $\lfloor \Delta_l \rfloor$ and $K_{X_l}+\Delta_l$ is semi-ample on every lc centre of $(X_l,\Delta_l)$ for $l \gg 0$. By Lemma \ref{vertical-lem} and Theorem \ref{lc-flips-3} we obtain the conclusion.
\end{proof}

\subsection{The non-pseudo-effective case}

\begin{thm}\label{vertical-non-psef}
Let $(X,\Delta=A+B)$, $f$ and $Z$ be as in Setup $(\ast)$. Assume Theorem~\ref{t-main-1} holds for dimension $n-1$. Assume further that $K_X+\Delta -\epsilon P$ is not pseudo-effective for any sufficiently small number $\epsilon>0$. Then, either $(X,\Delta)$ has a good minimal model or a Mori fibre space.
\end{thm}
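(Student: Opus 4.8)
Recall that in Setup $(\ast)$ the pair $(X,\Delta)$ is $\Q$-factorial dlt, $K_X+\Delta$ is pseudo-effective, $\lfloor\Delta\rfloor=\lfloor B\rfloor=P$ is vertical$/Z$, and (being in the situation of Theorem \ref{t-main-1}) $K_X+\Delta\sim_\R 0/Z$ with $A\sim_\R f^\ast A_Z$ for an ample $\R$-divisor $A_Z$. The plan is to subtract a small multiple of $P$, which makes the pair klt and reduces us to a situation already under control, and then to put $\epsilon P$ back; the aim is a Mori fibre space of $(X,\Delta)$, with a good minimal model appearing as a degenerate alternative.

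Fix $0<\epsilon\ll 1$. Because the coefficient of $\Delta$ along every component of $P$ is $1$, the pair $(X,\Delta-\epsilon P)=(X,A+(B-\epsilon P))$ is $\Q$-factorial klt. Since $A\sim_\R f^\ast A_Z\sim_\R 0/Z$, we get $K_X+B\sim_\R 0/Z$, hence $K_X+(B-\epsilon P)\sim_\R-\epsilon P/Z$, which is $\sim_\R 0$ over the nonempty open set $U:=Z\setminus f(\Supp P)$ because $P$ restricts to $0$ there, being vertical; moreover $U$ trivially contains the images of the generic points of all lc centres of $(X,B-\epsilon P)$, since that klt pair has none. Thus Corollary \ref{klt-lc-m} applies, and as $K_X+\Delta-\epsilon P$ is not pseudo-effective by hypothesis, $(X,\Delta-\epsilon P)$ has a Mori fibre space; realising it by a $(K_X+\Delta-\epsilon P)$-MMP with scaling of an ample divisor I would obtain an extremal contraction $g\colon(Y,\Psi_Y)\to T$ with $\dim Y>\dim T$, $\rho(Y/T)=1$, $(Y,\Psi_Y)$ $\Q$-factorial dlt, $-(K_Y+\Psi_Y)$ ample$/T$, and $\Psi_Y$ equal to the birational transform of $\Delta-\epsilon P$ together with the reduced exceptional divisor of $Y\dashrightarrow X$.

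Now put $\epsilon P$ back: set $\Delta_Y:=\Psi_Y+\epsilon P_Y$, where $P_Y$ is the birational transform of $P$ on $Y$, so that $\Delta_Y$ is the birational transform of $\Delta$ together with the reduced exceptional divisor and $(Y,\Delta_Y)$ is $\Q$-factorial dlt. On a general fibre $F$ of $g$ one has $-(K_Y+\Delta_Y)|_F=-(K_Y+\Psi_Y)|_F-\epsilon P_Y|_F$, which for $\epsilon$ small remains ample because $-(K_Y+\Psi_Y)|_F$ is ample and the ample cone of $F$ is open; hence $g\colon(Y,\Delta_Y)\to T$ is a $(K_Y+\Delta_Y)$-negative extremal contraction with $\dim Y>\dim T$. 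It then remains to verify that $a(D,X,\Delta)\le a(D,Y,\Delta_Y)$ for every prime divisor $D$ over $X$, with strict inequality whenever $D$ lies on $X$ and is contracted by $X\dashrightarrow Y$; granting this, $(Y,\Delta_Y)$ is a Mori fibre space of $(X,\Delta)$ and the theorem follows.

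This last comparison is the heart of the matter, and the step I expect to be the main obstacle. From the $(K_X+\Delta-\epsilon P)$-MMP one has $a(D,X,\Delta-\epsilon P)\le a(D,Y,\Psi_Y)$ for all $D$, strict for the contracted ones, and passing from $\Delta-\epsilon P$ to $\Delta$ shifts both sides by $-\epsilon$ times the multiplicity of $D$ in the pullback of $P$, respectively of $P_Y$, to a common resolution; so one must show that this $\epsilon$-correction is dominated by the strictly positive slack at the finitely many divisors of $X$ contracted by $X\dashrightarrow Y$. The subtlety is that the MMP --- hence $P_Y$ and the set of contracted divisors --- depends on $\epsilon$, so one cannot simply shrink $\epsilon$ afterwards. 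I would circumvent this exactly as in the ``special LMMP'' construction of Section \ref{preliminaries}: choose $\epsilon_1>\epsilon_2>\cdots$ with $\epsilon_j\to 0$, run the LMMP on $K_X+\Delta-\epsilon_j P$ for each $j$, and use ACC for log canonical thresholds (Theorem \ref{ACC-1}) and for numerically trivial pairs (Theorem \ref{ACC-2}) to conclude that for $j\gg 0$ the resulting Mori fibre spaces are all isomorphic in codimension one, with the same contracted divisors and the same numerical data along them; then the $\epsilon$-correction is uniformly small and the required inequalities hold for every sufficiently small $\epsilon$. Finally, in the degenerate case where $(K_Y+\Delta_Y)|_F$ fails to be anti-ample --- so it is pseudo-effective on the general fibre $F$, which is of Fano type since $-(K_F+\Psi_F)$ is ample --- the pair $(F,\Delta_F)$ has a good minimal model, and one propagates this through $g$, via Lemma \ref{lc-m-g-model} and a canonical bundle formula over $T$, to a good minimal model of $(X,\Delta)$.
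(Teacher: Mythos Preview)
Your approach has a genuine gap at its core. The main claim---that $g\colon(Y,\Delta_Y)\to T$ is a $(K_Y+\Delta_Y)$-negative extremal contraction, hence a Mori fibre space of $(X,\Delta)$---is not merely hard to verify but actually false. The very ACC argument (Theorem \ref{ACC-2}) you invoke to stabilise the models as $\epsilon\to 0$ forces $K_Y+\Delta_Y\sim_\R 0/T$ once $\epsilon$ is small enough: on the general fibre $F$ there is some $0\le\epsilon'\le\epsilon$ with $K_F+\Psi_F+\epsilon'P_F\equiv 0$, and since the coefficients $1-\epsilon'$ form a DCC set as $\epsilon\to 0$, ACC for numerically trivial pairs gives $\epsilon'=0$. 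Thus $-(K_Y+\Delta_Y)|_F$ is numerically trivial, not ample, and what you call the ``degenerate case'' is in fact the only case. Separately, $(Y,\Delta_Y)$ need not be dlt---the MMP was run on the klt pair $(X,\Delta-\epsilon P)$ and gives no control on the lc centres of $(Y,\Delta_Y)$---so it cannot serve as a Mori fibre space under the paper's definition in any event.

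The paper's proof takes the relation $K_{X'}+\Delta'\sim_\R 0/Y$ (your $K_Y+\Delta_Y\sim_\R 0/T$) as the starting point rather than an obstacle. The crucial observation is that some component of $\lfloor\Delta'\rfloor$ must be horizontal over the Mori fibre base, since $P'$ is ample on fibres. One then passes to a log smooth model $(W,\Delta_W)$ of $(X,\Delta)$, runs an LMMP over the base to reach a good minimal model $(W',\Delta_{W'})\to Y'$ via Theorem \ref{lc-flips-1}, and restricts to the horizontal lc centre $S_{W'}$. After a careful comparison of pullbacks (Steps~2--3) one shows that $(S_{W'},\Delta_{S_{W'}})$ satisfies the hypotheses of Theorem \ref{t-main-1} in dimension $n-1$, and then the horizontal-case machinery (Theorem \ref{horizontal}, Proposition \ref{klt-g-model}) gives a good minimal model of $(X,\Delta)$. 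Your one-line treatment of the degenerate case (``propagate through $g$ via Lemma \ref{lc-m-g-model} and a canonical bundle formula over $T$'') glosses over precisely this: there is no canonical bundle formula readily available for the lc pair $(Y,\Delta_Y)/T$, and the reduction to dimension $n-1$ has to go through the horizontal lc centre instead.
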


\begin{proof}
\emph{Step 1}.
Since $K_X+\Delta -\epsilon P$ is not pseudo-effective for $\epsilon \ll 1$, we run an LMMP on $K_X+\Delta -\epsilon P$ with scaling of an ample divisor which terminates with a Mori fibre space $g: (X',\Delta') \rightarrow Y$ by [\ref{BCHM}]. Because $\epsilon$ can be chosen arbitrary small, we obtain that $(X',\Delta')$ is $\Q$-factorial lc and $K_{X'}+\Delta' \sim_\mathbb{R} 0/Y$ by ACC for numerically trivial pairs Theorem \ref{ACC-2}. Let $W$ be a common log resolution of $(X,\Delta)$ and $(X',\Delta')$, 
$$
\xymatrix{
	& (W,\Delta_W) \ar[dl]_{p} \ar[dr]^{q}  &\\
	(X,\Delta) \ar[d]^{f}\ar@{-->}[rr]_{} & & (X',\Delta') \ar[d]^{g} \\
	Z && Y} 
$$
and write $K_W+\Delta_W = p^\ast (K_X+\Delta) +E$  where $(W,\Delta_W)$ is a log smooth model of $(X,\Delta)$. We can run an LMMP$/Y$ on $K_W+\Delta_W$ which terminates with a good minimal model. To see this, we first run an LMMP$/X'$ on $K_W+\Delta_W$ and by Lemma \ref{lem-exc-3} after finitely many steps we reach a model $W'$ on which $K_{W'}+\Delta_{W'} +G \sim_\mathbb{R} 0/Y$ for some divisor $G\geq 0$. Because $K_X+\Delta$ is pseudo-effective, $K_{W'}+\Delta_{W'}$ is also pseudo-effective and hence $G$ is vertical$/Y$. Then, by Theorem \ref{lc-flips-1} we run an LMMP$/Y$ on $K_{W'}+\Delta_{W'}$ which terminates with a good minimal model, after replacing, which is again denoted by $g': (W',\Delta_{W'}) \rightarrow Y'$. Now consider the following commutative diagram.
 $$
 \xymatrix{
 	& (W,\Delta_W) \ar[d]_{q}\ar[dl]_{p}\ar@{-->}[r]^{\phi} & (W',\Delta_{W'}) \ar[d]_{}  \\
 	(X,\Delta)\ar[d]^{f}\ar@{-->}[r]_{} & (X',\Delta')\ar[d]^{g} & Y' \ar[dl]_{}    \\
 	Z &  Y & } 
 $$ 
Since there exists a component $S'$ of $\lfloor\Delta' \rfloor$ on $X'$ which is horizontal$/Y$, the birational transform $S_{W'}$ is a component of $\lfloor \Delta_{W'} \rfloor$ which is horizontal$/Y$. We denote by $S$, $S_W$ the birational transforms on $X,W$.

\emph{Step 2}.
By Lemma \ref{NNF} and Proof of Lemma \ref{vertical-lem'}, there exists a nonempty open subset $V \subset Z$ such that its inverse image $U$ on $X$ contains the generic points of all lc centres of $(X,\Delta)$ not contained in $\mathrm{NNF}(K_X+\Delta)$. In particular, $U$ contains the generic point of $S$ since $\sigma_{S_W}(K_W+\Delta_W)=\sigma_{S_{W'}}(K_{W'}+\Delta_{W'})=0$.

By replacing $(W,\Delta_W)$ we can assume that the birational map $ \phi :W \dashrightarrow W'$ is a morphism. Possibly after shrinking $V$ we may assume that $\phi ^\ast (K_{W'}+\Delta_{W'}) \sim_\mathbb{R} E' /U$ where $0 \leq E' \leq E$ and hence $\mathrm{ord}_\Gamma E' =a(\Gamma, X,\Delta)-a(\Gamma, W',\Delta_{W'})$ for any component $\Gamma$ of $E$. If $\Gamma$ is a prime divisor on $W'$, then 
$$
a(\Gamma,W',\Delta_{W'})+\mathrm{ord}_\Gamma E'=a(\Gamma, X,\Delta).
$$ 
On the other hand, if $\Gamma$ is a prime divisor on $X$, then
$$
a(\Gamma,W',\Delta_{W'}) \ge a(\Gamma,W,\Delta_{W})= a(\Gamma, X,\Delta).
$$
It follows that
$$
\phi^\ast(K_{W'}+\Delta_{W'}) +N = p^\ast (K_{X}+\Delta) +E' +F
$$
where $N \geq 0$ is exceptional$/W'$, $E'$, $F \geq 0$ is exceptional$/X$ with $N$, $E'$ and $F$ being simple normal crossing and having no common components. In addition, $N$ and $F$ are mapped into $X \backslash U$. We therefore deduce that $E'+F \leq N_\sigma (\phi^\ast(K_{W'}+\Delta_{W'}))$. 

\emph{Step 3}.
Let $K_{S_{W'}}+\Delta_{S_{W'}}= (K_{W'}+\Delta_{W'})|_{S_{W'}}$. We denote by $\phi_S :S_W \rightarrow S_{W'}$ the restriction of $\phi$ on $S_W$. Since $S_{W'}$ is horizontal$/Y'$ and $K_{W'}+\Delta_{W'} \sim_\mathbb{R} 0/Y'$, we have 
\begin{align*}
E'|_{S_W}+F|_{S_W} &\leq N_\sigma (\phi^\ast(K_{W'}+\Delta_{W'}) )|_{S_W}\\
& =N_\sigma (\phi_S^\ast (K_{S_{W'}}+\Delta_{S_{W'}}))
\end{align*}
by a similar argument in the proof of Proposition \ref{klt-g-model}.

If we write $K_{S_W}+\Theta=\phi_S^\ast (K_{S_{W'}}+\Delta_{S_{W'}})+ L$ such that $(S_W,\Theta)$ is a log smooth model of $(S_{W'},\Delta_{S_{W'}})$, then we have
\begin{align}\label{eq3}
K_{S_W} + \Theta= p_S^\ast (K_{S}+\Delta_{S}) +E'|_{S_W} +F|_{S_W} -N|_{S_W} + L.
\end{align}
We denote by $M_{S_W}:= E'|_{S_W} +F|_{S_W}  + L$. Since $M_{S_W} \leq N_\sigma (K_{S_W} + \Theta)$, we can decrease the coefficients of $\Theta$ as well as the coefficients of $M_{S_W}$ so that $\Theta$ and $M_{S_W}$ have no common components. It turns out that, if we write 
$$
p_{S,\ast} \Theta + p_{S,\ast} (N|_{S_W})= \Delta_{S} + p_{S,\ast} M_{S_W},
$$ 
then $p_{S,\ast} M_{S_W} \le p_{S,\ast} (N|_{S_W})$.  
$$
\xymatrix{
	S_W'\ar[ddr]_{f''}\ar[dr]^{p'_S} & & S_W \ar[d]^{q_S}\ar[dl]_{p_S}\ar[r]^{\phi_S} \ar@{-->}[ll]_{\psi_S}  & S_{W'} \ar[d]_{}  \\
	& S\ar[d]^{f_S} \ar@{-->}[r]_{} &S' \ar[d]^{g_{S}} & Y' \ar[dl]_{}    \\
	& T	&  Y & } 
$$ 

\emph{Step 4}.
In this step we prove that $(S_{W'},\Delta_{S_W'})$ has a good minimal model. Now run an LMMP$/S$ on $K_{S_W}+\Theta$ with scaling of an ample divisor. By Equation (\ref{eq3}), we have  
$$
K_{S_W} + \Theta \sim_\R M_{S_W} -N|_{S_W} /S.
$$
Because $M_{S_W}$, $N|_{S_W} \geq 0$ and $p_{S,\ast} (M_{S_W}  -N|_{S_W}) \le 0$, by Lemma \ref{lem-exc-3}, after finitely steps we reach a model $S_W'$ on which 
$$
K_{S_W'} + \Theta' \sim_\R - N'/S
$$
where $0 \leq N' = (N|_{S_W}-M_{S_W})'$, and the divisors $\Theta'$, $ (N|_{S_W}-M_{S_W})'$ are birational transforms of $\Theta$, $ N|_{S_W}-M_{S_W}$ respectively. In addition we have 
$$K_{S_W'} + \Theta' +N' = p_S'^\ast (K_{S}+\Delta_{S}).$$ As we pointed out earlier, $N|_{S_W}$ is mapped into $S \backslash U_{S}$ where $U_{S}$ is the restriction of $U$ to $S$, and hence $N'$ is also mapped into $S \backslash U_{S}$. 

Now we write $f''=f_{S}\circ p_S'$. If we suitably choose $A_Z$, then $A_T$ does not contain $T \backslash V_T$ in its support where $T$ is the normalization of the image of $S$ in $Z$, $V_T$ is the inverse image of $V$ on $T$, and $A_T$ denotes that restriction of $A_Z$. So, we have $\Theta' \geq   f''^\ast A_T$ which in turn implies that $(S_W',\Theta')$ has a good minimal model by combining Theorem \ref{lc-flips-1} and Theorem \ref{t-main-1} in dimension $n-1$. We therefore conclude that $(S_{W'},\Delta_{S_W'})$ has a good minimal model.

\emph{Step 5}.
By a similar argument in the proofs of Proposition~\ref{klt-g-model} and Theorem~\ref{horizontal}, we deduce that $(W',\Delta_{W'})$ has a good minimal model which in turn implies that $(X,\Delta)$ has a good minimal model.
\end{proof}

\begin{proof}[Proof of Theorem \ref{t-main-1}]
We argue by induction so in particular we can assume
that Theorem \ref{t-main-1} holds in dimension $ \le n - 1$. By Theorem \ref{vertical-psef} and Theorem \ref{vertical-non-psef}, Theorem \ref{t-main-1}
holds in dimension $n$ in the vertical case. On the other hand, by Theorem \ref{horizontal}, Theorem \ref{t-main-1} also holds in dimension $n$ in the horizontal case.	
\end{proof}

\begin{cor}\label{t-main-cor}
Let $(X,\Delta)$ be a projective lc pair such that $\Delta \geq A$ where $A \geq 0$ is an ample $\R$-divisor. Then, either $(X,\Delta)$ has a good minimal model or a Mori fibre space. 
\end{cor}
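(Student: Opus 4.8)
The plan is to obtain this immediately from Theorem~\ref{t-main-1} by applying that theorem with the base taken to be $X$ itself. First I would observe that, since $\Delta \geq A$ with $A \geq 0$, one may set $B := \Delta - A \geq 0$, so that $\Delta = A + B$ is a decomposition with $A, B \geq 0$ as required. Then I would take $Z := X$, $f := \mathrm{id}_X$, and $A_Z := A$: the morphism $f$ is surjective, $A = f^{\ast} A_Z$ with $A_Z$ ample on $Z = X$, and $K_X + \Delta \sim_\R 0/Z$ holds vacuously, since every $\R$-Cartier divisor on $X$ is its own pullback under the identity. Theorem~\ref{t-main-1} then applies and delivers a good minimal model or a Mori fibre space for $(X,\Delta)$, which is exactly the claim.

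Accordingly, I do not expect any real obstacle here — the corollary is simply the ``absolute'' shadow of the relative Theorem~\ref{t-main-1}. The only conceptual point worth recording is why Theorem~\ref{t-main-1} had to be formulated relatively in the first place: its proof runs by induction on dimension via restriction to log canonical centres, and the restriction of $K_X+\Delta$ to such a centre $S$ is in general trivial only over a base of strictly smaller dimension (the normalisation of the image of $S$ in $Z$), so the inductive hypothesis must be available in the relative form. Once that theorem is in hand, no further work is needed in the case $\Delta \ge A$ ample beyond the choice $Z = X$ above; this recovers, and extends to log canonical pairs, the klt statement of~[\ref{BCHM}]. If moreover $K_X+\Delta$ is $\Q$-Cartier, finite generation of $R(X,\Delta)$ then follows from semi-ampleness of $K_Y+\Delta_Y$ on the good minimal model $(Y,\Delta_Y)$, and is automatic when $(X,\Delta)$ admits a Mori fibre space, as $K_X+\Delta$ is then not pseudo-effective.
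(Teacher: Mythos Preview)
Your approach is correct and essentially the same as the paper's: both apply Theorem~\ref{t-main-1} with $Z=X$. The only cosmetic difference is that the paper first passes to a $\Q$-factorial dlt blow-up $f\colon(Y,\Delta_Y)\to X$ and then applies Theorem~\ref{t-main-1} to $(Y,\Delta_Y)$ over $X$ (using $K_Y+\Delta_Y\sim_\R 0/X$ and $\Delta_Y\ge f^*A$), whereas you take $f=\mathrm{id}_X$ directly; since the proof of Theorem~\ref{t-main-1} begins by replacing the pair with a dlt blow-up anyway, the two routes are equivalent.
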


\begin{proof}
Let $(Y,\Delta_Y)$ be a $\Q$-factorial dlt blow-up of $(X,\Delta)$. We denote by $f:Y \rightarrow X$. By definition we have $K_Y+\Delta_Y \sim_\R 0/X$. Since $\Delta \ge A$, we have $\Delta_Y \ge A_Y:=f^\ast A$. Hence the corollary follows from Theorem \ref{t-main-1} immediately.
\end{proof}

\vspace{0.3cm}
\section{Terminations with scaling}\label{term}

As an application we prove the terminations with scaling for lc pairs under a positivity assumption. To this end we study the geography of weak lc models on a lc pair. The geography of models on a klt pair was studied by C. Birkar, P. Cascini, C. D. Hacon, J. McKernan [\ref{BCHM}], Y. Kawamata [\ref{Kaw-3}], [\ref{Kaw-4}], A. Corti, V. Lazi\'c [\ref{Corti-Lazic}], P. Cascini, V. Lazi\'c [\ref{Cascini-Lazic}] etc., and the geography of models on a $\Q$-factorial variety was studied by A. S. Kaloghiros, A. K\"uronya, V. Lazi\'c [\ref{KKL}], etc. 

\begin{lem}\label{lem-geography-1}
Let $X$ be a projective normal variety, and let $\{ \Delta_1$, $\Delta_2$, $\ldots$, $\Delta_r \}$ be a set of $\Q$-Weil divisors such that $(X,\Delta_i)$ is lc for each $1\le i \le r$. Let $\mathcal{P}$ be the rational polytope defined by $\{ \Delta_i \}$. Then, the subset
$$
\mathcal{N}:=\{\Delta \in \mathcal{P}|K_X+\Delta \mathrm{~is~nef.}  \}
$$
is a rational polytope contained in $\mathcal{P}$.
\end{lem}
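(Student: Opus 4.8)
The plan is to exhibit $\mathcal{N}$ as the intersection of the rational polytope $\mathcal{P}$ with finitely many half-spaces, each of the form $\{\Delta : (K_X+\Delta)\cdot C \ge 0\}$ for a curve $C$ on $X$. First I would record the soft properties. For every $\Delta\in\mathcal{P}$ the divisor $K_X+\Delta$ is $\R$-Cartier, being a convex combination of the $\Q$-Cartier divisors $K_X+\Delta_i$; hence $\Delta\mapsto[K_X+\Delta]$ is an affine-linear map $\mathcal{P}\to N^1(X)_\R$, and since the nef cone is closed and convex and $\mathcal{P}$ is compact, $\mathcal{N}$ is a compact convex subset of $\mathcal{P}$. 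Note also that for any curve $C$ the class $[C]$ lies in the integral lattice $N_1(X)_\Z$, so $(K_X+\Delta_i)\cdot C\in\Q$ for each $i$ and therefore $\Delta\mapsto(K_X+\Delta)\cdot C=\sum_i\lambda_i\,(K_X+\Delta_i)\cdot C$ (barycentric coordinates, $\sum\lambda_i=1$) is a rational affine functional on $\mathcal{P}$. Consequently, rationality of $\mathcal{N}$ will be automatic once we know that finitely many curves suffice; the entire content is a finiteness statement about which extremal rays of $\NEbar(X)$ are relevant.

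The tool is the Cone Theorem for lc pairs (Ambro, Fujino) together with the bound on lengths of extremal rays: for each $i$, every $(K_X+\Delta_i)$-negative extremal ray $R$ of $\NEbar(X)$ is spanned by a rational curve $C$ with $0<-(K_X+\Delta_i)\cdot C\le 2\dim X$, these rays are locally discrete in the open half-space $\{(K_X+\Delta_i)<0\}$, and only finitely many of them are non-negative against a fixed ample divisor. I would first dispose of the trivial case $\mathcal{N}=\mathcal{P}$, which by convexity occurs exactly when every $K_X+\Delta_i$ is nef; otherwise some vertex, say $K_X+\Delta_{i_0}$, is not nef. If $(K_X+\Delta)\cdot R<0$ for some $\Delta=\sum\lambda_i\Delta_i\in\mathcal{P}$, then $(K_X+\Delta_i)\cdot R<0$ for some vertex $i$, so $R$ is spanned by a rational curve and contributes a \emph{rational} inequality to $\mathcal{N}$. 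It remains to see that, although $\NEbar(X)$ may have infinitely many extremal rays, only finitely many of the resulting half-spaces actually cut into $\mathcal{P}$: working near a point $\Delta_0\in\mathcal{N}$, a ray is binding in a small neighbourhood only if $(K_X+\Delta_0)\cdot R$ is small compared with the diameter of $\mathcal{P}$; choosing for each candidate ray a $\Delta\in\mathcal{P}$ realizing negativity and a length-bounded rational curve, passing to a convergent subsequence and using local discreteness of the negative rays of the limiting lc pair, one gets that only finitely many half-spaces survive around $\Delta_0$. Covering the compact set $\mathcal{N}$ by finitely many such neighbourhoods and adding the (rational) facets of $\mathcal{P}$ produces the finite list $C_1,\dots,C_m$, whence $\mathcal{N}=\mathcal{P}\cap\bigcap_l\{(K_X+\Delta)\cdot C_l\ge 0\}$ is a rational polytope. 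This is exactly the argument of [\ref{BCHM}, Corollary 1.1.5], with the klt Cone Theorem replaced by its lc counterpart, and is also covered by [\ref{KKL}].

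The main obstacle is precisely this finiteness step. Because there is no ample divisor fixed in the statement, one cannot simply bound the $H$-degrees of the relevant curves, and extremal rays of $\NEbar(X)$ may genuinely accumulate on the boundary of the nef cone; the resolution is that such an accumulating family must be $(K_X+\Delta_i)$-negative for a single fixed vertex $\Delta_i$, so the length bound for that lc pair plus local discreteness forces only finitely many of the associated half-spaces to be binding near any given point of $\mathcal{N}$. Everything else — the affine-linearity and rationality of $\Delta\mapsto(K_X+\Delta)\cdot C$, the passage between "$K_X+\Delta$ is nef" and "$(K_X+\Delta)\cdot C_l\ge0$ for all $l$" via Kleiman's criterion, and the patching over a finite cover of $\mathcal{N}$ — is routine.
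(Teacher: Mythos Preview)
Your approach is correct but differs from the paper's. You work directly on $X$, invoking the lc Cone Theorem of Ambro--Fujino and the length bound on extremal rational curves to cut $\mathcal{N}$ out of $\mathcal{P}$ by finitely many rational half-spaces, exactly as in [\ref{BCHM}, Corollary 1.1.5] or [\ref{B-II}, Proposition 3.2(3)] with klt replaced by lc. The paper instead reduces to the $\Q$-factorial klt case, where the result is taken as known: it passes to a $\Q$-factorial dlt blow-up $(Y,\Delta_Y)\to(X,\Delta)$, observes that (locally on $\mathcal{P}$, after some care at the faces, possibly running a further LMMP$/X$) the same $Y$ serves as a crepant dlt model for all nearby $\Delta'$, and then uses that $K_X+\Delta'$ is nef iff $K_Y+\Delta'_Y$ is nef to pull the rational polytope back from the known $\Q$-factorial situation on $Y$.

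Your direct route has the advantage of being self-contained and of exhibiting explicitly the finitely many curve classes that determine $\mathcal{N}$; the paper's reduction has the advantage of treating the polytope statement as a black box and is more in keeping with the dlt-blow-up formalism used throughout Section~\ref{term}. One small remark on your finiteness sketch: the compactness/subsequence language is fine, but the cleanest way to finish is exactly what you note afterwards --- by pigeonhole an infinite family of binding rays is $(K_X+\Delta_i)$-negative for a single fixed vertex $i$, and then the length bound $-(K_X+\Delta_i)\cdot C\le 2\dim X$ together with $(K_X+\Delta_0)\cdot C\ge 0$ bounds all the vertex intersections, so the rays lie in a compact slice of $N_1(X)_\R$ on which the $(K_X+\Delta_i)$-negative extremal rays are discrete; this is the argument of [\ref{B-II}, Proposition 3.2] verbatim.
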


\begin{proof}
The lemma is obvious when $X$ is $\Q$-factorial klt. We therefore assume that $X$ is not $\Q$-factorial klt. Pick a boundary $\Delta \in  \mathcal{P}$, and let $(Y,\Delta_Y)$ be a dlt blow-up of $(X,\Delta)$ where $\Delta_Y=\widetilde{\Delta}+E$. Here $\widetilde{\Delta}$ denotes the birational transform of $\Delta$ and $E$ is the reduced exceptional$/X$ divisor. If $\Delta$ is an interior point of $\mathcal{P}$, it follows that $(Y,\Delta_Y')$ is a $\Q$-factorial dlt blow-up of $(X,\Delta')$ where $\Delta_Y'=\widetilde{\Delta'}+E$ if $\Delta' \in \mathcal{P}$ is sufficiently close to $\Delta$. Since the question is local, by shrinking $\mathcal{P}$ we can assume that $(Y,\Delta_Y)$ is a dlt blow-up of $(X,\Delta)$ for every $\Delta \in \mathcal{P}$. Because $K_X+\Delta$ is nef if and only if $K_Y+\Delta_Y$ is nef, one easily conclude that $\mathcal{N}$ is a rational polytope.

If $\Delta$ lies on a face of $\mathcal{P}$, then for an interior point $\Delta'$ sufficiently close to $\Delta$, it may happen that $a(\Gamma,X,\Delta')>0$. Now we run an LMMP$/X$ on $K_Y+\widetilde{\Delta'} +E$ which terminates with a $\Q$-factorial dlt blow-up of $(X,\Delta')$. Without confusion we again denote it by $(Y,\Delta_Y')$. By an easy calculation we deduce that there exists a neighborhood $U$ of $\Delta$ on $\mathcal{P}$ such that for any point $\Delta'' \in U$, we have that $(Y,\Delta_Y'')$ is $\Q$-factorial lc. By shrinking $\mathcal{P}$ we can lift every divisor $\Delta'' \in \mathcal{P}$ to $Y$ which in turn implies that $\mathcal{N}$ is a rational polytope.
\end{proof}

The next lemma is essentially contained in [\ref{B-II}, Proposition 3.2(5)]. We borrow the proof below for the reader's convenience.

\begin{lem}[cf.\text{[\ref{B-II}, Proposition 3.2(5)]}]\label{lem-geograpgy}
	Let $X$ be a $\Q$-factorial projective variety, and let $\{ \Delta_i \}$ and $\mathcal{P}$ be as in Lemma \ref{lem-geography-1}. Fix a boundary $\Delta \in \mathcal{P}$ such that $K_X+\Delta$ is nef. Then, there is a real number $\epsilon>0$ depending only on $X$, $\Delta$ and $\mathcal{P}$ satisfying: for any boundary $\Delta' \in \mathcal{P}$, if $\|\Delta'-\Delta\|<\epsilon$, then any sequence of $K_X+\Delta'$-MMP is $K_X+\Delta$-trivial.
\end{lem}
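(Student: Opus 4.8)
The plan is to run a $K_X+\Delta'$-MMP $X=X_1\dashrightarrow X_2\dashrightarrow\cdots$ and prove, by induction on the step, that $K_{X_i}+\Delta_i$ is nef and that the $i$-th contraction is $K_X+\Delta$-trivial; here $\Delta_i$ and $\Delta'_i$ denote the birational transforms of $\Delta$ and $\Delta'$, which make sense because $X$, and hence every $X_i$, is $\Q$-factorial. The base case $i=1$ is the hypothesis that $K_X+\Delta$ is nef. For the inductive step, assume $K_{X_i}+\Delta_i$ is nef and let $R_i$ be the extremal ray contracted at step $i$, so $(K_{X_i}+\Delta'_i)\cdot R_i<0$. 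Everything reduces to the claim that, for $\epsilon$ small enough depending only on $X$, $\Delta$ and $\mathcal P$, one has $(K_{X_i}+\Delta_i)\cdot R_i=0$: granting this, $K_{X_i}+\Delta_i$ is numerically trivial over the base of $\cont_{R_i}$, hence equals the pullback of a nef divisor from there, so its transform on $X_{i+1}$ (obtained by the flip or the divisorial contraction of $R_i$) is again nef and the induction continues, while the $i$-th step is by definition $K_X+\Delta$-trivial.

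To analyse the claim, set $n=\dim X$. By boundedness of the length of extremal rays for lc pairs, $R_i$ is generated by a rational curve $C_i$ with $0<-(K_{X_i}+\Delta'_i)\cdot C_i\le 2n$, and $(K_{X_i}+\Delta_i)\cdot C_i\ge 0$ since $K_{X_i}+\Delta_i$ is nef. Writing $\Delta=\sum_j a_j\Delta_j$ and $\Delta'=\sum_j a'_j\Delta_j$ with $\sum a_j=\sum a'_j=1$, and putting $q_{j,i}:=(K_{X_i}+\Delta_{j,i})\cdot C_i$, a rational number because $X_i$ is $\Q$-factorial and $\Delta_j$ is $\Q$-Weil, we get $\sum_j a_j q_{j,i}\ge 0$ and $-2n\le\sum_j a'_j q_{j,i}<0$. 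Hence the segment $[\Delta,\Delta']\subset\mathcal P$ meets the rational hyperplane $\{\,\sum_j t_j q_{j,i}=0\,\}$ in coefficient space at a point $D\in\mathcal P$ with $(K_{X_i}+D_i)\cdot C_i=0$ and $\|D-\Delta\|<\epsilon$. Thus the claim is equivalent to ruling out that $(K_{X_i}+\Delta_i)\cdot C_i$ is positive but arbitrarily small as $\epsilon$ shrinks, uniformly over all steps $i$ and all MMP's.

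This discreteness is the crux and the main obstacle (the intersection numbers $q_{j,i}$ need not have boundedly denominators along the MMP, since Cartier indices can grow under flips). I would establish it by a Diophantine argument in the style of Lemma~\ref{diophantine}, following [\ref{B-II}, Proposition~3.2(5)]. The key reduction is to replace $\Delta$ by the projection of $\Delta'$ onto the rational affine span of the minimal face of the nef polytope $\mathcal N=\{D\in\mathcal P:K_X+D\text{ is nef}\}$ (a rational polytope by Lemma~\ref{lem-geography-1}) containing $\Delta$; this is legitimate for $\|\Delta'-\Delta\|$ small because $\Delta$ lies in the relative interior of that face, and it suffices because an affine function that is $\ge 0$ on the face and vanishes at an interior point vanishes identically, so $K_X+\Delta$-triviality follows from triviality with respect to the projected divisor. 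After this reduction $\Delta$ and $\Delta'$ differ by a vector in a fixed rational complement to the span of the face, and Lemma~\ref{diophantine} applied to this fixed rational configuration — combined with the length bound $-(K_{X_i}+\Delta'_i)\cdot C_i\le 2n$ — yields a positive lower bound, depending only on $X$, $\Delta$ and $\mathcal P$, for the positive values that $(K_{X_i}+\Delta_i)\cdot C_i$ can take. Choosing $\epsilon$ below this bound forces $(K_{X_i}+\Delta_i)\cdot C_i=0$ at every step, completing the induction and the proof.
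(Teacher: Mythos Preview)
Your inductive framework matches the paper's exactly: assume $K_{X_i}+\Delta_i$ is nef and show that the $i$-th extremal ray satisfies $(K_{X_i}+\Delta_i)\cdot R_i = 0$. The divergence is entirely in how you obtain the discreteness, and here the paper is much more direct and does not use Diophantine approximation at all. The paper writes $\Delta' = r\Delta + s\Delta''$, where $\Delta''$ is the point where the ray from $\Delta$ through $\Delta'$ hits $\partial\mathcal{P}$, so $r+s=1$ and $s$ is small when $\|\Delta'-\Delta\|$ is small. Since each step is $K_X+\Delta$-trivial by induction and $K_X+\Delta'$-negative, it is automatically a step of a $K_X+\Delta''$-MMP, so $(X_i,\Delta''_i)$ remains lc and the length bound gives $(K_{X_i}+\Delta''_i)\cdot\Gamma\ge -2n$ for an extremal curve $\Gamma\in R_i$. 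Separately, writing $\Delta=\sum a_j\Delta_j$ as a convex combination of $\Q$-divisors yields a fixed $\alpha>0$ such that $(K_{X_i}+\Delta_i)\cdot\Gamma>\alpha$ whenever positive. Then
\[
(K_{X_i}+\Delta'_i)\cdot\Gamma \;=\; r\,(K_{X_i}+\Delta_i)\cdot\Gamma \;+\; s\,(K_{X_i}+\Delta''_i)\cdot\Gamma \;>\; r\alpha - 2sn,
\]
which is positive as soon as $s/r<\alpha/(2n)$, contradicting $(K_{X_i}+\Delta'_i)\cdot R_i<0$. Hence $(K_{X_i}+\Delta_i)\cdot\Gamma=0$.

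Your route through Lemma~\ref{diophantine} and a projection onto a face of $\mathcal N$ is unnecessary and, as written, has a gap: Diophantine approximation produces $\Q$-points near $\Delta$, but that alone does not yield a uniform lower bound on $(K_{X_i}+\Delta_i)\cdot C_i$ across the MMP (you still face exactly the Cartier-index growth you flagged), and the projection step is too vague to evaluate. The decomposition $\Delta'=r\Delta+s\Delta''$ is the simplification you missed: it lets one apply the length bound to the \emph{single} lc boundary $\Delta''_i$ on $X_i$ (whose log canonicity is preserved because the MMP is a $K_X+\Delta''$-MMP), rather than trying to control each $\Delta_{j,i}$ separately.
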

\begin{proof}
	Write $\Delta = \Sigma a_j \Delta_j$ as a convex combination of $\Q$-divisors $\Delta_j$. Let $R$ be an extremal ray of $\overline{NE}(X)$. Pick a rational curve $\Gamma \in R$ such that $(K_X+\Delta_j) \cdot \Gamma \ge -2 \dim X$. Then, there is a real number $\alpha >0$ satisfying 
	$$
	(K_X+\Delta) \cdot \Gamma =\Sigma a_j(K_X+\Delta_j) \cdot \Gamma > \alpha
	$$
	if it is positive.
	
	Take $L$ to be the line which goes through $\Delta'$ and $\Delta$ and let $\Delta''$ be the intersection point of $L$ and the boundary of $\mathcal{P}$, in the direction of $\Delta'$. So, there are nonnegaitve real numbers $r, s$ such that $r + s = 1$	and $\Delta' = r\Delta + s\Delta''$. Given a sequence of $K_X+\Delta'$-MMP, we suppose by induction that $X \dashrightarrow X_i$ is $K_X+\Delta$-trivial. In particular, the map $X_i \dashrightarrow X_{i+1}$ is
	also a step of LMMP on $K_X + \Delta''$ and $(X_i, \Delta''_i)$ is lc where $\Delta''_i$ is the
	birational transform of $\Delta''$. Suppose that there is an extremal ray $R$ on $X_i$ such that $(K_{X_i} + \Delta'_i)\cdot R \le 0$ but $(K_{X_i} + \Delta_i)\cdot R > 0$. Let $\Gamma$ be an extremal curve for $R$. By the previous discussion, $(K_{X_i} + \Delta_i) \cdot \Gamma > \alpha$ and $
	(K_{X_i} + \Delta''_i) \cdot \Gamma \ge -2 \dim X$.	Now
	$$
	(K_{X_i} + \Delta'_i) \cdot \Gamma = r(K_{X_i} + \Delta_i)  \cdot \Gamma  + s(K_{X_i} + \Delta''_i) \cdot \Gamma > r\alpha - 2s \dim X
	$$
	and it is obvious that this is positive if $\frac{r}{s}> \frac{2\dim X}{\alpha} $. In other words, if $\Delta'$ is sufficiently close to $\Delta$, then we get a contradiction. 
\end{proof}

\begin{prop}[Geography of weak lc models]\label{lem-geography-2}
Let $X$, $\{ \Delta_i \}$ and $\mathcal{P}$ be as in Lemma \ref{lem-geography-1}. Suppose that for each $\Delta \in \mathcal{P}$, either $(X,\Delta)$ has a good minimal model or a Mori fibre space. Then, the subset
$$
\mathcal{E}:=\{\Delta \in \mathcal{P}| \text{$K_X+\Delta$ is pseudo-effective.}  \}
$$
is a rational polytope contained in $\mathcal{P}$. Moreover, $\mathcal{E}$ admits a finite rational polyhedral decomposition $\mathcal{E}= \bigcup_k \mathcal{E}_k$ satisfying: 

$\bullet$ $\dim  \mathcal{E}_k =\dim  \mathcal{E}$ for every $k$;

$\bullet$ $\dim  \mathcal{E}_k \bigcap \mathcal{E}_l  <\dim  \mathcal{E}$ for every $l\neq k$;

$\bullet$ for any two divisors $\Delta$, $\Delta' \in \mathcal{E}$, $\Delta$ and $\Delta'$ belong to some same chamber if and only if there exists a normal variety $Y$ such that $(Y,\Delta_Y)$ and $(Y,\Delta_Y')$ are weak lc models of $(X,\Delta)$ and $(X,\Delta')$ respectively.
\end{prop}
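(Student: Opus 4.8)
The plan is to reduce to the $\Q$-factorial case, prove a \emph{finiteness of weak lc models} statement over $\mathcal P$, and then extract the rational polytope $\mathcal E$ and its chamber decomposition by the standard geography-of-models formalism (cf.\ [\ref{BCHM}, Section~3], [\ref{Corti-Lazic}], [\ref{Cascini-Lazic}], [\ref{KKL}]).

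\textbf{Reduction to $X$ $\Q$-factorial.} I would first argue exactly as in the proof of Lemma~\ref{lem-geography-1}: after subdividing $\mathcal P$ into finitely many rational subpolytopes and inducting on $\dim\mathcal P$ to handle faces, fix a dlt blow-up $h\colon Y\to X$ (followed, on the faces, by an $\mathrm{LMMP}/X$ to restore $\Q$-factorial dltness) with $(Y,\Delta_Y)$ $\Q$-factorial dlt and $K_Y+\Delta_Y\sim_\R 0/X$ for every $\Delta$ in the subpolytope at hand. Since $h$ is crepant, pseudo-effectivity of $K_X+\Delta$, the property of being a weak lc model of $(X,\Delta)$, and the existence of a common weak lc model of two boundaries are all unchanged on passing from $X$ to $Y$. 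So I may assume $X$ is $\Q$-factorial and $(X,\Delta)$ is $\Q$-factorial dlt for every $\Delta\in\mathcal P$.

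\textbf{Finiteness of weak lc models --- the main point.} Fix a general ample $\Q$-divisor $H$. For $\Delta\in\mathcal E$ the hypothesis (together with $K_X+\Delta$ pseudo-effective) provides a good minimal model of $(X,\Delta)$, reached by an LMMP on $K_X+\Delta$ with scaling of $H$; this terminates by [\ref{B-lc-flips}, Theorem~1.9] and the hypothesis that every pair in $\mathcal P$ has a good minimal model. Call the result $g_\Delta\colon X\bir Y_\Delta$. The claim is that, up to the underlying birational map, only finitely many $Y_\Delta$ occur as $\Delta$ runs over $\mathcal E$. I would prove this following [\ref{BCHM}] and [\ref{B-II}]: since $K_{Y_\Delta}+\Delta_{Y_\Delta}$ is nef, Lemma~\ref{lem-geograpgy} applied on $Y_\Delta$ shows that for every $\Delta'$ in a small rational neighbourhood of $\Delta$ in $\mathcal P$ each step of a $(K_{Y_\Delta}+\Delta'_{Y_\Delta})$-MMP is $(K_{Y_\Delta}+\Delta_{Y_\Delta})$-trivial; running such an MMP (again terminating, by the hypothesis) one lands on a model that is simultaneously a weak lc model of $(X,\Delta)$ and of $(X,\Delta')$, and the models reachable from $Y_\Delta$ this way are governed by the Mori chamber structure of $Y_\Delta$ inside the pulled-back polytope, which is rational polyhedral by Lemma~\ref{lem-geography-1} on $Y_\Delta$. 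A finite subcover of the compact set $\mathcal E$ by such neighbourhoods then produces birational contractions $g_1\colon X\bir Y_1,\dots,g_N\colon X\bir Y_N$ of $X$ so that every $\Delta\in\mathcal E$ admits some $(Y_j,\Delta_{Y_j})$ as a weak lc model, and (enlarging the list if necessary) so that every weak lc model occurring over $\mathcal E$ is one of the $Y_j$ up to the underlying birational map. \emph{I expect this finiteness to be the main obstacle}: unlike the klt case of [\ref{BCHM}], it must be carried out for genuinely log canonical pairs, and here there is no ampleness hypothesis $\Delta_i\ge A$ available as a crutch.

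\textbf{The decomposition.} For $1\le j\le N$ put $\mathcal E_j:=\{\Delta\in\mathcal P\mid (Y_j,\Delta_{Y_j})\text{ is a weak lc model of }(X,\Delta)\}$. Membership in $\mathcal E_j$ is the conjunction of "$K_{Y_j}+\Delta_{Y_j}$ is nef" --- a rational polytope by Lemma~\ref{lem-geography-1} applied to $(Y_j,\{(\Delta_i)_{Y_j}\})$ --- and the finitely many affine-linear inequalities $a(D,X,\Delta)\le a(D,Y_j,\Delta_{Y_j})$ for $D$ contracted by $g_j$; so each $\mathcal E_j$ is a rational polytope, $\mathcal E_j\subseteq\mathcal E$ (a pair with a weak lc model has pseudo-effective log canonical class), and $\mathcal E=\bigcup_j\mathcal E_j$ by the finiteness step, whence $\mathcal E$ is a rational polytope contained in $\mathcal P$. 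Each $(X,\Delta)$ with $\Delta\in\mathcal E$ has an lc model (good minimal model $\Rightarrow$ semi-ample log canonical class $\Rightarrow$ ample model), and I take the chambers to be the closures of the subsets of $\mathcal E$ on which this lc model is constant with compatible maps; by the geography-of-ample-models arguments of [\ref{BCHM}, Section~3] (see also [\ref{Corti-Lazic}], [\ref{KKL}]), carried out with the finite list $Y_1,\dots,Y_N$, there are finitely many such chambers, each a rational polytope of dimension $\dim\mathcal E$, and distinct chambers meet in dimension $<\dim\mathcal E$. Finally, for the last bullet: ``if'' is clear, since a weak lc model $Y_j$ from the list that works for a generic point of the common chamber works for $\Delta$ and $\Delta'$ as well, by closedness of the nef and discrepancy conditions; ``only if'' is the standard fact that a common weak lc model $Y$ of $(X,\Delta)$ and $(X,\Delta')$ --- after which $K_Y+\Delta_Y$ and $K_Y+\Delta'_Y$ are semi-ample by the existence of good minimal models and [\ref{B-lc-flips}, Remark~2.7] --- forces the lc models to agree, by comparing the induced contractions via the Negativity Lemma.
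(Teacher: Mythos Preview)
Your outline shares the paper's opening reduction and the idea of producing, for each $\Delta\in\mathcal E$, a weak lc model and a rational polytope around it, but there are two genuine gaps.

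\textbf{Finiteness.} You correctly flag this as the main obstacle, but your resolution does not go through. The appeal to ``the Mori chamber structure of $Y_\Delta$ inside the pulled-back polytope, which is rational polyhedral by Lemma~\ref{lem-geography-1}'' is a non sequitur: Lemma~\ref{lem-geography-1} only says the nef locus on $Y_\Delta$ is a rational polytope, not that there are finitely many models reachable by $(K_{Y_\Delta}+\Delta_{Y_\Delta})$-trivial MMPs. Even granting the paper's Step~2 --- which is nontrivial: one must iterate MMPs over the lc model $T$, using Lemma~\ref{lem-geograpgy} and the Rigidity Lemma to keep the new model a weak lc model for the old boundaries, until the chamber becomes full-dimensional --- your compactness argument still fails. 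The chambers produced are \emph{closed} rational polytopes, and a boundary $\Delta\in\partial\mathcal E$ may lie on the boundary of every chamber containing it, so there is no open cover of $\mathcal E$ to extract a finite subcover from. The paper instead proves finiteness by a separate argument (Step~4): assuming infinitely many chambers, choose interior points $\Delta_k\to\Delta$, use Lemma~\ref{lem-geograpgy} on a weak lc model $Y$ at $\Delta$ to force all nearby MMPs to be $(K_Y+\Delta_Y)$-trivial, and observe that along each ray $\overrightarrow{\Delta\,\Delta_k}$ the segment of length $\epsilon$ lies in a single chamber; intersecting with a sphere of radius $\epsilon'<\epsilon$ drops $\dim\mathcal E$ by one and contradicts induction.

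\textbf{Chamber definition and the last bullet.} You take chambers to be the closures of loci with constant lc (ample) model, then claim the equivalence with ``share a common weak lc model''. The ``only if'' direction is false: take $X=\PP^1$, $\Delta=p+q$, $\Delta'=p+q+\epsilon r$; then $X$ itself is a common weak lc model of both, but the lc models are a point and $\PP^1$. The statement's decomposition is by \emph{weak} lc models, and the paper builds it that way: chambers are the maximal regions admitting a common weak lc model (Step~2), shown to meet in lower dimension and to satisfy the equivalence by a convexity/segment argument (Step~3). Importing the ample-model formalism from [\ref{BCHM}] does not suffice here, because the coincidence of ample-model and weak-lc-model chambers in [\ref{BCHM}] relies on the klt-plus-ample hypothesis that is absent in this proposition.
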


\begin{proof}
	\emph{Step 1.}
By the proof of Lemma \ref{lem-geography-1}, possibly after shrinking $\mathcal{P}$ we can assume that $X$ is $\Q$-factorial klt. Let $\Delta$ be a point in $\mathcal{P}$. If $K_X+\Delta$ is pseudo-effective, then we show that there exists a rational polytope $\mathcal{E}_k$ containing $\Delta$ given by a common weak lc model. To this end, we run an LMMP on $K_X+\Delta$ with scaling. By Section \ref{preliminaries}: decreasing the coefficients appeared in LMMP with scaling and [\ref{B-lc-flips}, Theorem 1.9] the LMMP ends with a weak lc model $(Y,\Delta_Y)$ of $(X,\Delta)$ on which $K_Y+\Delta_Y$ is semi-ample. Note that $(Y,\Delta_Y)$ is not necessarily a log minimal model because $(X,\Delta)$ is not necessarily dlt, but we have $a(D,X,\Delta)< a(D,Y, \Delta_Y)$ for any exceptional$/Y$ prime divisor $D$ on $X$. We denote the birational contraction by $\phi :X \dashrightarrow Y$. It is easy to see that 
\begin{align*}
\mathcal{E}'&:=\{ \Delta' \in U| \text{$K_Y+\phi_\ast \Delta'$ is nef and} \\ 
& \text{for any prime divisor $D$ exceptional$/Y$, $a(D,X,\Delta')\le a(D,Y,\phi_\ast \Delta')$}  \}
\end{align*}
is a rational polytope by [\ref{B-II}, Proposition 3.2 (3)]. 

\emph{Step 2.}
We claim that, if $\dim \mathcal{E}' < \dim \mathcal{E}$, then we can extend $\mathcal{E}'$ so that $\dim \mathcal{E}' = \dim \mathcal{E}$. To this end, we first modify $\Delta \in \mathcal{E}'$ so that there is a neighborhood of $\Delta$ in $\mathcal{E}'$ sharing the same lc model. In fact, if we denote by $T$ the lc model of $(X,\Delta)$, then for any point $\Delta' \in \mathcal{E}'$ sufficiently close to $\Delta$, the lc model of $(X,\Delta')$ is either $T$ or a variety $T' \to T$. By an inductive argument we get a required boundary $\Delta$.

Pick a point $\Delta'' \in U$ sufficiently close to $\Delta$ such that $K_X+\Delta''$ is pseudo-effective and $\Delta'' \notin \mathcal{E}'$. For any exceptional$/Y$ prime divisor $D$, $a(D,X,\Delta'')< a(D,Y, \Delta_Y'')$ where $\Delta_Y''$ is the birational transform of $\Delta''$. By the construction of $\mathcal{E}'$, $K_Y+\Delta_Y''$ is not nef. But we can run an LMMP$/T$ on $K_Y+\Delta_Y''$ which terminates with a weak lc model $(W,\Delta_{W}'')$ as $\Delta_Y''$ is sufficiently close to $\Delta_Y$. To see this, because $\phi: X \dashrightarrow Y$ is a sequence of $K_X+\Delta''$-MMP, by assumption we can run an LMMP on $K_Y+\Delta_Y''$ which ends with a weak lc model $(W,\Delta_W'') \rightarrow T''$ of $(Y,\Delta_Y'')$ such that $K_W+\Delta_{W}''$ is semi-ample and $T''$ is the lc model. It is enough to prove that $g$ induces a morphism $T'' \rightarrow T$. Let $\widetilde{W}$ be a common log resolution of $(Y,\Delta_Y)$ and $(W,\Delta_W)$ and consider the following commutative diagram.
$$
\xymatrix{
	& \widetilde{W}\ar[dl]_{p}\ar[dr]^{q}  &\\
	Y \ar[d]_{g}\ar@{-->}[rr]_{\psi} &&  W\ar[d]^{g'} &\\
	T&& T'' \ar@{-->}[ll]^{}   &  } 
$$
By Lemma \ref{lem-geograpgy} (cf.[\ref{B-II}, Proposition 3.2 (5)]), one deduces that $\psi$ is $K_Y+\Delta_Y$-trivial since $\| \Delta_Y''-\Delta_Y\| \ll 1$, and hence $p^\ast(K_Y+\Delta_Y) =q^\ast (K_W+\Delta_W)$. In particular, we have $(g \circ p)^\ast A =q^\ast N$ for an ample divisor $A$ on $T$ and a nef divisor $N$ on $W$ which gives the morphism $W \to T$ by the Rigidity Lemma [\ref{Deb}, Lemma 1.15]. Since $K_W+\Delta_{W}'$ is semi-ample$/T$, we conclude that $T'' \dashrightarrow T$ is a morphism.

By the construction of $W$, we see that $(W,\Delta_W')$ is a weak lc model of every point $\Delta' \in \mathcal{E}'$ near $\Delta$ since $T$ is the common lc model of those boundaries $\Delta'$. So $(W,\Delta_W')$ is a weak lc model of every point $\Delta' \in \mathcal{E}'$. By replacing $Y$ with $W$, we extend $\mathcal{E}'$ so that it contains $\Delta'$. By continuing this process, we finally obtain a rational polytope $\mathcal{E}'$ with $\dim  \mathcal{E}' =\dim  \mathcal{E}$. Note that the way of extension is not unique, for example, when $\Delta$ lies on a face of $\mathcal{E}'$.

\emph{Step 3.}
Now we expand each $\mathcal{E}_k$ to its largest size. We show that for any pair of different chambers $\mathcal{E}_1$ and $\mathcal{E}_2$, their intersection has lower dimension. To see this, we assume the contrary. There is a point $\Delta \in \mathcal{E}_1$ but $\Delta \notin \mathcal{E}_2$, and $\dim \mathcal{E}_1 \bigcap \mathcal{E}_2 =\dim \mathcal{E}$. Pick an interior point $\Delta' \in \mathcal{E}_1 \bigcap \mathcal{E}_2 $. Suppose that $Y_1,Y_2$ give the chambers $\mathcal{E}_1,\mathcal{E}_2$ respectively, then for every point $\Delta''$ in $ \mathcal{E}_1 \bigcap \mathcal{E}_2$ on the segment defined by $\Delta,\Delta'$, log pairs $(Y_1,\Delta_{Y_1}''),(Y_2,\Delta_{Y_2}'')$ give the same log discrepancies which forces $\Delta \in  \mathcal{E}_1 \bigcap \mathcal{E}_2$. This is a contradiction.

 By a similar argument, we show that for any pair of points $\Delta$ and $\Delta'$, if there exists a normal variety $Y$ such that $(Y,\Delta_Y)$ and $(Y,\Delta_Y')$ are weak lc models of $(X,\Delta)$ and $(X,\Delta')$ respectively, then they belong to the same chamber. To see this, we assume $\Delta \in \mathcal{E}_1$ but $\Delta \notin \mathcal{E}_2$, while $\Delta' \in \mathcal{E}_2$ but $\Delta' \notin \mathcal{E}_1$. Then, for every point $\Delta''$ in $ \mathcal{E}_1$ on the segment defined by $\Delta,\Delta'$, log pairs $(Y_1,\Delta_{Y_1}''),(Y,\Delta_{Y}'')$ give the same log discrepancies which forces $\Delta' \in \mathcal{E}_1$. This is a contradiction.
 

\emph{Step 4.}
Finally we prove that the rational polyhedral decomposition $\mathcal{E}= \bigcup_k \mathcal{E}_k$ is finite. In particular, we obtain that $\mathcal{E}$ is a rational polytope. We proceed by induction on the dimension of $\mathcal{E}$. Assume the decomposition is not finite. Then, for each $k$ we pick an interior point $\Delta_k$. Since $\mathcal{P}$ is compact, we obtain a sequence of such points which converges to a point. By abuse of notation we again denote the sequence by $\{\Delta_k \}$ and the limit point by $\Delta$. It is obvious that $K_X+\Delta$ is pseudo-effective. Note that if $\Delta$ is not rational, then there is a rational polytope $\mathcal{E}_k$ containing $\Delta$ which is a contradiction. We therefore assume $\Delta$ is a $\Q$-divisor.

Now run an LMMP on $K_X+\Delta$ which terminates with a weak lc model $(Y,\Delta_Y)$. Then by Lemma \ref{lem-geograpgy} (cf.[\ref{B-II}, Proposition 3.2 (5)]), there is a small real number $\epsilon>0$ such that for every $\Delta' \in \mathcal{E}$ with $\| \Delta' -\Delta \| \le \epsilon$, any $K_Y+\Delta_Y'$-MMP is $K_Y+\Delta_Y$-trivial. So, for every $k\gg 0$, we have $(Y,\Delta_{k,Y})$ has a good minimal model. Moreover, if we run an LMMP on $K_Y+\Delta_{k,Y}$, then the sequence of LMMP is $K_Y+\Delta_Y$-trivial.

Let $\mathcal{R}_k$ be the ray from the point $\Delta$ passing through $\Delta_k$, and let $\mathcal{L}_k$ be the segment on $\mathcal{R}_k$ with $\| \Delta' -\Delta \| \le \epsilon$. We claim that $\mathcal{L}_k \subset \mathcal{E}_k$ for $k \gg 0$. In fact, for a point $\Delta_k' \in \mathcal{L}_k$ and its birational transform $\Delta_{k,Y}'$, we can run an LMMP on $K_Y+\Delta_{k,Y}'$ which terminates with a weak lc model $Y_k$. Since the LMMP above is $K_Y+\Delta_Y$-trivial, it is also an LMMP on $K_Y+\Delta_{k,Y}''$ for any point $\Delta_k'' \in \mathcal{L}_k \backslash \{ \Delta \}$ which implies the claim by Step 3.

Let $\epsilon' < \epsilon$ and let $\mathcal{S}$ be the surface defined by $\|\Delta'' -\Delta\| =\epsilon'$. We let $\Delta''_k$ be the intersection point of $\mathcal{S}$ and $\mathcal{L}_k$ for every $k \gg 0$. It is clear that $\dim \mathcal{S}\bigcap \mathcal{E} < \dim \mathcal{E}$ and that $\Delta_k''$ is an interior point of a chamber. By induction this is a contradiction. 
\end{proof}

By combining Theorem \ref{t-main-1} and Lemma \ref{lem-geography-2}, we immediately obtain the following corollary.

\begin{cor}\label{cor-geography}
Let $X$, $\{ \Delta_i \}$ and $\mathcal{P}$ be as in Lemma \ref{lem-geography-1}. Given a surjective morphism $f:X \rightarrow Z$. Assume further that $K_X+\Delta_i \sim_{\mathbb{R}} 0/Z$ for every index $i$ and that $\Delta_i \ge A$ where $A\sim_\R f^{\ast} A_Z$ is the pull-back of an ample divisor $A_Z$ on $Z$. Then, the subset
$$
\mathcal{E}:=\{\Delta \in \mathcal{P}|\text{ $K_X+\Delta$ is pseudo-effective.}  \}
$$
is a rational polytope contained in $\mathcal{P}$. Moreover, $\mathcal{E}$ admits a finite rational polyhedral decomposition $\mathcal{E}= \bigcup_k \mathcal{E}_k$ satisfying: 

$\bullet$ $\dim  \mathcal{E}_k =\dim  \mathcal{E}$ for every $k$;

$\bullet$ $\dim  \mathcal{E}_k \bigcap \mathcal{E}_l  <\dim  \mathcal{E}$ for every $l\neq k$;

$\bullet$ for any two divisors $\Delta$, $\Delta' \in \mathcal{E}$, $\Delta$ and $\Delta'$ belong to some same chamber if and only if there exists a normal variety $Y$ such that $(Y,\Delta_Y)$ and $(Y,\Delta_Y')$ are weak lc models of $(X,\Delta)$ and $(X,\Delta')$ respectively.
\end{cor}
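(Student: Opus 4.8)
The plan is to deduce the statement directly from Proposition~\ref{lem-geography-2} (Geography of weak lc models), after checking that its standing hypothesis — that every pair $(X,\Delta)$ with $\Delta \in \mathcal{P}$ has a good minimal model or a Mori fibre space — is automatically satisfied here, thanks to Theorem~\ref{t-main-1}. So the work reduces to verifying that the relative conditions imposed on the vertices $\Delta_i$ propagate to all of $\mathcal{P}$.

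First I would record the elementary stability properties along $\mathcal{P}$. Any $\Delta \in \mathcal{P}$ is a convex combination $\Delta = \sum_i \lambda_i \Delta_i$ with $\lambda_i \ge 0$ and $\sum_i \lambda_i = 1$. Since log discrepancies are affine in the boundary, $a(D,X,\Delta) = \sum_i \lambda_i\, a(D,X,\Delta_i) \ge 0$ for every divisor $D$ over $X$; hence $(X,\Delta)$ is log canonical and $K_X+\Delta = \sum_i \lambda_i(K_X+\Delta_i)$ is $\R$-Cartier. Likewise $\Delta - A = \sum_i \lambda_i(\Delta_i - A) \ge 0$, so $\Delta \ge A$, and $K_X+\Delta = \sum_i \lambda_i(K_X+\Delta_i) \sim_\R 0/Z$ since each summand is. Finally $A \sim_\R f^\ast A_Z$ with $A_Z$ ample on $Z$ by hypothesis. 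Writing $\Delta = A + B$ with $B := \Delta - A \ge 0$, the pair $(X,\Delta = A+B)$ together with the surjective morphism $f$ satisfies all the hypotheses of Theorem~\ref{t-main-1}, so $(X,\Delta)$ has a good minimal model or a Mori fibre space. This holds uniformly for every $\Delta \in \mathcal{P}$, including boundary points of $\mathcal{P}$ and points with irrational coefficients, because $\mathcal{P}$ is by definition the set of all real convex combinations of the fixed $\Q$-divisors $\Delta_i$.

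Having checked the hypothesis, I would then simply invoke Proposition~\ref{lem-geography-2} for $X$, $\{\Delta_i\}$ and $\mathcal{P}$: it yields that $\mathcal{E} = \{\Delta \in \mathcal{P} \mid K_X+\Delta \text{ is pseudo-effective}\}$ is a rational polytope contained in $\mathcal{P}$, and it produces the finite rational polyhedral decomposition $\mathcal{E} = \bigcup_k \mathcal{E}_k$ with the three listed properties: $\dim \mathcal{E}_k = \dim \mathcal{E}$ for every $k$, $\dim(\mathcal{E}_k \cap \mathcal{E}_l) < \dim \mathcal{E}$ for $l \neq k$, and the characterisation of chambers via the existence of a common normal model carrying weak lc models of both boundaries. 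This is precisely the assertion of the corollary.

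There is no genuinely new obstacle: all the substance already sits in Theorem~\ref{t-main-1} (existence of good minimal models or Mori fibre spaces in the generically trivial, ample-over-the-base setting) and in Proposition~\ref{lem-geography-2} (the geography argument, which itself rests on Lemma~\ref{lem-geograpgy}, on rational polytopes of weak lc models in the spirit of [\ref{B-II}, Proposition~3.2], and on a compactness–plus–induction argument excluding infinitely many chambers). The only point requiring care is the bookkeeping of the previous paragraph, showing that the relative triviality $K_X+\Delta_i \sim_\R 0/Z$ and the positivity $\Delta_i \ge A = f^\ast A_Z$ are preserved under convex combination, so that Theorem~\ref{t-main-1} can be applied at every point of $\mathcal{P}$ and not merely at its vertices.
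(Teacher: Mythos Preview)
Your proposal is correct and follows exactly the paper's approach: the paper simply says ``By combining Theorem~\ref{t-main-1} and Lemma~\ref{lem-geography-2}, we immediately obtain the following corollary,'' and you have supplied the routine convexity bookkeeping that makes this immediate. Nothing more is needed.
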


To establish the terminations for $\Q$-factorial lc pairs with boundaries containing ample divisors, we begin with the following lemma. 

\begin{lem}\label{lem-polytope}
	Let $(X,B)$ be a $\Q$-factorial projective lc pair and $f: X \dashrightarrow X'$ be a step of LMMP on some log canonical divisor $K_X+\Delta$ which is $K_X+B$-non-positive. Suppose there is a rational polytope $\mathcal{P} \subset \mathrm{Div}_\R(X)$ containing $K_X+B$ as an interior point such that $\mathcal{E} \subset \mathcal{P}$ has a rational polyhedral decomposition on which the asymptotic vanishing orders given by discrete valuations are piecewise linear. Then there is a rational polytope $\mathcal{P}' \subset \mathrm{Div}_\R(X')$ containing $K_{X'}+B'$ as an interior point satisfying the same property, where $B'$ is the birational transform of $B$. Moreover, if $\dim \varphi (\mathcal{P}) =\dim N^1(X)$ where $\varphi: \mathrm{Div}_\R(X) \to N^1(X)$ is the natural projection, then  $\dim \varphi' (\mathcal{P}') =\dim N^1(X')$  where $\varphi': \mathrm{Div}_\R(X') \to N^1(X')$ is the natural projection.
\end{lem}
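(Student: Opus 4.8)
The plan is to take $\mathcal{P}':=f_\ast\mathcal{P}$, the image of $\mathcal{P}$ under the pushforward of divisors, and to transport the polyhedral decomposition of $\mathcal{E}$ along $f_\ast$. First I would record the elementary structure: a step $f:X\dashrightarrow X'$ of the LMMP is either a divisorial contraction of a prime divisor $E$, in which case $f_\ast:\mathrm{Div}_\R(X)\to\mathrm{Div}_\R(X')$ is a surjective linear map with kernel $\R E$ and the induced map $\bar f:N^1(X)\to N^1(X')$ is surjective with section $f^\ast$ (so $\rho(X')=\rho(X)-1$), or a small flip, in which case $f_\ast$ and $\bar f$ are linear isomorphisms. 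In both cases the image of an interior point is interior, so $\mathcal{P}'$ is a rational polytope with $K_{X'}+B'=f_\ast(K_X+B)$ in its interior; moreover $\varphi'\circ f_\ast=\bar f\circ\varphi$ (the pushforward of a numerically trivial divisor is numerically trivial), whence $\varphi'(\mathcal{P}')=\bar f(\varphi(\mathcal{P}))$, which is full–dimensional in $N^1(X')$ as soon as $\varphi(\mathcal{P})$ is full–dimensional in $N^1(X)$. This disposes of everything except the transport of the decomposition of $\mathcal{E}$.

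Next I would shrink $\mathcal{P}$ to a small neighbourhood of $K_X+B$, which is harmless since the statement only asks that $K_X+B$ stay interior. Then, according to whether the contracted ray $R$ meets $K_X+B$ strictly negatively or trivially, every $D\in\mathcal{P}$ is either $D$-negative on $R$ (so $f$ is a step of the $D$-MMP) or $f$ is $D$-trivial over the contraction base; a short case analysis — using that a $D$-non-positive step, and likewise its inverse when available, preserve and reflect pseudo-effectivity, which one sees on the common resolution — shows $\mathcal{E}'=f_\ast(\mathcal{E})$, the fibres of $f_\ast|_{\mathcal{E}}$ being single points (flip) or segments $\{D+tE\}$ (divisorial contraction). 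Hence $\mathcal{E}'=\bigcup_k f_\ast(\mathcal{E}_k)$ is again a finite rational polyhedral decomposition, and it only remains to arrange, after a finite refinement, that the asymptotic vanishing orders of $X'$ are piecewise linear on it.

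The hard part is exactly this last point. Fixing a common resolution $p:W\to X$, $q:W\to X'$ with $W$ smooth, one has $\sigma_v(\|D\|)=\sigma_v(\|p^\ast D\|)$ and $\sigma_v(\|f_\ast D\|)=\sigma_v(\|q^\ast f_\ast D\|)$ for every divisorial valuation $v$, together with an identity $p^\ast D=q^\ast f_\ast D+E_f(D)$ in which $E_f(D)$ is supported on $\mathrm{Exc}(p)\cup\mathrm{Exc}(q)$ and depends linearly on $D$ (with $\mathrm{Exc}(p)=\mathrm{Exc}(q)$ when $f$ is a flip). Since $D\mapsto\sigma_v(\|p^\ast D\|)$ is piecewise linear on $\mathcal{E}$ for the given decomposition, what must be proved is that passing from $p^\ast D$ to $p^\ast D-E_f(D)$ changes the vanishing order by a piecewise linear amount whose pieces refine $\{\mathcal{E}_k\}$; this is a Negativity/Nakayama computation, done chamber by chamber via the weak lc models supplied by the geography of models (Corollary \ref{cor-geography}, itself now available through Theorem \ref{t-main-1}), on which $N_\sigma$ varies affine–linearly, so that subtracting the linear family $E_f(D)$ of exceptional divisors preserves affine behaviour after a finite polyhedral refinement. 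Pushing the refined decomposition forward and verifying the piecewise-linearity for every discrete valuation of $X'$ then concludes the proof. The genuinely delicate issue is that one LMMP step can simultaneously deform the pseudo-effective cone and alter which valuations are exceptional or have positive $\sigma_v$, so the bookkeeping of these valuations must be carried out uniformly over the whole polytope — which is precisely why shrinking $\mathcal{P}$ at the outset is essential.
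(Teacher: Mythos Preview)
Your overall strategy is sound, but you make the divisorial case much harder than necessary, and the ``hard part'' you identify is not fully justified as written. The paper avoids this difficulty by a simple trick you missed: rather than setting $\mathcal{P}'=f_\ast\mathcal{P}$ (which collapses along $\R E$), it chooses a rational codimension-one sub-polytope $\mathcal{Q}\subset\mathcal{P}$ still containing $K_X+B$ as an interior point and transverse to the $E$-direction, so that $f_\ast|_{\mathcal{Q}}$ is a bijection onto its image $\mathcal{P}':=f_\ast\mathcal{Q}$. Since $\mathcal{Q}$ lies in the half-space $\Gamma\le 0$ and projects bijectively to the hyperplane $H=\{\Gamma=0\}$ (where divisors are literal pullbacks from $X'$), the asymptotic vanishing orders transfer directly through this one-to-one correspondence, with no refinement needed. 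The flip case is handled exactly as you say.

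The specific gap in your argument is the sentence ``subtracting the linear family $E_f(D)$ of exceptional divisors preserves affine behaviour after a finite polyhedral refinement.'' This is not automatic: $\sigma_v$ is convex but not linear, so knowing $\sigma_v(\|p^\ast D\|)$ is affine on a chamber does not by itself control $\sigma_v(\|p^\ast D - E_f(D)\|)$. Your appeal to weak lc models from Corollary~\ref{cor-geography} is the right instinct, but you would need to show that the chambers for $(X',f_\ast D)$ are compatible with (a refinement of) the images of the chambers for $(X,D)$ --- which amounts to re-proving a geography-of-models statement on $X'$. The paper's slice trick sidesteps all of this: since $\mathcal{Q}\subset\mathcal{P}$, the piecewise-linearity on $\mathcal{Q}\cap\mathcal{E}$ is inherited directly from the hypothesis, and the bijection carries it to $\mathcal{P}'$.
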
	
\begin{proof}
	If $f$ is a log flip, then the result follows easily from the fact $f$ being an isomorphism in codimension one. So we suppose $f$ is a divisorial contraction. Let $V$ be a sufficiently large finite dimensional subsapce of $\mathrm{Div}_\R(X)$ containing $\mathcal{P}$ and let $H \subset V$ be a hyperplane defined by $\Gamma=0$ where $\Gamma$ denotes the extremal ray contracted in the step. By definition of asymptotic vanishing orders given by discrete valuations, we see there is the one-to-one correspondence between $H$ and $V'$ where $V' \subset \mathrm{Div}_\R(X')$ is the image of $V$ under $f_*$. 
	
	Now we pick a rational sub-polytope $\mathcal{Q} \subset \mathcal{P}$ of codimension one which contains $K_X+B$ as an interior point such that the natural projection from the half-space $\Gamma \le 0$ to the hyperplane $H$ gives the one-to-one correspondence between $\mathcal{Q}$ and its image $\mathcal{Q}_H$ on $H$. Let $\mathcal{P}' \subset V'$ be the birational transform of $\mathcal{Q}$ on $X'$. It is easy to check that $\mathcal{P}'$ satisfies the required properties.   
\end{proof}

\begin{thm}\label{t-main-2}
Let $(X,\Delta)$ be a $\Q$-factorial projective lc pair such that $\Delta \geq A$ where $A \geq 0$ is an ample divisor. Then, any LMMP on $K_X+\Delta$ with scaling terminates. 
\end{thm}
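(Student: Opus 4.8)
The plan is to combine the geography of weak lc models (Proposition~\ref{lem-geography-2}, which applies thanks to Corollary~\ref{t-main-cor}) with the polytope bookkeeping of Lemma~\ref{lem-polytope}, and to show that an infinite LMMP with scaling is, after finitely many steps, forced to run over a \emph{fixed} base $Z$ along whose fibres the scaling divisor becomes numerically trivial --- which is impossible. Concretely, suppose we are given an LMMP on $K_X+\Delta$ with scaling of a divisor $C$, producing $X=X_1\dashrightarrow X_2\dashrightarrow\cdots$ with $\lambda_i\downarrow\lambda$. Only finitely many steps are divisorial contractions, so after truncating I may assume every step is a log flip; and if $K_X+\Delta$ is not pseudo-effective one checks the program either reaches a Mori fibre structure or, after finitely many steps, $K_{X_i}+\Delta_i+\lambda C_i$ becomes pseudo-effective and the tail is an LMMP on $K_{X_i}+\Delta_i+\lambda C_i$ with scaling of $C_i$ --- so I may assume $K_X+\Delta$ is pseudo-effective. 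Replacing $A$ by a smaller rational ample divisor I may assume $A$ is a $\Q$-divisor.

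\textbf{Geography.} I would form the rational polytope $\mathcal{P}_{\mathrm{bd}}$ of boundaries spanned by $\Delta$, $\Delta+C$, and perturbations $\Delta+\eta_jH_j$ by small multiples of general very ample divisors; since $\lambda_i\le1$, every member $\Delta'$ of $\mathcal{P}_{\mathrm{bd}}$ can be arranged to be a boundary with $\Delta'\ge A$ and $(X,\Delta')$ lc, with $\Delta$ an interior point and $\mathcal{P}_{\mathrm{bd}}$ surjecting onto $N^1(X)$. By Corollary~\ref{t-main-cor} every such $(X,\Delta')$ has a good minimal model or a Mori fibre space, so Proposition~\ref{lem-geography-2} gives a finite rational polyhedral decomposition of the pseudo-effective locus $\mathcal{E}_{\mathrm{bd}}$ into chambers with a common weak lc model, which --- after refining by the faces of the nef cone of that model --- may be taken to have a common lc model, and on each chamber the asymptotic vanishing orders $\sigma_v$ are linear. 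Transporting everything to $\mathrm{Div}_\R(X)$ via $\Delta'\mapsto K_X+\Delta'$ yields a rational polytope $\mathcal{P}\ni K_X+\Delta$ (interior point), with $\varphi(\mathcal{P})$ full-dimensional, containing the segment $[K_X+\Delta,\,K_X+\Delta+C]$, and whose pseudo-effective part $\mathcal{E}$ carries a finite polyhedral decomposition on which the asymptotic vanishing orders along discrete valuations are piecewise linear.

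\textbf{Propagation and the fixed base.} Each step of the LMMP is a step of an LMMP on $K_X+\Delta$ that is trivially $K_X+\Delta$-non-positive, so Lemma~\ref{lem-polytope} applies at every step: I get rational polytopes $\mathcal{P}_i\subset\mathrm{Div}_\R(X_i)$ with the same structure, containing the birational transform of $[K_X+\Delta,\,K_X+\Delta+C]$ (the steps are isomorphisms in codimension one), and with $\varphi_i(\mathcal{P}_i)$ full-dimensional; since weak lc models and asymptotic vanishing orders are birational invariants, the chamber decomposition of $\mathcal{E}_i$ is identified with that of $\mathcal{E}$. Now $0\le\lambda\le\lambda_i\le1$ and $\lambda_i\downarrow\lambda$, and the segment $s\mapsto K_X+\Delta+sC$ meets only finitely many chambers, so for all $i\gg0$ the nef divisor $K_{X_i}+\Delta_i+\lambda_iC_i$ lies in one fixed chamber $\mathcal{E}_{k^\ast}$ with common lc model $Z$. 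As $K_{X_i}+\Delta_i+\lambda_iC_i$ is nef and, $(X,\Delta+\lambda_iC)$ having a good minimal model, also semi-ample, its ample model is $Z$, giving a contraction $h_i\colon X_i\to Z$; the $i$-th flip contracts a ray $R_i$ with $(K_{X_i}+\Delta_i+\lambda_iC_i)\cdot R_i=0$, hence contracted by $h_i$, so the tail is an LMMP$/Z$ on $K_{X_i}+\Delta_i$ with scaling of $C_i$. Because the two distinct parameters $\lambda,\lambda_i$ lie in $\mathcal{E}_{k^\ast}$ and have ample model $Z$, on the common weak lc model $Y$ both $K_Y+\Delta_Y+\lambda C_Y$ and $K_Y+\Delta_Y+\lambda_iC_Y$ are pullbacks from $Z$, so their difference $(\lambda_i-\lambda)C_Y$ is too; using the piecewise linearity of the $\sigma_v$ to see that every divisor contracted by $X_i\dashrightarrow Y$ is vertical over $Z$, this forces $(K_{X_i}+\Delta_i)|_{X_{i,\eta}}\sim_\R0$ on the generic fibre $X_{i,\eta}$ of $h_i$.

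\textbf{Conclusion and main obstacle.} Once this is in place, termination follows: $K_{X_i}+\Delta_i+\lambda_iC_i\sim_\R0/Z$ with $\lambda_iC_i\ge0$, so $(X_i/Z,\Delta_i)$ has a good minimal model$/Z$ by Theorem~\ref{lc-flips-1}, and since $(K_{X_i}+\Delta_i)|_{X_{i,\eta}}\sim_\R0$ it has an lc model$/Z$; then Lemma~\ref{lc-m-g-model-2} (after replacing $C_i$ by $C_i$ plus a sufficiently small ample$/Z$ divisor, which does not change the initial steps of the LMMP with scaling) shows the tail terminates. Equivalently one sees the contradiction directly: the flipped ray satisfies $\lambda_i\,(C_i\cdot R_i)=-(K_{X_i}+\Delta_i)\cdot R_i>0$, while $C_i\cdot R_i=0$ because $R_i$ is contracted by $h_i$ and $C_i$ is numerically trivial over $Z$, so no such ray can exist. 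The hard part is precisely the third paragraph: showing that, once the scaling numbers enter a single chamber, the whole tail is forced over a fixed lc model $Z$ and the scaling divisor becomes relatively numerically trivial. This is where the finiteness of the chamber decomposition and the piecewise linearity of the asymptotic vanishing orders --- propagated along the LMMP by Lemma~\ref{lem-polytope} --- are essential; the delicate technical points are the refinement to a common lc model, the transfer of the relative triviality of $C$ from the common weak lc model $Y$ to each $X_i$, and the passage from a general fibre of $h_i$ to all of its fibres.
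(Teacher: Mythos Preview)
Your setup coincides with the paper's: form a rational polytope of boundaries with $\Delta+\lambda_1C$ in the interior and full image in $N^1(X)$, apply the geography of weak lc models (Corollary~\ref{cor-geography}), and propagate the chamber structure along the LMMP via Lemma~\ref{lem-polytope}. The divergence is in the endgame, and there the paper's argument is both simpler and avoids a gap in yours.

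The paper does not try to pin down a common lc model $Z$ or to show $C_i\equiv 0/Z$. Instead it observes that $K_{X_i}+\Delta_i+\lambda_iC_i$ is nef and, since $\varphi_i(K_{X_i}+\mathcal{P}_i)$ is full-dimensional in $N^1(X_i)$ and $\varphi_i(K_{X_i}+\Delta_i+\lambda_iC_i)$ is an interior point of it, there is a chamber $\mathcal{E}^{(k_i)}_i$ containing $\Delta_i+\lambda_iC_i$ whose image under $\varphi_i$ lies inside the nef cone and meets the ample cone. Once only flips remain, the chamber labels $k_i$ take finitely many values, so by pigeonhole $k_i=k_j$ for some $i\ne j$; since both $\varphi_i(K_{X_i}+\mathcal{E}^{(k_i)}_i)$ and $\varphi_j(K_{X_j}+\mathcal{E}^{(k_j)}_j)$ contain ample classes with the same ample model, the Rigidity Lemma gives $X_i\cong X_j$, a contradiction.

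Your route has a genuine gap at the step ``$C_i$ is numerically trivial over $Z$'' (equivalently, $(K_{X_i}+\Delta_i)|_{X_{i,\eta}}\sim_\R 0$). You deduce $C_Y\equiv 0/Z$ on the common weak lc model $Y$ for the chamber, but the transfer to $X_i$ fails: $X_i$ is a weak lc model only for the single parameter $\lambda_i$, not for the whole chamber, and in particular $K_{X_i}+\Delta_i+\lambda_jC_i$ is \emph{not} nef for $\lambda_j<\lambda_i$ (the very flipping ray $R_i$ witnesses this). Thus $X_i$ and $Y$ need not be related by a $C$-trivial map, and the strict transform of a class trivial over $Z$ need not remain trivial after a small modification; the exceptional contributions in $p^*C_{X_i}-q^*C_Y$ on a common resolution are exactly what can make $C_i\cdot R_i\neq 0$. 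Your own list of ``delicate technical points'' names this transfer, but it is not a detail that can be filled in --- it is where the argument breaks. The refinement of weak-lc-model chambers to common-lc-model chambers is a second unproven step (the nef cone of $Y$ is not known to decompose finitely by ample models in this generality). The paper's pigeonhole-plus-rigidity argument bypasses both issues: it uses only that each $X_i$ is itself the ample model of something in a fixed finite list of chambers.
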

\begin{proof}
	Run an LMMP on $K_X+\Delta$ with scaling of $C$. Suppose that there exists an infinite sequence of LMMP
	$$
	(X,\Delta) = (X_1,\Delta_1)\dashrightarrow \cdots \dashrightarrow (X_i,\Delta_i) \dashrightarrow (X_{i+1},\Delta_{i+1}) \dashrightarrow \cdots
	$$
	where $X_i \dashrightarrow X_{i+1}$ is a divisorial contraction or a log flip for every $i$. 
	We write $\lambda_{i} \le 1$ as the coefficients appeared in the LMMP and $\lambda= \lim\limits_{i \rightarrow \infty}\lambda_i$. The strategy to show the contradiction is borrowed from [\ref{Corti-Lazic}, Theorem 6.5].
	
	
	Since $\Delta \geq A$, by modification of coefficients we can assume $(X,(1+\epsilon)(\Delta+C))$ is lc for a sufficiently small number $\epsilon>0$. We denote by $\varphi: \mathrm{Div}_\R(X) \rightarrow N^1(X)_\R$ and $\varphi_i: \mathrm{Div}_\R(X_i) \rightarrow N^1(X_i)_\R$ the natural
	projections. Pick $\Q$-boundaries $\Delta^{(1)}$, $\ldots$, $\Delta^{(r)}$ on $X$ such that:
	
	(i) $\Delta + \lambda_1 C$ is an interior point of the rational polytope $\mathcal{P}$ defined by $\{ \Delta^{(k)} \}$ and $\Delta$,
	
	(ii) the dimension of the polytope $\varphi(\mathcal{P}) \subset N^1(X)_\R$
	is $\dim N^1(X)_\R$,
	
	(iii) $(X,\Delta^{(k)})$ is lc, and $\| \varphi(K_X+\Delta + \lambda_1  C) - \varphi(K_X+\Delta^{(k)} )\| \ll 1$ for every $k$.

	Next we define $\mathcal{P}_i \subset N^1(X_i)_\R$ inductively. Suppose we are given $\mathcal{P}_i$ and a step of LMMP $\phi_i:X_i \dashrightarrow X_{i+1}$. If $\phi_i$ is a log flip, then we let $\mathcal{P}_{i+1}$ be the birational transform of $\mathcal{P}_i$. If $\phi_i$ is a divisorial contraction, then we let $\mathcal{Q}_i$ be the intersection of $\mathcal{P}_i$ with the half-space $\Gamma \le 0$ where $\Gamma$ is the extremal ray contracted by $\phi_i$, and let $\mathcal{P}_{i+1}$ be the birational transform of $\mathcal{Q}_i$. With this inductive construction, it is obvious that $\dim \varphi_i(\mathcal{P}_i) =\dim N^1(X_i)_\R$. Note that $\Delta_i + \lambda_i  C_i$ is not necessarily an interior point of $\mathcal{P}_i$. However, By Lemma \ref{lem-polytope} we still have that $\varphi(K_{X_i}+\Delta_i + \lambda_i  C_i)$ is an interior point of $\varphi_i(K_{X_i}+\mathcal{P}_i)$.
	
	By Corollary \ref{cor-geography}, there is a rational polytope $\mathcal{E} \subset \mathcal{P} $ which admits a finite polyhedral decomposition $\mathcal{E} = \bigcup_k \mathcal{E}^{(k)} $ satisfying the conditions listed in Corollary \ref{cor-geography}. Because $\Delta + \lambda_1  C $ is an interior point, by Condition (ii) there exists a chamber $\mathcal{E}^{(k_1)} $ containing this point such that $\varphi (K_{X }+\mathcal{E}^{(k_1)} )$ is contained in the nef cone of $N^1(X)_\R$. In particular, $\varphi (K_{X }+\mathcal{E}^{(k_1)} )$ contains an ample divisor.
	
	Let $\mathcal{E}_i$ and $\mathcal{E}^{(k)}_i$ be constructed in the same way as in the third paragraph. Also, let the decompositions of $\mathcal{E}_i= \bigcup_k \mathcal{E}^{(k)}_i$ be constructed in the same way. We may lose the log canonicity of $(X_i,\Delta'^{(k)}_i)$ but the linearity of asymptotic vanishing orders given by discrete valuations on each chamber will be preserved by Lemma \ref{lem-polytope} (cf.[\ref{Corti-Lazic}, Lemma 5.2]). Since $\lambda_i >0$, we see $\varphi_i(\Delta_i + \lambda_i C_i)$ is an interior point of $\varphi_i(\mathcal{P}_i)$ and there exists a chamber $\mathcal{E}^{(k_i)}_i$ containing $\Delta_i + \lambda_i C_i$ such that $\varphi_i(K_{X_i}+\mathcal{E}^{(k_i)}_i)$ is contained in the nef cone. In particular by Condition (ii), $\varphi_i(K_{X_i}+\mathcal{E}^{(k_i)}_i)$ contains an ample divisor. 
	
	Let $i_0$ be an index such that the sequence of LMMP consists of only log flips after $X_{i_0}$. Since the decomposition is finite, there exists two indices $i,j\ge i_0$ such that $k_i=k_j$ but $i \neq j$. Then by the Rigidity Lemma [\ref{Deb}, Lemma 1.15] $X_i$ and $X_j$ are isomorphic. This is a contradiction.
\end{proof}

\begin{rem}
	It is worthy to note that a log flip on a non-$\Q$-factorial lc pair sometimes increases the Picard number. For example, see [\ref{Fujino-3}, Example 7.5.1]. Therefore the argument above cannot be applied directly to an LMMP for non-$\Q$-factorial pairs. To generalise the previous theorem we need a detailed analysis on the structure of $N^1(X_i)_\R$.
\end{rem}

\begin{cor}\label{w-Mori}
Let $(X,B)$ be a $\Q$-factorial projective lc pair such that $K_X+B$ is not pseudo-effective. Then, any LMMP on $K_X+B$ with scaling of some ample divisor terminates with a weak Mori fibre space.
\end{cor}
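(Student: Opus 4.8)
The plan is to prove the sharper statement that any such LMMP \emph{terminates}; the assertion about weak Mori fibre spaces then follows formally. An LMMP on $K_X+B$ with scaling of an ample divisor $C$ cannot stop with $K_{X_i}+B_i$ nef: otherwise $K_{X_i}+B_i$ would be pseudo-effective, and since $X\dashrightarrow X_i$ is a $(K_X+B)$-non-positive birational contraction, the Negativity Lemma would force $K_X+B$ to be pseudo-effective, against our hypothesis. So a terminating such LMMP must stop at a Mori fibre structure $X_i\to T$, and then $(X_i,B_i)$ with $X_i\to T$ is a weak Mori fibre space of $(X,B)$: the map $X\dashrightarrow X_i$ is a birational contraction, so $B_i$ is the birational transform of $B$ with zero exceptional part and $(X_i,B_i)$ is a log birational model; $X_i\to T$ is a $(K_{X_i}+B_i)$-negative extremal contraction with $\dim X_i>\dim T$; and log discrepancies do not decrease along the steps, with strict increase along divisors contracted by divisorial contractions.

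So fix an ample $C$ with $(X,B+C)$ lc and $K_X+B+C$ nef, and suppose for contradiction that some LMMP on $K_X+B$ with scaling of $C$, with coefficients $\lambda_i$, fails to terminate. The crucial observation is that for every $i$ the partial program $X\dashrightarrow X_i$ is not merely $(K_X+B)$-non-positive but even $(K_X+B+\lambda_i C)$-non-positive: the $j$-th step ($j<i$) is $(K_{X_j}+B_j+\lambda_j C_j)$-trivial, and since $\lambda_j\ge\lambda_i$ and $C_j\cdot R_j>0$ for the contracted ray $R_j$ (because $(K_{X_j}+B_j)\cdot R_j<0$ and $\lambda_j>0$), it is also $(K_{X_j}+B_j+\lambda_i C_j)\cdot R_j=(\lambda_i-\lambda_j)\,C_j\cdot R_j\le 0$. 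Hence, on a common resolution $p\colon W\to X$, $q\colon W\to X_i$, the Negativity Lemma gives $p^*(K_X+B+\lambda_i C)\ge q^*(K_{X_i}+B_i+\lambda_i C_i)$, whose right-hand side is the pullback of a nef divisor, hence pseudo-effective; therefore $K_X+B+\lambda_i C$ is pseudo-effective on $X$. Since the pseudo-effective cone is closed, $C$ is ample, and $K_X+B+C$ is nef, the set $\{t\ge 0:K_X+B+tC\text{ is pseudo-effective}\}$ is a half-line $[t_0,\infty)$, and $t_0>0$ because $K_X+B$ is not pseudo-effective; thus $\lambda_i\ge t_0>0$ for all $i$.

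Now fix $\epsilon$ with $0<\epsilon<t_0$. Then $(X,B+\epsilon C)$ is lc (a convex combination of the lc pairs $(X,B)$ and $(X,B+C)$), $B+\epsilon C\ge\epsilon C$ is a sum of an effective and an ample divisor, $K_X+B+\epsilon C$ is not pseudo-effective, and $(X,(B+\epsilon C)+(1-\epsilon)C)=(X,B+C)$ is lc with $K_X+B+C$ nef. I claim the given LMMP on $K_X+B$ with scaling of $C$ is literally an LMMP on $K_X+B+\epsilon C$ with scaling of $(1-\epsilon)C$: at the $i$-th step $(K_{X_i}+B_i+\epsilon C_i)\cdot R_i=(\epsilon-\lambda_i)\,C_i\cdot R_i<0$ since $\epsilon<t_0\le\lambda_i$ and $C_i\cdot R_i>0$, the scaling coefficient is $\mu_i=\tfrac{\lambda_i-\epsilon}{1-\epsilon}\in(0,1]$, and $K_{X_i}+B_i+\epsilon C_i+\mu_i(1-\epsilon)C_i=K_{X_i}+B_i+\lambda_i C_i$ is nef, so the intermediate lc pairs also match. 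But $(X,B+\epsilon C)$ is a $\Q$-factorial projective lc pair whose boundary dominates an ample divisor, so by Theorem \ref{t-main-2} every LMMP on $K_X+B+\epsilon C$ with scaling terminates, contradicting the infinitude of our program. Hence the LMMP terminates, and by the first paragraph it terminates with a weak Mori fibre space.

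The main obstacle is the second paragraph: realizing that the tail of the scaling program forces $K_X+B+\lambda_i C$ to be pseudo-effective \emph{on the original $X$}, which is precisely what bounds the scaling coefficients uniformly away from $0$ and lets one absorb a small ample multiple of $C$ into the boundary so as to invoke Theorem \ref{t-main-2}. The remaining steps are routine bookkeeping with the definition of an LMMP with scaling.
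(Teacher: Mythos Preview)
Your proof is correct and follows essentially the same approach as the paper: both observe that the given LMMP on $K_X+B$ with scaling of the ample divisor $C$ is simultaneously an LMMP on $K_X+B+\epsilon C$ with scaling (of $(1-\epsilon)C$ in your version, of $C$ in the paper's terser version) for small $\epsilon>0$, and then invoke Theorem~\ref{t-main-2}. You are simply more explicit than the paper about why a uniform $\epsilon$ exists---namely your bound $\lambda_i\ge t_0>0$ via the pseudo-effective threshold---whereas the paper leaves this implicit.
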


\begin{proof}
Pick an ample divisor $H$ and run an LMMP on $K_X+\Delta$ with scaling of $H$. Note that this LMMP is also an LMMP on $K_X+\Delta+\epsilon H$ with scaling of $H$ where $\epsilon>0$ is a small number. Now the conclusion follows from Theorem \ref{t-main-2}.	
\end{proof}

\vspace{0.3cm}
\section{Appendix}\label{appendix}
\begin{center}
Written by Jinhyung Park
\end{center}

The aim is to show the finite generation of Cox rings of log canonical Fano pairs, which was conjectured by Cascini and Gongyo ([\ref{CG}, Conjecture 4.1]).

\begin{thm}\label{coxlcfano}
Let $X$ be a $\Q$-factorial projective variety. Assume there is a boundary $\Delta$ such that $(X,\Delta)$ is log canonical and that $-(K_X+B)$ is ample. Then a Cox ring of $X$ is finitely generated, and hence $X$ is a Mori dream space.
\end{thm}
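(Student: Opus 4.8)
The plan is to deduce that $X$ is a Mori dream space from the Hu--Keel characterisation, with Corollary~\ref{t-main-cor'} (existence of good minimal models), Proposition~\ref{lem-geography-2} (geography of weak lc models) and Theorem~\ref{t-main-2} (termination with scaling) supplying the birational input. Since $X$ is already $\Q$-factorial, it suffices to prove that $\Pic(X)$ is finitely generated and that a Cox ring $\operatorname{Cox}(X)=\bigoplus_{L\in\Pic(X)}H^{0}(X,L)$ is a finitely generated $k$-algebra. Throughout, write $-(K_X+\Delta)\sim_{\R}H$ with $H\ge 0$ an ample $\R$-divisor, so that $-K_X\sim_{\R}\Delta+H$ is big.

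First I would show that $\Pic(X)$ is finitely generated. Because $(X,\Delta)$ is log canonical and $-(K_X+\Delta)$ is ample, the Kawamata--Viehweg-type vanishing theorem for log canonical pairs (Ambro, Fujino) gives $H^{i}(X,\mathcal{O}_X)=0$ for all $i>0$; in particular $H^{1}(X,\mathcal{O}_X)=0$, so in characteristic zero $\Pic^{0}(X)=0$, whence $\Pic(X)=\operatorname{NS}(X)$ is finitely generated and $\Pic(X)_{\Q}=N^{1}(X)_{\Q}$. The torsion of $\operatorname{NS}(X)$, if present, contributes only finitely many extra gradings with no nonzero sections away from the trivial class, and is absorbed by the standard reduction.

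Next I would organise the relevant divisor classes into rational polytopes of log canonical boundaries. For a $\Q$-divisor $D$ on $X$, the class $D-K_X$ is $\Q$-Cartier (as $X$ is $\Q$-factorial) and equals $D+\Delta+H$ up to $\R$-linear equivalence, i.e. an ample class plus an effective one; hence, after rescaling $D$, a standard Bertini/perturbation argument produces a $\Q$-boundary $\Gamma_D\sim_{\Q}D-K_X$ with $(X,\Gamma_D)$ log canonical and $\Gamma_D\ge A_D$ for some ample $\R$-divisor $A_D\ge 0$, so $D\sim_{\Q}K_X+\Gamma_D$. Such boundaries can be grouped into rational polytopes $\mathcal{B}$ on which convex combinations preserve log canonicity and the ``$\ge$ ample'' property, and for $\Gamma\in\mathcal{B}$ the class $K_X+\Gamma$ is $\Q$-linearly equivalent to a convex combination of ample divisors, in particular ample and so pseudo-effective. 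Thus Corollary~\ref{t-main-cor'} shows each $(X,\Gamma)$ has a good minimal model; Proposition~\ref{lem-geography-2} then gives a finite rational polyhedral (Mori chamber) decomposition of $\mathcal{B}$ on which the weak lc models, hence the ample models, are constant; and Theorem~\ref{t-main-2} governs the LMMPs involved. Since the shifted classes $K_X+\Gamma$ arising this way sweep out, up to scaling, every big class, covering a compact slice of $\overline{\operatorname{Eff}}(X)$ by finitely many such local pictures --- exactly as in the klt Fano case [\ref{BCHM}] --- one concludes that $\overline{\operatorname{Eff}}(X)$ is rational polyhedral and admits a finite decomposition into Mori chambers, each the pullback of the ample cone of a $\Q$-factorial birational model of $X$. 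By the Hu--Keel criterion, $\operatorname{Cox}(X)$ is then finitely generated, and hence $X$ is a Mori dream space.

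The hard part, and the step requiring real care, is this last passage: bootstrapping the \emph{single}-pair statement of Corollary~\ref{t-main-cor'} into finiteness of the Mori chamber decomposition of the whole effective cone and into finite generation of the multigraded Cox ring. The two delicate ingredients are (i) that the auxiliary boundaries can be taken to be honest boundaries (coefficients in $[0,1]$) whose shifted classes $K_X+\Gamma$ sweep out, locally, every numerical direction near a prescribed rational big class --- which is precisely where the \emph{ample} summand $H$ of $-K_X$ is indispensable, since it leaves room to perturb while keeping the pairs log canonical with boundary dominating an ample divisor; and (ii) the local-to-global (covering/compactness) argument that assembles the finitely many local chamber decompositions produced by Proposition~\ref{lem-geography-2} and Theorem~\ref{t-main-2} into a single finite one. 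Granting the paper's main theorems, the finiteness of $\Pic(X)$ and the concluding Cox-ring bookkeeping are routine.
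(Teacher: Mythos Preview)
Your approach is plausible and heads in a workable direction, but it is genuinely different from the paper's argument, and considerably more laborious. The paper (in the Appendix, due to Park) sidesteps the Hu--Keel verification entirely by the projective bundle trick: after observing $H^{1}(X,\mathcal{O}_X)=0$ so that $\Pic_{\Q}(X)$ is finite dimensional, one picks Cartier divisors $L_1,\ldots,L_m$ whose convex hull contains $\operatorname{Eff}(X)$, forms $Y=\PP\bigl(\bigoplus_i\mathcal{O}_X(L_i)\bigr)$ with tautological class $H$, and uses the standard identification of the Cox ring of $X$ with $R(Y,H)$. Lemma~\ref{bundlelc} shows $Y$ again carries an lc Fano structure, so one may write $\epsilon H\sim_{\Q}K_Y+\Gamma+A$ with $(Y,\Gamma+A)$ lc and $A$ ample; then Corollary~\ref{t-main-cor'} gives finite generation of $R(Y,\epsilon H)$, and one is done. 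No geography of models, no termination with scaling, no covering argument.

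By contrast, your route leans on Proposition~\ref{lem-geography-2} and Theorem~\ref{t-main-2} and must still bridge from ``finite polyhedral decomposition into weak lc model chambers'' to the Hu--Keel data of finitely many $\Q$-factorial small modifications with rational polyhedral nef cones spanned by semi-ample classes; the weak lc models furnished by Proposition~\ref{lem-geography-2} need not be $\Q$-factorial and need not be small, so this passage is not immediate. You also need the local polytopes in $\operatorname{Div}_{\R}(X)$ to surject, under $\Gamma\mapsto[K_X+\Gamma]$, onto open sets in $N^{1}(X)_{\R}$, and then to glue finitely many such pictures across a slice of $\overline{\operatorname{Eff}}(X)$. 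All of this is likely tractable (and mirrors how one argues in the klt Fano case), but it is real work that the projective bundle trick renders unnecessary: one application of the finite-generation theorem on $Y$ replaces the whole chamber-by-chamber analysis.
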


The following thereom is a key ingredient of the proof of Theorem \ref{coxlcfano}.

\begin{thm}\label{lcmmp}
	Let $(X, \Delta+A)$ be a projective log canonical pair such that $\Delta$ is a boundary divisor and $A \geq 0$ is an ample divisor. If $K_X+\Delta+A$ is a pseudo-effective $\Q$-divisor, then $R(X, K_X+\Delta+A)$ is finitely generated.
\end{thm}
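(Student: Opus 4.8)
The plan is to obtain this as a direct consequence of Corollary~\ref{t-main-cor'}. The pair $(X,\Delta+A)$ is projective log canonical with $\Delta+A\ge A$ and $A$ ample, so Corollary~\ref{t-main-cor'} applies to $(X,\Delta+A)$: it has either a good minimal model or a Mori fibre space. Moreover $K_X+\Delta+A$ is a $\Q$-divisor, hence $\Q$-Cartier, so the ``in particular'' clause of that corollary gives at once that the divisorial ring $R(X,K_X+\Delta+A)$ is finitely generated. At the formal level this is the whole proof; the difficulty has been absorbed into Theorem~\ref{t-main-1}.

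For a more transparent write-up I would spell out the two points behind the ``in particular'' clause in the present situation. First, the pseudo-effectivity hypothesis places us in the good minimal model case: a Mori fibre space of $(X,\Delta+A)$ would come with a $K$-negative fibration, and a standard argument (using the log discrepancy inequalities between $(X,\Delta+A)$ and its Mori fibre space, together with the behaviour of pseudo-effectivity under birational contractions) shows this is incompatible with $K_X+\Delta+A$ being pseudo-effective. Let $(Y,B_Y)$ then be the good minimal model, so that $K_Y+B_Y$ is semi-ample and $R(Y,K_Y+B_Y)$ is finitely generated. Second, on a common resolution $X\xleftarrow{p}W\xrightarrow{q}Y$ the discrepancy inequalities together with the Negativity Lemma yield $p^\ast(K_X+\Delta+A)=q^\ast(K_Y+B_Y)+E$ with $E\ge 0$ exceptional over $Y$ (the reduced exceptional divisor added to the boundary in the paper's notion of weak lc model being absorbed into $E$), and pushing down identifies $R(X,K_X+\Delta+A)$ with $R(Y,K_Y+B_Y)$; hence the former is finitely generated.

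Since this is essentially a corollary, there is no serious obstacle inside the argument itself; the only points needing attention are bookkeeping — checking that the Mori fibre space alternative is genuinely excluded under the pseudo-effectivity hypothesis, and that the birational identification of section rings is carried out correctly with the paper's convention on boundaries. The genuinely hard input, namely producing a good (i.e.\ semi-ample) minimal model for the log canonical pair $(X,\Delta+A)$, is precisely what Corollary~\ref{t-main-cor'} — via Theorem~\ref{t-main-1} — provides.
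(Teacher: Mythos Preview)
Your proposal is correct and follows exactly the same route as the paper: the paper's proof is a single sentence citing Corollary~\ref{t-main-cor'} to obtain a good minimal model and conclude finite generation. Your write-up merely unpacks the ``in particular'' clause (excluding the Mori fibre space case by pseudo-effectivity and identifying section rings via a common resolution), which the paper leaves implicit.
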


\begin{proof}
	Since $(X, \Delta+A)$ has a good minimal model by Corollary \ref{t-main-cor'}, it follows that $R(X, K_X+\Delta+A)$ is finitely generated.
\end{proof}

We also need an easy lemma (cf. [\ref{CG}, Propositions 2.9 and 2.10]).

\begin{lem}\label{bundlelc}
	Let $(X, \Delta)$ be a projective log canonical pair such that $-(K_X+\Delta)$ is ample, and $L_1, \ldots, L_m$ be Cartier divisors on $X$. Let $E:= \bigoplus_{i=1}^m \mathcal{O}_X(L_i)$, and $\pi \colon Y=\mathbb{P}(E) \to X$ be the corresponding projective bundle. Then there is a boundary divisor $\Gamma$ on $Y$ such that $(Y, \Gamma)$ is a projective log canonical  pair and $-(K_Y+\Gamma)$ is ample. 
\end{lem}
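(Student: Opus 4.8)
The plan is to realise $(Y,\Gamma)$ as a small perturbation of the natural ``fibrewise toric boundary'' pair on the projective bundle. Write $E=\bigoplus_{i=1}^m\mathcal{O}_X(L_i)$, $\pi\colon Y=\PP(E)\to X$, and let $\mathcal{O}_Y(1)$ be the relative hyperplane bundle. For each $i$ the composition $\pi^*\mathcal{O}_X(L_i)\hookrightarrow\pi^*E\twoheadrightarrow\mathcal{O}_Y(1)$ vanishes exactly along the sub-bundle $\PP(\bigoplus_{j\neq i}\mathcal{O}_X(L_j))$, giving a prime divisor $D_i$ with $\mathcal{O}_Y(D_i)\cong\mathcal{O}_Y(1)\otimes\pi^*\mathcal{O}_X(-L_i)$. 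Summing over $i$ and using the relative canonical bundle formula $\omega_{Y/X}\cong\mathcal{O}_Y(-m)\otimes\pi^*\det E$ together with $\det E=\mathcal{O}_X(\sum_i L_i)$, one obtains $\mathcal{O}_Y(\sum_i D_i)\cong\omega_{Y/X}^{-1}$, hence
$$
K_Y+\sum_{i=1}^m D_i+\pi^*\Delta\sim_\R\pi^*(K_X+\Delta).
$$
In particular $K_Y+\sum_i D_i+\pi^*\Delta$ is $\R$-Cartier, and so will be $K_Y+\Gamma$ for the $\Gamma$ chosen below.

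Next I would check that $(Y,\sum_i D_i+\pi^*\Delta)$ is log canonical. Since $E$ is a direct sum of line bundles, $\pi$ is a Zariski-locally trivial $\PP^{m-1}$-bundle; over a trivialising open $U\subseteq X$ the pair restricts to $(U\times\PP^{m-1},\ \Delta|_U\times\PP^{m-1}+U\times\sum_{i=1}^m H_i)$, where $H_1,\dots,H_m$ are the coordinate hyperplanes of $\PP^{m-1}$. The pair $(\PP^{m-1},\sum_i H_i)$ is log smooth (and log Calabi--Yau), hence log canonical, and log canonicity is local on the base and compatible with products, so $(Y,\sum_i D_i+\pi^*\Delta)$ is log canonical; alternatively one base-changes a log resolution of $(X,\Delta)$ along the smooth morphism $\pi$ and reads off the discrepancies from the displayed identity.

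Then I would perturb: set $\Gamma:=(1-\epsilon)\sum_{i=1}^m D_i+\pi^*\Delta$ for a small rational number $\epsilon\in(0,1)$. Because the $D_i$ dominate $X$ while the components of $\pi^*\Delta$ are vertical, $\Gamma$ is effective with coefficients in $[0,1]$, i.e.\ a boundary, and $(Y,\Gamma)$ is log canonical since $\Gamma\le\sum_i D_i+\pi^*\Delta$. Moreover
$$
-(K_Y+\Gamma)=-\Big(K_Y+\sum_{i=1}^m D_i+\pi^*\Delta\Big)+\epsilon\sum_{i=1}^m D_i\sim_\R\pi^*\big(-(K_X+\Delta)\big)+\epsilon\sum_{i=1}^m D_i.
$$
Now $\sum_i D_i$ is $\pi$-ample, since it differs from $\mathcal{O}_Y(m)$ by a divisor pulled back from $X$ and $\mathcal{O}_Y(1)$ is $\pi$-ample; and $-(K_X+\Delta)$ is ample on $X$. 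By the standard fact that a relatively ample divisor plus a sufficiently large pull-back of an ample divisor is ample, $\sum_i D_i+n\,\pi^*(-(K_X+\Delta))$ is ample on $Y$ for all $n\gg0$. Taking $\epsilon=1/n$ with $n\gg0$ then makes $-(K_Y+\Gamma)$ ample, which completes the proof.

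The only point that is not purely formal is the log canonicity in the second paragraph: one must ensure that adding the full fibre-direction boundary $\sum_i D_i$ (coefficient $1$ on each $D_i$) does not destroy log canonicity over the possibly singular base $X$. This is precisely where Zariski-local triviality of $\PP(E)$ for split $E$ is used, reducing the question to the log canonicity of $(X,\Delta)$ and of $(\PP^{m-1},\sum_i H_i)$; the perturbation and the ampleness computation are then routine.
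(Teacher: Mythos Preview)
Your argument is correct and follows essentially the same line as the paper's: both construct $\Gamma$ from the fibrewise coordinate divisors $D_i$ (the paper calls them $T_i$) together with a pullback from $X$, taking $\Gamma$ close to $(1-\epsilon)\sum_i D_i+\pi^*(\cdots)$. The only cosmetic differences are that the paper first twists the $L_i$ to be very ample so that $\mathcal{O}_Y(1)$ is already ample, and absorbs $-(K_X+\Delta)$ into an auxiliary ample divisor $A$ on $X$ to write $-(K_Y+\Gamma)\sim_\Q \epsilon m H$ directly, whereas you keep the $L_i$ as given and deduce ampleness from ``pullback of ample plus small relatively ample''; your explicit verification of log canonicity via the local product structure in fact supplies the detail the paper leaves implicit in its ``There exists a boundary divisor $\Gamma$\ldots''.
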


\begin{proof}
	After possibly tensoring by a suitable ample divisor on $X$, we may assume that $L_1, \ldots, L_m$ are very ample divisors on $X$. Then the tautological section $H$ of $E$ is an ample divisor on $Y$. 
	Now fix a sufficiently small rational numver $\epsilon >0$ such that $-(K_X+\Delta) - \epsilon \sum_{i=1}^m L_i$ is ample. We can take an ample divisor $A$ such that  $A \sim_{\Q} -(K_X+\Delta)- \epsilon \sum_{i=1}^m L_i$ and $(X, \Delta+A)$ is a projective log canonical pair. 
	Let $T_1, \ldots, T_m$ be the divisors on $Y$ given by the summands of $E$, and $T:=\sum_{i=1}^m T_i$. Note that $T \sim mH - \sum_{i=1}^m \pi^*L_i$.
	There exists a boundary divisor $\Gamma$ on $Y$ such that 
	$(Y, \Gamma)$ is a projective log canonical pair and
	\begin{align*}
\Gamma &\sim_{\Q} (1-\epsilon)T + \pi^*(\Delta+A) \\
&\sim_{\Q} T - \epsilon\left(mH - \sum_{i=1}^m \pi^* L_i\right) + \pi^*(\Delta+A) \\
	& \sim_{\Q} T + \pi^*(-K_X) - \epsilon mH.
	\end{align*}
	Since $-K_Y = T - \pi^*K_X$, it follows that
	$
	-(K_Y+\Gamma) \sim_{\Q}  -K_Y-T + \pi^*K_X + \epsilon m H\sim_{\Q} \epsilon mH
	$
	is ample.
\end{proof}

We then prove Theorem \ref{coxlcfano}.

\begin{proof}[Proof of Theorem \ref{coxlcfano}]
	By Kodaira vanishing theorem for log canonical pairs, we have $H^1(X, \mathcal{O}_X)=0$, and thus $\Pic_{\Q}(X)$ is a finite dimensional vector space over $\Q$. Choose a basis $L_1, \ldots, L_m$ of $\Pic_{\Q}(X)$ such that each $L_i$ is a Cartier divisor on $X$ and the convex hull of $L_1, \ldots, L_m$ in $N_{\R}(X)$ contains the effective cone $\mathrm{Eff}(X)$. Let $Y:=\mathbb{P}( \bigoplus_{i=1}^m \mathcal{O}_X(L_i))$, and $H$ be the tautological section of $\bigoplus_{i=1}^m \mathcal{O}_X(L_i)$.
	Note that $H$ is a big Cartier divisor on $Y$.
	It is easy to check that a Cox ring of $X$ is finitely generated if and only if the section ring $R(Y, H)$ is finitely generated.
	By Lemma \ref{bundlelc}, there is a boundary divisor $\Gamma$ on $Y$ such that $(Y, \Gamma)$ is a projective log canonical pair and $-(K_Y+\Gamma)$ is ample. We can choose an ample divisor $A$ on $Y$ such that $(Y, \Gamma+A)$ is a projective log canonical pair and $K_Y+\Gamma+A \sim_{\Q} \epsilon H$ for a sufficiently small rational number $\epsilon >0$.
	By Theorem \ref{lcmmp}, $R(Y, \epsilon H)$ is finitely generated, and so is $R(Y, H)$. Therefore, a Cox ring of $X$ is also finitely generated.
\end{proof}

Now we discuss about the characterization problem for $\Q$-factorial log canonical weak Fano pairs whose Cox rings are finitely generated.
Note that
$$
\begin{array}{l}
\{X \mid (X, \Delta) \text{ is a $\Q$-factorial klt Fano pair} \}  \\
= \{X \mid (X, \Delta) \text{ is a $\Q$-factorial klt weak Fano pair} \}
\end{array}
$$
and every element of the above set is a Mori dream space by [\ref{BCHM}, Corollary 1.3.2]. However, in the log canonical weak Fano pair case, we only have strict inclusions
$$
\begin{array}{l}
\{X \mid (X, \Delta) \text{ is a $\Q$-factorial log canonical Fano pair} \} \\
\subsetneq \{X \mid (X, \Delta) \text{ is a $\Q$-factorial log canonical weak Fano pair and $X$ is a Mori dream space} \} \\
\subsetneq \{X \mid (X, \Delta) \text{ is a $\Q$-factorial log canonical weak Fano pair} \}
\end{array}
$$
by the following example.

\begin{exa}
	Let $Y$ be a log canonical non-rational del Pezzo surface, and $f : X \to Y$ be the minimal resolution. We denote by $-K_X=P+N$ the Zariski decomposition. 
	Then $(X, N)$ is a log canonical weak Fano pair, but $X$ is not a Mori dream space since $\mathrm{Pic}(X)$ is not finitely generated.
	Note that $N$ is an elliptic curve contracted by $f$ (see [\ref{HP}, Theorem 1.6]). Take a blow-up $g : X' \to X$ at a point lying on $N$, and then contract $g_*^{-1} N$ so that we get a normal projective surface $Y'$. 
	Then $Y'$ is a log canonical weak del Pezzo surface, $Y'$ is a Mori dream space ([\ref{HP}, Corollary 1.9]), and there is no boundary divisor $\Delta$ with $(Y', \Delta)$ being a log canonical del Pezzo pair.
\end{exa}

Recall that if $(X, \Delta)$ is a $\Q$-factorial log canonical Fano pair, then $X$ is rationally chain connected ([\ref{HM}, Corollary 1.2]). Furthermore, when $(X, \Delta)$ is a log canonical weak del Pezzo pair, $X$ is a Mori dream space if and only if $X$ is rationally chain connected ({[\ref{HP}, Corollary 1.9]).
	Thus it is tempting to expect that for a $\Q$-factorial log canonical weak Fano pair $(X, \Delta)$, the finite generation of Cox ring of $X$ is equivalent to the rationally chain connectedness of $X$. However, this naive expectation turns out to be false in both directions by the following examples.
	
\begin{exa}
		Let $S$ be a Mori dream K3 surface, and $A$ be a very ample divisor on $S$. Let $X:=\mathbb{P}(\mathcal{O}_S \oplus \mathcal{O}_S(-A))$, and $H$ be the tautological section of $\mathcal{O}_S \oplus \mathcal{O}_S(-A)$. 
		Then $(X, H)$ is a log canonical weak Fano pair and $X$ is a Mori dream space by [\ref{CG}, Proposition 2.6], but $X$ is not rationally chain connected.
\end{exa}
	
\begin{exa}
		Let $S$ be the blow-up of $\mathbb{P}^2$ at 9 very general points, and $A$ be a very ample divisor on $S$ such that $A-K_S$ is also very ample. Let $X:=\mathbb{P}(\mathcal{O}_S(-A) \oplus \mathcal{O}_S (-K_S))$, and $H$ be the tautological section of $\mathcal{O}_S(-A) \oplus \mathcal{O}_S (-K_S)$.
		Then  $(X, H)$ is a log canonical weak Fano pair and $X$ is rationally connected, but $X$ is not a Mori dream space. 
\end{exa}

\vspace{0.3cm}

\vspace{2cm}

\flushleft{National} Center of Theoretical Sciences,\\
National Taiwan University,\\
Roosevelt Road,\\
Taipei, 10617,\\
Taiwan\\
email: zyhu@ncts.ntu.edu.tw

\vspace{1cm}


\begin{thebibliography}{99}
	
	\bibitem{}\label{AK} {D.~Abramovich, K.~Karu, Weak semistable reduction in characteristic $0$, Invent. math. 139 (2000), no. 2, 241-273.}
	
\bibitem{}\label{Ambro}  {F. Ambro; {\emph{Shokurov’s boundary property.}}
		J. Differential Geom. 67 (2004), no. 2, 229-255.}
	
\bibitem{}\label{Ambro-2}  {F. Ambro; {\emph{An injectivity theorem.}}
		Compos. Math. 150 (2014), no. 6,
		999-1023.}

\bibitem{}\label{B-lc-flips} C. Birkar; \textit{Existence of log canonical flips and a special LMMP.}
Publ. Math. de l'IH\'ES, 115 (2012), no. 1,  325-368.

\bibitem{}\label{B-II}  {C. Birkar; {\emph{On existence of log minimal models II.}}
J. Reine Angew Math. 658 (2011), 99-113.}

\bibitem{}\label{BCHM} {C. Birkar, P. Cascini, C. D. Hacon, J. McKernan; {\emph{Existence of minimal models for varieties of log general type.}} J. Amer. Math. Soc. 23 (2010), 405-468.}

\bibitem{}\label{BH-I}  {C. Birkar, Z. Hu; {\emph{Polarized pairs, log minimal models, and Zariski decompositions.}} Nagoya Math. J. 215 (2014), 203-224. }

\bibitem{}\label{BH-II} {C. Birkar, Z. Hu; {\emph{Log canonical pairs with good augmented base loci.}} Compositio Math. 150 (2014), 579-592.}

\bibitem{}\label{Bou}  {S. Boucksom; {\emph{Divisorial Zariski decompositions on compact complex manifolds.}} Ann. Sci. Ecole Norm. Sup. 37 (2004), no. 4, 45-76.}

\bibitem{}\label{C-B} {S. Cacciola, L. D. Biagio; {\emph{Asymptotic base loci on singular varieties.}} Mathematische Zeitschrift, 275 (2013), no. 1, pp 151-166.}

\bibitem{}\label{CG} P. Cascini and Y. Gongyo, \textit{On the anti-canonical ring and varieties of Fano type}, Saitama Math. J. 30 (2013), 27-38.

\bibitem{}\label{Cascini-Lazic} { P. Cascini, V. Lazi\'c; {\emph{The Minimal Model Program Revisited.}} Contributions to Algebraic Geometry (Piotr Pragacz, ed.), EMS Series of Congress Reports, EMS Publishing Hous e(2012), 169-187}

\bibitem{}\label{Corti-Lazic} { A. Corti, V. Lazi\'c; {\emph{New outlook on the Minimal Model Program, II.}} Math. Ann. 356 (2013) 617-633.}

\bibitem{}\label{Deb} {O. Debarre,; \textit{Higher-Dimensional Algebraic Geometry.} Universitext, Springer-Verlag,
	New York, 2001.}

\bibitem{}\label{Fujino} {O. Fujino; \textit{Injectivity theorems.} Adv. Stud. Pure Math. (2017), 131-157.}


\bibitem{}\label{Fujino-2} {O. Fujino; \textit{Special termination and reduction to pl flips.} In Flips for 3-folds and 4-folds,
	Oxford University Press (2007).}

\bibitem{}\label{Fujino-3} {O. Fujino; \textit{Foundations of the Minimal Model Program.}  Math. Soc. of Japan Memoirs: Volume 35 (2017).}
 
\bibitem{}\label{FG3}  {O. Fujino, Y. Gongyo; {\emph{On canonical bundle formulae and subadjunctions.}} Michigan Math. J. 61 (2012), no. 2, 255-264.}

\bibitem{}\label{FG2}  {O. Fujino, Y. Gongyo; {\emph{Log pluricanonical representations and abundance conjecture.}} Compos. Math. 150 (2014), no. 4, 593-620.}

\bibitem{}\label{FG}  {O. Fujino, Y. Gongyo; {\emph{On the moduli b-divisors of lc-trivial fibrations}}  Ann. Inst. Fourier (Grenoble) 64 (2014), no. 4, 1721-1735.}


\bibitem{}\label{Fujino-Mori} O. Fujino, S. Mori; \textit{A canonical bundle formula.} J. Differential Geometry 56 (2000), no. 1, 167-188.

\bibitem{}\label{Gongyo-Lehmann}  {Y. Gongyo, B. Lehmann; {\emph{Reduction maps and minimal model theory.}} Compositio Math 149 (2013), no. 2, 295-308.}


\bibitem{}\label{HM} C. Hacon and J. McKernan, \textit{On Shokurov's rational connectedness conjecture}, Duke Math. J. \textbf{138} (2007), 11936.

\bibitem{}\label{HMX}  {C. D. Hacon, J. McKernan, C. Xu; {\emph{ACC for log canonical thresholds.}} Annals of Math. 180 (2014), No. 2, 523-571.}

\bibitem{}\label{HX2}  {C. D. Hacon, C. Xu; {\emph{Existence of log canonical closures.}} Invent. math. 192 (2013), 161-195.}

\bibitem{}\label{HX}  {C. D. Hacon, C. Xu; {\emph{On finiteness of B-representations and semi-log canonical abundance.}}
	Adv. Stud. Pure Math. (2016), 361-377.}

\bibitem{}\label{Hashizume} {K. Hashizume; {\emph{Remarks on special kinds of the relative log minimal model program.} manuscripta math. (2018). }}


\bibitem{}\label{HP} D. Hwang and J. Park, \textit{Characterization of log del Pezzo pairs via anticanonical models}, Math. Z. 280(2015), 211-229.

\bibitem{}\label{Kaw} {Y. Kawamata; {\emph{Pluricanonical systems on minimal algebraic varieties. }} Invent. Math. 79 (1985), 567-588.}

\bibitem{}\label{Kaw-2}  {Y. Kawamata; {\emph{Subadjunction of log canonical divisors. II.}}  Amer. J. Math. 120
	(1998), no. 5, 893-899.}

\bibitem{}\label{Kaw-3} {Y. Kawamata; {\emph{Finite generation of a canonical ring. }} Current Developments in Mathematics, 2007(2009), 43-76.}

\bibitem{}\label{Kaw-4} {Y. Kawamata; {\emph{Remarks on the cone of divisors. }} Classification of Algebraic Varieties, (2010). }

\bibitem{}\label{KKL}  {A. S. Kaloghiros, A. K\"uronya, V. Lazi\'c; {\emph{Finite generation and geography of models.}}  Adv. Stud. Pure Math.(2016), 215-245.}

\bibitem{}\label{KMM}  {S. Keel; K. Matsuki; J. McKernan; {\emph{Log abundance theorem for threefolds.}} Duke
	Math. J. 75 (1994), no. 1, 99-119.}

\bibitem{}\label{Kollar}  {J. Koll\'ar; {\emph{Higher direct images of dualizing sheaves.}} I. Ann. of Math. (2) 123
	(1986), no. 1, 11-42.}

\bibitem{}\label{Kollar-Mori}  {J. Koll\'ar, S. Mori; {\emph{Birational Geometry of Algebraic Varieties.}} Cambridge University
Press (1998).}

\bibitem{}\label{Lehmann} {B. Lehmann; {\emph{On Eckl's pseudo-effective reduction map. }} Trans. Amer. Math. Soc. 366 (2014), no. 3, 1525-1549.}

\bibitem{}\label{Mustata} {M. Mustata; {\emph{The non-nef locus in positive characteristic.}} arXiv:1109.3825v2.}

\bibitem{}\label{Nakayama} {N. Nakayama; {\emph{Zariski decomposition and abundance.}} MSJ Memoirs 14, Tokyo (2004).}

\bibitem{}\label{Shokurov} { V. Shokurov;  {\emph{Prelimiting flips.}}Proc. Steklov Inst. Math. 240 (2003), 75-213.}

\bibitem{}\label{Choi-Shokurov} { V. Shokurov; S-R. Choi;  {\emph{Geography of log models:theory and applications.}} Open Mathematics 9 (2011), no. 3, 489-534.}

\end{thebibliography}
\end{document}